\def\qkpossz{10pt}
\IfFileExists{qkpos.conf}{\input{qkpos.conf}}{}
% A different font size can be set by adding the line "\def\qkpossz{12pt}"
% to a new file named qkpos.conf. Do not add qkpos.conf to the repository.
\documentclass[\qkpossz]{amsart}
\usepackage{amsmath, amsfonts, amssymb}
\usepackage{stmaryrd}
\usepackage{accents}
\usepackage[all]{xy}
\usepackage{graphicx}
\usepackage{refcount}
\usepackage{hhline}
\usepackage{tableau}
\usepackage{tikz}
\usetikzlibrary{patterns}
\usepackage[colorlinks=true, linkcolor=blue,
  citecolor=blue, urlcolor=blue]{hyperref}
\ifcsname mypreamble\endcsname\mypreamble\fi

% Latex preamble file
% !TeX root=qkpos.tex

\def\newthm#1#2{\newtheorem{#1}[dummy]{#2}%
  \expandafter\def\csname#2\endcsname##1{\hyperref[#1:##1]{#2~\ref*{#1:##1}}}}
\newthm{thm}{Theorem}
\newthm{lemma}{Lemma}
\newthm{prop}{Proposition}
\newthm{cor}{Corollary}
\newthm{conj}{Conjecture}
\newthm{cons}{Consequence}
\newthm{claim}{Claim}
\newthm{fact}{Fact}
\newthm{obs}{Observation}
\newthm{guess}{Guess}
\theoremstyle{definition}
\newthm{defn}{Definition}
\newthm{remark}{Remark}
\newthm{example}{Example}
\newthm{question}{Question}
\newthm{notation}{Notation}

\newcommand{\Section}[1]{\hyperref[sec:#1]{Section~\ref*{sec:#1}}}
\newcommand{\Sec}[1]{\hyperref[sec:#1]{\S\ref*{sec:#1}}}
\newcommand{\Table}[1]{\hyperref[tab:#1]{Table~\ref*{tab:#1}}}
\newcommand{\Figure}[1]{\hyperref[fig:#1]{Figure~\ref*{fig:#1}}}
\newcommand{\eqn}[1]{\hyperref[eqn:#1]{(\ref*{eqn:#1})}}

% Index of symbols
\newcounter{symbolpage}
\def\spage#1{\ifnum\value{symbolpage}=\getpagerefnumber{#1}\else\hyperref[#1]{\textcolor{blue}{[\pageref*{#1}]}}\, \fi\setcounter{symbolpage}{\getpagerefnumber{#1}}}
\newif\ifsnext
\def\scomma{\ifsnext,\, \fi\snexttrue}
\def\sbreak{,\\\snextfalse}
\newcommand{\targetsec}[2]{\hypertarget{link:#1}{\label{page:#1}#2}}
\newcommand{\indexsec}[2]{\snextfalse\noin\Section{#1}: {\hypersetup{hidelinks}#2}.}
\newcommand{\slink}[2]{\scomma\spage{page:#1}\hyperlink{link:#1}{#2}}
\newcommand{\sref}[2]{\scomma\spage{#1}\hyperref[#1]{#2}}

\DeclareMathOperator{\GL}{GL}
\DeclareMathOperator{\SL}{SL}
\DeclareMathOperator{\PGL}{PGL}
\DeclareMathOperator{\Sp}{Sp}
\DeclareMathOperator{\SO}{SO}
\DeclareMathOperator{\Gr}{Gr}
\DeclareMathOperator{\Fl}{Fl}
\DeclareMathOperator{\LG}{LG}
\DeclareMathOperator{\IG}{IG}
\DeclareMathOperator{\SG}{SG}

\DeclareMathOperator{\OG}{OG}

\DeclareMathOperator{\Aut}{Aut}
\DeclareMathOperator{\Ker}{Ker}
\DeclareMathOperator{\Span}{Span}
\DeclareMathOperator{\Spec}{Spec}
\DeclareMathOperator{\QH}{QH}
\DeclareMathOperator{\QK}{QK}

\DeclareMathOperator{\codim}{codim}

\DeclareMathOperator{\Pic}{Pic}

\DeclareMathOperator{\dist}{dist}

\DeclareMathOperator{\lead}{lead}

\newcommand{\comin}{\mathrm{comin}}

\newcommand{\ssm}{\smallsetminus}
\newcommand{\bP}{{\mathbb P}}
\renewcommand{\P}{{\mathbb P}}
\newcommand{\C}{{\mathbb C}}

\newcommand{\Q}{{\mathbb Q}}
\newcommand{\Z}{{\mathbb Z}}
\newcommand{\N}{{\mathbb N}}
\newcommand{\cB}{{\mathcal B}}

\newcommand{\cD}{{\mathcal D}}

\newcommand{\cF}{{\mathcal F}}
\newcommand{\cI}{{\mathcal I}}
\newcommand{\cJ}{{\mathcal J}}
\newcommand{\cK}{{\mathcal K}}

\newcommand{\cO}{{\mathcal O}}
\newcommand{\cP}{{\mathcal P}}
\newcommand{\cR}{{\mathcal R}}
\newcommand{\cS}{{\mathcal S}}
\newcommand{\cT}{{\mathcal T}}
\newcommand{\hT}{\widehat{\mathcal T}}

\newcommand{\cZ}{{\mathcal Z}}
\newcommand\oE{\accentset{\circ}{E}}
\newcommand\oX{\accentset{\circ}{X}}
\newcommand\oY{\accentset{\circ}{Y}}
\newcommand\oZ{\accentset{\circ}{Z}}
\newcommand\oGamma{\accentset{\circ}{\Gamma}}
\newcommand\oPi{\accentset{\circ}{\Pi}}
\newcommand\ocZ{\accentset{\circ}{\mathcal Z}}
\newcommand\Zd[1]{Z_d^{(#1)}}
\newcommand\oZd[1]{\oZ_d^{(#1)}}
\newcommand\Zdd[1]{Z_{d-1,1}^{(#1)}}
\newcommand\hZdd[1]{\wh Z_{d-1,1}^{(#1)}}
\newcommand\Zhdd{{\wh Z_{d-1,1}}}

\newcommand{\gw}[2]{\langle #1 \rangle^{\mbox{}}_{#2}}

\newcommand{\qpair}[1]{(\!(#1)\!)}
\newcommand{\euler}[1]{\chi_{_{#1}}}
\newcommand{\pt}{\mathrm{point}}

\newcommand{\al}{{\alpha}}
\newcommand{\be}{{\beta}}
\newcommand{\ga}{{\gamma}}

\newcommand{\ka}{{\kappa}}
\newcommand{\la}{{\lambda}}

\newcommand{\om}{{\omega}}
\newcommand{\tal}{{\widetilde\alpha}}
\newcommand{\dmin}{d_{\min}}
\newcommand{\dmax}{d_{\max}}

\newcommand{\Bl}{{\mathrm{B}\ell}}

\newcommand{\ev}{\operatorname{ev}}
\newcommand{\wt}{\widetilde}
\newcommand{\wh}{\widehat}
\newcommand{\wb}{\overline}
\newcommand{\ov}{\overline}
\newcommand{\pic}[2]{\includegraphics[scale=#1]{Figures/#2}}
\newcommand{\ignore}[1]{}

\newcommand{\Mb}{\wb{\mathcal M}}
\newcommand{\ch}{\operatorname{ch}}
\newcommand{\noin}{\noindent}

\newcommand{\hmm}[1]{\mbox{}\hspace{#1mm}\mbox{}}
\newcommand{\YdoGam}[1]{Y_d(#1,\oGamma_d(#1))}

\newenvironment{abcenum}{\begin{enumerate}}{\end{enumerate}}

\newcommand{\semi}{contact}
\newcommand{\Semi}{Contact}

\begin{document}

\title{Positivity of minuscule quantum $K$-theory}

% Revision information from git. The variables \gitdate and \gittag are
% set by the shell script mkpdf.sh
\ifdefined\gitdate
\date{\gitdate\ revision {\tt \gittag}}
\else
\date{March 13, 2026}
\fi

\author{Anders~S.~Buch}
\address{Department of Mathematics, Rutgers University, 110
  Frelinghuysen Road, Piscataway, NJ 08854, USA}
\email{asbuch@math.rutgers.edu}

\author{Pierre--Emmanuel Chaput}
\address{Domaine Scientifique Victor Grignard, 239, Boulevard des
  Aiguillettes, Universit{\'e} de Lorraine, B.P.  70239,
  F-54506 Vandoeuvre-l{\`e}s-Nancy Cedex, France}
\email{pierre-emmanuel.chaput@univ-lorraine.fr}

\author{Leonardo~C.~Mihalcea}
\address{Department of Mathematics, Virginia Tech University, 460
  McBryde, Blacksburg VA 24060, USA}
\email{lmihalce@math.vt.edu}

\author{Nicolas Perrin}
\address{Laboratoire de Math\'ematiques de Versailles, UVSQ, CNRS,
  Universit\'e Paris-Saclay, 78035 Versailles, France}
\email{nicolas.perrin@uvsq.fr}

\subjclass[2020]{Primary 14N35; Secondary 19E08, 14N15, 14M15}

\keywords{Quantum $K$-theory, Gromov-Witten invariants, cominuscule flag
varieties, Schubert varieties, Richardson varieties, Schubert structure
constants, positivity}

\thanks{The authors were partially supported by NSF grants DMS-1205351 and
DMS-1503662 and a Visiting Professorship at Universit{\'e} de Lorraine (Buch),
NSA grants H98230-13-1-0208 and H98320-16-1-0013 and a Simons Collaboration
Grant (Mihalcea), NSF grant DMS-1929284 while in residence at the Institute for
Computational and Experimental Research in Mathematics in Providence, RI, during
the Spring of 2021 (Buch and Mihalcea), and a public grant as part of the
Investissement d'avenir project, reference ANR-11-LABX-0056-LMH, LabEx LMH
(Perrin)}

\dedicatory{\mbox{}\vspace{2mm}\mbox{}}

\definecolor{uayellow}{rgb}{.7,.7,.0}

{\color{blue}
\begin{abstract}
  {\color{blue}%
  We prove that the Schubert structure constants of the quantum $K$-theory ring
  of any minuscule flag variety or quadric hypersurface have signs that
  alternate with codimension. We also prove that the powers of the deformation
  parameter $q$ that occur in the product of two Schubert classes in the quantum
  cohomology or quantum $K$-theory ring of a cominuscule flag variety form an
  integer interval. Our proofs are based on several new results, including an
  explicit description of the most general non-empty intersection}
  {\color{uayellow}%
  of two Schubert varieties in an arbitrary flag manifold, and a computation of
  the cohomology groups of any negative line bundle restricted to a Richardson
  variety in a cominuscule flag variety. We also give a type-uniform proof of
  the quantum-to-classical theorem, which asserts that the (3-point, genus 0)
  Gromov-Witten invariants of any cominuscule flag variety are classical
  triple-intersection numbers on an associated flag variety. Finally, we prove
  several new results about the geometry and combinatorics related to this
  theorem.\parfillskip=0pt\par}
\end{abstract}
}

\maketitle

\markboth{A.~BUCH, P.--E.~CHAPUT, L.~MIHALCEA, AND N.~PERRIN}
{POSITIVITY OF MINUSCULE QUANTUM $K$-THEORY}

\setcounter{tocdepth}{1}
{\hypersetup{hidelinks}
\tableofcontents
}

% !TeX root=qkpos.tex

\section{Introduction}

\subsection{Positivity in quantum $K$-theory}

Let $X = G/P_X$ be a flag variety defined by a semi-simple complex linear
algebraic group $G$ and a parabolic subgroup $P_X$. The (small) quantum
$K$-theory ring $\QK(X)$ of Givental and Lee \cite{givental:wdvv, lee:quantum}
is a deformation of the $K$-theory ring $K(X)$ of algebraic vector bundles on
$X$, whose structure constants $N^{w,d}_{u,v}$ encode the arithmetic genera of
the (3 pointed, genus zero) Gromov-Witten varieties of $X$. Here $N^{w,d}_{u,v}$
denotes the coefficient of $q^d \cO^w$ in the product $\cO^u \star \cO^v$ in
$\QK(X)$, where $\cO^w = [\cO_{X^w}]$ denotes a $K$-theoretic Schubert class of
$X$, and $q^d$ is a monomial in the deformation parameters of $\QK(X)$, encoding
a degree $d \in H_2(X,\Z)$. The Schubert variety $X^w$ corresponds to a Weyl
group element $w$ such that $\codim(X^w,X)$ is equal to the length $\ell(w)$.

The constant $N^{w,d}_{u,v}$ is non-zero only if
\[
  \ell(w) + \int_d c_1(T_X) \geq \ell(u) + \ell(v) \,,
\]
and when this inequality is satisfied with equality, $N^{w,d}_{u,v}$ is the
(cohomological) Gromov-Witten invariant $\gw{[X^u], [X^v], [X_w]}{d}$, equal to
the number of parameterized curves $f : \bP^1 \to X$ of degree $d$ that map the
points $0, 1, \infty$ to general translates of the Schubert varieties $X^u$,
$X^v$, and $X_w$; here $X_w$ is the Schubert variety of dimension $\ell(w)$
opposite to $X^w$. In particular, $N^{w,d}_{u,v}$ is non-negative in this case.
More generally, it is conjectured \cite{lenart.maeno:quantum,
buch.mihalcea:quantum, buch.chaput.ea:chevalley} that the constants
$N^{w,d}_{u,v}$ have signs that alternate with codimension, in the sense that
\begin{equation}\label{eqn:qksigns}
  (-1)^{\ell(uvw) + \int_d c_1(T_X)} N^{w,d}_{u,v} \geq 0 \,.
\end{equation}
This generalizes the fact that the Schubert structure constants of the
$K$-theory ring $K(X)$ have alternating signs
\cite{buch:littlewood-richardson*1, brion:positivity}. Our main result is a
proof of the alternating signs conjecture for $\QK(X)$ when $X$ is a minuscule
flag variety or a quadric hypersurface.

\begin{thm}\label{thm:qkpos-minuscule}%
  Assume that $X$ is a minuscule flag variety or a quadric hypersurface. Then,
  $(-1)^{\ell(uvw) + \int_d c_1(T_X)} N^{w,d}_{u,v} \geq 0$.
\end{thm}

The family of \emph{minuscule} flag varieties includes Grassmannians of Lie type
A, maximal orthogonal Grassmannians, quadric hypersurfaces of even dimension,
and two exceptional spaces of type E called the Cayley plane and the Freudenthal
variety. The larger family of \emph{cominuscule} flag varieties also includes
Lagrangian Grassmannians and odd dimensional quadrics (see \Section{comin}). We
will prove more generally that the constant $N^{w,d}_{u,v}$ has the expected
sign \eqn{qksigns} whenever $X$ is cominuscule and $q^d$ occurs in the product
$[X^u] \star [X^v]$ in the quantum cohomology ring $\QH(X)$.

Earlier examples where alternating signs of the structure constants in quantum
$K$-theory have been proved include the Pieri formula for products with special
Schubert classes on Grassmannians of type A \cite{buch.mihalcea:quantum},
structure constants associated to `line' degrees corresponding to certain
fundamental weights on any $G/P$ \cite{li.mihalcea:k-theoretic}, Chevalley
formulas for products with Schubert divisors on some families of flag varieties
\cite{buch.chaput.ea:chevalley, lenart.naito.ea:combinatorial,
kouno.lenart.ea:quantum}, and all the structure constants of the quantum
$K$-theory of incidence varieties $\Fl(1,n-1;n)$ of type A \cite{rosset:quantum,
xu:quantum*1}.

The ring $\QK(X)$ has an alternative basis $\{\cI_q^w\}$ that is dual to the
structure sheaf basis $\{\cO_w\}$ under Givental's quantum $K$-metric. A formula
for the dual Schubert classes $\cI_q^w$ in $\QK(X)$ has recently been proved in
\cite{summers:dual} when $X$ is cominuscule. This formula together with
\Theorem{qkpos-minuscule} implies that the structure constants of $\QK(X)$
relative to the dual basis also have alternating signs when $X$ is minuscule or
a quadric. More generally, we conjecture that the dual structure constants of
the equivariant quantum $K$-theory ring $\QK_T(X)$ are positive in the sense of
\cite{griffeth.ram:affine,anderson.griffeth.ea:positivity}, see
\Section{qkdual}.

\subsection{Powers of $q$ in quantum products}

We also address the problem of finding the powers $q^d$ that occur in the
quantum product of two Schubert classes, either in the quantum cohomology ring
$\QH(X)$ or the quantum $K$-theory ring $\QK(X)$. When $X$ is a Grassmannian of
type A, the smallest power $\dmin(u,v)$ of $q$ in the product $[X^u] \star [X^v]
\in \QH(X)$ was determined by Fulton and Woodward
\cite{fulton.woodward:quantum}, as the number of diagonal units the (dual) Young
diagram of $X^u$ must be translated in order to contain the Young diagram of
$X^v$. Postnikov \cite{postnikov:affine} gave a similar rule for the largest
degree $\dmax(u,v)$, and also proved that the powers of $q$ that occur form an
integer interval. This answers the question for the quantum cohomology of
Grassmannians of type A.

For an arbitrary flag variety $X$, it is not clear if a quantum product $[X^u]
\star [X^v] \in \QH(X)$ contains a minimal or maximal power of $q$, since the
group $H_2(X,\Z)$ is linearly ordered only when it has rank 1. It turns out that
$[X^u] \star [X^v]$ always contains a minimal power $q^d$
\cite{postnikov:affine, buch.chung.ea:euler}, where $d = \dmin(u,v)$ is the
(unique) minimal degree of a rational curve connecting two general translates of
the Schubert varieties $X^u$ and $X^v$. The corresponding product $\cO^u \star
\cO^v$ in $\QK(X)$ contains the same minimal power $q^d$. However, a quantum
cohomology product $[X^u] \star [X^v]$ may not contain a unique maximal power of
$q$, and the powers that occur may not form a convex subset in the natural
partial order of $H_2(X,\Z)$. For example, the $q$-degrees in the square of
$[X^{164532}]$ in $\QH(\Fl(\C^6))$ do not form a convex subset, and no unique
maximal degree exists.

Any cominuscule flag variety $X$ has Picard rank one, so all quantum products
automatically contain a maximal power of $q$. We will show that the interval
property also holds when $X$ is cominuscule. More precisely, let $\cB = \{ q^d
[X^u] \}$ denote the natural $\Z$-basis of $\QH(X)$. It was proved in
\cite{belkale:transformation, chaput.manivel.ea:affine} that any product $[\pt]
\star [X^v]$ belongs to $\cB$ when $X$ is cominuscule. Define a partial order on
$\cB$ by $q^e [X^v] \leq q^d [X^u]$ if and only if the Schubert varieties $X_u$
and $X^v$ are connected by a rational curve of degree $d-e$.

\begin{thm}\label{thm:qh-powers-intro}%
  Assume that $X$ is cominuscule. Then the powers $q^d$ that occur in $[X^u]
  \star [X^v] \in \QH(X)$ form an integer interval. More precisely, $q^d$ occurs
  in $[X^u] \star [X^v]$ if and only if $[X^v] \leq q^d [X_u] \leq [\pt] \star
  [X^v]$.
\end{thm}

Postnikov's description of the extreme powers of $q$ in a quantum product on the
Grassmannian $\Gr(m,n)$ involves order ideals in the \emph{cylinder}
$\Z^2/(-m,n-m)\Z$ that extends the usual $m \times (n-m)$-rectangle of boxes
associated with the Grassmannian. \Theorem{qh-powers-intro} can be interpreted
as a type-uniform generalization of this construction. In fact, $\cB$ turns out
to be a distributive lattice, and the join-irreducible elements in $\cB$ can be
identified with a set of boxes in the plane that specializes to Postnikov's
cylinder in type A. Examples are provided in \Section{qshapes}. Isomorphic
partially ordered sets have been constructed in \cite{hagiwara:minuscule*2,
green:combinatorics*1}, where they are used to study to minuscule
representations.

In quantum $K$-theory it is known that the powers of $q$ in any product $\cO^u
\star \cO^v$ are bounded above \cite{buch.chaput.ea:finiteness,
buch.chaput.ea:rational, kato:quantum, anderson.chen.ea:finiteness}; this
is not apparent from Givental's definition of the product in $\QK(X)$.
\Theorem{qh-powers-intro} has the following generalization.

\begin{thm}\label{thm:qk-powers-intro}%
  \noin{\rm(a)} Assume that $X$ is minuscule. Then $q^d$ occurs in $\cO^u \star
  \cO^v$ if and only if $q^d$ occurs in $[X^u] \star [X^v]$.\smallskip

  \noin{\rm(b)} Assume that $X$ is cominuscule. Then the powers of $q$ that
  occur in $\cO^u \star \cO^v$ form an integer interval. The smallest power
  matches the smallest power in $[X^u] \star [X^v]$, and the largest power is at
  most one larger than the largest power in $[X^u] \star [X^v]$.
\end{thm}

In \Definition{exceptional} we give a combinatorial definition of an
\emph{exceptional degree} of a product $\cO^u \star \cO^v$ in the quantum
$K$-theory ring of any cominuscule flag variety $X$. Exceptional degrees occur
only when $X$ is not minuscule, and even in this case, most products have no
exceptional degrees (see \Table{exceptional-counts}). If $\cO^u \star \cO^v$ has
an exceptional degree, then this degree is $\dmax(u,v)+1$. We prove that if
$q^d$ occurs in $\cO^u \star \cO^v$, then either $\dmin(u,v) \leq d \leq
\dmax(u,v)$, or $d = \dmax(u,v)+1$ is an exceptional degree. In particular, this
means that $N^{w,d}_{u,v}$ has the expected sign whenever $d$ is not an
exceptional degree. We conjecture that $q^d$ occurs in $\cO^u \star \cO^v$
whenever $d$ is an exceptional degree. Equivalently, $d$ is an exceptional
degree if and only if $q^d$ occurs in the quantum $K$-theory product $\cO^u
\star \cO^v$ but not in the quantum cohomology product $[X^u] \star [X^v]$.

\subsection{Strategy of proof}

\targetsec{Md11}{}%
To illustrate the main strategy in our proofs, fix a cominuscule flag variety
$X$ and let $M_d = \Mb_{0,3}(X,d)$ denote the Kontsevich moduli space of
3-pointed stable maps to $X$ of degree $d$ and genus zero
\cite{fulton.pandharipande:notes}. Let $M_d(X_u,X^v) = \ev_1^{-1}(X_u) \cap
\ev_2^{-1}(X^v) \subset M_d$ denote the \emph{Gromov-Witten variety} of stable
maps that send the first two marked points to the Schubert varieties $X_u$ and
$X^v$, respectively. The image $\Gamma_d(X_u,X^v) = \ev_3(M_d(X_u,X^v)) \subset
X$ is a two-pointed \emph{curve neighborhood}, equal to the closure of the union
of all rational curves of degree $d$ in $X$ that meet $X_u$ and $X^v$. We also
let $M_{d-1,1} \subset M_d$ denote the divisor of stable maps $f : C \to X$ for
which the domain has (at least) two components, $C = C_1 \cup C_2$, such that
$C_1$ contains the first two marked points, $C_2$ contains the third marked
point, and the restrictions of $f$ to $C_1$ and $C_2$ have degrees $d-1$ and
$1$, respectively. Set $M_{d-1,1}(X_u,X^v) = M_{d-1,1} \cap M_d(X_u,X^v)$ and
$\Gamma_{d-1,1}(X_u,X^v) = \ev_3(M_{d-1,1}(X_u,X^v))$. In other words,
$\Gamma_{d-1,1}(X_u,X^v)$ is the closure of the set of points in $X$ that are
connected by a line to a rational curve of degree $d-1$ from $X_u$ to $X^v$.

Let $\cO_u = [\cO_{X_u}]$ and $\cO^v = [\cO_{X^v}]$ be two opposite Schubert
classes. It follows from \cite[Prop.~3.2]{buch.chaput.ea:chevalley} that the
product $\cO_u \star \cO^v \in \QK(X)$ is given by\footnote{The dual Weyl group
element $u^\vee$ satisfies $\cO^{u^\vee} = \cO_u$.}
\[
  \cO_u \star \cO^v =
  \sum_{w, d \geq 0} N^{w,d}_{u^\vee,v}\, q^d\, \cO^w =
  \sum_{d \geq 0} (\cO_u \star \cO^v)_d\, q^d \,,
\]
where the classes $(\cO_u \star \cO^v)_d \in K(X)$ are determined by
\begin{equation}\label{eqn:qkprod-d}%
  (\cO_u \star \cO^v)_d =
  (\ev_3)_* [\cO_{M_d(X_u,X^v)}] - (\ev_3)_* [\cO_{M_{d-1,1}(X_u,X^v)}] \,.
\end{equation}
It was proved in \cite[Thm.~4.1]{buch.chaput.ea:projected} that
$\Gamma_d(X_u,X^v)$ is a projected Richardson variety in $X$, and the restricted
map $\ev_3 : M_d(X_u,X^v) \to \Gamma_d(X_u,X^v)$ is cohomologically trivial,
that is, $(\ev_3)_* \cO_{M_d(X_u,X^v)} = \cO_{\Gamma_d(X_u,X^v)}$ and $R^j
(\ev_3)_* \cO_{M_d(X_u,X^v)} = 0$ for $j > 0$. In particular, we have $(\ev_3)_*
[\cO_{M_d(X_u,X^v)}] = [\cO_{\Gamma_d(X_u,X^v)}]$ in $K(X)$. Since projected
Richardson varieties have rational singularities
\cite{billey.coskun:singularities, knutson.lam.ea:projections}, it follows from
a theorem of Brion \cite{brion:positivity} that the expansion of
$[\cO_{\Gamma_d(X_u,X^v)}]$ in the Schubert basis of $K(X)$ has alternating
signs in the sense of \Theorem{qkpos-minuscule}.

The further restriction $\ev_3 : M_{d-1,1}(X_u,X^v) \to \Gamma_{d-1,1}(X_u,X^v)$
is not as well understood; for example, we do not know if
$\Gamma_{d-1,1}(X_u,X^v)$ has rational singularities. Our strategy is to
establish the following two properties.

\begin{itemize}
  \item[(i)] The general fibers of the map $\ev_3 : M_{d-1,1}(X_u,X^v) \to
  \Gamma_{d-1,1}(X_u,X^v)$ are cohomologically trivial.

  \item[(ii)] The variety $\Gamma_{d-1,1}(X_u,X^v)$ is either equal to
  $\Gamma_d(X_u,X^v)$ or a divisor in $\Gamma_d(X_u,X^v)$.
\end{itemize}

The first property (i) implies, by using a result of Koll\'ar
\cite{kollar:higher} (see \Corollary{push}), that $(\ev_3)_*
[\cO_{M_{d-1,1}(X_u,X^v)}]$ is equal to the class of a resolution of
singularities of $\Gamma_{d-1,1}(X_u,X^v)$. In particular, if
$\Gamma_{d-1,1}(X_u,X^v) = \Gamma_d(X_u,X^v)$, then $(\cO_u \star \cO^v)_d = 0$.
Otherwise, property (ii) predicts that $\Gamma_{d-1,1}(X_u,X^v)$ is a divisor in
$\Gamma_d(X_u,X^v)$. In this case Brion's theorem \cite{brion:positivity}
implies that the expansion of $(\ev_3)_* [\cO_{M_{d-1,1}(X_u,X^v)}]$ has signs
that are opposite to the signs of $[\cO_{\Gamma_d(X_u,X^v)}]$, so that all signs
are compatible in the difference \eqn{qkprod-d}. \Theorem{qkpos-minuscule} is
therefore a consequence of properties (i) and (ii). In addition, $q^d$ occurs in
the product $\cO_u \star \cO^v$ if and only if $\Gamma_{d-1,1}(X_u,X^v)
\subsetneq \Gamma_d(X_u,X^v)$.

We show that property (ii) is always true. More precisely,
$\Gamma_{d-1,1}(X_u,X^v)$ is empty for $d \leq \dmin(u^\vee,v)$, is a
(non-empty) divisor in $\Gamma_d(X_u,X^v)$ for $\dmin(u^\vee,v) < d \leq
\dmax(u^\vee,v)$, and is equal to $\Gamma_d(X_u,X^v)$ for $d > \dmax(u^\vee,v)$.
We also show that (i) is true, \emph{except} when $d$ is an exceptional degree
of $\cO_u \star \cO^v$. In this case the general fibers of the map $\ev_3 :
M_{d-1,1}(X_u,X^v) \to \Gamma_{d-1,1}(X_u,X^v)$ have arithmetic genus one (see
\Theorem{exceptional} and \Corollary{introclaims_birat}), which explains the
exceptional behavior of exceptional degrees.

\subsection{The quantum-to-classical construction}

Our proofs rely on the geometric construction underlying the \emph{quantum
equals classical} theorem for cominuscule flag varieties \cite{buch:quantum,
buch.kresch.ea:gromov-witten, chaput.manivel.ea:quantum*1,
buch.mihalcea:quantum, chaput.perrin:rationality}, which we proceed to discuss
when $X = \Gr(m,n)$ is the Grassmannian of $m$-planes in $\C^n$. Given any
stable map $f : C \to X$, let $\Ker(f) \subset \C^n$ be the intersection of the
$m$-planes contained in the image $f(C)$, and let $\Span(f) \subset \C^n$ be the
linear span of these $m$-planes. For $f$ in a dense open subset of $M_d =
\Mb_{0,3}(X,d)$, with $d \leq \min(m,n-m)$, we have $\dim \Ker(f) = m-d$ and
$\dim \Span(f) = m+d$, that is, $(\Ker(f),\Span(f))$ is a point in the two-step
flag variety $Y_d = \Fl(m-d,m+d;n)$. Define the three-step flag variety $Z_d =
\Fl(m-d,m,m+d;n)$, and let $p_d : Z_d \to X$ and $q_d : Z_d \to Y_d$ be the
projections. The quantum equals classical theorem states that any (3 point,
genus zero) Gromov-Witten invariant of $X$ is given by
\[
  \gw{\Omega_1, \Omega_2, \Omega_3}{d} =
  \int_{Y_d} {q_d}_* p_d^*(\Omega_1) \cdot {q_d}_* p_d^*(\Omega_2) \cdot
  {q_d}_* p_d^*(\Omega_1) \,.
\]
Define a rational map $\varphi : M_d \dashrightarrow Z_d$ by $\varphi(f) =
(\Ker(f), \ev_3(f), \Span(f))$, and define subvarieties of $Z_d$ by
\[
  Z_d(X_u,X^v) = \ov{\varphi(M_d(X_u,X^v))}
  \text{ \ and \ }
  Z_{d-1,1}(X_u,X^v) = \ov{\varphi(M_{d-1,1}(X_u,X^v))} \,.
\]
We will show that (completions of) the general fibers of the restricted maps
$\varphi : M_d(X_u,X^v) \dashrightarrow Z_d(X_u,X^v)$ and $\varphi :
M_{d-1,1}(X_u,X^v) \dashrightarrow Z_{d-1,1}(X_u,X^v)$ are cohomologically
trivial. As a consequence, we can replace $M_{d-1,1}(X_u,X^v)$ with
$Z_{d-1,1}(X_u,X^v)$ in property (i). Furthermore, $Z_d(X_u,X^v)$ is a
Richardson variety in $Z_d$, and $Z_{d-1,1}(X_u,X^v)$ is the inverse image of a
projected Richardson variety in $Y_d$. Geometric results about Schubert
varieties can therefore be utilized for studying the fibers of the map $p_d :
Z_{d-1,1}(X_u,X^v) \to \Gamma_{d-1,1}(X_u,X^v)$.

The quantum-to-classical construction can be generalized to any cominuscule flag
variety $X$ by replacing kernel-span pairs $\om = (K,S)$ with certain
well-behaved subvarieties $\Gamma_\om \subset X$, which in type A are the
sub-Grassmannians $\Gamma_\om = \Gr(d,S/K) \subset \Gr(m,n)$. These subvarieties
correspond to points in a related flag variety $Y_d = G/P_{Y_d}$, which in turn
defines the incidence variety $Z_d = G/(P_X \cap P_{Y_d}) = \{ (\om,x) \in Y_d
\times X \mid x \in \Gamma_\om \}$. This provides a type-independent framework
for studying properties (i) and (ii).

\subsection{\Semi-transversal intersections}

In order to establish the required geometric properties of (especially) fibers
of maps related to the quantum-to-classical construction, we prove a number of
new results about intersections of Schubert varieties in arbitrary flag
varieties that are not in general position. In particular, given two opposite
Schubert varieties with empty intersection, we define and study a
\emph{\semi-transversal intersection} obtained when these varieties are moved
towards each other until they just meet, using the group action. In fact,
\semi-transversal intersections can be defined for subvarieties of any variety
with a group action, but in this generality it is not guaranteed that a
\semi-transversal intersection exists. We show that the \semi-transversal
intersection of two Schubert varieties always exists, is a Richardson variety,
and we give explicit descriptions of the defining Weyl group elements. For
example, if $X = G/P_X$ is a cominuscule flag variety, then the \semi-transversal
intersection of $X_u$ with $X^v$ is the Richardson variety $X_u \cap X^{u \cap
v}$, where $u \cap v$ denotes the join operation on the set of minimal length
Weyl group elements, corresponding to the intersection of Young diagrams in type
A. We also prove the following result about the fibers of a projection of a
Schubert or Richardson variety to a smaller flag variety.

\begin{thm}
  Let $\pi : Z \to X$ be a projection of flag varieties. Each fiber $\pi^{-1}(x)
  \cong P_X/P_Z$ is again a flag variety.\smallskip

  \noin{\rm(a)} Let $Z_u \subset Z$ be a Schubert variety. Then $Z_u \cap
  \pi^{-1}(x)$ a (reduced) Schubert variety in $\pi^{-1}(x)$ for all $x \in
  \pi(Z_u)$.\smallskip

  \noin{\rm(b)} Let $Z_u^v = Z_u \cap Z^v \subset Z$ be a Richardson variety.
  Then $Z_u^v \cap \pi^{-1}(x)$ is a Richardson variety in $\pi^{-1}(x)$ for all
  $x$ in a dense open subset of $\pi(Z_u^v)$.
\end{thm}

The general fibers in both (a) and (b) are given by explicitly determined Weyl
group elements (see \Theorem{schubfib} and \Theorem{richfib}). The explicit
description of the fibers of the projection of a Richardson variety is crucial
for our proofs of \Theorem{qkpos-minuscule}, \Theorem{qh-powers-intro}, and
\Theorem{qk-powers-intro}.

We apply these results to the projection $p_d : Z_d \to X$ of the
quantum-to-classical construction. For each $x \in X$, the fiber $F_d =
p_d^{-1}(x) \cong P_X/P_{Z_d}$ is a product of cominuscule flag varieties. If
$x$ is a general point of $\Gamma_d(X_u,X^v)$, then $R = F_d \cap Z_d(X_u,X^v)$
is a Richardson variety in $F_d$ given by explicitly determined Weyl group
elements. The most interesting case of property (i) happens in the range of
degrees $\dmax(u^\vee,v) < d \leq \min(\dmax(u^\vee),\dmax(v))$, where
$\dmax(v)$ denotes the unique power of $q$ in $[\pt] \star [X^v]$. In this case,
$\Gamma_{d-1,1}(X_u,X^v) = \Gamma_d(X_u,X^v)$. We show that $D = F_d \cap
Z_{d-1,1}(X_u,X^v)$ is a Cartier divisor in $R$, obtained as the intersection of
$R$ with a divisor in $F_d$. We then prove that $D$ is cohomologically trivial
if and only if $d$ is not an exceptional degree. This is done by explicitly
computing the cohomology groups of any negative line bundle restricted to a
Richardson variety in any cominuscule flag variety (\Theorem{Jcohom}). A special
case is the following statement.

\begin{thm}\label{thm:vanish}%
  Let $X$ be a minuscule flag variety, let $R \subset X$ be a Richardson variety
  of positive dimension, and let $\cO_X(-1) \subset \cO_X$ be the ideal sheaf of
  the Schubert divisor in $X$. Then $H^i(R,\cO_X(-1)) = 0$ for all $i$.
\end{thm}

Our results can also be used to prove that the Seidel representation of the
fundamental group $\pi_1(\Aut(X))$ on the localized quantum cohomology ring
$\QH(X)_{q=1}$, as studied in \cite{belkale:transformation,
chaput.manivel.ea:affine}, has a natural generalization to the quantum
$K$-theory ring in the cominuscule case. We plan to discuss this elsewhere
together with applications to Pieri formulas in quantum $K$-theory.

\subsection{Organization}

This paper is organized as follows. In \Section{schubint} we fix our notation
for flag varieties and related combinatorics and prove several results about
intersections of Schubert varieties in special position that are valid for
arbitrary flag varieties. In particular, we introduce the notion of
\semi-transversal intersections. In \Section{projrich} we give short proofs of
some related results about projections of Richardson varieties, which were first
obtained in \cite{knutson.lam.ea:projections}. In \Section{comin} we introduce
our notation for cominuscule flag varieties. We also compute the (top)
cohomology group of any negative line bundle restricted to a Richardson variety
in a cominuscule flag variety, as a representation of the maximal torus $T
\subset G$. This allows us to determine when the intersection of a Richardson
variety with an effective Cartier divisor is cohomologically trivial.
\Section{qclassical} gives a detailed account of the quantum equals classical
theorem, focusing on Gromov-Witten invariants of degrees no larger than the
\emph{diameter} of a cominuscule variety $X$, that is, the smallest degree of a
rational curve connecting two general points. We take this opportunity to
provide a type-uniform proof of this theorem, something that has so far not been
available in the literature. At the same time we further develop the geometry
and combinatorics of the quantum-to-classical construction. \Section{qcfibers}
provides explicit descriptions of the general fibers of several maps of
varieties related to this construction. \Section{qhinterval} proves that the
$q$-degrees in the quantum cohomology product of two cominuscule Schubert
classes form an integer interval. We also construct our generalization of
Postnikov's cylinder, which provides a combinatorial description of the minimal
and maximal degrees in a quantum product. \Section{quantumk} proves the results
about alternating signs and $q$-degrees in the quantum $K$-theory ring of a
cominuscule flag variety. We also discuss the structure constants of this ring
relative to its basis of dual Schubert classes. The proofs of some technical
facts are postponed to \Section{divisors}. Finally, \Section{claims} proves that
the general fibers of the rational map $M_{d-1,1}(X_u,X^v) \dashrightarrow
Z_{d-1,1}(X_u,X^v)$ have cohomologically trivial completions. The bibliography
is preceded by an index of symbols.

\subsection{Acknowledgments}

Parts of this work was carried out while the authors visited the Hausdorff
Research Institute for Mathematics in Bonn, the Department of Mathematical
Sciences at the University of Copenhagen, or participated in the semester
program in Combinatorial Algebraic Geometry at the Institute for Computational
and Experimental Research in Mathematics at Brown University. We are grateful to
these institutions for their hospitality and stimulating environments. We thank
David Anderson, Jesper Thomsen, and Chenyang Xu for helpful discussions, and we
thank Prakash Belkale and Robert Proctor for making us aware of the references
\cite{belkale:transformation, hagiwara:minuscule*2, green:combinatorics*1}. We
finally thank an anonymous referee for a very careful reading of our manuscript
and for several insightful suggestions.

% !TeX root=qkpos.tex

\section{Intersections and fibers of Schubert and Richardson varieties}
\label{sec:schubint}

In this section we fix our notation for flag varieties. In addition, we prove
some results about intersections of Schubert and Richardson varieties in special
position that are valid for arbitrary flag varieties over any algebraically
closed field. In this paper, a \emph{variety} is always reduced but not
necessarily irreducible. By a \emph{point} we will always mean a closed point.
The \emph{fibers} of a morphism are understood to be fibers over closed points,
and the \emph{general fibers} mean the fibers over all closed points in a dense
open subset of the target.

\subsection{Flag varieties}
\label{sec:flagvar}

\targetsec{group}{}%
Let $G$ be a connected linear algebraic group, and fix a maximal torus $T$ and a
Borel subgroup $B$ such that $T \subset B \subset G$. The opposite Borel
subgroup $B^- \subset G$ is defined by $B \cap B^- = R_u(G) T$, where $R_u(G)$
is the unipotent radical. Let $\Phi$ be the root system of $(G,T)$, with
positive roots $\Phi^+$ and simple roots $\Delta \subset \Phi^+$ given by $B$.
This means that $\Phi$ is the set of roots of the reductive quotient $G/R_u(G)$,
see \cite[\S7.4.3]{springer:linear*1}. Let $W = N_G(T)/Z_G(T)$ be the Weyl group
of $G$. The reflection along a root $\al \in \Phi$ is denoted by $s_\al$.

\targetsec{flagvar}{}%
A complete homogeneous $G$-variety $X$ will be called a \emph{flag variety} of
$G$ if the morphism $G \to X$, $g \mapsto g.x$ defined by any point $x \in X$ is
separable. Any such flag variety $X$ contains a unique $B$-invariant point. We
denote the parabolic subgroup stabilizing this point by $P_X \subset G$ and the
point itself by $1.P_X$. We identify $X$ with the quotient $G/P_X$. Each element
$g \in G$ defines a point $g.P_X = g.(1.P_X)$ in $X$. Let $\Phi_X$ be the root
system of (the reductive quotient of) $P_X$, and set $\Phi_X^+ = \Phi^+ \cap
\Phi_X$ and $\Delta_X = \Delta \cap \Phi_X$. Let $W_X = N_{P_X}(T)/Z_{P_X}(T)$
be the Weyl group of $P_X$, and let $W^X \subset W$ be the subset of minimal
representatives of the cosets in $W/W_X$. Each element $u \in W$ defines a
$T$-fixed point $u.P_X \in X$ and the \emph{Schubert varieties} $X_u =
\ov{Bu.P_X}$ and $X^u = \ov{B^-u.P_X}$. We denote the corresponding
\emph{Schubert cells} by $\oX_u = Bu.P_X$ and $\oX^u = B^-u.P_X$. These Schubert
varieties and cells depend only on the coset $u W_X$ in $W/W_X$, and for $u \in
W^X$ we have $\dim(X_u) = \codim(X^u,X) = \ell(u)$.

\targetsec{bruhat}{}%
Let $\leq$ denote the Bruhat order on $W$. For $u, v \in W^X$ we then have $v
\leq u$ $\Leftrightarrow$ $X_v \subset X_u$ $\Leftrightarrow$ $X_u \cap X^v \neq
\emptyset$. In this case the intersection $X_u^v = X_u \cap X^v$ is called a
\emph{Richardson variety}; this variety is reduced, irreducible, rational, and
$\dim(X_u^v) = \ell(u) - \ell(v)$ \cite{richardson:intersections}. The
\emph{Richardson cell} $\oX_u^v = \oX_u \cap \oX^v$ is a dense open subset of
$X_u^v$. Any translate of $X_u^v$ will be called a Richardson variety. In other
words, a non-empty closed subvariety $\Omega \subset X$ is a Richardson variety
if and only if $\Omega = g.X_u^v$ for some $u,v \in W^X$ and $g \in G$.
Similarly, arbitrary translates of Schubert varieties will be called Schubert
varieties.

\targetsec{factor}{}%
Each element $u \in W$ has a unique factorization $u = u^X u_X$ for which $u^X
\in W^X$ and $u_X \in W_X$, called the \emph{parabolic factorization} of $u$
with respect to $P_X$. This factorization is \emph{reduced} in the sense that
$\ell(u) = \ell(u^X) + \ell(u_X)$. The parabolic factorization of the longest
element $w_0 \in W$ is $w_0 = w_0^X w_{0,X}$, where $w_0^X$ and $w_{0,X}$ are
the longest elements in $W^X$ and $W_X$, respectively. The Poincar\'e dual
element of $u \in W^X$ is $u^\vee = w_0 u w_{0,X} \in W^X$, which satisfies
$X^{u^\vee} = w_0.X_u$.

\targetsec{factor2}{}%
Suppose $Y$ is an additional flag variety of $G$ such that $P_X \subset P_Y$,
and let $\pi : X \to Y$ be the projection. We then have $(u_Y)^X = (u^X)_Y$ for
any $u \in W$, so this element of $W_Y \cap W^X$ may be written as $u_Y^X$
without ambiguity. The parabolic factorizations of $u$ with respect to both
$P_Y$ and $P_X$ can be simultaneously expressed as $u = u^Y u_Y^X u_X$. Notice
also that $\pi^{-1}(Y^u) = X^u$ and $\pi^{-1}(Y_u) = X_{u w_{0,Y}^X}$ for any $u
\in W^Y$.

\targetsec{hecke}{}%
We need the \emph{Hecke product} on $W$, which by definition is the unique
associative monoid product satisfying
\[
u \cdot s_\be = \begin{cases}
  u s_\be & \text{if $u < u s_\be$,}\\
  u & \text{if $u > u s_\be$}
\end{cases}
\]
for all $u \in W$ and $\be \in \Delta$. Equivalently, the Hecke product $u \cdot
v$ of $u,v \in W$ is given by $\ov{B (u \cdot v) B} = \ov{B u B v B}$ \cite[\S
8.3]{springer:linear*1}. In particular we have $u.X_v \subset X_{u\cdot v}$. The
product of $u$ and $v$ is reduced (i.e.\ $\ell(uv) = \ell(u)+\ell(v)$) if and
only if $uv = u \cdot v$. Several other useful properties of the Hecke product
can be found in e.g.\ \cite[\S3]{buch.mihalcea:curve}.

\targetsec{weak}{}%
The \emph{left weak Bruhat order} on $W$ is defined by $u \leq_L w$ if and only
if $\ell(w u^{-1}) = \ell(w) - \ell(u)$. Equivalently, there exists $x \in W$
such that $w = xu$ is a reduced factorization of $w$. We also need the
$P_X$-\emph{Bruhat order} $\leq_X$ on $W$, which we define by
\[
  v \leq_X u \ \ \ \ \ \text{if and only if} \ \ \ \ \
  v \leq u \ \ \text{and} \ \ u_X \leq_L v_X \,.
\]
It follows from \Corollary{rich_birat} below that this definition is equivalent
to the definition given in \cite[\S2]{knutson.lam.ea:projections} (see also
\cite{bergeron.sottile:schubert}).

\begin{lemma}\label{lemma:hecke}
  For $x, u \in W$ we have $u \leq_L x \cdot u$ and $(x \cdot u) u^{-1} \leq x$.
\end{lemma}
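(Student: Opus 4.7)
The proof will proceed by induction on $\ell(x)$. The base case $x = e$ is immediate: $x \cdot u = u$, so $u \leq_L x \cdot u$ holds trivially and $(x \cdot u) u^{-1} = e \leq x$. For the inductive step, I would factor $x = s_\beta y$ with $\beta \in \Delta$ and $\ell(x) = \ell(y) + 1$, so that $x \cdot u = s_\beta \cdot (y \cdot u)$ by associativity of the Hecke product. Set $w = y \cdot u$ and $v = w u^{-1}$. By the inductive hypothesis, $u \leq_L w$, i.e.\ $\ell(v) = \ell(w) - \ell(u)$, and $v \leq y$ in the Bruhat order.

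I then split into two cases according to the defining alternative of the Hecke product. If $s_\beta \cdot w = w$, then $x \cdot u = w = y \cdot u$, so $u \leq_L x \cdot u$ is immediate from the inductive hypothesis, and $(x \cdot u) u^{-1} = v \leq y \leq x$ (using $y \leq s_\beta y = x$). If instead $s_\beta \cdot w = s_\beta w$ with $\ell(s_\beta w) = \ell(w) + 1$, then $x \cdot u = s_\beta w$. To prove $u \leq_L x \cdot u$ I need $\ell(s_\beta v) = \ell(v) + 1$. The upper bound $\ell(s_\beta v) \leq \ell(v) + 1 = \ell(w) - \ell(u) + 1$ is the triangle inequality for left multiplication by $s_\beta$; for the matching lower bound, from $s_\beta w = (s_\beta v) u$ one gets $\ell(w) + 1 = \ell(s_\beta w) \leq \ell(s_\beta v) + \ell(u)$, i.e.\ $\ell(s_\beta v) \geq \ell(w) + 1 - \ell(u)$. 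Combining yields $\ell((x \cdot u) u^{-1}) = \ell(x \cdot u) - \ell(u)$, as required.

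It remains to show $s_\beta v \leq s_\beta y$ in Case B. From the length calculation above, $v < s_\beta v$, and by construction $y < s_\beta y$. I would invoke the standard lifting property of the Bruhat order (as in \cite[Prop.~2.2.7]{bjorner.brenti:combinatorics} or equivalently the subword property for reduced expressions): if $v \leq y$, $v < s_\beta v$, and $y < s_\beta y$, then $s_\beta v \leq s_\beta y$. This gives $(x \cdot u) u^{-1} = s_\beta v \leq s_\beta y = x$, completing the induction.

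The main obstacle, and the only nontrivial step, is Case B: verifying that the length of the new factor $(x \cdot u) u^{-1}$ drops by exactly $\ell(u)$ rather than collapsing, and then upgrading the inequality $v \leq y$ past the simple reflection $s_\beta$. Both are resolved by the length bookkeeping and the lifting property sketched above; no deeper machinery beyond standard Coxeter-group identities is needed.
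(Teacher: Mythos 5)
Your induction is correct. A minor check worth spelling out: in Case B you derive $v < s_\beta v$ from the length computation ($\ell(s_\beta v)=\ell(v)+1$), and $y < s_\beta y$ holds because $x=s_\beta y$ is reduced; with $v\le y$ these are exactly the hypotheses of the Bruhat-order lifting property for left multiplication (equivalently, prepend $s_\beta$ to a subword witness of $v\le y$ inside a reduced word for $y$), so $s_\beta v\le s_\beta y=x$ follows. Everything else is straightforward bookkeeping.

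The paper dispatches the lemma in two lines by invoking the structural fact that the Hecke (Demazure) product can always be written as $x\cdot u = x'u$ for some $x'\le x$ with the product $x'u$ reduced; both claims then drop out immediately, since $(x\cdot u)u^{-1}=x'$. Your induction essentially \emph{reproves} that structural fact from scratch: your $v=(y\cdot u)u^{-1}$ plays the role of the paper's $x'$ one step down the induction, and your Case A/Case B dichotomy is exactly the recursion one would use to establish the paper's quoted characterization. So your route is more elementary and self-contained (no black-boxed property of the Hecke product, only the defining recursion plus the lifting lemma), at the cost of carrying the length bookkeeping explicitly, whereas the paper's proof is shorter because it outsources the inductive content to a standard fact. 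Either version is acceptable; the paper's is preferable when the reader is assumed to know the $x'u$ normal form for Hecke products, yours when they are not.
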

\begin{proof}
  By definition of the Hecke product, there exists $x' \leq x$ such that $x
  \cdot u = x' u$ and the product $x' u$ is reduced. The lemma follows from
  this.
\end{proof}

\targetsec{fundweight}{}%
Given a positive root $\al \in \Phi^+ \ssm \Phi_X$ there exists a unique
$T$-stable curve $C \subset X$ through $1.P_X$ and $s_\al.P_X$. For any simple
root $\be \in \Delta \ssm \Delta_X$ we then have $\int_C [X^{s_\be}] =
(\al^\vee,\om_\be)$, where $\al^\vee$ denotes the coroot of $\al$ and $\om_\be$
is the fundamental weight corresponding to $\be$ (see
\cite[\S3]{fulton.woodward:quantum}).

Let $Y$ be a variety with a transitive action of a connected algebraic group
$H$, and choose any point $y_0 \in Y$. The associated map $H \to Y$ defined by
$h \mapsto h.y_0$ is locally trivial (in the Zariski topology) if and only if
there exists a local section $s : U \to H$, with $\emptyset \neq U \subset Y$
open, such that $s(y).y_0 = y$ for all $y \in U$. Many actions encountered in
the study of Schubert varieties have this property, including the action of $B$
on a Schubert cell $\oX_u$ and the action of $B \times B$ on a double Bruhat
cell $BuB$; this can be proved using the Bruhat decomposition, see e.g.\
\cite[Prop.~2.2]{buch.chaput.ea:finiteness}. Much more generally, the associated
map $H \to Y$ is locally trivial whenever the stabilizer $H_{y_0} \subset H$ is
a \emph{special group} \cite{serre:espaces*2, grothendieck:torsion}. If $f : Z
\to Y$ is any equivariant morphism of $H$-varieties such that the action of $H$
on $Y$ has a section $s: U \to H$ as above, then $f$ is a locally trivial
fibration. In fact, the map $\varphi : U \times f^{-1}(y_0) \to f^{-1}(U)$
defined by $\varphi(y,z) = s(y).z$ is an isomorphism. We will make repeated use
of this fact throughout this paper.

\subsection{\Semi-transversal intersections}\label{sec:semitrans}

Let $\Omega_1$ and $\Omega_2$ be closed subsets of a flag variety $X$ of the
group $G$. It is customary to study the intersection of general translates of
these varieties, that is, any intersection of the form $\Omega_1 \cap
g.\Omega_2$, where $g$ is a general element of $G$. In particular, the product
of (Chow) cohomology classes is given by $[\Omega_1] \cdot [\Omega_2] =
[\Omega_1 \cap g.\Omega_2]$. In this section we consider the situation where the
intersection of general translates of $\Omega_1$ and $\Omega_2$ is empty. We
then seek to understand non-empty intersections of the form $\Omega_1 \cap
g.\Omega_2$ that are as general as possible. Such intersections will be called
\emph{\semi-transversal} intersections of $\Omega_1$ and $\Omega_2$ (when they
exist). Intuitively, a \semi-transversal intersection is obtained by moving
general translates of $\Omega_1$ and $\Omega_2$ towards each other until they
just meet. \Semi-transversal intersections make sense for actions of arbitrary
algebraic groups, so we will formulate our definition in this setting.

\targetsec{stab}{}%
Let $X$ be an algebraic variety and let $G$ be an algebraic group acting on $X$.
For any subset $\Omega \subset X$ we let $G_\Omega = \{ g \in G \mid g.\Omega =
\Omega \}$ denote the stabilizer. Given two closed subsets $\Omega_1, \Omega_2
\subset X$, we define a subset of $G$ by
\[
  G(\Omega_1,\Omega_2) =
  \{ g \in G \mid \Omega_1 \cap g.\Omega_2 \neq \emptyset \} \,.
\]
This set is stable under the action of $G_{\Omega_1} \times G_{\Omega_2}$
defined by $(a_1,a_2).g = a_1 g a_2^{-1}$, so we have $G_{\Omega_1} G_{\Omega_2}
\subset G(\Omega_1,\Omega_2)$ if and only if $\Omega_1 \cap \Omega_2 \neq
\emptyset$. Notice also that $G(\Omega_2,\Omega_1) = G(\Omega_1,\Omega_2)^{-1}$,
and $G(g_1.\Omega_1, g_2.\Omega_2) = g_1 G(\Omega_1,\Omega_2) g_2^{-1}$ for
$g_1, g_2 \in G$. If $X$ is a complete variety, then $G(\Omega_1,\Omega_2)$ is
closed in $G$; this follows because $G(\Omega_1,\Omega_2) =
p(\mu^{-1}(\Omega_1))$, where $p : G \times \Omega_2 \to G$ is the projection
and $\mu : G \times \Omega_2 \to X$ is defined by the action.

\begin{defn}\label{defn:semitrans}%
  We will say that $\Omega_1$ and $\Omega_2$ \emph{meet \semi-transversally} if
  $G_{\Omega_1} G_{\Omega_2}$ is a dense subset of $G(\Omega_1, \Omega_2)$. A
  \emph{\semi-transversal intersection} of $\Omega_1$ and $\Omega_2$ is any
  subscheme of the form $g_1.\Omega_1 \cap g_2.\Omega_2$ for which
  $g_1.\Omega_1$ and $g_2.\Omega_2$ meet \semi-transversally (with $g_1,g_2 \in
  G$).
\end{defn}

If $\Omega_1$ and $\Omega_2$ meet \semi-transversally, then for all $g$ in a
dense open subset of $G(\Omega_1,\Omega_2)$, the intersection $\Omega_1 \cap
g.\Omega_2$ is a translate of $\Omega_1\cap \Omega_2$, so $\Omega_1 \cap
\Omega_2$ is as general as possible among non-empty intersections. A
\semi-transversal intersection of $\Omega_1$ and $\Omega_2$ exists if and only
if $G(\Omega_1,\Omega_2)$ contains a dense orbit for the action of $G_{\Omega_1}
\times G_{\Omega_2}$, in which case $\Omega_1 \cap g.\Omega_2$ is a
\semi-transversal intersection whenever $g$ belongs to this orbit. Any
\semi-transversal intersection of $\Omega_1$ and $\Omega_2$ is a $G$-translate
of any other. Notice that $\Omega_1 \cap \Omega_2$ may be a \semi-transversal
intersection even though $\Omega_1$ and $\Omega_2$ fail to meet
\semi-transversally. The condition that $\Omega_1$ and $\Omega_2$ meet
\semi-transversally is stronger because it concerns the relative position of the
two varieties and not just their intersection. When the group $G$ is not clear
from the context, we will write ``$G$-\semi-transversal'' to clarify the action.

\begin{example}
\noin{\rm(a)} Let $G$ act trivially on $X$. Then $\Omega_1$ meets $\Omega_2$
\semi-transversally if and only if $\Omega_1 \cap \Omega_2 \neq \emptyset$, in
which case $\Omega_1 \cap \Omega_2$ is the only \semi-transversal
intersection.\smallskip

\noin{\rm(b)} Let $\GL(3)$ act on $\bP^2$. Then a line and a conic meet
\semi-transversally if and only if they have two points in common.\smallskip

\noin{\rm(c)} Let $\GL(4)$ act on $\bP^3$. Now a line and an irreducible curve
of degree two meet \semi-transversally if and only if their intersection is a
single reduced point.\smallskip

\noin{\rm(d)} Let $G = \PGL(2) \times \PGL(2) \rtimes S_2$ be the automorphism
group of $X = \bP^1 \times \bP^1$, and set $L = \bP^1 \times \{0\}$. Then $G_L$
is connected while $G(L,L)$ is disconnected, so no \semi-transversal
intersection exists of $L$ with itself. However, if we restrict the action to
the identity component $G^0 = \PGL(2) \times \PGL(2)$, then $L$ meets itself
\semi-transversally.\smallskip

\noin{\rm(e)} Let $\PGL(n+1)$ act on $\bP^n$, let $H \subset \bP^n$ be a
hyperplane, and let $S \subset \bP^n$ be any hypersurface with finite
automorphism group. Then the dimension of $G_H \times G_S$ is smaller than the
dimension of $G(H,S) = G$, so no \semi-transversal intersection of $H$ and $S$
exists. In particular, transversal intersections may fail to be
\semi-transversal.\smallskip

\noin{\rm(f)} The maximal orthogonal Grassmannian $X = \OG(4,8)$ is one
component of the variety of maximal isotropic subspaces in an orthogonal vector
space $V$ of dimension 8. This space $X$ is a flag variety of both $\SO(8)$ and
its subgroup $\SO(7)$. A Schubert line in $X$ corresponds to a 2-dimensional
isotropic subspace $E \subset V$. Two Schubert lines given by $E_1$ and $E_2$
meet $\SO(8)$-\semi-transversally if and only if $E_1 + E_2$ is a point of $X$,
whereas they meet $\SO(7)$-\semi-transversally only if $E_1 + E_2$ is an
isotropic subspace of dimension 3. It is therefore impossible for two Schubert
lines to meet \semi-transversally for both groups at the same time.\smallskip

\noin{\rm(g)} The projective space $\bP^{2n-1}$ is a flag variety of both
$\SL(2n)$ and its subgroup $\Sp(2n)$ of elements preserving a symplectic form on
a vector space $V$ of dimension $2n$. A Schubert line for the action of
$\Sp(2n)$ is given by a 2-dimensional isotropic subspace $E \subset V$. Two
Schubert lines given by isotropic subspaces $E_1$ and $E_2$ meet
$\SL(2n)$-\semi-transversally if and only if $\dim(E_1 + E_2) = 3$, whereas they
meet $\Sp(2n)$-\semi-transversally if and only if $\dim(E_1 + E_2) = 3$ and
$E_1+E_2$ is not isotropic. Two Schubert lines can therefore meet
$\SL(2n)$-\semi-transversally without meeting $\Sp(2n)$-\semi-transversally.
\end{example}

\subsection{\Semi-transversal intersections of Schubert varieties}

We are mostly interested in \semi-transversal intersections of Schubert
varieties in a flag variety, so assume again that $X$ is a flag variety of a
connected linear algebraic group $G$. Our first result shows that a
\semi-transversal intersection of two Schubert varieties exists and is a
Richardson variety. The explicit description of this Richardson variety is used
to determine the general fibers of the projection of a Richardson variety in
\Theorem{richfib}; cominuscule specializations are obtained in
\Proposition{comint} and \Corollary{fibers_geom}(d). Notice that $X_u \cap
w_0.X_v$ is a \semi-transversal intersection of $X_u$ and $X_v$ whenever this
intersection is not empty; this happens exactly when $\ka = w_0$ in the
following result.

\begin{thm}\label{thm:schubint} Let $u, v \in W^X$, and set $\kappa = u \cdot
  w_{0,X} \cdot v^{-1}$. Then $G(X_u,X_v) = \ov{B \kappa B}$, $w_0 \kappa v \in
  W^X$, and $X_u \cap \kappa.X_v = X_u \cap w_0.X_{w_0 \kappa v}$ as (reduced
  and irreducible) subschemes of $X$. In particular, the Richardson variety $X_u
  \cap w_0.X_{w_0\ka v}$ is a \semi-transversal intersection of $X_u$ and $X_v$.
\end{thm}
\begin{proof}
  Let $\mu : G \times X_v \to X$ be the map defined by $\mu(h,x) = h.x$. If we
  let $G$ act on the left factor of $G \times X_v$, then $\mu$ is
  $G$-equivariant, so $\mu$ is a locally trivial fibration. It follows that
  $\mu^{-1}(X_u) \subset G \times X_v$ is a closed irreducible subvariety. Let
  $p : G \times X_v \to G$ be the first projection and let $B \times B$ act on
  $G \times X_v$ by $(b_1,b_2).(h,x) = (b_1 h b_2^{-1}, b_2.x)$. Then $p$ is
  proper and $B\times B$-equivariant, and since $p^{-1}(g) \cap \mu^{-1}(X_u) =
  \{g\} \times (g^{-1}.X_u \cap X_v) \cong X_u \cap g.X_v$, it follows that
  $G(X_u,X_v) = p(\mu^{-1}(X_u)) \subset G$ is closed, irreducible, and $B\times
  B$-stable. Since $\kappa v \leq u w_{0,X}$ by \Lemma{hecke}, we obtain $\kappa
  v.P_X \in X_u \cap \kappa.X_v \neq \emptyset$, so $B \kappa B \subset
  G(X_u,X_v)$. On the other hand, if $w \in W$ is any element such that $X_u
  \cap w.X_v \neq \emptyset$, then let $v'.P_X \in w^{-1}.X_u \cap X_v$ be any
  $T$-fixed point, with $v' \in W^X$. Then $v' \leq v$ and $w v' \leq u
  w_{0,X}$, which implies $w = (w v') (v')^{-1} \leq (w v') \cdot (v')^{-1} \leq
  u w_{0,X} \cdot v^{-1} = \kappa$. This shows that $G(X_u,X_v) = \ov{B \kappa
  B}$.

  Since $p : \mu^{-1}(X_u) \to G(X_u,X_v)$ is $B\times B$-equivariant, this map
  is locally trivial over the open orbit $B\kappa B$. In particular, the
  intersection $X_u \cap \kappa.X_v$ is reduced. Since $\ell(\kappa v) =
  \ell(\kappa) - \ell(v)$ by \Lemma{hecke}, we have $\ell(w_0 \kappa v) =
  \ell(w_0 \kappa) + \ell(v)$. It follows that $w_0 \kappa.X_v \subset X_{w_0
  \kappa v}$, so we have $X_u \cap \kappa.X_v \subset X_u \cap w_0.X_{w_0 \kappa
  v}$. Using that $\mu : G \times X_v \to X$ is locally trivial, we get
  $\dim(\mu^{-1}(X_u)) = \dim(P_X) + \ell(u) + \ell(v)$, and since $p :
  \mu^{-1}(X_u) \to G(X_u,X_v)$ is locally trivial over $B \kappa B$, we obtain
  $\dim(X_u \cap \kappa.X_v) = \ell(u) + \ell(w_{0,X}) + \ell(v) -
  \ell(\kappa)$. On the other hand, the dimension of the Richardson variety $X_u
  \cap w_0.X_{w_0 \kappa v}$ is bounded by $\dim(X_u \cap w_0.X_{w_0 \kappa v})
  \leq \ell(u) + \ell(w_0 \kappa v) - \dim(X) = \ell(u) + \ell(w_{0,X}) +
  \ell(v) - \ell(\kappa)$. We deduce that $X_u \cap \kappa.X_v = X_u \cap
  w_0.X_{w_0 \kappa v}$ and that $\dim(X_{w_0 \kappa v}) = \ell(w_0 \kappa v)$,
  so that $w_0 \kappa v \in W^X$.
\end{proof}

The following corollary gives a geometric characterization of the weak left
Bruhat order that we have not seen before.

\begin{cor}\label{cor:weakorder} Let $u, v \in W^X$. The following are
  equivalent. {\rm(1)} There exists $g \in G$ such that $X_u \cap g.X_v$ is a
  single point. {\rm(2)} The product $u \cdot w_{0,X} \cdot v^{-1}$ is reduced.
  {\rm(3)} We have $u \leq_L v^\vee$.
\end{cor}
\begin{proof}
  Set $\kappa = u \cdot w_{0,X} \cdot v^{-1}$. It follows from
  \Theorem{schubint} that $X_u \cap \kappa.X_v$ is a point if and only if $w_0
  \kappa v = u^\vee = w_0 u w_{0,X}$, or $\kappa = u w_{0,X} v^{-1}$, so (1) and
  (2) are equivalent. Conditions (2) and (3) are equivalent because we have
  $\ell(v w_{0,X} u^{-1}) = \ell(v w_{0,X}) + \ell(u)$ if and only if
  $\ell(v^\vee u^{-1}) = \ell(v^\vee) - \ell(u)$.
\end{proof}

\begin{remark}
  Using the Stein factorization of the map $p : \mu^{-1}(X_u) \to G(X_u,X_v)$,
  it follows that $X_u \cap g.X_v$ is connected for all $g \in G(X_u,X_v)$.
  However, the intersection $X_u \cap g.X_v$ is not always irreducible, and
  Jesper~F.~Thomsen has shown us an example where $X_u \cap g.X_v$ fails to be
  reduced. For example, the intersection of two Schubert divisors in the
  Grassmannian $\Gr(2,4)$ may be a union of two projective planes, and two
  Schubert divisors in the quadric hypersurface $Q^3 \subset \bP^4$ can meet in
  a double line.
\end{remark}

\begin{remark}
  If $X$ is a flag variety of two different groups $G$ and $H$, and $\Omega_1,
  \Omega_2 \subset X$ are Schubert varieties with respect to both groups, then
  one can show that any \semi-transversal intersection of $\Omega_1$ and
  $\Omega_2$ for the action of $H$ is also a \semi-transversal intersection for
  the action of $G$, up to translation by an automorphism of $X$. In fact, there
  are only three families of flag varieties of groups of distinct Lie types,
  namely odd-dimensional projective spaces $\bP^{2n-1} = A_{2n-1}/P_1 =
  C_n/P_1$, maximal orthogonal Grassmannians $\OG(n,2n) = D_n/P_n =
  B_{n-1}/P_{n-1}$, and the 5-dimensional quadric $Q^5 = B_3/P_1 = G_2/P_2$. The
  first two families consist of (co)minuscule flag varieties, in which case the
  claim follows from \Proposition{comint}. The last case $B_3/P_1 = G_2/P_2$ has
  been checked from \Theorem{schubint} with help from a computer. We will not
  need this fact in the following.
\end{remark}

\subsection{Fibers of Schubert varieties}

Let $Y = G/P_Y$ be an additional flag variety of $G$ such that $P_X \subset
P_Y$, and let $\pi : X \to Y$ be the projection. Then $F = \pi^{-1}(1.P_Y) =
P_Y/P_X$ is a flag variety of $P_Y$. The Schubert varieties in $F$ are the
$B$-orbit closures $F_w = X_w$ for $w \in W_Y$, as well as their
$P_Y$-translates. Similarly, any fiber $\pi^{-1}(g.P_Y) = g.F$ for $g \in G$ is
a flag variety of $g P_Y g^{-1}$. Given a subvariety $\Omega \subset X$, the
fibers of the restricted map $\pi : \Omega \to \pi(\Omega)$ will be called
fibers of $\Omega$. Our next result shows that, if $\Omega$ is a Schubert
variety in $X$, then any fiber $\Omega \cap \pi^{-1}(y)$ with $y \in
\pi(\Omega)$ is a Schubert variety in $\pi^{-1}(y)$.

\begin{thm}\label{thm:schubfib}%
  Any intersection $X_u \cap \pi^{-1}(y)$ defined by $u \in W^X$ and $y \in
  \pi(X_u)$ is a (reduced) Schubert variety in $\pi^{-1}(y)$. Let $u = u^Y u_Y$
  be the parabolic factorization with respect to $P_Y$. Any fiber $X_u \cap
  \pi^{-1}(y)$ given by $y \in \oY_u$ is a translate of $X_u \cap u.F =
  u^Y.F_{u_Y}$.
\end{thm}
\begin{proof}
  We may assume that $G$ is reductive. Let $L \subset P_Y$ be the Levi subgroup
  containing $T$ and set $B_L = B \cap L$. We then have $F_w = \ov{B_L w.P_X}$
  for each $w \in W_Y$. Since $\pi$ is $B$-equivariant, we may assume that $y =
  v.P_Y \in \pi(X_u)$ is a $T$-fixed point, given by an element $v \in W^Y$.
  Then $v \leq u$. Since $v \in W^Y$ we have $v.\al > 0$ for each positive root
  $\al$ of $L$, and this implies that $v B_L v^{-1} \subset B$ and therefore
  $B_L \subset v^{-1} B v \cap L$. Since $v^{-1} B v \cap L$ is a Borel subgroup
  of $L$, we obtain $B_L = v^{-1} B v \cap L$. It follows that $v^{-1}.X_u \cap
  F$ is a closed $B_L$-stable subset of $F$, so this set is a union of
  $B_L$-stable Schubert varieties in $F$. The set of $T$-fixed points in
  $v^{-1}.X_u \cap F$ is $\{ z.P_X \mid z \in W_Y \text{ and } vz \leq uw_{0,X}
  \}$. It follows from \cite[Prop.~2.1]{knutson.lam.ea:projections} that the set
  $\{ z \in W_Y \mid vz \leq u w_{0,X} \}$ has a unique maximal element, say $z'
  \in W_Y$. This implies that $v^{-1}.X_u \cap F = F_{z'}$ (as sets), hence $X_u
  \cap \pi^{-1}(v.P_Y) = v.F_{z'}$ is a translate of this Schubert variety.
  Notice also that, if $y \in \oY_u$, then $v = u^Y$, $z' = u_Yw_{0,X}$, and
  $F_{z'} = F_{u_Y}$. To see that $X_u \cap \pi^{-1}(v.P_Y)$ is reduced, set $Z
  = X_u \cap \pi^{-1}(Y_v)$. Since $Z$ is an intersection of $B$-stable Schubert
  varieties, it follows that $Z$ is a reduced union of Schubert varieties in
  $X$, see \cite[\S2]{brion.kumar:frobenius}. Since the map $\pi : Z \to Y_v$
  has irreducible fibers by the above argument, it follows that $Y_v$ is
  dominated by a unique irreducible component $Z'$ of $Z$, and $Z' \subset X$ is
  a $B$-stable Schubert variety. Since $\pi : Z' \to Y_v$ is $B$-equivariant, it
  is locally trivial over the open cell $\oY_v \subset Y_v$. We conclude that
  $X_u \cap \pi^{-1}(v.P_Y) = Z' \cap \pi^{-1}(v.P_Y)$ is reduced.
\end{proof}

\begin{remark}\label{remark:oppschubfib}
  The parabolic factorization of $u \in W^X$ commutes with dualization in the
  sense that $w_0 u w_{0,X} = (w_0 u^Y w_{0,Y}) (w_{0,Y} u_Y w_{0,X})$. It
  follows that the general fibers of the projection $\pi : X^u \to \pi(X^u)$ are
  translates of the Schubert variety $F^{u_Y}$.
\end{remark}

\subsection{Fibers of Richardson varieties}

We finally consider the fibers $R \cap \pi^{-1}(y)$ of a Richardson variety $R
\subset X$ under the projection of flag varieties $\pi : X \to Y$. While the
fibers of a Richardson variety may fail to be irreducible
\cite[Ex.~3.1]{buch.ravikumar:pieri}, we will show that $R \cap \pi^{-1}(y)$ is
a Richardson variety in $\pi^{-1}(y)$ for all points $y$ in a dense open subset
of $\pi(R)$. Some very special cases of this were proved in
\cite{buch.ravikumar:pieri}. The projection $\pi : R \to \pi(R)$ was also
studied in \cite{billey.coskun:singularities, knutson.lam.ea:projections}, were
it was proved that this map is cohomologically trivial and that the projected
Richardson variety $\pi(R)$ is Cohen-Macaulay with rational singularities.

\begin{thm}\label{thm:richfib}%
  Let $R = X_u \cap w_0.X_v$ be a Richardson variety in $X$, with $u,v \in W^X$.
  Let $u = u^Y u_Y$ and $v = v^Y v_Y$ be the parabolic factorizations with
  respect to $P_Y$. Then for all points $y$ in a dense open subset of $\pi(R)$,
  the fiber $R \cap \pi^{-1}(y)$ is a $G$-translate of a \semi-transversal
  intersection of $F_{u_Y}$ and $F_{v_Y}$ in $F = \pi^{-1}(1.P_Y)$.
\end{thm}
\begin{proof}
  Using \cite[Lemma~8.3.6]{springer:linear*1} we may choose two morphisms
  $\phi_1 : \oY_u \to B$ and $\phi_2 : \oY_v \to B$ such that $\phi_1(y)u.P_Y =
  y$ for all $y \in \oY_u$ and $\phi_2(y)v.P_Y = y$ for all $y \in \oY_v$. For
  each element $w \in W$ we choose a fixed representative $\Dot w \in N_G(T)$.
  Set $\pi(R)^0 = \pi(R) \cap \oY_u \cap w_0.\oY_v$ and define $\psi : \pi(R)^0
  \to G$ by
  \[
  \psi(y) = (\Dot u^Y)^{-1}\, \phi_1(y)^{-1}\, \Dot w_0\, \phi_2(\Dot
  w_0^{-1}.y)\, \Dot v^Y \,.
  \]
  Since $\Dot w_0 \phi_2(\Dot w_0^{-1}.y) \Dot v^Y.P_Y = y = \phi_1(y) \Dot
  u^Y.P_Y$, we have $\psi(y) \in P_Y$ for all $y \in \pi(R)^0$.
  \Theorem{schubfib} implies that $X_u \cap \pi^{-1}(y) = \phi_1(y) u^Y.F_{u_Y}$
  and $w_0.X_v \cap \pi^{-1}(y) = \Dot w_0 \phi_2(\Dot w_0^{-1}.y) v^Y.F_{v_Y}$.
  It follows that translation by $\phi_1(y) \Dot u^Y$ maps the triple of
  varieties $(F_{u_Y}, \psi(y).F_{v_Y}, F)$ to $(X_u \cap \pi^{-1}(y), w_0.X_v
  \cap \pi^{-1}(y), \pi^{-1}(y))$. We deduce that $F_{u_Y} \cap \psi(y).F_{v_Y}
  \neq \emptyset$ for all $y \in \pi(R)^0$, hence $\psi(y) \in P_Y(F_{u_Y},
  F_{v_Y})$. Since \Theorem{schubint} shows that $F_{u_Y} \cap g.F_{v_Y}$ is a
  \semi-transversal intersection in $F$ for all elements $g$ in a dense open
  subset of $P_Y(F_{u_Y}, F_{v_Y})$, it is enough to show that $X_u \cap
  \pi^{-1}(y)$ meets $w_0.X_v \cap \pi^{-1}(y)$ \semi-transversally in
  $\pi^{-1}(y)$ for at least one point $y \in \pi(R)^0$.

  Fix any point $y' = g.P_Y \in \pi(R)^0$. Then $\pi^{-1}(y') = g.F$ is a flag
  variety of the group $gP_Yg^{-1}$, and the set $H = (gP_Yg^{-1})(X_u \cap g.F,
  w_0.X_v \cap g.F)$ of \Sec{semitrans} is an irreducible closed subvariety
  of $gP_Yg^{-1}$ by \Theorem{schubint}. This theorem also implies that $X_u
  \cap g.F$ meets $h w_0.X_v \cap g.F$ \semi-transversally in $g.F$ for all
  elements $h$ in a dense open subset $H^0 \subset H$. Since $B B^-$ is a dense
  open subset of $G$ and $1 \in B B^- \cap H$, it follows that $B B^- \cap H$ is
  another dense open subset of $H$. Let $U^- \subset B^-$ be the unipotent
  radical, let $\rho : B B^- = B \times U^- \to B$ be the first projection, and
  define $f : B B^- \to Y$ by $f(h) = \rho(h)^{-1}.y'$. For $h \in B B^-$ we
  have $\rho(h)^{-1} h \in U^-$, which implies that $X_u \cap w_0.X_v \cap
  \pi^{-1}(f(h)) = \rho(h)^{-1}.(X_u \cap h w_0.X_v \cap \pi^{-1}(y'))$. It
  follows that $f$ restricts to a morphism $f : B B^- \cap H \to \pi(R)$. Since
  $f(1) = y' \in \pi(R)^0$, it follows that $f^{-1}(\pi(R)^0)$ is a dense open
  subset of $H$. Finally, let $h \in f^{-1}(\pi(R)^0) \cap H^0$ be any element
  and set $y = f(h)$. Then we have $y \in \pi(R)^0$, and $X_u \cap \pi^{-1}(y)$
  meets $w_0.X_v \cap \pi^{-1}(y)$ \semi-transversally in $\pi^{-1}(y)$, as
  required.
\end{proof}

\begin{cor}\label{cor:richfib}
  The general fibers of a projection $\pi : R \to \pi(R)$ from a Richardson
  variety are Richardson varieties.
\end{cor}

The following result was proved in \cite[Cor.~3.4]{knutson.lam.ea:projections}
with a different but equivalent definition of the $P_X$-Bruhat order $\leq_X$.

\begin{cor}\label{cor:rich_birat}
  Let $u, v \in W^X$. The projection $\pi : X_u^v \to \pi(X_u^v)$ is a
  birational map of non-empty varieties if and only if $v \leq_Y u$.
\end{cor}
\begin{proof}
  This follows from \Theorem{richfib}, \Corollary{weakorder}, and
  \Remark{oppschubfib}.
\end{proof}

For later use we state the following result, which was proved in
\cite{billey.coskun:singularities, knutson.lam.ea:projections}.

\begin{thm}\label{thm:projrich}
  Let $\pi : X \to Y$ be a projection of flag varieties and let $R \subset X$ be
  a Richardson variety.\smallskip

  \noin{\rm(a)} The image $\pi(R)$ is Cohen-Macaulay and has rational
  singularities.\smallskip

  \noin{\rm(b)} The map $\pi : R \to \pi(R)$ is cohomologically trivial, that is,
  $\pi_*(\cO_R) = \cO_{\pi(R)}$ and $R^j \pi_* \cO_R = 0$ for $j > 0$.
\end{thm}

% !TeX root=qkpos.tex

\section{Projected Richardson varieties}
\label{sec:projrich}

We need some additional results about projections of Richardson varieties that
were proved in \cite{knutson.lam.ea:projections}. In this expository section we
give short proofs of these results. The statements of some results deviate
slightly from the original versions, for example the bounds on $u'$ and $v'$ in
\Theorem{richmodel} are important for our applications but not immediately clear
from \cite[Prop.~3.3 and Prop.~3.6]{knutson.lam.ea:projections}. Another
difference is our simpler but equivalent definition of the $P_X$-Bruhat order
from \Section{flagvar}: for $u,v \in W$ we have $v \leq_X u$ if and only if $v
\leq u$ and $u_X \leq_L v_X$.

\targetsec{projrich}{}%
We work over any algebraically closed field. Let $E = G/B$ be the variety of
complete flags, let $X = G/P_X$ be any flag variety of $G$, and let $\pi : E \to
X$ be the projection. Given $v, u \in W$ with $v \leq u$, the images in $X$ of
the corresponding Richardson variety and Richardson cell in $E$ are denoted by
$\Pi_u^v(X) = \pi(E_u^v)$ and $\oPi_u^v(X) = \pi(\oE_u^v)$.

\begin{lemma}\label{lemma:minparab}%
  Let $u, v \in W$ satisfy $v \leq u$, and let $s \in W_X$ be a simple
  reflection such that $u < us$ and $v < vs$. Then the following hold.

  \noin{\rm(a)}
  $\Pi_u^v(X) = \Pi_{us}^{vs}(X) = \Pi_{us}^v(X)$.

  \noin{\rm(b)}
  $\oPi_u^v(X) = \oPi_{us}^{vs}(X)$.

  \noin{\rm(c)}
  $\oPi_{us}^v(X) = \oPi_u^v(X) \cup \oPi_u^{vs}(X)$.

  \noin{\rm(d)} $\pi : \oE_u^v \to \oPi_u^v(X)$ is an isomorphism of varieties
  if and only if $\pi : \oE_{us}^{vs} \to \oPi_{us}^{vs}(X)$ is an isomorphism
  of varieties.
\end{lemma}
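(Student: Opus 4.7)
The plan is to factor the projection $\pi : E \to X$ through the intermediate minimal parabolic quotient, reducing everything to a fiber-by-fiber analysis of a $\bP^1$-bundle. Since $s \in W_X$ is a simple reflection, the minimal parabolic $P_s = B \cup BsB$ is contained in $P_X$, so $\pi$ factors as $\pi = \pi'' \circ \pi'$, where $\pi' : E \to E/P_s$ is a $\bP^1$-bundle and $\pi'' : E/P_s \to X$ is another projection of flag varieties. All four parts should then follow from a uniform analysis of how the cells meet the fibers of $\pi'$.

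The key inputs are the standard facts that, for $u < us$, one has $E_{us} = (\pi')^{-1}(\pi'(E_u))$ while $\pi'|_{\oE_u}$ is an isomorphism onto its image, and dually, for $v < vs$, one has $E^v = (\pi')^{-1}(\pi'(E^{vs}))$ while $\pi'|_{\oE^{vs}}$ is an isomorphism onto its image (both using $uP_s = usP_s$ and $vP_s = vsP_s$ since $s \in P_s$). Over any $z \in \pi'(\oE_u) \cap \pi'(\oE^v)$, the fiber $(\pi')^{-1}(z) \cong \bP^1$ therefore meets $\oE_u$ in a single point $p_1(z)$ and meets $\oE_{us}$ in $\bP^1 \setminus \{p_1(z)\}$, and it meets $\oE^{vs}$ in a single point $p_2(z)$ and meets $\oE^v$ in $\bP^1 \setminus \{p_2(z)\}$. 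Intersecting these four pieces immediately reads off the intersection of the fiber with each of $\oE_u^v$, $\oE_{us}^{vs}$, $\oE_u^{vs}$, and $\oE_{us}^v$, expressed in terms of whether $p_1(z) = p_2(z)$.

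Given this fiber picture the four claims fall out. For (a), the presentation $E_{us}^v = (\pi')^{-1}(\pi'(E_u) \cap \pi'(E^{vs}))$ yields $\pi'(E_{us}^v) = \pi'(E_u) \cap \pi'(E^v)$ immediately. To see $\pi'(E_u^v) \supset \pi'(E_u) \cap \pi'(E^v)$, take $z$ in the right-hand side: then $(\pi')^{-1}(z) \subset E^v$ while $(\pi')^{-1}(z) \cap E_u$ is non-empty, so any point in this intersection lies in $E_u^v$. A symmetric argument gives $\pi'(E_{us}^{vs}) = \pi'(E_u) \cap \pi'(E^v)$, and applying $\pi''$ produces the three equalities. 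For (b), the fiber analysis identifies both $\pi'(\oE_u^v)$ and $\pi'(\oE_{us}^{vs})$ with the locus $\{z \in \pi'(\oE_u) \cap \pi'(\oE^v) : p_1(z) \neq p_2(z)\}$, so their images under $\pi''$ coincide. For (c), the same fiber description partitions $\pi'(\oE_{us}^v) = \pi'(\oE_u) \cap \pi'(\oE^v)$ as the disjoint union $\pi'(\oE_u^v) \sqcup \pi'(\oE_u^{vs})$ according to whether $p_1(z) \neq p_2(z)$ or $p_1(z) = p_2(z)$, and applying $\pi''$ yields the stated union. For (d), both restricted maps $\pi'|_{\oE_u^v}$ and $\pi'|_{\oE_{us}^{vs}}$ are isomorphisms onto the same image $\pi'(\oE_u^v) = \pi'(\oE_{us}^{vs})$ (as restrictions of the isomorphisms $\pi'|_{\oE_u}$ and $\pi'|_{\oE^{vs}}$), so $\pi|_{\oE_u^v}$ and $\pi|_{\oE_{us}^{vs}}$ are simultaneously isomorphisms, namely precisely when the common restriction $\pi''|_{\pi'(\oE_u^v)}$ is.

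The main obstacle is the scheme-theoretic upgrade of this set-level analysis: one must verify that $\pi'|_{\oE_u}$ and $\pi'|_{\oE^{vs}}$ are honest isomorphisms of varieties (not just bijections on points), so that $p_1(z)$ and $p_2(z)$ vary regularly in $z$ and the images $\pi'(\oE_u^v)$ and $\pi'(\oE_{us}^{vs})$ are genuinely the locally closed subschemes predicted by the set-theoretic picture. This is the standard statement that ascending along a minimal parabolic exposes a Schubert cell as a trivial $\bP^1$-bundle over the smaller cell beneath it, and once it is quoted the remainder of the argument is essentially combinatorial.
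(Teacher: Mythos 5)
Your proof is correct and follows essentially the same route as the paper: reduce to the minimal parabolic quotient (the paper's ``we may assume $P_X$ is the minimal parabolic'' is exactly your factorization $\pi = \pi'' \circ \pi'$), use the standard fact that $\pi'$ restricts to isomorphisms on $\oE_u$ and on $\oE^{vs}$, and do a fiber-by-fiber $\bP^1$-analysis. Your account is slightly more explicit about why the reduction is legitimate for part (d), but the mathematical content matches the paper's argument.
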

\begin{proof}
  We may assume that $P_X$ is the minimal parabolic subgroup defined by $W_X =
  \{1,s\}$. We then have $u, v \in W^X$. Part (a) follows from $\pi(E_u^v) =
  \pi(E_u \cap \pi^{-1}(X^v)) = \pi(E_u) \cap X^v = X_u^v$ and symmetric
  identities. We also have $\pi(\oE_{us}^v) \subset \oX_u^v = \pi(\oE_u) \cap
  \oX^v = \pi(\oE_u \cap \pi^{-1}(\oX^v)) = \pi(\oE_u \cap (\oE^v \cup
  \oE^{vs})) = \pi(\oE_u^v) \cup \pi(\oE_u^{vs})$. Given $x \in \oX_u^v$, the
  fiber $\pi^{-1}(x) \cong \bP^1$ is contained in $\pi^{-1}(\oX_u^v) =
  \oE_{us}^v \cup \oE_u^v \cup \oE_{us}^{vs} \cup \oE_u^{vs}$. Since $\pi$ is
  injective on $\oE_u$ and on $\oE^{vs}$, $\pi^{-1}(x) \cap (\oE_u^v \cup
  \oE_{us}^{vs} \cup \oE_u^{vs})$ is finite, so $\pi^{-1}(x) \cap \oE_{us}^v
  \neq \emptyset$. This proves (c). A symmetric argument gives $\oX_u^v =
  \pi(\oE_{us}^{vs}) \cup \pi(\oE_u^{vs})$. Again using that $\pi$ is injective
  on $\oE_u$ and on $\oE^{vs}$, part (b) follows because $\pi(\oE_u^v) = \oX_u^v
  - \pi(\oE_u^{vs}) = \pi(\oE_{us}^{vs})$. Finally, both maps in part (d) are
  isomorphisms because $\pi : \oE_u \to \oX_u$ and $\pi : \oE^{vs} \to \oX^{vs}$
  are isomorphisms by \cite[Lemma~8.3.6]{springer:linear*1}.
\end{proof}

\begin{prop}\label{prop:rich_iso}
  Let $u,v \in W$ satisfy $v \leq u$. The following are equivalent.\smallskip

  \noin{\rm(a)} We have $v \leq_X u$.\smallskip

  \noin{\rm(b)} The dimension of $\Pi_u^v(X)$ is $\ell(u) - \ell(v)$.\smallskip

  \noin{\rm(c)} The map $\pi : \oE_u^v \to \oPi_u^v(X)$ is an isomorphism of
  varieties.
\end{prop}
\begin{proof}
  If $u \in W^X$, then (a) holds by definition of $\leq_X$, and (b) and (c) hold
  because $\pi : \oE_u \to \oX_u$ is an isomorphism. Otherwise let $s \in W_X$
  be a simple reflection such that $u s < u$. If $v < vs$, then $v \not\leq_X
  u$, and \Lemma{minparab}(a) implies that $\dim \Pi_u^v(X) < \dim E_u^v$. On
  the other hand, if $vs < v$, then $vs \leq_X us$ holds if and only if $v
  \leq_X u$, so the result follows from \Lemma{minparab}(b,d) by induction on
  $\ell(u_X)$.
\end{proof}

If part (c) of \Proposition{rich_iso} is replaced with the requirement that $\pi
: E_u^v \to \pi(E_u^v)$ is birational, then this theorem holds when $E$ is an
arbitrary flag variety and $u,v \in W^E$. However, the following example shows
that $\pi : \oE_u^v \to \pi(\oE_u^v)$ may fail to be injective when $E \neq G/B$
and $v \leq_X u$ in $W^E$.

\begin{example}
  Let $G = \GL(5)$, let $B \subset G$ be the subgroup of upper triangular
  matrices, and let $B \subset P_X \subset P_Y \subset G$ be the parabolic
  subgroups such that $W_X$ is generated by $s_4$ and $W_Y$ is generated by
  $s_1, s_3, s_4$. Then $X = G/P_X = \Fl(1,2,3;5)$ and $Y = G/P_Y = \Gr(2,5)$.
  Let $\pi : X \to Y$ be the projection and set $v = s_3$ and $u = s_3 s_2 s_1
  s_4 s_3 s_2 s_3 = 45213$. Since $u_Y = v_Y = s_3$, it follows from
  \Proposition{rich_iso} or \Corollary{rich_birat} that $\pi : X_u^v \to
  \pi(X_u^v)$ is birational. Let $E = G/B = \Fl(5)$ and $v' = s_3 s_4$, and
  consider the composition of projections
  \[
    \oE_u^{v'} \ \longrightarrow \ \oX_u^v \ \xrightarrow{\ \pi\ }
    \pi(\oX_u^v) \,.
  \]
  Since $u \in W^X$, the map $\oE_u \to \oX_u$ is an isomorphism of affine
  spaces, so the first projection is injective. However, since $u_Y \not\leq_L
  v'_Y$, it follows from \Proposition{rich_iso} that the composed projection is
  not injective. We conclude that $\pi: \oX_u^v \to \pi(\oX_u^v)$ is not
  injective.
\end{example}

\targetsec{simx}{}%
Let $\sim_X$ denote the equivalence relation on the set $\{ (v,u) \in W \times W
\mid v \leq u \}$ generated by $(v,u) \sim_X (v,us) \sim_X (vs,us)$ whenever $s
\in W_X$ is a simple reflection such that $u < us$ and $v < vs$.

\begin{thm}\label{thm:simX}
  Let $u, v, u', v' \in W$ satisfy $v \leq u$ and $v' \leq u'$.\smallskip

  \noin{\rm(a)} If $\Pi_u^v(X) = \Pi_{u'}^{v'}(X)$ and $u,u' \in W^X$, then
  $u=u'$ and $v=v'$.\smallskip

  \noin{\rm(b)} We have $\Pi_u^v(X) = \Pi_{u'}^{v'}(X)$ if and only if $(v,u)
  \sim_X (v',u')$.\smallskip

  \noin{\rm(c)} If $v \leq_X u$ and $v' \leq_X u'$, then either $\oPi_u^v(X) =
  \oPi_{u'}^{v'}(X)$ or $\oPi_u^v(X) \cap \oPi_{u'}^{v'}(X) = \emptyset$.
\end{thm}
\begin{proof}
  It follows from \Lemma{minparab}(a) that $(v,u) \sim_X (v',u')$ implies
  $\Pi_u^v(X) = \Pi_{u'}^{v'}(X)$. Assume that $u,u' \in W^X$ and $\oPi_u^v(X)
  \cap \oPi_{u'}^{v'}(X) \neq \emptyset$. Then $\oX_u \cap \oX_{u'} \neq
  \emptyset$, which implies $u = u'$. Since $\pi$ is injective on $\oE_u$, we
  deduce that $v = v'$ as well. This proves (a), and also establishes (b) and
  (c) when $u, u' \in W^X$. Assume next that $u \notin W^X$. Choose a simple
  reflection $s \in W_X$ such that $\wt u = u s < u$, and set $\wt v = (v \cdot
  s)s$. Then we have $(v,u) \sim_X (\wt v,\wt u)$, $\Pi_u^v(X) = \Pi_{\wt
  u}^{\wt v}(X)$, and $\wt u_X < u_X$, so part (b) follows by induction on
  $\ell(u_X) + \ell(u'_X)$. If $v \leq_X u$, then we must have $\wt v = vs < v$
  and $\wt v \leq_X \wt u$. Since \Lemma{minparab}(b) shows that $\oPi_u^v(X) =
  \oPi_{\wt u}^{\wt v}(X)$, part (c) also follows by induction on $\ell(u_X) +
  \ell(u'_X)$.
\end{proof}

\begin{thm}\label{thm:richmodel}
  Let $u,v \in W$ satisfy $v \leq u$.\smallskip

  \noin{\rm(a)} The set $\oPi_u^v(X)$ is the disjoint union of some of the sets
  $\oPi_{u'}^{v'}(X)$ for which $v \leq v' \leq_X u' \leq u$.\smallskip

  \noin{\rm(b)} We have $\displaystyle\Pi_u^v(X) = \bigcup_{v \leq v' \leq_X u'
  \leq u} \oPi_{u'}^{v'}(X)$.\smallskip

  \noin{\rm(c)} There exists $u', v' \in W$ such that $v \leq v' \leq_X u' \leq
  u$ and $\Pi_u^v(X) = \Pi_{u'}^{v'}(X)$.
\end{thm}
\begin{proof}
  We first prove part (a). If $u \in W^X$, then $v \leq_X u$ and the result is
  clear. Otherwise let $s \in W_X$ be a simple reflection such that $us < u$. If
  $v < vs$, then \Lemma{minparab}(c) shows that $\oPi_u^v(X) = \oPi_{us}^v(X)
  \cup \oPi_{us}^{vs}(X)$, so the result follows by induction on $\ell(u_X)$.
  The obtained union is automatically disjoint by \Theorem{simX}(c). Assume that
  $vs < v$. Then \Lemma{minparab}(b) gives $\oPi_u^v(X) = \oPi_{us}^{vs}(X)$,
  and by induction on $\ell(u_X)$ we can express $\oPi_u^v(X)$ as a union of
  sets $\oPi_{u'}^{v'}(X)$ for which $vs \leq v' \leq_X u' \leq us$. Any such
  set $\oPi_{u'}^{v'}(X)$ for which $v \leq v'$ satisfies the requirements of
  part (a), so assume that $vs \leq v' \leq_X u' \leq us$ and $v \not\leq v'$.
  Then we must have $v' < v's$. Since $v' \leq_X u'$, we also obtain $u' < u's$,
  so \Lemma{minparab}(b) shows that $\oPi_{u'}^{v'}(X) = \oPi_{u's}^{v's}(X)$.
  Since $v \leq v's \leq_X u's \leq u$, this set has the required form. This
  completes the proof of (a). Part (b) follows from (a) because $E_u^v$ is the
  union of all sets $\oE_{u'}^{v'}$ for which $v \leq v' \leq u' \leq u$, and
  (c) follows because $\oPi_{u'}^{v'}(X)$ must be dense in $\oPi_u^v(X)$ for
  some $u', v' \in W$ with $v \leq v' \leq_X u' \leq u$.
\end{proof}

\begin{cor}\label{cor:rich2rich}
  If $v \leq u$ in $W^X$, then $X_u^v = \Pi_u^v(X)$ and $\oX_u^v = \oPi_{u
  w_{0,X}}^v(X)$.
\end{cor}
\begin{proof}
  The first equality is true because $X_u^v = \pi(E_u) \cap X^v = \pi(E_u \cap
  \pi^{-1}(X^v)) = \Pi_u^v(X)$. For the second, notice first that $\oPi_{u
  w_{0,X}}^v(X) \subset \pi(\oE_{u w_{0,X}}) \cap \pi(\oE^v) = \oX_u^v$. We also
  have $\oX_u^v = \oX_u \cap \pi(\oE^v) = \pi(\pi^{-1}(\oX_u) \cap \oE^v) =
  \bigcup_{x \in W_X} \oPi_{ux}^v(X)$. Since \Lemma{minparab}(c) implies that
  $\oPi_{ux}^v(X) \subset \oPi_{u w_{0,X}}^v(X)$, we deduce that $\oX_u^v
  \subset \oPi_{u w_{0,X}}^v(X)$.
\end{proof}

% !TeX root=qkpos.tex

\section{Cominuscule flag varieties}
\label{sec:comin}

\subsection{Schubert varieties}\label{sec:comin-schubert}

\targetsec{gamma}{}%
In the remainder of this paper we let $X = G/P_X$ be a \emph{cominuscule flag
variety} defined over $\C$. This means that $P_X$ is a maximal parabolic
subgroup of $G$, and the unique simple root $\ga$ in $\Delta \ssm \Delta_X$ is
cominuscule, that is, when the highest root of $\Phi$ is expressed as a linear
combination of simple roots, the coefficient of $\ga$ is one. If in addition the
root system $\Phi$ is simply laced, then $X$ is also called \emph{minuscule}. It
was proved by Proctor that the Bruhat order on $W^X$ is a distributive lattice
that agrees with the left weak Bruhat order \cite{proctor:bruhat}. Stembridge
has proved that all elements of $W^X$ are \emph{fully commutative}, which means
that any reduced expression of an element of $W^X$ can be obtained from any
other by interchanging commuting simple reflections \cite{stembridge:fully}. We
proceed to summarize the facts we need in more detail, following the notation
from \cite{buch.chaput.ea:chevalley}. Proofs of our claims can be found in
\cite{proctor:bruhat, stembridge:fully, perrin:small*1, buch.samuel:k-theory}.

\targetsec{poset}{}%
The root lattice $\Span_\Z(\Delta)$ has a partial order defined by $\al' \leq
\al$ if and only if $\al-\al'$ is a sum of positive roots. Set $\cP_X = \{ \al
\in \Phi \mid \al \geq \ga \}$, with the induced partial order. For any element
$u \in W$ we let $I(u) = \{ \al \in \Phi^+ \mid u.\al < 0 \}$ denote the
inversion set. We then have $\ell(u) = |I(u)|$, and $u \in W^X$ if and only if
$I(u) \subset \cP_X$. Moreover, the assignment $u \mapsto I(u)$ restricts to a
bijection between the elements of $W^X$ and the (lower) order ideals of $\cP_X$.
This bijection is an order isomorphism in the sense that $v \leq u$ if and only
if $I(v) \subset I(u)$. The order ideals in $\cP_X$ generalize the Young
diagrams known from the Schubert calculus of classical Grassmannians. For this
reason the roots in $\cP_X$ will sometimes be called \emph{boxes}. An order
ideal in $\cP_X$ will be called a \emph{straight shape}, and a difference
between order ideals is called a \emph{skew shape}.

\targetsec{cominrep}{}%
Given a straight shape $\la \subset \cP_X$, let $\la = \{ \al_1, \al_2, \dots,
\al_{|\la|} \}$ be any \emph{increasing ordering} of its elements, i.e.\ $\al_i
< \al_j$ implies $i<j$. Then the element of $W^X$ corresponding to $\la$ is the
product of reflections $w_\la = s_{\al_1} s_{\al_2} \dots s_{\al_{|\la|}}$ (see
e.g.\ \cite[Thm.~2.4]{buch.samuel:k-theory}). Given $v, u \in W^X$ with $v \leq
u$, we will use the notation $u/v = u v^{-1} \in W$. Since the Bruhat order on
$W^X$ agrees with the left weak Bruhat order \cite{proctor:bruhat} (see also
\cite[Thm.~7.1]{stembridge:fully}), we have $\ell(u/v) = \ell(u)-\ell(v)$.

\targetsec{labeling}{}%
For any root $\al \in \cP_X$, consider the shape $\la(\al) = \{ \al' \in \cP_X
\mid \al' < \al \}$, and set $\delta(\al) = w_{\la(\al)}.\al$. Then
$s_{\delta(\al)} = w_{\la(\al)} s_\al w_{\la(\al)}^{-1} = w_{\la(\al) \cup
\{\al\}}/w_{\la(\al)}$ has length one. It follows that $\delta : \cP_X \to
\Delta$ is a labeling of the boxes in $\cP_X$ by simple roots. Examples of this
labeling are provided in \Table{tablez1}.

% !TeX root=qkpos.tex

\def\vmm#1{\vspace{#1mm}}
\begin{table}
\caption{Partially ordered sets $\cP_X$ of cominuscule varieties with $I(z_1)$
  highlighted (see \Definition{zdkappad}). In each case the partial order is
  given by $\al' \leq \al$ if and only if $\al'$ is weakly north-west of $\al$.}
\label{tab:tablez1}
\begin{tabular}{|c|c|}
\hline%%%%%%%%%%%%%%%%%%%%%%%%%%%%%%%%%%%%%%%%%%%%%%%%%%%%%%%%%%%%%%%%%%%%%
&\vmm{-2}\\
Grassmannian $\Gr(3,7)$ of type A & Max.\ orthog.\ Grassmannian $\OG(6,12)$
\\
&\vmm{-3}\\
\pic{1}{dyn_gr37} &\\
&\\
&\vmm{-3}\\
$\tableau{12}{
[aLlTt]3 & [aTtBb]4 & 5 & [aTtBbRr]6 \\
[aLlRr]2 & [a]3 & 4 & 5 \\
[aLlRrBb]1 & [a]2 & 3 & 4
}$
&\vmm{-27}\\
&\pic{1}{dyn_og6}
\\ &
$\tableau{12}{
[aLlTtBb]6 & [aTt]4 & 3 & 2 & [aTtRr]1 \\
  & [aLlBb]5 & [aBb]4 & 3 & [aBbRr]2 \\
  &   & [a]6 & 4 & 3 \\
  &   &   & 5 & 4 \\
  &   &   &   & 6
}$
\\ &
\vmm{-2}\\
\hline%%%%%%%%%%%%%%%%%%%%%%%%%%%%%%%%%%%%%%%%%%%%%%%%%%%%%%%%%%%%%%%%%%%%%
&\vmm{-2}\\
Lagrangian Grassmannian $\LG(6,12)$ & Cayley Plane $E_6/P_6$
\\
&\vmm{-3}\\
\pic{1}{dyn_sg6} & \\
&\vmm{-2}\\
$\tableau{12}{
[aLlTtBb]6 & [aTtBb]5 & 4 & 3 & 2 & [aTtBbRr]1 \\
  & [a]6 & 5 & 4 & 3 & 2 \\
  &   & 6 & 5 & 4 & 3 \\
  &   &   & 6 & 5 & 4 \\
  &   &   &   & 6 & 5 \\
  &   &   &   &   & 6
}$
&\vmm{-36}\\
& \pic{1}{dyn_e6} \\
&\vmm{-2}\\
&
$\tableau{12}{
[aLlTtBb]6 & [aTtBb]5 & [aTt]4 & [aTtRr]2 \\
  &   & [aLl]3 & [a]4 & [aTtRr]5 & [a]6 \\
  &   & [aLlBb]1 & [aBb]3 & [aRr]4 & [a]5 \\
  &   &   &   & [aLlBbRr]2 & [a]4 & 3 & 1
}$
\\
& \vmm{-2}\\
\hline%%%%%%%%%%%%%%%%%%%%%%%%%%%%%%%%%%%%%%%%%%%%%%%%%%%%%%%%%%%%%%%%%%%%%
& \vmm{-2}\\
Even quadric $Q^{10} \subset \bP^{11}$ & Freudenthal variety $E_7/P_7$
\\
&\vmm{-3}\\
\pic{1}{dyn_q10} &
\\
&\vmm{-3}\\
$\tableau{12}{
[aLlTtBb]1 & [aTtBb]2 & 3 & [aTt]4 & [aTtRr]5 \\
  &   &   & [aLlBb]6 & [aBb]4 & [aTtBb]3 & [aTtBbRr]2 & [a]1
}$
& \\
&\\
\hhline{-~}%%%%%%%%%%%%%%%%%%%%%%%%%%%%%
&\vmm{-2}\\
Odd quadric $Q^{11} \subset \bP^{12}$ &
\\
&\vmm{-3}\\
\pic{1}{dyn_q11} & \\
&\vmm{-1}\\
$\tableau{12}{
[aLlTtBb]1 & [aTtBb]2 & 3 & 4 & 5 & 6 & 5 & 4 & 3 & [aTtBbRr]2 & [a]1
}$
& \vmm{-52}\\
& \pic{1}{dyn_e7} \\
&\vmm{-3}\\
&
$\tableau{12}{
[aLlTtBb]7 & [aTtBb]6 & 5 & [aTt]4 & 3 & [aTtRr]1 \\
  &   &   & [aLlBb]2 & [a]4 & [aRr]3 \\
  &   &   &   & [aLl]5 & [a]4 & [aTtRr]2 \\
  &   &   &   & [aLlBb]6 & [aBb]5 & 4 & [aTtBb]3 & [aTtBbRr]1 \\
  &   &   &   & [a]7 & 6 & 5 & 4 & 3 \\
  &   &   &   &   &   &   & 2 & 4 \\
  &   &   &   &   &   &   &   & 5 \\
  &   &   &   &   &   &   &   & 6 \\
  &   &   &   &   &   &   &   & 7
}$
\vmm{-2}\\
& \\
\hline%%%%%%%%%%%%%%%%%%%%%%%%%%%%%%%%%%%%%%%%%%%%%%%%%%%%%%%%%%%%%%%%%%%%%
\end{tabular}
\end{table}

The element $u/v$ depends only on the skew shape $I(u) \ssm I(v)$. If $I(u) \ssm
I(v) = \{ \al_1, \al_2, \dots, \al_\ell \}$ is any increasing ordering, then
$u/v = s_{\delta(\al_\ell)} \cdots s_{\delta(\al_2)} s_{\delta(\al_1)}$ is a
reduced expression for $u/v$. In the special case $v=1$, every reduced
expression for $u$ can be obtained in this way. We will say that $u/v$ is a
\emph{rook strip} if this element of $W$ is a product of commuting simple
reflections. Equivalently, no pair of roots in $I(u)\ssm I(v)$ are comparable by
the partial order $\leq$ on $\cP_X$. We call $u/v$ a \emph{short rook strip} if
it is a product of commuting reflections defined by short simple roots. Notice
that if the root system $\Phi$ is simply laced, then all roots are long by
convention, so $u/v$ is a short rook strip if and only if $u=v$.

The following result holds for any (reduced and crystallographic) root system
$\Phi$.

\begin{lemma}\label{lemma:invcover}%
  Let $\Phi$ be a root system with associated Weyl group $W$, let $w \in W$, and
  let $\al' \lessdot \al$ be a covering relation in $I(w)$. Then $\al - \al' \in
  \Phi^+$.
\end{lemma}
\begin{proof}
  We can write $\al = \al' + \be_1 + \dots + \be_k$, with $\be_i \in \Phi^+$ and
  $\be_i+\be_j \notin \Phi$ for all $i$ and $j$. By
  \cite[Thm.~2.4]{gay.pilaud:weak} we may assume $\al' + \be_1 + \dots +
  \be_{k-1} \in \Phi^+$, and \cite[Thm.~2.5]{gay.pilaud:weak} implies that $\al'
  + \be_k \in \Phi^+$. Using that $w.\al'$ and $w.\al$ are negative roots, it
  follows that $\al'+\be_1+\dots+\be_{k-1}$ or $\al'+\be_k$ belongs to $I(w)$.
  In both cases, since $\al' \lessdot \al$ is a covering relation, we deduce
  that $k=1$.
\end{proof}

\begin{lemma}\label{lemma:skew}%
  Let $u, v \in W^X$ satisfy $v \leq u$.\smallskip

  \noin{\rm(a)} The action of $v$ defines an order isomorphism $v : I(u)\ssm
  I(v) \to I(u/v)$.\smallskip

  \noin{\rm(b)} Let $\al \in \Phi$. Then $\al$ is a minimal box of $\cP_X \ssm
  I(v)$ if and only if $\al \geq \ga$ and $v.\al \in \Delta$. In this case we
  have $v.\al = \delta(\al)$.
\end{lemma}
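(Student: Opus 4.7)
The plan is to establish part (b) first, then use the standard Weyl-group formula for the inversions of a reduced factorization together with the cominuscule condition to derive part (a).

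For part (b), I would argue as follows. If $\alpha$ is minimal in $\cP_X \ssm I(v)$, then $I(v) \cup \{\alpha\}$ is still an order ideal of $\cP_X$, so equals $I(v')$ for a unique $v' \in W^X$ with $v \lessdot v'$. Since the Bruhat order on $W^X$ coincides with the left weak Bruhat order, $v' = s_\beta v$ for some simple root $\beta$, with $\ell(s_\beta v) = \ell(v) + 1$ (equivalently $v^{-1}.\beta > 0$). Comparing inversion sets via the identity $I(s_\beta v) = I(v) \sqcup \{v^{-1}.\beta\}$ from the reduced factorization then yields $v.\alpha = \beta \in \Delta$. Conversely, if $\alpha \geq \gamma$ and $v.\alpha = \beta \in \Delta$, then $v^{-1}.\beta = \alpha > 0$, so $v' := s_\beta v$ satisfies $\ell(v') = \ell(v) + 1$ and $I(v') = I(v) \cup \{\alpha\} \subset \cP_X$, hence $v' \in W^X$ and $I(v')$ is an order ideal, forcing $\alpha$ to be minimal in $\cP_X \ssm I(v)$. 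Finally, to identify $\beta$ with $\delta(\alpha)$, I would use the alternative reduced expression $v' = w_{I(v) \cup \{\alpha\}} = s_{\delta(\alpha)}\, w_{I(v)} = s_{\delta(\alpha)}\, v$, valid because $\alpha$ is maximal in $I(v) \cup \{\alpha\}$ and so appears last in any increasing ordering.

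For the bijection part of (a), since the Bruhat order on $W^X$ equals the left weak Bruhat order, we have $\ell(u/v) = \ell(u) - \ell(v)$, and $u = (u/v) \cdot v$ is a reduced factorization. The standard Weyl-group identity for reduced factorizations yields $I(u) = I(v) \sqcup v^{-1}(I(u/v))$, so $v\bigl(I(u) \ssm I(v)\bigr) = I(u/v)$. Since $v$ acts as a bijection on $\Phi$, this restricts to the desired bijection.

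The order-preserving property will hinge on the defining feature of a cominuscule simple root: every root in $\cP_X$ has $\gamma$-coefficient exactly $1$. Given $\alpha, \alpha' \in \cP_X \ssm I(v)$ with $\alpha < \alpha'$, write $\alpha' - \alpha = \sum_{\delta \in \Delta} n_\delta\, \delta$ with $n_\delta \in \Z_{\geq 0}$; comparing $\gamma$-coefficients on both sides forces $n_\gamma = 0$. For any simple root $\delta \neq \gamma$, the $\gamma$-coefficient of $\delta$ is $0$, so $\delta \notin \cP_X$ and therefore $\delta \notin I(v)$, giving $v.\delta > 0$. Hence $v.\alpha' - v.\alpha = \sum_{\delta \neq \gamma} n_\delta\, v.\delta$ is a nonnegative integer combination of positive roots, and it is nonzero because $v$ is injective, so $v.\alpha < v.\alpha'$ in the root order. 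I do not anticipate any genuine obstacle; the only subtlety is recognizing that the cominuscule hypothesis kills the single simple root ($\gamma$ itself) that could otherwise appear with nonzero coefficient and whose image under $v$ could be negative.
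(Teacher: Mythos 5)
Your argument for part (a) is essentially the paper's: $v$ carries $I(u)\ssm I(v)$ bijectively onto $I(u/v)$ by the reduced-factorization identity for inversion sets, and the order-preserving property follows because $v.\be > 0$ for all $\be \in \Delta\ssm\{\ga\}$. Your observation that the cominuscule condition forces $n_\ga = 0$ in $\al'-\al$ is exactly the point; the paper compresses this into one sentence.

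For part (b) you take a genuinely different route. The paper proves $v.\al = \delta(\al)$ directly: since $\al$ is minimal in $\cP_X\ssm I(v)$, we have $\la(\al)\subset I(v)$, and every box of $I(v)\ssm\la(\al)$ is incomparable to $\al$ and hence fixes $\al$ (by Lemma~2.2 of [buch.samuel:k-theory]), giving $v.\al = w_{\la(\al)}.\al = \delta(\al)$. You instead pass through the covering relation $v\lessdot v'$ in the left weak Bruhat order, using that it coincides with the Bruhat order on $W^X$, and then read off $\be = v.\al$ from $I(s_\be v) = I(v)\sqcup\{v^{-1}.\be\}$. This works, but it imports the nontrivial Proctor/Stembridge coincidence of orders that the paper's proof of (b) does not need.

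There is one step you should tighten: the justification \emph{``$v' = w_{I(v)\cup\{\al\}} = s_{\delta(\al)}\,w_{I(v)}$, valid because $\al$ appears last in any increasing ordering''} does not quite follow. The definition of $w_\la$ via an increasing ordering gives $w_{I(v)\cup\{\al\}} = w_{I(v)}\,s_\al$, and to rewrite this as $s_{\delta(\al)}\,w_{I(v)}$ you must commute $s_\al$ to the front, which amounts to knowing $w_{I(v)}.\al = \delta(\al)$. Since $\delta(\al)$ is by definition $w_{\la(\al)}.\al$ and $\la(\al)$ may be a proper subset of $I(v)$, this is exactly the incomparability argument the paper invokes. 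You can of course instead cite the general fact stated in \S4.1 that $u'/v' = s_{\delta(\al_\ell)}\cdots s_{\delta(\al_1)}$ for any increasing ordering of $I(u')\ssm I(v')$ (applied here with the single box $\al$), but then you should say that explicitly rather than appealing to ``$\al$ appears last.''
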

\begin{proof}
  We have $v.(I(u) \ssm I(v)) \subset I(u/v)$ by definition of the inversion
  sets, and both sets have the same cardinality. The map $\al \mapsto v.\al$ is
  order-preserving on all of $\cP_X$ since $v.\be > 0$ for all $\be \in \Delta
  \ssm \{\ga\}$. To prove that $v^{-1} : I(u/v) \to I(u) \ssm I(v)$ also
  preserves the order, let $\al' \lessdot \al$ be a covering relation in
  $I(u/v)$. Then $v^{-1}.\al - v^{-1}.\al' = v^{-1}(\al - \al') \in \Phi$ is a
  root by \Lemma{invcover}, so $v^{-1}.\al'$ and $v^{-1}.\al$ are comparable
  elements in $\cP_X$. Since $v$ is order-preserving and $\al' < \al$, we deduce
  that $v^{-1}.\al' < v^{-1}.\al$. This proves (a). If $\al$ is a minimal box of
  $\cP_X \ssm I(v)$, then $\la(\al) \subset I(v)$, and any box $\al' \in I(v)
  \ssm \la(\al)$ is incomparable to $\al$, hence $s_{\al'}.\al = \al$ by
  \cite[Lemma~2.2]{buch.samuel:k-theory}. This implies that $v.\al =
  w_{\la(\al)}.\al = \delta(\al) \in \Delta$. On the other hand, the conditions
  $\al \geq \ga$ and $v.\al \in \Delta$ imply that $\al \in \cP_X \ssm I(v)$.
  Let $\al' \in \cP_X \ssm I(v)$ be any minimal box such that $\al' \leq \al$.
  Since $0 < v.\al' \leq v.\al \in \Delta$, we must have $\al' = \al$, which
  proves (b).
\end{proof}

\begin{remark}\label{remark:skew}%
  \noin{\rm(a)} If $\al_1 \neq \al_2 \in \cP_X$ are incomparable boxes, then
  \Lemma{skew}(b) implies that $\delta(\al_1) = w_\la.\al_1 \neq w_\la.\al_2 =
  \delta(\al_2)$ where $\la = \la(\al_1) \cup \la(\al_2)$. In addition, we have
  $(\delta(\al_1),\delta(\al_2)) = (\al_1,\al_2) = 0$ by
  \cite[Lemma~2.2]{buch.samuel:k-theory}.\smallskip

  \noin{\rm(b)} If $\al_1 \lessdot \al_2$ is a covering relation in $\cP_X$,
  then $(\al_1,\al_2)>0$. In fact, since $\al_1$ is a maximal box of
  $\la(\al_2)$, we obtain $(\al_1,\al_2) = (w_{\la(\al_2)}.\al_1,
  w_{\la(\al_2)}.\al_2) = (-\delta(\al_1),\delta(\al_2)) \geq 0$. If
  $(\al_1,\al_2)=0$, then $s_{\delta(\al_1)}$ and $s_{\delta(\al_2)}$ are
  commuting simple reflections. Set $\la = \la(\al_2) \ssm \{\al_1\}$. Since
  $s_{\delta(\al_1)} s_{\delta(\al_2)} w_\la \in W^X$ is a reduced product, it
  follows that $s_{\delta(\al_2)} w_\la \in W^X$ and $\la \subsetneq
  I(s_{\delta(\al_2)} w_\la) \subsetneq \la \cup \{\al_1,\al_2\}$. But then
  $I(s_{\delta(\al_2)} w_\la) = \la \cup \{\al_1\}$, a contradiction.
\end{remark}

\begin{lemma}\label{lemma:PXinvol}%
  \noin{\rm(a)} The action $w_{0,X} : \cP_X \to \cP_X$ is an order-reversing
  involution, and $\delta(w_{0,X}.\al) = -w_0.\delta(\al)$ is the Cartan
  involution of $\delta(\al)$ for each $\al \in \cP_X$.

  \noin{\rm(b)} The Poincar\'e dual element of $u \in W^X$ is determined by
  $I(u^\vee) = I(w_0 u w_{0,X}) = \cP_X \ssm w_{0,X}.I(u)$.
\end{lemma}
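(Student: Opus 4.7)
The starting point is the cominuscule property: every positive root of $\Phi$ has $\ga$-coefficient in $\{0,1\}$, so $\Phi^+ = \cP_X \sqcup \Phi_X^+$, with $\cP_X$ consisting of the positive roots of $\ga$-coefficient $1$. Since each simple reflection $s_\be$ with $\be \in \Delta_X$ changes a root only in its $\be$-coefficient, $W_X$ preserves the $\ga$-coefficient, hence stabilizes both $\cP_X$ and $\Phi_X^+$. I will record this observation first. The involution statement of~(a) is immediate from $w_{0,X}^2 = 1$. For the order-reversing statement, I observe that if $\al \leq \al'$ in $\cP_X$, then $\al' - \al \in \Span_{\Z_{\geq 0}}(\Delta)$ has vanishing $\ga$-coefficient, so $\al' - \al \in \Span_{\Z_{\geq 0}}(\Delta_X)$; since $w_{0,X}$ sends $\Delta_X$ into $\Phi_X^-$, it maps this cone into $\Span_{\Z_{\leq 0}}(\Delta_X)$, giving $w_{0,X}.\al' \leq w_{0,X}.\al$.

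I will prove~(b) next, since the Cartan identity in~(a) depends on it. For $\al \in \Phi^+$, the condition $\al \in I(u^\vee)$ is equivalent to $w_0 u w_{0,X}.\al < 0$, hence to $u w_{0,X}.\al > 0$. If $\al \in \Phi_X^+$, then $-w_{0,X}.\al \in \Phi_X^+$, so $u \in W^X$ forces $u.(-w_{0,X}.\al) > 0$ and therefore $\al \notin I(u^\vee)$ (compatible with $I(u^\vee) \subset \cP_X$). If instead $\al \in \cP_X$, then $w_{0,X}.\al \in \cP_X \subset \Phi^+$, so $u w_{0,X}.\al > 0 \iff w_{0,X}.\al \notin I(u) \iff \al \notin w_{0,X}.I(u)$, whence $I(u^\vee) = \cP_X \ssm w_{0,X}.I(u)$.

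For the Cartan identity in~(a), the key idea is to reduce to the case where two elements of $W^X$ differ by exactly one box, so that $\delta$ can be read off the Weyl group quotient. I will fix $\al \in \cP_X$ and set $v = w_{\la(\al)}$ and $u = w_{\la(\al) \cup \{\al\}} = s_{\delta(\al)}\, v$, both in $W^X$, with $I(u) \ssm I(v) = \{\al\}$. By~(b), $I(v^\vee) \ssm I(u^\vee) = \{w_{0,X}.\al\}$, and the order-reversing property from~(a) shows that $w_{0,X}.\al$ is a maximal box of $I(v^\vee)$; the standard description of $u/v$ as a product of labels $s_{\delta(\bull)}$ then gives the reduced product $v^\vee = s_{\delta(w_{0,X}.\al)}\, u^\vee$. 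On the other hand, a direct conjugation yields
\[
  u^\vee \;=\; w_0\, s_{\delta(\al)}\, v\, w_{0,X}
    \;=\; (w_0\, s_{\delta(\al)}\, w_0^{-1})\, v^\vee
    \;=\; s_{-w_0.\delta(\al)}\, v^\vee,
\]
so $v^\vee(u^\vee)^{-1} = s_{-w_0.\delta(\al)}$. Matching the two expressions gives $s_{\delta(w_{0,X}.\al)} = s_{-w_0.\delta(\al)}$, and since both $\delta(w_{0,X}.\al)$ and $-w_0.\delta(\al)$ lie in $\Delta$, they must coincide. The one point requiring a moment of care is the verification that $w_{0,X}.\al$ is maximal in $I(v^\vee) = \cP_X \ssm w_{0,X}.I(v)$, and this is the main place where~(a)(i) re-enters the argument; beyond this it is all direct Weyl group computation.
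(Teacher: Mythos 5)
Your proof is correct and follows essentially the same route as the paper. The order-reversing and (b) arguments are the same (just spelled out in more detail with the $\cP_X \sqcup \Phi_X^+$ decomposition). For the Cartan identity, the paper applies Lemma~\ref{lemma:skew}(b) (in a dual form, to $w_{0,X}.\al$ viewed as a maximal box of $I(w_{\la(\al)}^\vee)$) and computes directly, while you instead compare two expressions for the quotient $v^\vee(u^\vee)^{-1}$ of the dualized covering pair --- one from the labeled-reflection description of $W^X$-quotients, one from conjugation by $w_0$ and $w_{0,X}$. This is a mild repackaging of the same computation; if anything your version is slightly cleaner in avoiding the implicit dual of Lemma~\ref{lemma:skew}(b). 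One small remark: the observation that $w_{0,X}.\al$ is a maximal box of $I(v^\vee)$ is not actually needed --- once you know $I(u^\vee)\subset I(v^\vee)$ with difference a single box (which is automatic from $u^\vee, v^\vee \in W^X$ and~(b)), the reduced-expression description applies directly.
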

\begin{proof}
  The action of $w_{0,X}$ is an order-reversing involution on $\cP_X$ since it
  does not change the coefficient of $\ga$, and $w_{0,X}.\be < 0$ for each $\be
  \in \Delta \ssm \{\ga\}$. For $u \in W^X$ and $\al \in \Phi^+$ we deduce that
  $w_0 u w_{0,X}.\al < 0$ holds if and only if $w_{0,X}.\al \in \cP_X \ssm
  I(u)$, which proves (b). Since $w_{0,X}.\al$ is a maximal box of $\cP_X \ssm
  w_{0,X}.\la(\al)$, it follows from \Lemma{skew}(b) that $\delta(w_{0,X}.\al) =
  - w_{\la(\al)}^\vee.(w_{0,X}.\al) = - w_0 w_{\la(\al)}.\al =
  -w_0.\delta(\al)$, which completes the proof of (a).
\end{proof}

\targetsec{distlattice}{}%
The Bruhat order on $W^X$ is a distributive lattice, with join and meet
operations defined by $I(u \cup v) = I(u) \cup I(v)$ and $I(u \cap v) = I(u)
\cap I(v)$. Notice that $(u \cup v)/v = u/(u \cap v)$. It follows that $u/(u
\cap v)$ and $v/(u \cap v)$ are commuting elements of $W$, as their product in
either order is $(u \cup v)/(u \cap v)$. We also have $(u \cap v)^\vee = u^\vee
\cup v^\vee$ and $(u \cup v)^\vee = u^\vee \cap v^\vee$.

\begin{prop}\label{prop:comint}%
  Let $u, v \in W^X$. The Richardson variety $X_u^{u \cap v}$ is a
  \semi-transversal intersection of $X_u$ and $X^v$ in $X$.
\end{prop}
\begin{proof}
  Set $z = v^\vee = w_0 v w_{0,X}$ and $\kappa = u \cdot w_{0,X} \cdot z^{-1}$.
  It follows from \Theorem{schubint} that $X_u \cap w_0.X_{w_0 \kappa z} =
  X_u^{\kappa z w_{0,X}}$ is a \semi-transversal intersection of $X_u$ and $X^v$.
  We must therefore show that, for all $u, z \in W^X$ we have
  \[
  (u \cdot w_{0,X} \cdot z^{-1}) z w_{0,X} = u \cap z^\vee \,.
  \]
  Set $u' = u \cap z^\vee$. Since $z^\vee w_{0,X} z^{-1} = w_0$ is a reduced
  product and $u' \leq_L z^\vee$, it follows that $u' \cdot w_{0,X} \cdot z^{-1}
  = u' w_{0,X} z^{-1}$ is also a reduced product, so we obtain $(u' \cdot
  w_{0,X} \cdot z^{-1}) z w_{0,X} = u' = u \cap z^\vee$. Let $\al \in I(u) \ssm
  I(z^\vee)$ and set $\be = \delta(\al)$. It suffices to show that $s_\be \cdot
  (u' w_{0,X} z^{-1}) = u' w_{0,X} z^{-1}$. We may assume that $s_\be u' w_{0,X}
  > u' w_{0,X}$, in which case $s_\be u' \in W^X$. We then have $I(s_\be u') =
  I(u') \cup \{\al'\}$ for some root $\al'$ with $\delta(\al') = u'.\al' = \be$.
  Since $s_\be u' \not\leq z^\vee$ we have $\al' \notin I(z^\vee)$. It follows
  that $(u' w_{0,X} z^{-1})^{-1}.\be = z w_{0,X}.\al' = w_0 z^\vee.\al' < 0$,
  and hence $s_\be \cdot (u' w_{0,X} z^{-1}) = u' w_{0,X} z^{-1}$, as required.
\end{proof}

\subsection{Cohomology of negative line bundles on Richardson varieties}
\label{sec:cohom-rich}

\targetsec{ktheory}{}%
Let $K_T(X)$ denote the $T$-equivariant $K$-theory ring of $X$, see e.g.\
\cite[\S2.1]{buch.chaput.ea:chevalley} and the references therein. Pullback
along the structure morphism $X \to \{\pt\}$ makes $K_T(X)$ an algebra over the
ring $K_T(\pt)$ of virtual representations of $T$. Let $\euler{X} : K_T(X) \to
K_T(\pt)$ be the pushforward along the structure morphism. The Schubert classes
in $K_T(X)$ are denoted by $\cO^v = [\cO_{X^v}]$ and $\cO_u = [\cO_{X_u}]$. Let
$J \subset \cO_X$ be the ideal sheaf of the Schubert divisor $X^{s_\ga}$. Then
$J^{-1}$ is the ample generator of $\Pic(X)$. In addition, $J$ inherits a
structure of $T$-equivariant line bundle from $\cO_X$. An equivalent definition
is $J = (G \times^P \C_{\om_\ga}) \otimes \C_{-\om_\ga}$, where $\om_\ga$ is the
fundamental weight corresponding to the cominuscule simple root $\ga$, see
\cite[\S4.1]{buch.chaput.ea:chevalley}.

Given any integer $p \in \Z$, we set $p' = p - \frac{1}{2}$. The half-integers
$\frac{1}{2}\Z$ is then the set of primed and unprimed integers.

\begin{defn}\label{defn:dec_primed}%
  Let $\cS \subset \cP_X$ be a skew shape. A \emph{decreasing primed tableau} of
  shape $\cS$ is a labeling $\cT : \cS \to \frac{1}{2}\Z$ such that (i) $\al' <
  \al$ in $\cS$ implies $\cT(\al') > \cT(\al)$, and (ii) $\cT(\al) \in \Z$ for
  all long boxes $\al \in \cS$.
\end{defn}

\targetsec{labelweight}{}%
Given any labeling $\hT : \cP_X \to \frac{1}{2}\Z$ of $\cP_X$ such that
$\hT(\al) \in \Z$ for each long box $\al \in \cP_X$, define the weight
\[
  \la(\hT) = \sum_{\al \in \cP_X} \hT(\al)\, (\om_\ga, \al^\vee)\,
  \delta(\al) \,.
\]
Notice that, if $\hT(\al)$ is not an integer, then $\al$ is a
short root, hence $(\om_\ga,\al^\vee) = 2$.

\targetsec{tabrep}{}%
Let $u, v \in W^X$ satisfy $v \leq u$, let $m \in \Z$, and let $a \in
\frac{1}{2}\Z$. Given a decreasing primed tableau $\cT$ of shape $I(u)\ssm
I(v)$, let $\cT[m]: \cP_X \to \frac{1}{2}\Z$ denote the extension of $\cT$ that
maps all boxes of $I(v)$ to $m$ and maps all boxes of $\cP_X \ssm I(u)$ to $0$,
see \Example{topcohom}. Using this notation, we define a representation of $T$
by
\[
  C^v_{u,[a,m)} = \bigoplus_{\cT} \C_{-\la(\cT[m])} \,,
\]
where the sum is over all decreasing primed tableaux $\cT$ of shape $I(u)\ssm
I(v)$ with labels in $[a,m)$, i.e.\ $a \leq \cT(\al) < m$ for all $\al \in
I(u)\ssm I(v)$.

\begin{lemma}\label{lemma:tabrep}%
  Let $u, v \in W^X$, $m, p \in \Z$ and $a \in \frac12\Z$, and assume that $v
  \leq u$, $a \leq m$, and $p \geq 0$. Then
  \[
    C^v_{u,[a,m+p)} \cong \bigoplus_{w\in W^X:\, v \leq w \leq u}
    C^v_{w,[0,p)} \otimes_\C C^w_{u,[a,m)} \,.
  \]
\end{lemma}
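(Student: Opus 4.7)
The plan is to exhibit an explicit $T$-equivariant bijection between the primed tableaux indexing the summands on the two sides. Given a decreasing primed tableau $\cT$ of shape $I(u) \ssm I(v)$ with labels in $[a,m+p)$, I would split it according to the threshold $m$: let $S = \{ \al \in I(u) \ssm I(v) \mid \cT(\al) \geq m \}$. Because $\cT$ is strictly decreasing along the order, if $\al \in S$ and $\al' < \al$ lies in $I(u) \ssm I(v)$, then $\cT(\al') > \cT(\al) \geq m$, so $\al' \in S$. Combining with the fact that $I(v)$ is already a lower order ideal in $\cP_X$, the union $I(v) \cup S$ is a lower order ideal contained in $I(u)$, hence equals $I(w)$ for a unique $w \in W^X$ with $v \leq w \leq u$.

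Next, I would define $\cT_1 : I(w) \ssm I(v) = S \to \frac12\Z$ by $\cT_1(\al) = \cT(\al) - m$, and $\cT_2 : I(u) \ssm I(w) \to \frac12\Z$ as the restriction of $\cT$. The decreasing and parity conditions for $\cT_1$ and $\cT_2$ are immediate from those of $\cT$ (note that $m \in \Z$ preserves the integrality condition on long boxes), and the label ranges transform as $\cT_1 \in [0,p)$ and $\cT_2 \in [a,m)$ as required. Conversely, given the triple $(w, \cT_1, \cT_2)$, the assignments $\cT(\al) = \cT_1(\al) + m$ for $\al \in I(w) \ssm I(v)$ and $\cT(\al) = \cT_2(\al)$ for $\al \in I(u) \ssm I(w)$ recover a decreasing primed tableau; the only nontrivial comparison is between $\al' \in I(u) \ssm I(w)$ and $\al \in I(w) \ssm I(v)$ with $\al' < \al$, but this cannot occur since $I(w)$ is a lower ideal.

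Finally I would verify that the bijection is $T$-equivariant, i.e.\ that the weights match. Comparing the extended tableaux $\cT[m+p]$, $\cT_1[p]$, $\cT_2[m]$ box-by-box on $\cP_X$ gives the pointwise identity $\cT[m+p] = \cT_1[p] + \cT_2[m]$: on $I(v)$ both sides equal $m+p$; on $I(w)\ssm I(v)$ both sides equal $\cT_1 + m$; on $I(u)\ssm I(w)$ both sides equal $\cT_2$; and on $\cP_X \ssm I(u)$ both vanish. Since $\la(\hT)$ is $\Z$-linear in $\hT$, this forces $\la(\cT[m+p]) = \la(\cT_1[p]) + \la(\cT_2[m])$, hence $\C_{-\la(\cT[m+p])} \cong \C_{-\la(\cT_1[p])} \otimes_\C \C_{-\la(\cT_2[m])}$ as $T$-modules. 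Summing over all $\cT$ on the left and over all $(w,\cT_1,\cT_2)$ on the right yields the claimed isomorphism. The only genuinely delicate point is the verification that $I(v) \cup S$ is a lower ideal in $\cP_X$, which uses only the elementary property of decreasing tableaux noted above; everything else is bookkeeping.
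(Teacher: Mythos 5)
Your proof is correct and takes essentially the same approach as the paper: split a decreasing primed tableau at the threshold $m$ into a "top part" (subtract $m$) and a "bottom part" (restrict), and observe that the top part's shape together with $I(v)$ is an order ideal, giving the intermediate element $w$. The only small remark is that you never explicitly locate where the hypothesis $a \leq m$ is used; it is exactly what guarantees that the reassembled tableau in the inverse direction has all its labels in $[a,m+p)$, so that the two maps are mutually inverse.
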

\begin{proof}
  Given a decreasing primed tableau $\cT$ of shape $I(u)\ssm I(v)$ with labels
  in $[a,m+p)$, let $\cT''$ be the tableau consisting of the boxes with labels
  smaller than $m$, and let $\cT'$ be the tableau obtained by subtracting $m$
  from all boxes with labels greater than or equal to $m$. Then $\cT'$ has shape
  $I(w)\ssm I(v)$ and $\cT''$ has shape $I(u)\ssm I(w)$ for a unique element $w
  \in W^X$ with $v \leq w \leq u$, $\cT'$ has labels in $[0,p)$, $\cT''$ has
  labels in $[a,m)$, and we have $\cT[m+p] = \cT'[p] + \cT''[m]$ with pointwise
  addition. The assumption $a \leq m$ ensures that the assignment $\cT \mapsto
  (\cT',\cT'')$ has a well defined inverse map. The lemma follows from this.
\end{proof}

The following identities generalize Theorems 3.6 and 3.7 from
\cite{buch.chaput.ea:chevalley}. A more general Chevalley formula that holds in
the $K$-theory of arbitrary flag varieties was proved in
\cite[Thm.~13.1]{lenart.postnikov:affine}.

\begin{prop}\label{prop:chevalley}
  {\rm(a)} For $v \in W^X$ and $m \geq 0$ we have in $K_T(X)$ that
  \[
    [J]^m \cdot \cO^v \ = \ \sum_{u \in W^X:\, v \leq u}
    (-1)^{\ell(u/v)}\, [C^v_{u,[0,m)}]\, \cO^u \,.
  \]

  \noin{\rm(b)} For $v \leq u$ in $W^X$ and $m \geq 1$ we have in $K_T(\pt)$
  that
  \[
    \chi(X_u^v, J^m) \ = \ (-1)^{\ell(u/v)}\,
    [C^v_{u,[\frac{1}{2},m)}] \,.
  \]
\end{prop}
\begin{proof}
  Part (a) is clear for $m=0$ and is equivalent to
  \cite[Thm.~3.6]{buch.chaput.ea:chevalley} for $m=1$. For $m \geq 2$ it follows
  by induction using \Lemma{tabrep}. Let $\cI^w \in K_T(X)$ be dual to $\cO_w$,
  i.e.\ $\euler{X}(\cO_u \cdot \cI^w) = \delta_{u,w}$ for $u \in W^X$. By
  \cite[Lemma~3.5]{buch.chaput.ea:chevalley} we have $\cI^w = \sum_u
  (-1)^{\ell(u/w)} \cO^u$, the sum over all $u \geq w$ for which $u/w$ is a rook
  strip. Using that $C^v_{u,[0,m)} = \bigoplus_w C^v_{w,[\frac{1}{2},m)}$, with
  the sum over all $w \in W^X$ for which $v \leq w \leq u$ and $u/w$ is a rook
  strip, it follows that part (a) for $m \geq 1$ is equivalent to the identity
  \[
  [J]^m \cdot \cO^v \ = \ \sum_{w \in W^X: v \leq w}
  (-1)^{\ell(w/v)}\, [C^v_{w,[\frac{1}{2},m)}]\, \cI^w \,.
  \]
  Part (b) follows by multiplying both sides by $\cO_u$ and applying
  $\euler{X}$.
\end{proof}

\begin{thm}\label{thm:Jcohom}%
  Let $u, v \in W^X$ satisfy $v \leq u$ and let $m \geq 1$. Then $H^i(X_u^v,J^m)
  = 0$ for all $0 \leq i < \dim(X_u^v) = \ell(u/v)$. Moreover, we have
  $H^{\ell(u/v)}(X_u^v,J^m) \cong C^v_{u,[\frac{1}{2},m)}$ as representations of
  $T$.
\end{thm}
\begin{proof}
  To prove the vanishing of cohomology groups, we may assume that $X = G/P_X$ is
  defined over an algebraically closed field of positive characteristic \cite[\S
  1.6]{brion.kumar:frobenius}. Then \cite[Thm.~2.3.1]{brion.kumar:frobenius}
  together with \cite[Lemma~1.1.8]{brion.kumar:frobenius} applied to the
  projection $G/B \to X$ shows that $X_u^v$ is Frobenius split. Since $J^{-1}$
  is ample and $X_u^v$ is Cohen-Macaulay and irreducible, the Kodaira vanishing
  theorem for split varieties \cite[Thm.~1.2.9]{brion.kumar:frobenius} implies
  that $H^i(X_u^v,J^m) = 0$ for $ i < \dim(X_u^v)$. We therefore have
  $\chi(X_u^v, J^m) = (-1)^{\ell(u/v)} [H^{\ell(u/v)}(X_u^v,J^m)]$, so the
  result follows from \Proposition{chevalley}.
\end{proof}

\begin{remark}\label{remark:smt}%
  Using standard monomial theory, it is possible to compute the cohomology
  groups of the restriction of any ample line bundle on $G/P$ to a Richardson
  variety; see \cite[Thm.~3]{brion.lakshmibai:geometric} or
  \cite[Thm.~20]{lakshmibai.littelmann:richardson}. However, we have not seen
  the computation of the (top) cohomology of negative line bundles in the
  literature, and this cannot be deduced using Serre duality since the canonical
  sheaf of a Richardson variety is not a line bundle in general.
\end{remark}

\begin{example}\label{example:topcohom}%
  Let $X = \LG(3,6) = C_3/P_3$ be the Lagrangian Grassmannian of maximal
  isotropic subspaces in a complex symplectic vector space of dimension 6. Let
  $\Delta = \{\be_1, \be_2, \be_3\}$ be the set of simple roots, where $\ga =
  \be_3$ is the long root. The labeling $\delta : \cP_X \to \Delta$ is given by
  the following diagram, where the upper-left box represents $\ga$ and the
  bottom-right box represents the highest root of $\Phi$.
  \[
    \tableau{14}{{\be_3}&{\be_2}&{\be_1}\\&{\be_3}&{\be_2}\\&&{\be_3}}
  \]
  Set $v = s_2 s_3$ and $u = s_2 s_3 s_1 s_2 s_3$. Then we have
  \[
    H^3(X_u^v, J^{\otimes 2}) \ \cong \ C^v_{u,[\frac{1}{2},2)} =
    \C_{-2\be_1 - 5\be_2 - 3\be_3} \oplus \C_{-3\be_1 - 5\be_2 - 3\be_3} \,.
  \]
  The extensions $\cT[2]$ of the decreasing primed tableaux $\cT$ corresponding
  to the summands are displayed below. The coefficient of the simple root
  $\be_i$ in the weight obtained from of each tableau is the (negative) sum of
  the half-integers in the $i$-th diagonal, multiplied by 2 if $\be_i$ is short.
  \[
    \tableau{12}{[ltb]{\scriptstyle 2}&
      [tbr]{\scriptstyle 2}&[aA]1\\
      &{1}&{1'}\\&&[a]{\scriptstyle 0}}
    \hspace{10mm}
    \tableau{12}{[ltb]{\scriptstyle 2}&
      [tbr]{\scriptstyle 2}&[aA]2'\\
      &{1}&{1'}\\&&[a]{\scriptstyle 0}}
  \]
\end{example}

An algebraic variety $D$ is called \emph{cohomologically trivial} if
$H^0(D,\cO_D) = \C$ and $H^i(D,\cO_D) = 0$ for $i > 0$.

\begin{cor}\label{cor:trivial}%
  Let $D \subset X_u^v$ be an effective Cartier divisor of class $[D] =
  m[X^{s_\ga}] |_{X_u^v}$. Then $D$ is cohomologically trivial if and only if
  there are no decreasing primed tableaux of shape $I(u)\ssm I(v)$ with labels
  in $[\frac{1}{2}, m) \cap \frac{1}{2}\Z$.
\end{cor}
\begin{proof}
  The Richardson variety $X_u^v$ is cohomologically trivial, as it is rational
  with rational singularities. The long exact sequence of cohomology groups
  derived from $0 \to J^m|_{X_u^v} \to \cO_{X_u^v} \to \cO_D \to 0$ then shows
  that $D$ is cohomologically trivial if and only if $H^i(X_u^v, J^m) = 0$ for
  all $i$. The result therefore follows from \Theorem{Jcohom}.
\end{proof}

\begin{example}
  Let $X_u^v$ be a Richardson variety of positive dimension and let $D \subset
  X_u^v$ be a Cartier divisor. If $[D] = [X^{s_\ga}] |_{X^u_v}$, then $D$ is
  cohomologically trivial if and only if $u/v$ is not a short rook strip. In
  particular, $D$ is cohomologically trivial if $X$ is minuscule. If $[D] = 2
  [X^{s_\ga}] |_{X_u^v}$ and $X$ is minuscule, then $D$ is cohomologically
  trivial if and only if $u/v$ is not a rook strip.
\end{example}

If $X = X_1 \times \dots \times X_k$ is a product of cominuscule flag varieties,
then the Schubert varieties in $X$ are given by sequences $(\la_1, \dots,
\la_k)$ of order ideals $\la_i \subset \cP_{X_i}$. Such a sequence can be
identified with an order ideal in the disjoint union $\cP_X = \cP_{X_1} \coprod
\dots \coprod \cP_{X_k}$. We will consider a point as a product of (zero)
cominuscule varieties with associated set $\cP_{\{\pt\}} = \emptyset$. The
results in this section hold for products of cominuscule varieties with minor
modifications. In \Lemma{skew}(b), the condition $\al \geq \ga$ can be replaced
with $\al \in \cP_X$. In the results of \Section{cohom-rich}, $J$ should be the
ideal sheaf of the union of the Schubert divisors $X^{s_{\ga_i}}$ for $1 \leq i
\leq k$.

% !TeX root=qkpos.tex

\section{The quantum to classical principle}
\label{sec:qclassical}

\subsection{Introduction}\label{sec:qcintro}%

The quantum to classical principle allows Gromov-Witten invariants of certain
flag varieties to be computed as classical intersection numbers on related flag
varieties. The goal in this section is to derive the quantum-to-classical
theorem with as little type-by-type checking as possible. In addition we will
develop the associated combinatorics and geometry in order to support the main
results of this paper. We will restrict our discussion to Gromov-Witten
invariants of cominuscule flag varieties of small degrees. Here a degree $d$ is
considered \emph{small} if $q^d$ occurs in a product of two Schubert classes in
the small quantum cohomology ring $\QH(X)$. Equivalently, $d$ is less than or
equal to the \emph{diameter} $d_X(2)$ defined in \Section{nbhd}. Our main
references include \cite{buch:quantum, buch.kresch.ea:gromov-witten,
chaput.manivel.ea:quantum*1, buch.kresch.ea:quantum, buch.mihalcea:quantum,
chaput.perrin:rationality}.

\targetsec{kerspan}{}%
The first version of the quantum-to-classical theorem applied to the enumerative
(cohomological) Gromov-Witten invariants of classical Grassmannians. Let $X =
\Gr(m,n)$ be the Grassmannians of $m$-dimensional vector subspaces of $\C^n$. A
rational curve $C \subset X$ has a \emph{kernel} and a \emph{span} defined by
\cite{buch:quantum}
\[
  \Ker(C) = \bigcap_{V \in C} V \hspace{1cm};\hspace{1cm}
  \Span(C) = \sum_{V \in C} V \,.
\]
If $C$ is a general curve of small degree $d$, then one can show that $\dim
\Ker(C) = m-d$ and $\dim \Span(C) = m+d$, which means that $(\Ker(C),\Span(C))$
is a point in the two-step flag variety $Y_d = \Fl(m-d,m+d;n)$. Given three
Schubert varieties $\Omega_1, \Omega_2, \Omega_3 \subset X$ in general position,
the Gromov-Witten invariant $\gw{[\Omega_1],[\Omega_2],[\Omega_3]}{d}$ is the
number of rational curves of degree $d$ meeting these Schubert varieties
(assuming that this number is finite). The quantum-to-classical theorem states
that the map $C \mapsto (\Ker(C),\Span(C))$ gives a bijection between the
counted curves and the intersection of three Schubert varieties in $Y_d$. As a
consequence, the Gromov-Witten invariant
$\gw{[\Omega_1],[\Omega_2],[\Omega_3]}{d}$ is equal to a classical Schubert
structure constant of $H^*(Y_d)$ \cite{buch.kresch.ea:gromov-witten}.

Subsequent work \cite{chaput.manivel.ea:quantum*1} demonstrated that the
quantum-to-classical theorem can be understood in a type-independent way if the
points $(K,S)$ of $Y_d$ are replaced with the corresponding subvarieties of $X$
defined by $\Gr(d,S/K) = \{ V \in X \mid K \subset V \subset S \}$. Indeed,
these subvarieties of $X$ are non-singular Schubert varieties, and also
cominuscule flag varieties themselves. They can also be described as unions of
rational curves of degree $d$ that pass through two given points in $X$. Such
Schubert varieties will be called \emph{primitive} cominuscule varieties in this
paper, see \Section{primitive}.

The quantum-to-classical theorem was extended to non-enumerative (equivariant
and $K$-theoretic) Gromov-Witten invariants in \cite{buch.mihalcea:quantum} by
showing that the moduli space $\Mb_{0,3}(X,d)$ of stable maps to $X$ is
birational to the space $\{(K,S,V_1,V_2,V_3)\}$ of kernel-span pairs $(K,S) \in
Y_d$ together with 3 additional points $V_i \in \Gr(d,S/K)$. Indeed, given a
general 5-tuple of this type, there exists a unique rational curve $C \subset
\Gr(d,S/K) \subset X$ of degree $d$ which contains the three points $V_1, V_2,
V_3$.

While we only discuss Gromov-Witten invariants of small degrees $d \leq d_X(2)$,
the definition of the quantum $K$-theory ring $\QK(X)$ also depends on
Gromov-Witten invariants of higher degrees. Such Gromov-Witten invariants can be
computed with similar methods, granted that certain Gromov-Witten varieties of
large degrees are rational; this was proved in \cite{buch.mihalcea:quantum} for
Grassmannians of type A and in \cite{chaput.perrin:rationality} for cominuscule
varieties of other Lie types (see also
\cite[Remark~3.4]{buch.chaput.ea:finiteness}).

\subsection{Curve neighborhoods}\label{sec:nbhd}

\targetsec{gwinv}{}%
Let $X = G/P_X$ be a cominuscule flag variety defined over $\C$. For any
non-negative integer $d \geq 0$ we let $M_d = \Mb_{0,3}(X,d)$ denote the
Kontsevich moduli space of 3-pointed stable maps to $X$ of degree $d$ and genus
zero \cite{fulton.pandharipande:notes}. The evaluation map $\ev_i : M_d \to X$,
defined for $1 \leq i \leq 3$, sends a stable map to the image of the $i$-th
marked point in its domain. Given classes $\Omega_1, \Omega_2, \Omega_3 \in
H^*(X;\Z)$, the corresponding cohomological Gromov-Witten invariant of $X$ of
degree $d$ is defined by
\[
  \gw{\Omega_1, \Omega_2, \Omega_3}{d} =
  \int_{M_d} \ev_1^*(\Omega_1) \cdot \ev_2^*(\Omega_2) \cdot
  \ev_3^*(\Omega_3) \,.
\]
More generally, three $K$-theory classes $\cF_1, \cF_2, \cF_3 \in K_T(X)$ define
the $K$-theoretic Gromov-Witten invariant
\[
  I_d(\cF_1, \cF_2, \cF_3) =
  \euler{M_d}(\ev_1^*(\cF_1) \cdot \ev_2^*(\cF_2) \cdot \ev_3^*(\cF_3)) \,.
\]

\targetsec{cnbhd}{}%
Given subvarieties $\Omega_1, \Omega_2 \subset X$, let $M_d(\Omega_1,\Omega_2) =
\ev_1^{-1}(\Omega_1) \cap \ev_2^{-1}(\Omega_2)$ denote the \emph{Gromov-Witten
variety} of stable maps that send the first two marked points to the given
subvarieties. Let $\Gamma_d(\Omega_1, \Omega_2) = \ev_3(M_d(\Omega_1,
\Omega_2))$ be the union of all stable curves of degree $d$ in $X$ that connect
$\Omega_1$ and $\Omega_2$. We also consider the special cases $M_d(\Omega_1) =
\ev_1^{-1}(\Omega_1)$ and $\Gamma_d(\Omega_1) = \ev_3(M_d(\Omega_1))$.

\targetsec{dist}{}%
Define the \emph{degree distance} $\dist(x,y)$ between two points $x,y \in X$ to
be the smallest degree of a rational curve $C \subset X$ with $x,y \in C$. This
is the minimal degree $d$ for which $\Gamma_d(x,y) \neq \emptyset$. We need the
following key result from \cite{chaput.manivel.ea:quantum*1}. A type-independent
proof based on properties satisfied by all flag manifolds was given in \cite[\S
5.4]{buch.mihalcea:curve}.

\begin{thm}\label{thm:dist}
  Let $u \in W^X$. Then $\dist(1.P_X, u.P_X)$ is the number of occurrences of
  $s_\ga$ in any reduced expression for $u$.
\end{thm}

\targetsec{diameter}{}%
Equivalently, $\dist(1.P_X, u.P_X)$ is the number of boxes $\al \in I(u)$ with
label $\delta(\al) = \ga$. Define the \emph{diameter} of $X$ to be the integer
$d_X(2) = \dist(1.P_X, w_0.P_X)$. The subset of boxes in $\cP_X$ labeled by
$\ga$ is totally ordered by \Remark{skew}(a). We denote these boxes by
\[
\delta^{-1}(\ga) = \{ \tal_1 < \tal_2 < \dots < \tal_{d_X(2)} \} \,.
\]

\begin{defn}\label{defn:zdkappad}%
  For $0 \leq d \leq d_X(2)$ we define elements $\ka_d$ and $z_d$ in $W^X$ by
  \[
    I(\ka_d) = \{ \al \in \cP_X \mid \al \leq \tal_d \}
    \text{ \ \ and \ \ }
    I(z_d) = \{ \al \in \cP_X \mid \al \not\geq \tal_{d+1} \} \,.
  \]
  We set $\ka_0 = z_0 = 1$ and $z_{d_X(2)} = w_0^X$.
\end{defn}

\begin{example}
  Let $X = \Gr(4,9)$ be the Grassmannian of $4$-planes in $\C^9$. Then the
  elements $\ka_2$ and $z_2$ are given by the following shapes:
  \[
    I(\ka_2) = \
    \tableau{12}{
      [aLT]&[aTR]&[t]&{}&[tr]\\
      [aLB]&[aBR]&&&[r]\\
      [l]&&&&[r]\\
      [lb]&[b]&{}&{}&[br]
    }
    \hmm{10} \text{and} \hmm{10}
    I(z_2) = \
    \tableau{12}{
      [aLT]&[aT]&{}&{}&[aTR]\\
      [aL]&[a]&[aB]&{}&[aBR]\\
      [aL]&[aR]&&&[r]\\
      [aLB]&[aBR]&[b]&{}&[br]
    } \,.
  \]
  The shapes $I(z_1)$ for a representative selection of cominuscule flag
  varieties are displayed in \Table{tablez1}.
\end{example}

\begin{lemma}\label{lemma:ka_d_inv}%
  We have $\ka_d^{-1} = \ka_d$, $(z_dw_{0,X})^{-1} = z_dw_{0,X}$, and $z_d^\vee
  z_d = w_0^X$. In addition, $\Gamma_d(1.P_X) = X_{z_d}$.
\end{lemma}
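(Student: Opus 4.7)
The plan is to establish Claim 4 ($\Gamma_d(1.P_X) = X_{z_d}$) first by a $T$-fixed point argument, and then handle the three Weyl group identities. For Claim 4, the set $\Gamma_d(1.P_X) = \ev_3(\ev_1^{-1}(1.P_X))$ is closed (as the image of a proper morphism) and $B$-invariant (since $\ev_1$ and $\ev_3$ are $G$-equivariant and $\{1.P_X\}$ is $B$-stable), hence a union of Schubert varieties. A $T$-fixed point $u.P_X$ (with $u \in W^X$) lies in $\Gamma_d(1.P_X)$ iff there is a $T$-fixed stable map of degree $d$ connecting $1.P_X$ and $u.P_X$, which occurs iff $\dist(1.P_X, u.P_X) \leq d$, since any shorter chain of $T$-stable Schubert lines can be padded to total degree $d$ by attaching an extra line of positive degree at one of the nodes. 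By \Theorem{dist}, this distance equals the number of $\ga$-labeled boxes of $I(u)$, which is at most $d$ precisely when $\tal_{d+1} \notin I(u)$, i.e., $I(u) \subseteq I(z_d)$, i.e., $u \leq z_d$. These are exactly the $T$-fixed points of $X_{z_d}$.

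Claims 2 and 3 are equivalent. Substituting $z_d^\vee = w_0 z_d w_{0,X}$ and $w_0^X = w_0 w_{0,X}$ into $z_d^\vee z_d = w_0^X$ yields $z_d w_{0,X} z_d w_{0,X} = 1$, which is exactly $(z_d w_{0,X})^2 = 1$, i.e., Claim 2. To prove this, I would analyze the inversion set $I(z_d w_{0,X}) = \Phi_X^+ \cup w_{0,X}.I(z_d)$ (from the reduced factorization $z_d w_{0,X} = z_d \cdot w_{0,X}$) and verify that $-z_d w_{0,X}$ stabilizes it, using \Lemma{PXinvol}(a) on the order-reversing action of $w_{0,X}$ on $\cP_X$ together with the explicit description $I(z_d) = \{\al \in \cP_X : \al \not\geq \tal_{d+1}\}$.

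For Claim 1 ($\ka_d^{-1} = \ka_d$), I would establish $-\ka_d.I(\ka_d) = I(\ka_d)$, which since $I(\ka_d^{-1}) = -\ka_d.I(\ka_d)$ forces $\ka_d^{-1} = \ka_d$. The inversion set $I(\ka_d) = [\ga, \tal_d]$ is the principal order ideal of $\tal_d$, with unique minimum $\ga$ and unique maximum $\tal_d$, both labeled by $\ga$. Using \Lemma{skew} and an incremental reduced expression that isolates the simple reflection $s_\ga$ corresponding to $\tal_d$ at one end of the word, I would compute $\ka_d.\ga = -\tal_d$ and symmetrically $\ka_d.\tal_d = -\ga$, which anchors the description of $\ka_d$ as acting on $I(\ka_d)$ by $\al \mapsto -\sigma(\al)$ for an order-reversing involution $\sigma$ of $I(\ka_d)$ swapping $\ga \leftrightarrow \tal_d$.

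The main obstacle is a type-uniform verification of the involutivity statements in Claims 1 and 2. While an order-reversing involution on the relevant sub-poset of $\cP_X$ is natural from the self-dual structure of principal order ideals generated by $\ga$-labeled boxes in cominuscule posets, a type-independent proof requires checking compatibility with the labeling $\delta$, up to the Cartan involution of the induced sub-Dynkin diagram. Structurally, $I(\ka_d)$ is itself the cominuscule poset of a sub-flag-variety on which $\ka_d$ realizes the longest Weyl group element, a perspective made fully rigorous via the primitive cominuscule subvariety theory developed in subsequent sections of the paper.
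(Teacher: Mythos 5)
Your argument for $\Gamma_d(1.P_X) = X_{z_d}$ is correct and essentially parallel to the paper's (the paper cites \cite[Cor.~3.3(a)]{buch.chaput.ea:finiteness} to get that $\Gamma_d(1.P_X)$ is a single Schubert variety, while you argue it is a $B$-stable closed set and determine it by its $T$-fixed points; both then apply \Theorem{dist}).

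The genuine gap is in the three Weyl group identities, and you flag it yourself as ``the main obstacle.'' Your proposed computations of inversion sets, of $\ka_d.\ga$ and $\ka_d.\tal_d$, and the identification of an order-reversing involution $\sigma$, are not carried out; and the structural fallback you mention (that $I(\ka_d)$ is the poset of the primitive cominuscule variety $\Gamma_d$ on which $\ka_d$ is the longest element) leans on \Proposition{primitive}, whose implication (3) $\Rightarrow$ (4) is itself proved from \Lemma{ka_d_inv}, so it cannot be invoked here without reorganizing the later material. The paper avoids all of this with a short type-uniform argument you did not find: \Theorem{dist} shows that $\ka_d$ and $z_d w_{0,X}$ are the \emph{unique} minimal and maximal elements of the level set $S_d = \{ w \in W \mid \dist(1.P_X, w.P_X) = d \}$ in Bruhat order, while the identity $\dist(1.P_X,w.P_X) = \dist(1.P_X,w^{-1}.P_X)$ shows $S_d$ is stable under $w \mapsto w^{-1}$, which is a Bruhat order automorphism; hence the two extreme elements are each self-inverse, and the identity $z_d^\vee z_d = w_0 z_d w_{0,X} z_d = w_0 (z_d w_{0,X})^{-1} z_d = w_0 w_{0,X} = w_0^X$ falls out immediately. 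That argument needs no analysis of labelings, Cartan involutions, or sub-Dynkin diagrams, so you should look for it rather than pursuing the combinatorial route.
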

\begin{proof}
  It follows from \Theorem{dist} that $\ka_d$ and $z_d w_{0,X}$ are the unique
  minimal and maximal elements of the set  $\{ w \in W \mid \dist(1.P_X, w.P_X)
  = d \}$. Since $\dist(1.P_X, w.P_X) = \dist(1.P_X, w^{-1}.P_X)$ for any
  element $w \in W$, we deduce that $\ka_d$ and $z_d w_{0,X}$ are self-inverse.
  We then obtain
  \[
    z_d^\vee z_d = w_0 z_d w_{0,X} z_d = w_0 (z_d w_{0,X})^{-1} z_d
    = w_0 w_{0,X} = w_0^X \,.
  \]
  The identity $\Gamma_d(1.P_X) = X_{z_d}$ follows from \Theorem{dist} since
  $\Gamma_d(1.P_X)$ is a Schubert variety by
  \cite[Cor.~3.3(a)]{buch.chaput.ea:finiteness}.
\end{proof}

\begin{remark}
  We have $W_X \ka_d W_X = \{ u \in W \mid \dist(1.P_X, u.P_X) = d \}$.
\end{remark}

\begin{lemma}\label{lemma:orbits}%
  The orbits of the diagonal action of $G$ on $X \times X$ are given by
  $\ocZ_{d,2} = \{ (x_1,x_2) \in X \times X \mid \dist(x_1,x_2) = d \}$, for $0
  \leq d \leq d_X(2)$.
\end{lemma}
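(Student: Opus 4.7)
The plan is to show that each set $\ocZ_{d,2}$ is a single $G$-orbit. The sets $\ocZ_{d,2}$ clearly partition $X \times X$ for $0 \leq d \leq d_X(2)$, they are $G$-stable (since the $G$-action preserves rational curves and their degrees), and every pair of points has a well-defined distance bounded by $d_X(2)$ in view of \Theorem{dist} and the transitivity of $G$ on $X$. Using this transitivity, I reduce the problem to showing that the set $\ocZ_d := \{x \in X \mid \dist(1.P_X, x) = d\}$ is a single $P_X$-orbit, where $P_X$ is the stabilizer of $1.P_X$.

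The first observation is that $\ocZ_d$ is $P_X$-stable, because $P_X$ fixes $1.P_X$ and therefore preserves the function $x \mapsto \dist(1.P_X,x)$. Using \Lemma{ka_d_inv} together with the identification $\Gamma_d(1.P_X) = \{x \mid \dist(1.P_X,x) \leq d\}$ (the inclusion ``$\subset$'' is immediate from the definition of $\Gamma_d$; the reverse follows by bubbling off a rational curve of degree $d-e$ through $1.P_X$ to extend a minimal curve of degree $e \leq d$ realizing the distance), we obtain $\ocZ_d = X_{z_d} \ssm X_{z_{d-1}}$, so $\ocZ_d$ is locally closed.

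The second observation is that $P_X$-orbits on $X = G/P_X$ are parametrized by $W_X \backslash W / W_X$ via the Bruhat decomposition, and each orbit contains at least one $T$-fixed point $u.P_X$ with $u \in W^X$. By \Theorem{dist} and the remark preceding the present lemma, the $T$-fixed points of $\ocZ_d$ correspond precisely to $W^X \cap W_X \ka_d W_X$, which is the set of minimal coset representatives of the single $W_X$-double coset $W_X \ka_d W_X$. Hence every $T$-fixed point of $\ocZ_d$ lies in the single $P_X$-orbit $P_X.\ka_d.P_X$. Since $\ocZ_d$ is a $P_X$-stable locally closed subvariety and every $P_X$-orbit contained in it contains a $T$-fixed point, we conclude that $\ocZ_d = P_X.\ka_d.P_X$.

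There is no real obstacle here; the proof is a direct synthesis of \Theorem{dist}, \Lemma{ka_d_inv}, and the standard Bruhat parametrization of $P_X$-orbits on $G/P_X$. The only minor point that merits being spelled out is the identification $\Gamma_d(1.P_X) = \{x \mid \dist(1.P_X,x) \leq d\}$, which is why one needs to allow stable maps with reducible domains in the definition of $\Gamma_d$.
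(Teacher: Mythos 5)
Your proof is correct, and it takes a genuinely different route from the paper's. The paper's proof is a direct movement argument: given $(x_1,x_2) \in \ocZ_{d,2}$, choose $g \in G$ with $g.x_1 = 1.P_X$ and then $b \in B$ with $bg.x_2 = u.P_X$ a $T$-fixed point, $u \in W^X$; \Theorem{dist} then yields $u/\kappa_d \in W_X$, so $(u/\kappa_d)^{-1} \in W_X$ fixes $1.P_X$ and carries $u.P_X$ to $\kappa_d.P_X$, exhibiting $(x_1,x_2)$ in the orbit of $(1.P_X,\kappa_d.P_X)$. You instead argue structurally: after reducing to the $P_X$-action on the first slice $\ocZ_d \subset X$, you invoke the Bruhat parametrization of $P_X$-orbits, the fact that every such orbit contains a $T$-fixed point, and the claim that all $T$-fixed points of $\ocZ_d$ lie in a single $W_X$-double coset. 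Both approaches ultimately rest on the same combinatorial input: any $u \in W^X$ with $\dist(1.P_X,u.P_X)=d$ satisfies $u \in W_X\kappa_d W_X$, equivalently $u/\kappa_d \in W_X$. You outsource this to the Remark the paper places just before the lemma, but note that the Remark is stated there without proof, and it is precisely the nontrivial step. To make your argument self-contained you should supply the justification, which is the same as the paper's: for $\kappa_d \leq u \leq z_d$ in $W^X$, the skew shape $I(u) \ssm I(\kappa_d)$ contains none of the boxes $\wt\alpha_i$ labeled $\gamma$ (those with $i\le d$ lie in $I(\kappa_d)$ and those with $i>d$ lie outside $I(z_d) \supseteq I(u)$), so $u/\kappa_d$ has a reduced expression avoiding $s_\gamma$ and hence lies in $W_X$. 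Your observations that $\ocZ_d = X_{z_d}\ssm X_{z_{d-1}}$ is locally closed, and your note about allowing reducible domains in $\Gamma_d$, are both correct; the local closedness is not actually needed once you know $\ocZ_d$ is a $P_X$-stable union of $P_X$-orbits, each of which contains a $T$-fixed point. Overall the paper's direct proof is shorter, while yours is a valid and conceptually transparent alternative.
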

\begin{proof}
  Each set $\ocZ_{d,2}$ is stable under the action of $G$. Given $(x_1,x_2) \in
  \ocZ_{d,2}$, we can choose $g \in G$ such that $g.x_1 = 1.P_X$, and then
  choose $b \in B$ such that $bg.x_2 = u.P_X$ is a $T$-fixed point, with $u \in
  W^X$. Since $\dist(1.P_X, u.P_X) = d$, \Theorem{dist} implies that $u/\kappa_d
  \in W_X$. The lemma follows from this because $(u/\kappa_d)^{-1} bg.(x_1,x_2)
  = (1.P_X, \kappa_d.P_X)$.
\end{proof}

\begin{lemma}\label{lemma:gammaperp}%
  We have $(\al,\ga^\vee) = 1$ for $\al \in I(z_1)\ssm\{\ga\}$, and
  $(\al,\ga^\vee)=0$ for $\al \in \cP_X\ssm I(z_1)$.
\end{lemma}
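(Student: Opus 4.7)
The plan is to characterize the preimages of $\{0,1,2\}$ under the function $\al \mapsto (\al, \ga^\vee)$ on $\cP_X$ and match them against $\{\ga\}$, $I(z_1) \ssm \{\ga\}$, and $\cP_X \ssm I(z_1)$ respectively.

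First, since $\ga$ is cominuscule, every root of $\Phi$ has $\ga$-coefficient in $\{-1,0,1\}$, so $\cP_X$ consists precisely of the positive roots with $\ga$-coefficient $1$. The image $s_\ga(\al) = \al - (\al,\ga^\vee)\ga$ must again be a root with $\ga$-coefficient $1 - (\al, \ga^\vee) \in \{-1,0,1\}$, forcing $(\al, \ga^\vee) \in \{0,1,2\}$ for $\al \in \cP_X$, with value $2$ exactly when $\al = \ga$. The parallel argument applied to positive Levi roots $\be \in \Phi_X^+$ gives $(\be, \ga^\vee) \in \{-1, 0\}$, so the pairing $\al \mapsto (\al, \ga^\vee)$ is monotonically non-increasing on $\cP_X$ in the root order, and $B := \{\al \in \cP_X : (\al, \ga^\vee) = 0\}$ is an order filter.

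The lemma is therefore equivalent to the identity $B = \cP_X \ssm I(z_1) = \{\al \in \cP_X : \al \geq \tal_2\}$, a coincidence of order filters determined by their minimal elements. The crux is to compute $(\tal_2, \ga^\vee) = 0$. By definition $\delta(\tal_2) = \ga$, i.e.\ $w_\lambda . \tal_2 = \ga$ for $\lambda = \lambda(\tal_2)$, so $w_\lambda^{-1} . \ga = \tal_2$. \Lemma{ka_d_inv} asserts that $\ka_2 = s_\ga \cdot w_\lambda$ is an involution, which yields the conjugation identity $w_\lambda^{-1} = s_\ga\, w_\lambda\, s_\ga$. Applying both sides to $\ga$ gives $\tal_2 = -s_\ga(w_\lambda . \ga)$, and a short case analysis on the $\ga$-coefficient of $w_\lambda . \ga$ leaves two possibilities: either $w_\lambda . \ga = -\tal_2$, or $w_\lambda . \ga = -(\tal_2 - \ga)$ with $\tal_2 - \ga \in \Phi_X^+$. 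The second alternative is inconsistent with $\delta(\tal_2) = \ga$, since $\tal_2 = \ga + \be$ with $\be$ a positive Levi root would admit a saturated chain from $\ga$ to $\tal_2$ obtained by successively adding simple Levi roots, and the labeling formula $\delta(\al) = w_{\lambda(\al)} . \al$ combined with the computation $\delta(\ga + \be') = \be'$ for simple $\be' \in \Delta_X$ adjacent to $\ga$ forces the top of such a chain to be labeled by a simple Levi root rather than $\ga$. Hence $w_\lambda . \ga = -\tal_2$, and Weyl-invariance of the form together with $(\ga,\ga) = (\tal_2,\tal_2)$ (as $w_\lambda$ preserves length) yields $(\tal_2, \ga^\vee) = -(\tal_2, \ga^\vee)$ and therefore $(\tal_2, \ga^\vee) = 0$.

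With $\tal_2 \in B$ established, monotonicity gives the inclusion $\{\al \geq \tal_2\} \subseteq B$. For the reverse inclusion the plan is to show that every minimal element of $B$ is $\ga$-labeled: if $\al \in B$ is minimal, then every box covered by $\al$ in $\cP_X$ lies outside $B$ and thus has value $1$, and a case analysis on the Cartan integer $(\delta(\al), \ga^\vee)$ forces $\delta(\al) = \ga$. Hence $\al \in \{\tal_1, \tal_2, \dots\}$, and since $\tal_1 = \ga \notin B$ while $\tal_2 \in B$, the minimum of $B$ is $\tal_2$. The main obstacle is this final combinatorial step, which rests on the same structural feature of $\delta$ as the calculation above: excluding the spurious alternative $w_\lambda . \ga = -(\tal_2 - \ga)$ and identifying minimal elements of $B$ both reduce to the fact that boxes of the form $\ga + \be$ with $\be \in \Phi_X^+$ receive a non-$\ga$ label.
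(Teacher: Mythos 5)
Your first paragraph (the range $\{0,1,2\}$ for $(\al,\ga^\vee)$ on $\cP_X$, the monotonicity of $\al \mapsto (\al,\ga^\vee)$, and the identification of $B$ as an order filter) matches the paper's opening steps and is correct. The remainder is a genuinely different route from the paper's --- you try to derive $(\tal_2,\ga^\vee)=0$ from the involution $\ka_2^{-1}=\ka_2$ of \Lemma{ka_d_inv} rather than from a direct analysis of $\delta(\tal_2)$ --- but it has two gaps, and the gaps are not the ``single common fact'' you say they reduce to.

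The first gap is excluding the alternative $w_\la.\ga = -(\tal_2-\ga)$. Your saturated-chain argument does not establish it: computing $\delta(\ga+\be')=\be'$ for a simple $\be'$ adjacent to $\ga$ tells you the label of one box at height one, but it says nothing about $\delta$ at the top of a chain of length $k>1$; there is no propagation principle that forces the top label to be a Levi root. The second gap is the claim that every minimal element of $B$ is $\ga$-labeled; ``a case analysis on $(\delta(\al),\ga^\vee)$ forces $\delta(\al)=\ga$'' is asserted, not proved, and it is not clear how minimality of $\al$ in $B$ constrains that Cartan integer. Moreover, the two gaps are \emph{not} two instances of the same fact, as claimed at the end: gap one requires $\delta(\al)=\ga \Rightarrow (\al,\ga^\vee)=0$ (to contradict $\tal_2-\ga\in\Phi_X^+$), while gap two requires the converse $(\al,\ga^\vee)=0 \Rightarrow \delta(\al)=\ga$ (to conclude a minimal element of $B$ is $\ga$-labeled). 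You need the full biconditional, for all $\al\in I(z_1)\cup\{\tal_2\}$ with $\al\neq\ga$.

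That biconditional is exactly what the paper proves, and the proof is short enough that the detour through $\ka_2^{-1}=\ka_2$ is not paying for itself. For such $\al$, one has $\la(\al)\subset I(z_1)$ (boxes strictly below $\tal_2$ cannot dominate $\tal_2$), so $w_{\la(\al)}\le z_1$ contains a single $s_\ga$ in any reduced word (\Theorem{dist}); since $\ga\in I(w_{\la(\al)})$, one may write $w_{\la(\al)} = y\,s_\ga$ with $y\in W_X$. Then
\[
  \delta(\al) \;=\; w_{\la(\al)}.\al \;=\; y.\bigl(\al - (\al,\ga^\vee)\,\ga\bigr),
\]
and because $y\in W_X$ preserves the $\ga$-coefficient, $\delta(\al)\in\Delta$ has $\ga$-coefficient $1-(\al,\ga^\vee)$, which is $1$ iff $(\al,\ga^\vee)=0$ iff $\delta(\al)=\ga$. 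Plugging $\al=\tal_2$ and $\al\in I(z_1)\ssm\{\ga\}$ into this equivalence gives both halves of the lemma directly, making the involution and minimal-element machinery unnecessary. If you want to keep the $\ka_2$-involution route, the quickest repair of gap one is to note that $w_\la = y\,s_\ga$ with $y\in W_X$ (by the same reasoning) forces $w_\la.\ga = -y.\ga$ to have $\ga$-coefficient $-1$, which already selects your ``case 1''; but you will still need the other direction of the biconditional to close gap two.
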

\begin{proof}
  Notice that $(\al,\ga^\vee) \geq 0$ for every $\al \in \cP_X$, since otherwise
  the coefficient of $\ga$ in the root $s_\ga.\al = \al - (\al,\ga^\vee)\ga$ is
  at least 2. In addition, if $\al',\al \in \cP_X$ satisfy $\al' \leq \al$, then
  $(\al',\ga^\vee) \geq (\al,\ga^\vee)$, as $\al-\al'$ is a non-negative linear
  combination of $\Delta\ssm\{\ga\}$. Finally, since $\ga$ is a long root, we
  have $(\al,\ga^\vee) \leq 1$ for any root $\al \neq \ga$. It is therefore
  enough to show that $(\al,\ga^\vee)\neq 0$ for $\al \in I(z_1)$ and that
  $(\tal_2,\ga^\vee)=0$. If $\al \in I(z_1) \cup \{\tal_2\}$ is any root with
  $\al \neq \ga$, then we have $\delta(\al) = y s_\ga.\al = y.(\al -
  (\al,\ga^\vee)\ga) \in \Delta$ for some $y \in W_X$. Since the action of $y$
  does not change the coefficient of $\ga$, we deduce that $(\al,\ga^\vee)=0$ if
  and only if $\delta(\al)=\ga$, as required.
\end{proof}

\begin{cor}\label{cor:degq}%
  We have $\int_{X_{s_\ga}} c_1(T_X) = \ell(z_1) + 1$.
\end{cor}
\begin{proof}
  By \cite[Lemma~3.5]{fulton.woodward:quantum} we have $\int_{X_{s_\ga}}
  c_1(T_X) = \sum_{\al \in \cP_X} (\al, \ga^\vee)$, so the corollary follows
  from \Lemma{gammaperp}.
\end{proof}

\begin{prop}\label{prop:firstbiject}%
  {\rm(a)} The map $z_d : \cP_X \ssm I(z_d) \to I(z_d^\vee)$ defined by $\al
  \mapsto z_d.\al$ is an order isomorphism, and $\delta(z_d.\al) = \delta(\al)$
  for each $\al \in \cP_X \ssm I(z_d)$.\smallskip

  \noin{\rm(b)} The map $-\ka_d : I(\ka_d) \to I(\ka_d)$ defined by $\al \mapsto
  -\ka_d.\al$ is an order-reversing involution, and $\delta(-\ka_d.\al) =
  \delta(\al)$ for each $\al \in I(\ka_d)$.
\end{prop}
\begin{proof}
  Since $z_d^\vee = w_0^X/z_d$ by \Lemma{ka_d_inv}, it follows from
  \Lemma{skew}(a) that $z_d: \cP_X\ssm I(z_d) \to I(z_d^\vee)$ is an
  order-preserving bijection. Since $z_d^{-1} = w_{0,X} z_d w_{0,X}$, the
  inverse bijection is also order-preserving. Let $\al \in \cP_X \ssm I(z_d)$
  and set $\la = \la(\al) \cup I(z_d)$. Since $\al$ is a minimal box of $\cP_X
  \ssm \la$, we have $\delta(\al) = w_\la.\al$. Write $\la = I(z_d) \coprod
  \{\al_1, \al_2, \dots,\al_\ell \}$, where the roots are listed in increasing
  order. Then $w_\la = z_d s_{\al_1} s_{\al_2} \cdots s_{\al_\ell}$. Using that
  $z_d$ is an order isomorphism, we obtain $\la(z_d.\al) = \{ z_d.\al_1,
  z_d.\al_2, \dots, z_d.\al_\ell\}$, with the roots listed in increasing order,
  hence
  \[
  w_{\la(z_d.\al)} z_d = s_{z_d.\al_1}
  s_{z_d.\al_2} \cdots s_{z_d.\al_\ell} z_d = z_d s_{\al_1} s_{\al_2}\cdots
  s_{\al_\ell} z_d^{-1} z_d = w_\la \,,
  \]
  and $\delta(z_d.\al) = w_{\la(z_d.\al)}.(z_d.\al) = w_\la.\al = \delta(\al)$.
  This proves part (a).

  For $\al \in I(\ka_d)$ we have $-\ka_d.\al \in \Phi^+$ and $\ka_d.(-\ka_d.\al)
  = -\al < 0$, hence $-\ka_d.\al \in I(\ka_d)$. The map $-\ka_d : I(\ka_d) \to
  I(\ka_d)$ is order-reversing because $\ka_d.\be > 0$ for $\be \in \Delta \ssm
  \{\ga\}$. Given any element $u \leq \ka_d$ we have $I(u \ka_d) =
  -\ka_d.(I(\ka_d) \ssm I(u))$; in fact, the containment $\supseteq$ follows
  from the definition of inversion sets, and both sides have the same
  cardinality because $(u \ka_d)^{-1} = \ka_d/u$. Now choose $u \leq \ka_d$ such
  that $\al$ is a minimal box of $I(\ka_d) \ssm I(u)$. Then $-\ka_d.\al$ is a
  maximal box of $I(u \ka_d)$, so it follows from \Lemma{skew}(b) that
  $\delta(-\ka_d.\al) = - u \ka_d.(-\ka_d.\al) = u.\al = \delta(\al)$. This
  proves part (b).
\end{proof}

\begin{prop}\label{prop:z1k1}%
  The element $z_1 s_\ga = z_1/\ka_1$ permutes $\cP_X$ and satisfies\newline
  $w_{0,X} (z_1 s_\ga) w_{0,X} = (z_1 s_\ga)^{-1}$. The action of $z_1 s_\ga$ on
  $\cP_X$ has the following properties.\smallskip

  \noin{\rm(a)} $z_1 s_\ga : \cP_X \ssm I(z_1) \to I(z_1^\vee)$ is an order
    isomorphism, and $\delta(z_1 s_\ga.\al) = \delta(\al)$ for all $\al \in
    \cP_X \ssm I(z_1)$.\smallskip

  \noin{\rm(b)} $z_1 s_\ga : I(z_1) \ssm \{\ga\} \to w_{0,X}.(I(z_1) \ssm
  \{\ga\})$ is an order isomorphism.\smallskip

  \noin{\rm(c)} We have $z_1 s_\ga.\wt\al_d = \wt\al_{d-1}$ for $2 \leq d \leq
  d_X(2)$, and $z_1 s_\ga.\ga = w_{0,X}.\ga$ is the highest root of $\Phi^+$.
\end{prop}
\begin{proof}
  Since $z_1 s_\ga \in W_X$ we have $z_1 s_\ga.\cP_X = \cP_X$, and
  \Lemma{gammaperp} implies
  \[
    z_1 s_\ga.\al = \begin{cases}
      z_1.\al & \text{if $\al \in \cP_X \ssm I(z_1)$;} \\
      z_1.\al - z_1.\ga & \text{if $\al \in I(z_1) \ssm \{\ga\}$;} \\
      -z_1.\ga & \text{if $\al=\ga$}
    \end{cases}
  \]
  for any $\al \in \cP_X$. Part (a) therefore follows from
  \Proposition{firstbiject}(a). In particular, we have $z_1 s_\ga.\wt\al_d =
  \wt\al_{d-1}$ for $d \geq 2$. Since $z_1 s_\ga.\al < z_1 s_\ga.\ga$ for every
  $\al \in I(z_1) \ssm \{\ga\}$, we deduce that $z_1 s_\ga.\ga = w_{0,X}.\ga$ is
  the maximal box of $\cP_X$, which proves part (c). Using \Lemma{ka_d_inv} we
  also obtain $z_1 s_\ga w_{0,X} = z_1 w_{0,X} s_{w_{0,X}.\ga} = w_{0,X}
  z_1^{-1} s_{z_1.\ga} = w_{0,X} (z_1 s_\ga)^{-1}$. Finally, $z_1 s_\ga$ is
  order preserving on $I(z_1) \ssm \{\ga\}$ because $z_1.\be > 0$ for each $\be
  \in \Delta \ssm \{\ga\}$, and the identity $(z_1 s_\ga)^{-1} = w_{0,X} (z_1
  s_\ga) w_{0,X}$ shows that the inverse map is also order preserving. This
  proves part (b).
\end{proof}

\begin{cor}\label{cor:zd=z1k1d}%
  We have $(z_1 s_\ga)^d.\al = z_d.\al$ for each $\al \in \cP_X \ssm I(z_d)$.
\end{cor}
\begin{proof}
  Noting that $\cP_X \ssm I(z_d) = \{ \al \in \cP_X \mid \wt\al_{d+1} \leq \al
  \leq w_{0,X}.\wt\al_1 \}$ and $I(z_d^\vee) = \{ \al \in \cP_X \mid \wt\al_1
  \leq \al \leq w_{0,X}.\wt\al_{d+1} \}$, it follows from \Proposition{z1k1}
  that $(z_1 s_\ga)^d : \cP_X \ssm I(z_d) \to I(z_d^\vee)$ is an order
  isomorphism that preserves the labeling $\delta$. The result therefore follows
  from \Proposition{firstbiject}(a) and \Remark{skew}(a).
\end{proof}

\begin{remark}
  We will show in \Corollary{zdkd} that $z_d/\ka_d = (z_1/\ka_1)^d = (z_1
  s_\ga)^d$. Together with \Corollary{zd=z1k1d}, this implies that $\ka_d.\al =
  \al$ for all $\al \in \cP_X \ssm I(z_d)$. \Proposition{biject} implies that
  $z_d/\ka_d : I(z_d)\ssm I(\ka_d) \to I(\ka_d^\vee)\ssm I(z_d^\vee)$ is an
  order isomorphism, which generalizes \Proposition{z1k1}(b). Using
  \Proposition{firstbiject}(b), it follows that $z_d/\ka_d : I(\ka_d) \to
  w_{0,X}.I(\ka_d)$ is an order isomorphism. These remarks will not be used in
  the following.
\end{remark}

\targetsec{Sd}{}%
For $1 \leq d \leq d_X(2)$ we define $\cS_d = (I(z_d)\ssm I(z_{d-1})) \cup
(I(\ka_d)\ssm I(\ka_{d-1}))$.

\begin{prop}\label{prop:Sd}%
  We have $\cS_d = \{ \al \in \cP_X \mid (\al,\wt\al_d) > 0 \}$ for $1 \leq d
  \leq d_X(2)$, and $z_1 s_\ga.\cS_d = \cS_{d-1}$ for $2 \leq d \leq d_X(2)$.
\end{prop}
\begin{proof}
  The second identity follows from the first identity together with
  \Proposition{z1k1}(c). We must therefore show that $(\al,\wt\al_d) > 0$ if and
  only if $\al \in \cS_d$, for any $\al \in \cP_X$. If $\al$ and $\wt\al_d$ are
  not comparable in the partial order on $\cP_X$, then $(\al,\wt\al_d) = 0$ and
  $\al \notin \cS_d$. If $\al \geq \wt\al_d$, then \Proposition{firstbiject}(a)
  shows that $z_{d-1}.\wt\al_d = \ga$, and also that $\al \in \cS_d$ if and only
  if $z_{d-1}.\al \in I(z_1)$, so the claim follows from \Lemma{gammaperp},
  noting that $(\al,\wt\al_d) = (z_{d-1}.\al,\ga)$. Finally, if $\al \leq
  \wt\al_d$, then \Proposition{firstbiject}(b) shows that $-\ka_d.\wt\al_d =
  \ga$, and also that $\al \in \cS_d$ if and only if $-\ka_d.\al \in I(z_1)$, so
  the claim again follows from \Lemma{gammaperp}, this time noting that
  $(\al,\wt\al_d) = (-\ka_d.\al,\ga)$.
\end{proof}

\begin{cor}\label{cor:dimMd}%
  Let $0 \leq d \leq d_X(2)$.\smallskip

  \noin{\rm(a)} We have $\dim M_d = \dim(X) + \ell(z_d) +
  \ell(\ka_d)$.\smallskip

  \noin{\rm(b)} The variety $M_d(1.P_X, \kappa_d.P_X)$ is irreducible of
  dimension $\ell(\ka_d)$.
\end{cor}
\begin{proof}
  Since $\ell(z_d)+\ell(\ka_d) - \ell(z_{d-1})-\ell(\ka_{d-1}) = \# \cS_d + 1$
  for $d \geq 1$, it follows from \Proposition{Sd} and \Corollary{degq} that
  $\ell(z_d)+\ell(\ka_d) = d \int_{X_\ga} c_1(T_X)$. This proves part (a). Since
  $\ev_1 : M_d \to X$ is a locally trivial fibration, it follows that
  $M_d(1.P_X)$ is irreducible of dimension $\ell(z_d) + \ell(\ka_d)$. Part (b)
  follows from this, using that $\ev_2 : M_d(1.P_X) \to X_{z_d}$ is a locally
  trivial fibration over a dense open subset of $X_{z_d} = \ov{P_X \ka_d.P_X}$
  that contains $\ka_d.P_X$.
\end{proof}

\subsection{Incidence varieties}\label{sec:incidence}

Given any flag variety $Y = G/P_Y$, the \emph{incidence variety} of $X$ and $Y$
is the flag variety $Z = G/P_Z$ defined by $P_Z = P_X \cap P_Y$. Let $p : Z \to
X$ and $q : Z \to Y$ be the projections and set $F = p^{-1}(1.P_X) = P_X/P_Z$
and $\Gamma = q^{-1}(1.P_Y) = P_Y/P_Z$. For example, if $X = \Gr(m,n)$ and $Y =
\Fl(m-d,m+d;n)$, then $Z = \Fl(m-d,m,m+d;n)$, $\Gamma = \Gr(d,2d)$, and $F =
\Gr(m-d,m) \times \Gr(d, n-m)$. For $\om \in Y$ we write $\Gamma_\om =
p(q^{-1}(\om)) \subset X$. We identify $Z$ with the subvariety $\{(\om,x) \in Y
\times X \mid x \in \Gamma_\om\}$ of $Y \times X$. The restricted maps $p :
\Gamma \to p(\Gamma)$ and $q : F \to q(F)$ are isomorphisms, hence $p(\Gamma)
\subset X$ and $q(F) \subset Y$ are non-singular Schubert varieties. More
precisely we have $F = Z_{w_{0,X}^Z}$ and $q(F) = Y_{w_{0,X}^Z}$, and also
$\Gamma = Z_\ka$ and $p(\Gamma) = X_\ka$, where $\ka = w_{0,Y}^Z$. In our
applications of this construction we have $\ka = \ka_d$ for some degree $d$, see
\Corollary{comin_kapd} (but $\ka$ is not related to \Theorem{schubint}).

If $\ga \notin \Delta_Y$, then $P_Y \subset P_X$ and $\Gamma$ is a point. Assume
that $\ga \in \Delta_Y$. Since $\Delta_Y \ssm \Delta_Z = \{\ga\}$ consists of a
cominuscule simple root, it follows that $\Gamma$ is a cominuscule flag variety.
The corresponding partially ordered set is given by  $\cP_\Gamma = I(\ka) =
\Phi_Y^+ \ssm \Phi_Z = \cP_X \cap \Phi_Y$. The labeling $\cP_\Gamma \to
\Delta_Y$ is the restriction of the labeling $\delta : \cP_X \to \Delta$, and a
curve $C \subset \Gamma$ has the same degree in $\Gamma$ as in $X$. The variety
$\Gamma = P_Y/(P_Y \cap P_X)$ depends only on the connected component of (the
Dynkin diagram of) $\Delta_Y$ that contains $\ga$.

\targetsec{dynkinint}{}%
If $S \subset \Delta$ is any subset that is connected in the Dynkin diagram of
$\Phi$, then the sum of all simple roots in $S$ is a root in $\Phi$. Given $\be
\in \Delta$, let $[\ga,\be]$ denote the smallest connected subset of $\Delta$
that contains $\ga$ and $\be$. A simple root $\be \in \Delta \ssm \Delta_Y$ is
an \emph{essential excluded root} of $Y$ if $[\ga,\be] \subset \Delta_Y \cup
\{\be\}$, i.e.\ $\be$ is connected to the component of $\ga$ in $\Delta_Y$. The
group $P_Y$ is contained in the stabilizer of $X_\ka$ in $G$, and is equal to
this stabilizer if and only if all roots in $\Delta \ssm \Delta_Y$ are essential
excluded roots of $Y$.

\begin{remark}
  Assume that $P_Y$ is the stabilizer of $X_\ka$ in $G$. Then $\Delta_Y = \{\be
  \in \Delta \mid (\ka.\om_\ga, \be^\vee) \leq 0 \}$. In fact, if $\be \in
  \Delta$ and $(\ka.\om_\ga, \be^\vee) > 0$, then $\al = \ka^{-1}.\be$ must be a
  minimal box of $\cP_X \ssm I(\ka)$ by \Lemma{skew}(b), which implies that $\be
  \in \Delta \ssm \Delta_Y$. On the other hand, if $\be \in \Delta \ssm
  \Delta_Y$, then let $\al$ be the sum of the simple roots in the interval
  $[\ga,\be]$. Then $\al$ is a minimal box of $\cP_X \ssm I(\ka)$, $(\om_\ga,
  \al^\vee) > 0$, and \Lemma{skew}(b) implies that $\be = \ka.\al$.
\end{remark}

\subsection{Primitive cominuscule varieties}\label{sec:primitive}

The cominuscule flag variety $X$ will be called \emph{primitive} if the excluded
cominuscule simple root $\ga$ is invariant under the Cartan involution, that is,
$\ga = - w_0.\ga$. The list of all primitive cominuscule varieties is contained
in \Table{primitive}.

\begin{prop}\label{prop:primitive}%
  Let $X$ be a cominuscule flag variety of diameter $d = d_X(2)$, and let $\rho
  \in \Phi^+$ be the highest root. The following conditions are equivalent and
  hold if and only if $X$ is primitive. {\rm(1)} $\ga = -w_0.\ga$. {\rm(2)}
  $\delta(\rho) = \ga$. {\rm(3)} $\ka_d = z_d$. {\rm(4)} $(w_0^X)^{-1} = w_0^X$.
  {\rm(5)} $\dim(M_d) = 3 \dim(X)$.
\end{prop}
\begin{proof}
  Condition (1) is our definition of primitive. \Lemma{PXinvol} shows that
  $\delta(\rho) = \delta(w_{0,X}.\ga) = -w_0.\delta(\ga) = -w_0.\ga$, so (1) is
  equivalent to (2). The implication (2) $\Rightarrow$ (3) is clear from the
  definitions, (3) $\Rightarrow$ (4) holds because $\ka_d^{-1} = \ka_d$ and $z_d
  = w_0^X$, and (4) $\Rightarrow$ (2) holds because (4) implies that
  $(w_0^X)^{-1} \in W^X$. Finally, (5) is equivalent to (3) by
  \Corollary{dimMd}(a).
\end{proof}

\begin{table}
  \caption{Primitive cominuscule varieties.}
  \label{tab:primitive}
  \begin{tabular}{|c|c|}
    \hline
    $X$ & $d_X(2)$ \\
    \hline
    & {}\vspace{-.9em}\\
    $\Gr(d,2d)$ & $d$ \\
    $\LG(d,2d)$ & $d$ \\
    $\OG(2d,4d)$ & $d$ \\
    $Q^N$ & 2 \\
    $E_7/P_7$ & 3 \\
    \hline
  \end{tabular}
\end{table}

The following identity is a consequence of the structure theorems for quantum
cohomology proved in \cite{bertram:quantum, buch.kresch.ea:gromov-witten,
kresch.tamvakis:quantum, kresch.tamvakis:quantum*1,
chaput.manivel.ea:quantum*1}. It can also be checked by constructing the unique
rational curve through three general points in each case. We sketch how this is
done in the proof.

\begin{thm}\label{thm:prim3pt}%
  Let $X$ be a primitive cominuscule variety of diameter $d = d_X(2)$. Then
  $\gw{\pt,\pt,\pt}{d} = 1$.
\end{thm}
\begin{proof}
  Assume first that $V, V', V'' \subset \C^{2d}$ are three general points in the
  primitive Grassmannian $\Gr(d,2d)$ of type A. Choose any basis $\{v_1, \dots,
  v_d\}$ of $V$, and write $v_i = v'_i + v''_i$ for each $i$, with $v'_i \in V'$
  and $v''_i \in V''$. Then the only rational curve of degree $d$ through $V$,
  $V'$, $V''$ is $C = \{ \langle s v'_1 + t v''_1, \dots, s v'_d + t v''_d
  \rangle \mid (s:t) \in \bP^1 \}$, see
  \cite[Prop.~1]{buch.kresch.ea:gromov-witten} for details. The same
  construction works for the Lagrangian Grassmannian $\LG(d,2d)$ and the maximal
  orthogonal Grassmannian $\OG(2d,4d)$, see \cite[Prop.~2 and
  Prop.~4]{buch.kresch.ea:gromov-witten}. If $V, V', V'' \subset \C^{N+2}$ are
  general points in the quadric $Q^N = \OG(1,N+2)$, then $E = V \oplus V' \oplus
  V''$ is an orthogonal vector space of dimension 3, and the unique curve of
  degree 2 through $V$, $V'$, $V''$ is $C = \bP(E) \cap Q^N$. A similar
  construction of the unique cubic curve through three general points of the
  Freudenthal variety $E_7/P_7$ can be found in
  \cite[Lemma~5]{chaput.manivel.ea:quantum*1}.
\end{proof}

\begin{lemma}\label{lemma:prim_dual}%
  Let $X$ be a primitive cominuscule variety. For $0 \leq d \leq d_X(2)$ we have
  $\ka_d^\vee = z_{d_X(2)-d}$.
\end{lemma}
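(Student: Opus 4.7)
The plan is to verify the equality by comparing inversion sets. By \Lemma{PXinvol}(b) we have $I(\ka_d^\vee) = \cP_X \ssm w_{0,X}.I(\ka_d)$, so after complementing, the identity $\ka_d^\vee = z_{d_X(2)-d}$ reduces to showing
\[
  w_{0,X}.I(\ka_d) \ = \ \{\al \in \cP_X \mid \al \geq \tal_{d_X(2)-d+1}\} \,.
\]

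First I would invoke \Lemma{PXinvol}(a), which asserts that $w_{0,X}$ is an order-reversing involution on $\cP_X$. Since $I(\ka_d) = \{\al \in \cP_X \mid \al \leq \tal_d\}$ is the principal order ideal generated by the single element $\tal_d$, its image under $w_{0,X}$ is the principal filter generated by $w_{0,X}.\tal_d$. The remaining task is therefore to identify $w_{0,X}.\tal_d$ with $\tal_{d_X(2)-d+1}$.

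The key step uses the primitive hypothesis. By \Proposition{primitive} we have $\ga = -w_0.\ga$, so the formula $\delta(w_{0,X}.\al) = -w_0.\delta(\al)$ from \Lemma{PXinvol}(a) shows that $\delta(w_{0,X}.\tal_d) = -w_0.\ga = \ga$. Hence $w_{0,X}$ restricts to an order-reversing involution on the totally ordered chain $\delta^{-1}(\ga) = \{\tal_1 < \tal_2 < \dots < \tal_{d_X(2)}\}$, and this forces $w_{0,X}.\tal_d = \tal_{d_X(2)-d+1}$, completing the proof.

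The argument is essentially bookkeeping on inversion sets once \Lemma{PXinvol} is at hand; the only real input beyond that is the primitive hypothesis, which is used precisely to guarantee that $w_{0,X}$ maps the $\ga$-labeled chain into itself rather than into some other fiber of $\delta$. I do not expect a serious obstacle, only minor attention to indexing.
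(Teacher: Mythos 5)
Your proposal is correct and follows exactly the same route as the paper's (very terse) proof, which simply cites \Lemma{PXinvol} and notes that $w_{0,X}.\tal_d = \tal_{d_X(2)-d+1}$. You have merely made explicit the intermediate reasoning — in particular the use of the primitivity hypothesis $\ga = -w_0.\ga$ to show $w_{0,X}$ preserves the $\ga$-labeled chain, which is the key point the paper leaves implicit.
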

\begin{proof}
  This follows from \Lemma{PXinvol}, noting that $w_{0,X}.\tal_d =
  \tal_{d_X(2)-d+1}$.
\end{proof}

In the following we will consider a single point to be a primitive cominuscule
flag variety of diameter zero.

\begin{prop}\label{prop:defYd}%
  Let $X$ be any cominuscule variety and $0 \leq d \leq d_X(2)$. There exists a
  unique largest parabolic subgroup $P_{Y_d} \subset G$ containing $B$ such that
  $\Gamma_d = P_{Y_d}/(P_X \cap P_{Y_d})$ is a primitive cominuscule variety of
  diameter $d$. In addition, $F_d = P_X/(P_X \cap P_{Y_d})$ is a product of
  cominuscule varieties.
\end{prop}
\begin{proof}
  This must be checked from the classification of cominuscule flag varieties,
  but only the associated Dynkin diagrams need to be considered. For $d=0$ we
  have $Y_0=X$ and $\Gamma_0=F_0=\{\pt\}$. If $X$ is a primitive cominuscule
  variety and $d=d_X(2)$, then $Y_d=F_d=\{\pt\}$ and $\Gamma_d=X$. The choice of
  $P_{Y_d}$ in all other cases is indicated in \Table{incidence}, where the
  roots of $\Delta \ssm \Delta_{Y_d}$ are colored gray and $\ga$ is colored
  black.
\end{proof}

% !TeX root=qkpos.tex

\begin{table}
  \def\hspem#1{\mbox{}\hspace{#1em}\mbox{}}
  \def\vspex#1{\vspace{#1ex}}
  \def\topbox{\vspex{-1.7}\\}
  \def\botbox{\vspex{-2.1}\\}
  \caption{The quantum-to-classical construction.}
  \label{tab:incidence}
  \def\mydynsz{.8}
  \begin{tabular}{|l|}
    \hline\topbox %%%%%%%%%%%%%%%%%%%%%%%%%%%%%%%%%%%%%%%%%%%%%%%%%%%%%%%%%%%%
    Grassmannian $X=\Gr(m,n+1)$ of type $A_n$\\
    $d_X(2)=\min(m,n\!+\!1\!-\!m)$
    \vspex{-.5}\\
    \hspem{10}\pic{1}{dyn_gr49_d2}\\
    \vspex{-2.5}\\
    $Y_d=\Fl(m\!-\!d,m\!+\!d;n\!+\!1)$ \ ; \ $\Gamma_d=\Gr(d,2d)$ \ ;\\
    $F_d=\Gr(m\!-\!d,m)\times\Gr(d,n\!+\!1\!-\!m)$\\
    \botbox\hline\topbox %%%%%%%%%%%%%%%%%%%%%%%%%%%%%%%%%%%%%%%%%%%%%%%%%%%%%
    Lagrangian Grassmannian $X=\LG(n,2n)$ of type $C_n$\\
    $d_X(2) = n$
    \vspex{-3}\\
    \hspem{12.5}\pic{1}{dyn_sg7_d3}\\
    \vspex{-2.5}\\
    $Y_d=\SG(n\!-\!d,2n)$ \ ; \ $\Gamma_d=\LG(d,2d)$ \ ; \
    $F_d=\Gr(n\!-\!d,n)$\\
    \botbox\hline\topbox %%%%%%%%%%%%%%%%%%%%%%%%%%%%%%%%%%%%%%%%%%%%%%%%%%%%%
    Max.\ orthogonal Grassmannian $X=\OG(n,2n)$ of type $D_n$\\
    $d_X(2)=\lfloor n/2\rfloor$
    \vspex{-3}\\
    \hspem{14.4}\pic{1}{dyn_og7_d2}\\
    $Y_d=\OG(n\!-\!2d,2n)$ \ ; \ $\Gamma_d=\OG(2d,4d)$ \ ; \
    $F_d=\Gr(n\!-\!2d,n)$\\
    \botbox\hline\topbox %%%%%%%%%%%%%%%%%%%%%%%%%%%%%%%%%%%%%%%%%%%%%%%%%%%%%
    Even quadric $X = Q^{2n-2}$ of type $D_n$\\
    $d_X(2)=2$
    \vspex{-3}\\
    \hspem{14.4}\pic{1}{dyn_q12_d1}\\
    \vspex{-4.8}\\
    $Y_1=\OG(2,2n)$ \ ; \ $\Gamma_1=\bP^1$ \ ; \ $F_1=Q^{2n-4}$\\
    \botbox\hline\topbox %%%%%%%%%%%%%%%%%%%%%%%%%%%%%%%%%%%%%%%%%%%%%%%%%%%%%
    Odd quadric $X = Q^{2n-1}$ of type $B_n$\\
    $d_X(2)=2$
    \vspex{-2.9}\\
    \hspem{15}\pic{1}{dyn_q11_d1}\\
    \vspex{-2.5}\\
    $Y_1=\OG(2,2n\!+\!1)$ \ ; \ $\Gamma_1=\bP^1$ \ ; \ $F_1=Q^{2n-3}$\\
    \botbox\hline\topbox %%%%%%%%%%%%%%%%%%%%%%%%%%%%%%%%%%%%%%%%%%%%%%%%%%%%%
    Cayley plane $X=E_6/P_6$\\
    $d_X(2)=2$\\
    \vspex{-.8}\\
    $Y_1=E_6/P_5$ \ ; \ $\Gamma_1=\bP^1$ \ ; \ $F_1=\OG(5,10)$
    \vspex{-8.5}\\
    \hspem{17.5}\pic{1}{dyn_e6_d1}\\
    \\\\
    $Y_2=E_6/P_1$ \ ; \ $\Gamma_2=Q^8$ \ ; \ $F_2=Q^8$
    \vspex{-8.5}\\
    \hspem{17.5}\pic{1}{dyn_e6_d2}\\
    \hline\topbox %%%%%%%%%%%%%%%%%%%%%%%%%%%%%%%%%%%%%%%%%%%%%%%%%%%%%%%%%%%%
    Freudenthal variety $X=E_7/P_7$\\
    $d_X(2)=3$\\
    \vspex{-.8}\\
    $Y_1=E_7/P_6$ \ ; \ $\Gamma_1=\bP^1$ \ ; \ $F_1=E_6/P_6$
    \vspex{-8.5}\\
    \hspem{15}\pic{1}{dyn_e7_d1}\\
    \\\\
    $Y_2=E_7/P_1$ \ ; \ $\Gamma_2=Q^{10}$ \ ; \ $F_2=E_6/P_1$
    \vspex{-8.5}\\
    \hspem{15}\pic{1}{dyn_e7_d2}\\
    \hline %%%%%%%%%%%%%%%%%%%%%%%%%%%%%%%%%%%%%%%%%%%%%%%%%%%%%%%%%%%%%%%%%%%
  \end{tabular}
\end{table}

\subsection{Primitive curve neighborhoods}\label{sec:prim-nbhd}%

\targetsec{qcspaces}{}%
Let $X$ be a cominuscule variety. For $0 \leq d \leq d_X(2)$ we define $Y_d =
G/P_{Y_d}$ by the parabolic subgroup of \Proposition{defYd}, and we let $Z_d =
G/P_{Z_d}$ be the incidence variety defined by $P_{Z_d} = P_{Y_d} \cap P_X$. Let
$p_d : Z_d \to X$ and $q_d : Z_d \to Y_d$ denote the projections, with fibers
$F_d = P_X/P_{Z_d}$ and $\Gamma_d = P_{Y_d}/P_{Z_d}$. For any point $\om \in
Y_d$ we will use the notation $\Gamma_\om = p_d q_d^{-1}(\om) \subset X$.
We identify $Z_d$ with its image by the map $q_d\times p_d$, that is
\[
  Z_d = \{(\om,x) \in Y_d\times X \mid x \in \Gamma_\om\} \,.
\]
We will also frequently identify $\Gamma_d$ with $p_d(\Gamma_d) =
\Gamma_{1.P_Y}$.\footnote{If the symbol $\Gamma$ has a subscript that is not a
degree, then the subscript is a point $\omega$ in $Y_d$ for some degree $d$, in
which case $\Gamma_\omega$ is a translate of $\Gamma_d = \Gamma_{1.P_{Y_d}}$.}
Since $P_{Y_d}$ is the stabilizer of $\Gamma_{1.Y_d}$ by the maximality
condition of \Proposition{defYd}, the following result shows that the assignment
$\om \mapsto \Gamma_\om$ is a bijection from $Y_d$ to the set of all translates
of $X_{\ka_d}$ in $X$. Recall that $\Gamma_d(x,y)$ is the union of all stable
curves of degree $d$ through $x$ and $y$, and $\ocZ_{d,2} = \{ (x,y) \in X
\times X \mid \dist(x,y) = d \}$.

\begin{cor}\label{cor:comin_kapd}%
  {\rm(a)} We have $X_{\ka_d} = \Gamma_d = \Gamma_d(1.P_X,
  \ka_d.P_X)$.\smallskip

  \noin{\rm(b)} Let $x, y \in X$. We have $\dist(x,y) \leq d$ if and only if
  $x,y \in \Gamma_\om$ for some $\om \in Y_d$. The element $\om$ is unique if
  $\dist(x,y) = d$.\smallskip

  \noin{\rm(c)} The function $\varphi : \ocZ_{d,2} \to Y_d$ defined by
  $\Gamma_{\varphi(x,y)} = \Gamma_d(x,y)$ is a morphism of varieties.
\end{cor}
\begin{proof}
  Since $\Gamma_d$ is a primitive cominuscule variety of diameter $d$, it
  follows that $\wt\al_d$ is the largest root in $\cP_{\Gamma_d}$, hence
  $\cP_{\Gamma_d} = I(\ka_d)$ and $\Gamma_d = X_{\ka_d}$. \Corollary{dimMd}(b)
  implies that $\Gamma_d(1.P_X, \ka_d.P_X) = \ev_3(M_d(1.P_X, \ka_d.P_X))$ is an
  irreducible subvariety of $X$ of dimension at most $\ell(\ka_d)$, and
  \Theorem{prim3pt} shows that $X_{\ka_d} \subset \Gamma_d(1.P_X, \ka_d.P_X)$.
  This proves part (a). For part (b) we may assume that $x = 1.P_X$ and $y =
  \ka_{d'}.P_X$ by \Lemma{orbits}, where $d' = \dist(x,y)$. If $\dist(x,y) \leq
  d$, then $x,y \in \Gamma_{1.P_{Y_d}}$ by part (a). On the other hand, since
  the diameter of $\Gamma_\om$ is $d$ for each $\om \in Y_d$, we have
  $\dist(x,y) \leq d$ whenever $x,y \in \Gamma_\om$. Finally, if $x,y \in
  \Gamma_\om$ and $\dist(x,y) = d$, then \Theorem{prim3pt} applied to
  $\Gamma_\om$ shows that $\Gamma_\om \subset \Gamma_d(x,y) =
  \Gamma_{1.P_{Y_d}}$, hence $\om = 1.P_{Y_d}$. Part (b) follows from this.
  Choose morphisms $s_1 : \oX^1 \to B^-$ and $s_2 : \oX_{z_d} \to B$ so that $x
  = s_1(x).P_X$ for all $x \in \oX^1$, and $y = s_2(y)z_d.P_X$ for all $y \in
  \oX_{z_d}$. For all points $(x,y)$ in a dense open subset of $\ocZ_{d,2}$ we
  have $(x,y) = s_1(x) s_2(s_1(x)^{-1}.y) z_d \ka_d.(1.P_X, \ka_d.P_X)$, so
  $\varphi$ is defined by $\varphi(x,y) = s_1(x) s_2(s_1(x)^{-1}.y)
  z_d\ka_d.P_{Y_d}$ on this subset. This proves part (c).
\end{proof}

\subsection{A blow-up of the Kontsevich moduli space}\label{sec:qcblowup}%

\targetsec{blowup}{}%
Let $0 \leq d \leq d_X(2)$. The map $q_d : Z_d \to Y_d$ is a locally trivial
fibration with fibers $\Gamma_\om \cong X_{\kappa_d}$, $\om \in Y_d$. Define
a new family $\Bl_d \to Y_d$ by replacing each fiber $\Gamma_\om$ with the
moduli space $\Mb_{0,3}(\Gamma_\om, d)$. Since $\Gamma_\om$ is a
subvariety of $X$, we have $\Mb_{0,3}(\Gamma_\om,d) \subset M_d$, and these
inclusions define a morphism $\pi : \Bl_d \to M_d$. Equivalently, we have $\Bl_d
= G \times^{P_{Y_d}} \Mb_{0,3}(\Gamma_d, d)$. We will identify $\Bl_d$ with its
image in $Y_d \times M_d$, that is
\[
\Bl_d = \{ (\om,f) \in Y_d \times M_d \mid \operatorname{Image}(f)
\subset \Gamma_\om \} \,.
\]
\targetsec{Zd3}{}%
We also define the space
\[
Z_d^{(3)} \ = \ Z_d \times_{Y_d} Z_d \times_{Y_d} Z_d \ = \ \{
(\om,x_1,x_2,x_3) \in Y_d \times X^3 \mid x_1,x_2,x_3 \in
\Gamma_\om \} \,.
\]
Define a morphism $\phi : \Bl_d \to Z_d^{(3)}$ by $\phi(\om,f) =
(\om,\ev_1(f),\ev_2(f),\ev_3(f))$, and let $e_i : Z_d^{(3)} \to Z_d$ denote
the $i$-th projection. We obtain the following commutative diagram from
\cite{buch.mihalcea:quantum}:
\begin{equation}\label{eqn:qcdiagram}
  \raisebox{10ex}{
  \xymatrix{\Bl_d \ar[rr]^{\pi} \ar[d]^{\phi} & & M_d \ar[d]^{\ev_i} \\
    Z_d^{(3)}\ar[r]^{e_i} & Z_d \ar[r]^{p_d} \ar[d]^{q_d} & X \\ & Y_d}}
\end{equation}

\begin{prop}\label{prop:qc_birat}
  The maps $\pi : \Bl_d \to M_d$ and $\phi : \Bl_d \to Z_d^{(3)}$ are
  birational.
\end{prop}
\begin{proof}
  It follows from \Theorem{prim3pt} that $\phi$ is birational. Since the image
  of the map $\ev_1 \times \ev_2 : M_d \to X \times X$ is contained in
  $\cZ_{d,2} = \{ (x_1,x_2) \mid \dist(x_1,x_2) \leq d \}$, it follows that $U =
  (\ev_1 \times \ev_2)^{-1}(\ocZ_{d,2})$ is a dense open subset of $M_d$. Given
  any stable map $f \in U$, we have $\dist(\ev_1(f),\ev_2(f)) = d$, so
  \Corollary{comin_kapd} implies that the image of $f$ is contained in
  $\Gamma_\om$ for a unique point $\om \in Y_d$. It follows that $\pi^{-1}(f) =
  (\om,f)$, so $\pi$ is birational.
\end{proof}

The (three point, genus zero) Gromov-Witten invariants of small degrees of a
cominuscule flag variety are given by the following result. Generalizations to
larger degrees can be found in \cite{buch.mihalcea:quantum,
chaput.perrin:rationality, buch.chaput.ea:projected}.

\begin{cor}\label{cor:qc}
  Let $X$ be cominuscule and $0 \leq d \leq d_X(2)$.

  \noin{\rm(a)} For $\Omega_1, \Omega_2, \Omega_3 \in H^*_T(X;\Z)$ we have
  \[
    \gw{\Omega_1, \Omega_2, \Omega_3}{d} = \int_{Y_d} {q_d}_* p_d^*(\Omega_1)
    \cdot {q_d}_* p_d^*(\Omega_2) \cdot {q_d}_* p_d^*(\Omega_3) \,.
  \]

  \noin{\rm(b)} For $\cF_1, \cF_2, \cF_3 \in K_T(X)$ we have
  \[
    I_d(\cF_1,\cF_2,\cF_3) = \euler{Y_d} \big( {q_d}_* p_d^*(\cF_1)
    \cdot {q_d}_* p_d^*(\cF_2) \cdot {q_d}_* p_d^*(\cF_3) \big) \,.
  \]
\end{cor}
\begin{proof}
  Since all varieties in the diagram \eqn{qcdiagram} have rational
  singularities, it follows from \Proposition{qc_birat} that $\pi_*[\cO_{\Bl_d}]
  = [\cO_{M_d}]$ and $\phi_*[\cO_{\Bl_d}] = [\cO_{Z_d^{(3)}}]$. We obtain
  \[
  \begin{split}
    \euler{M_d}(\ev_1^*(\cF_1)\cdot \ev_2^*(\cF_2)\cdot \ev_3^*(\cF_3))
    &= \euler{Z_d^{(3)}}(e_1^*p_d^*(\cF_1)\cdot e_2^*p_d^*(\cF_2)\cdot
    e_3^*p_d^*(\cF_3)) \\
    &= \euler{Y_d}({q_d}_*p_d^*(\cF_1)\cdot
    {q_d}_*p_d^*(\cF_2)\cdot {q_d}_*p_d^*(\cF_3)) \,,
  \end{split}
  \]
  where the first identity follows from the projection formula (twice) together
  with commutativity of the diagram \eqn{qcdiagram}, and the second follows from
  \cite[Lemma~3.5]{buch.mihalcea:quantum}. This proves part (b). Part (a) is
  proved by repeating the same argument with cohomology classes, or by
  extracting the initial terms of both sides in part (b), see
  \cite[\S4.1]{buch.mihalcea:quantum}.
\end{proof}

\targetsec{qcrich}{}%
For any subvarieties $\Omega_1, \Omega_2 \subset X$ and $0 \leq d \leq d_X(2)$
we define
\[
  \begin{split}
    Y_d(\Omega_1,\Omega_2)
    &= q_d(p_d^{-1}(\Omega_1)) \cap q_d(p_d^{-1}(\Omega_2)) \\
    &= \{ \om \in Y_d \mid \Gamma_\om \cap \Omega_1 \neq \emptyset
    \text{ and } \Gamma_\om \cap \Omega_2 \neq \emptyset \} \,, \\
    Z_d(\Omega_1,\Omega_2) &= q_d^{-1}(Y_d(\Omega_1,\Omega_2)) \,.
  \end{split}
\]
We also define the special cases $Y_d(\Omega_1) = q_d p_d^{-1}(\Omega_1)$ and
$Z_d(\Omega_1) = q_d^{-1}(Y_d(\Omega_1))$. Notice that for $\om \in Y_d$ we have
$\Gamma_\om \cap \Omega_1 \neq \emptyset$ if and only if $\om \in
Y_d(\Omega_1)$.

\begin{cor}\label{cor:qc_nbhd}%
  We have $\Gamma_d(\Omega_1, \Omega_2) = p_d(Z_d(\Omega_1, \Omega_2))$. As a
  special case we obtain $\Gamma_d(\Omega_1) = p_d(Z_d(\Omega_1)) = p_d q_d^{-1}
  q_d p_d^{-1}(\Omega_1)$.
\end{cor}
\begin{proof}
  Since the diagram \eqn{qcdiagram} is commutative and the maps $\pi$ and $\phi$
  are surjective, we obtain
  \[
    \begin{split}
      \Gamma_d(\Omega_1, \Omega_2)
      &= \ev_3\pi \left((\ev_1\pi)^{-1}(\Omega_1)
      \cap (\ev_2\pi)^{-1}(\Omega_2)\right) \\
      &= p_d e_3 \left((p_d e_1)^{-1}(\Omega_1)
      \cap (p_d e_2)^{-1}(\Omega_2)\right) \\
      &= p_d\left(\{(\om,x_3) \in Z_d \mid \Gamma_\om
      \cap \Omega_1 \neq \emptyset
      \text{ and } \Gamma_\om \cap \Omega_2 \neq \emptyset \}\right)
    \end{split}
  \]
  as required.
\end{proof}

% !TeX root=qkpos.tex

\section{Fibers of the quantum-to-classical construction}\label{sec:qcfibers}

In this section we obtain explicit descriptions of the general fibers of several
maps related to the quantum-to-classical construction. These results are
required for determining the powers of $q$ that occur in quantum products, as
well as for our proof that the structure constants of quantum $K$-theory have
alternating signs.

\subsection{Bijections between order ideals}\label{sec:qcbiject}%

\begin{lemma}\label{lemma:weylid}%
  For $0 \leq d \leq d_X(2)$ we have the identities $\ka_d = w_{0,Y_d}^{Z_d}$,
  $z_d = w_{0,X} w_{0,Y_d}$, $z_d/\ka_d = w_{0,X}^{Z_d}$, and $w_{0,Y_d} = \ka_d
  w_{0,Z_d} = w_{0,Z_d}\ka_d$.
\end{lemma}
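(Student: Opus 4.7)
The plan is to establish the four identities in the order (i) $\ka_d = w_{0,Y_d}^{Z_d}$, (ii) $w_{0,Y_d}=\ka_d w_{0,Z_d}=w_{0,Z_d}\ka_d$, (iii) $z_d=w_{0,X}w_{0,Y_d}$, (iv) $z_d/\ka_d=w_{0,X}^{Z_d}$, with each feeding into the next.

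For (i), I will compare two descriptions of the fiber $\Gamma_d$: the general discussion of incidence varieties in \Section{incidence} identifies $\Gamma_d\subset Z_d$ as the Schubert variety $Z_{w_{0,Y_d}^{Z_d}}$, with $p_d$ restricting to an isomorphism onto $X_{w_{0,Y_d}^{Z_d}}\subset X$ (in particular $w_{0,Y_d}^{Z_d}\in W^X$), whereas \Corollary{comin_kapd} records $\Gamma_d=X_{\ka_d}$; the uniqueness of Schubert representatives in $W^X$ then forces $\ka_d=w_{0,Y_d}^{Z_d}$. For (ii), I use the parabolic factorization of $w_{0,Y_d}$ with respect to $W_{Z_d}$: every simple reflection in $\Delta_{Z_d}\subset\Delta_{Y_d}$ is a right descent of the longest element $w_{0,Y_d}$, so its $W_{Z_d}$-part is $w_{0,Z_d}$ and its $W^{Z_d}$-part is $w_{0,Y_d}^{Z_d}=\ka_d$ by (i), giving $w_{0,Y_d}=\ka_d w_{0,Z_d}$; the second equality follows by inverting and using that $w_{0,Y_d}$, $w_{0,Z_d}$, and $\ka_d$ are all involutions (the last from \Lemma{ka_d_inv}).

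For (iii), I combine a geometric identification with a length comparison. \Lemma{ka_d_inv} gives $X_{z_d}=\Gamma_d(1.P_X)$, which as the $P_X$-orbit closure of $\Gamma_d=X_{\ka_d}$ equals $X_{w_{0,X}\cdot\ka_d}=X_{w_{0,X}\ka_d}$ by the Hecke product structure (reduced because $\ka_d\in W^X$). Using (ii) to rewrite $w_{0,X}\ka_d=w_{0,X}w_{0,Y_d}w_{0,Z_d}$ and observing $w_{0,Z_d}\in W_X$, the Schubert variety coincides with $X_{w_{0,X}w_{0,Y_d}}$, so $z_d$ is the minimum-length representative of $w_{0,X}w_{0,Y_d}W_X$ in $W^X$. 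To conclude $z_d=w_{0,X}w_{0,Y_d}$ on the nose, I verify $\ell(w_{0,X}w_{0,Y_d})=\ell(z_d)$: a direct inversion count using $I(w_{0,X})=\Phi_X^+$ and $I(w_{0,Y_d})=\Phi_{Y_d}^+$ yields $\ell(w_{0,X}w_{0,Y_d})=\ell(w_{0,X})+\ell(w_{0,Y_d})-2|\Phi_{Z_d}^+|$, which simplifies via (ii) to $\ell(w_{0,X})+\ell(\ka_d)-\ell(w_{0,Z_d})$. For the matching value of $\ell(z_d)$, I apply \Corollary{dimMd}(a) to both $X$ and the primitive cominuscule variety $\Gamma_d$ of diameter $d$ (where $z_d^{\Gamma_d}=\ka_d^{\Gamma_d}=\ka_d$ by \Proposition{primitive}, so $\dim\Mb_{0,3}(\Gamma_d,d)=3\ell(\ka_d)$), and use $\dim M_d=\dim\Bl_d=\dim Y_d+\dim\Mb_{0,3}(\Gamma_d,d)$ from \Proposition{qc_birat} to extract $\ell(z_d)=\ell(w_{0,X})+\ell(\ka_d)-\ell(w_{0,Z_d})$.

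For (iv), a purely algebraic calculation finishes: the two expressions in (ii) force $\ka_d$ and $w_{0,Z_d}$ to commute, so $w_{0,Y_d}\ka_d=\ka_d^2 w_{0,Z_d}=w_{0,Z_d}$, and then $z_d/\ka_d=z_d\ka_d=w_{0,X}w_{0,Y_d}\ka_d=w_{0,X}w_{0,Z_d}=w_{0,X}^{Z_d}$, the last step via the parabolic factorization $w_{0,X}=w_{0,X}^{Z_d}w_{0,Z_d}$ in $W_X$ and $w_{0,Z_d}^2=1$. The main obstacle in this plan is the length comparison in (iii): matching $\ell(w_{0,X}w_{0,Y_d})$ with $\ell(z_d)$ cleanly requires both the inversion count for products of longest elements of parabolic subgroups and the dimension identity coming from the quantum-to-classical blow-up $\Bl_d$.
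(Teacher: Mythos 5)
Your proposal is correct, but it takes a genuinely different route for the crucial third identity. The paper's proof is considerably more direct: after establishing $\ka_d = w_{0,Y_d}^{Z_d}$ exactly as you do, it identifies $Z_d(1.P_X) = (Z_d)_{w_{0,X}^{Z_d}\,w_{0,Y_d}^{Z_d}}$ from the structure of the incidence variety, notes that $p_d\colon Z_d(1.P_X)\to\Gamma_d(1.P_X)=X_{z_d}$ is birational (from \Corollary{comin_kapd} and \Corollary{qc_nbhd}, since the fiber over a general $z$ with $\dist(z,1.P_X)=d$ is a single $\om$), and then reads off $w_{0,X}^{Z_d}w_{0,Y_d}^{Z_d}=z_d$ in one stroke; the remaining identities are elementary algebra. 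You instead derive $w_{0,Y_d}=\ka_d w_{0,Z_d}$ first via the parabolic factorization of a longest element, express $\Gamma_d(1.P_X)$ as $X_{w_{0,X}\cdot\ka_d}$ via the Hecke product, and then pin down $z_d=w_{0,X}w_{0,Y_d}$ by comparing lengths through the chain $\dim M_d = \dim\Bl_d = \dim Y_d + 3\ell(\ka_d)$ from \Proposition{qc_birat} and \Corollary{dimMd}. Both routes ultimately lean on the birationality of $\pi\colon\Bl_d\to M_d$, but the paper's version keeps the argument local to the incidence variety $Z_d$, whereas yours detours through the moduli-space dimension count; the payoff of your route is that it makes the parabolic factorization $w_{0,Y_d}=\ka_d w_{0,Z_d}$ structural rather than derived.

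One small gap: when you assert that $w_{0,X}\cdot\ka_d=w_{0,X}\ka_d$ is reduced "because $\ka_d\in W^X$", the cited fact is not quite sufficient. Membership in $W^X$ gives $\ell(\ka_d w)=\ell(\ka_d)+\ell(w)$ for $w\in W_X$, i.e.\ controls multiplication on the \emph{right}; to control $\ell(w_{0,X}\ka_d)$ you need $\ka_d\in{}^XW$, equivalently $\ka_d^{-1}\in W^X$. This does hold, but only via $\ka_d^{-1}=\ka_d$ from \Lemma{ka_d_inv}, which you already invoke in step (ii). To see the necessity of the involution: in $\Gr(2,5)$ the element $u=w_0^X$ lies in $W^X$ but $\ell(w_{0,X}\,w_0^X)=6<10=\ell(w_{0,X})+\ell(w_0^X)$, since $(w_0^X)^{-1}\notin W^X$. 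So add a citation of \Lemma{ka_d_inv} at that point and the argument is complete.
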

\begin{proof}
  Since the projection $p_d : q_d^{-1}(1.P_{Y_d}) \to X_{\ka_d}$ is an
  isomorphism, it follows that $q_d^{-1}(1.P_{Y_d}) = (Z_d)_{\ka_d}$ and
  $w_{0,Y_d}^{Z_d} = \ka_d$.  We also obtain $p_d^{-1}(1.P_X) =
  (Z_d)_{w_{0,X}^{Z_d}}$ and $Y_d(1.P_X) = (Y_d)_{w_{0,X}^{Z_d}}$, and therefore
  $Z_d(1.P_X) = (Z_d)_{w_{0,X}^{Z_d} w_{0,Y_d}^{Z_d}}$.  Since we have
  $\dist(z,1.P_X)=d$ for all points $z$ in a dense open subset of
  $\Gamma_d(1.P_X)$, it follows from \Corollary{comin_kapd} and
  \Corollary{qc_nbhd} that $p_d : Z_d(1.P_X) \to \Gamma_d(1.P_X) = X_{z_d}$ is
  birational.  We deduce that $w_{0,X}^{Z_d} w_{0,Y_d}^{Z_d} = z_d$, hence
  $w_{0,X}^{Z_d} = z_d/\ka_d$.  Finally, we have $\ka_d w_{0,Z_d} = w_{0,Y_d} =
  w_{0,Y_d}^{-1} = w_{0,Z_d} \ka_d$, and $z_d = w_{0,X}^{Z_d} \ka_d = w_{0,X}
  w_{0,Z_d} \ka_d = w_{0,X} w_{0,Y_d}$, which completes the proof.
\end{proof}

\targetsec{posetFd}{}%
The variety $F_d = p_d^{-1}(1.P_X) = P_X/P_{Z_d}$ is a product of cominuscule
varieties by \Proposition{defYd}, and the Schubert varieties in this space
(which are products of Schubert varieties in the factors of $F_d$) are indexed
by elements of the set $W^{F_d} \subset W_X$ of minimal representatives of the
cosets in $W_X/W_{Z_d}$.  The maximal element in $W^{F_d}$ is $w_{0,X}^{Z_d} =
z_d/\ka_d$, so the elements of $W^{F_d}$ correspond to order ideals in
$\cP_{F_d} = I(z_d/\ka_d)$.  This subset of $\Phi^+$ is always disjoint from
$\cP_X$ (see \Example{dissectPXa}).  Notice that if $F_d$ has more than one
cominuscule factor, then $\cP_{F_d}$ is a disjoint union of the corresponding
partially ordered sets. For $\eta \in \cP_{F_d}$ we set $\la'(\eta) = \{ \eta'
\in \cP_{F_d} \mid \eta' < \eta \}$. Then the labeling $\delta' : \cP_{F_d} \to
\Delta_X$ is given by $\delta'(\eta) = w'_{\la'(\eta)}.\eta$, where
$w'_{\la'(\eta)}$ is the product of the reflections $s_{\eta'}$ for $\eta' \in
\la'(\eta)$, in increasing order.

\begin{prop}\label{prop:biject}
  The following order isomorphisms are obtained by restricting the actions of
  Weyl group elements.
  \begin{abcenum}
  \item $\ka_d : \cP_{F_d} \to I(z_d)\ssm I(\ka_d)$ is an order isomorphism, and
    $\delta(\ka_d.\eta) = \delta'(\eta)$ for each $\eta \in
    \cP_{F_d}$.\smallskip

  \item $z_d : \cP_{F_d} \to I(\ka_d^\vee)\ssm I(z_d^\vee)$ is an order
    isomorphism, and $\delta(z_d.\eta) = w_0^X.\delta'(\eta)$ for each $\eta \in
    \cP_{F_d}$.
  \end{abcenum}
\end{prop}
\begin{proof}
  It follows from \Lemma{skew}(a) that $\ka_d : I(z_d)\ssm I(\ka_d) \to
  \cP_{F_d}$ is an order-preserving bijection, and since $\ka_d.\be > 0$ for
  each $\be \in \Delta \ssm \{\ga\}$, the inverse bijection $\ka_d^{-1}=\ka_d$
  is also order-preserving. Given $\eta \in \cP_{F_d}$, the set $\mu = I(\ka_d)
  \cup \ka_d.\la'(\eta)$ is a straight shape in $\cP_X$ such that $\ka_d.\eta$
  is a minimal box of $\cP_X \ssm \mu$, hence $\delta(\ka_d.\eta) =
  w_\mu.(\ka_d.\eta)$.  Using that $w_\mu = w'_{\la'(\eta)} \ka_d$, we obtain
  $\delta(\ka_d.\eta) = w'_{\la'(\eta)}.\eta = \delta'(\eta)$. This proves part
  (a). \Lemma{PXinvol}(a) applied to $F_d$ shows that $w_{0,Z_d} : \cP_{F_d} \to
  \cP_{F_d}$ is an order-reversing involution such that $\delta'(w_{0,Z_d}.\eta)
  = -w_{0,X}.\delta'(\eta)$.  Part (b) therefore follows from
  \Lemma{PXinvol}(a), part (a), and the identities $z_d = w_{0,X} \ka_d
  w_{0,Z_d}$ and $w_0 w_{0,X} = w_0^X$.
\end{proof}

\begin{example}\label{example:dissectPXa}%
  Consider $X = \Gr(7,17)$ and $d=4$, so that $\Gamma_d = \Gr(4,8)$ and $F_d =
  \Gr(3,7) \times \Gr(4,10)$. Let $\Phi^+ = \{e_i-e_j \mid 1 \leq i < j \leq 17
  \}$ be the set of positive roots of type $A_{16}$.  We identify each root
  $e_i-e_j$ with the box in row $17-i$ and column $j-1$ of a triangular diagram
  of boxes. \Proposition{biject} shows that $\cP_{F_d}$ can be identified
  with $\ka_d.\cP_{F_d} = I(z_d)\ssm I(\ka_d)$, and also with $z_d.\cP_{F_d} =
  I(\ka_d^\vee)\ssm I(z_d^\vee)$. This gives two dissections of $\cP_X$. Notice
  that $\cP_{\Gamma_d} = I(\ka_d)$, and $\cP_{F_d}$ can be identified with the
  disjoint union of $\cP_{\Gr(3,7)}$ and $\cP_{\Gr(4,10)}$.

  \begin{center}
  \begin{tikzpicture}[x=.4cm,y=.4cm]
    \def\zz{-- ++(0,1) -- ++(1,0)}
    \draw (16,16) -- (16,0) -- (0,0)
    \zz\zz\zz\zz\zz\zz\zz\zz\zz\zz\zz\zz\zz\zz\zz -- ++(0,1) -- cycle;
    \draw [gray!50] (6,6) -- (6,7) -- (7,7) -- (7,6) -- cycle;
    \node at (6.5,6.5) {$\ga$};
    \draw (6,0) -- (6,7) -- (16,7) -- (16,0) -- cycle;
    \node at (11,3.5) {$\cP_X$};
    \draw (2,0) -- (2,3) -- (6,3) -- (6,0) -- cycle;
    \node at (4,1.5) {$\cP_{F_d}$};
    \draw (10,7) -- (10,11) -- (16,11) -- (16,7) -- cycle;
    \node at (13,9) {$\cP_{F_d}$};

    \begin{scope}[shift={(20.5,9)}]
      \node at (-2,3.5) {$\cP_X =$};
      \draw (0,0) -- (0,7) -- (10,7) -- (10,0) -- cycle;
      \draw (0,3) -- (10,3);
      \draw (4,0) -- (4,7);
      \node at (2,5) {$\cP_{\Gamma_d}$};
      \node at (2,1.5) {$\ka_d.\cP_{F_d}$};
      \node at (7,5) {$\ka_d.\cP_{F_d}$};
      \node at (7,1.5) {$\cP_X \!\ssm\! I(z_d)$};
    \end{scope}

    \begin{scope}[shift={(20.5,0)}]
      \node at (-2,3.5) {$\cP_X =$};
      \draw (0,0) -- (0,7) -- (10,7) -- (10,0) -- cycle;
      \draw (0,4) -- (10,4);
      \draw (6,0) -- (6,7);
      \node at (3,5.5) {$I(z_d^\vee)$};
      \node at (3,2) {$z_d.\cP_{F_d}$};
      \node at (8,5.5) {$z_d.\cP_{F_d}$};
      \node at (8,2) {\tiny$\cP_X \!\ssm\! I(\ka_d^\vee)$};
    \end{scope}
  \end{tikzpicture}
  \end{center}
\end{example}

\begin{example}\label{example:dissectPXc}
  Let $X = \LG(8,16)$ and $d=5$.  Then \Proposition{biject} provides the
  following dissections of $\cP_X$.

  \begin{center}
  \begin{tikzpicture}[x=.4cm,y=.4cm]
    \def\zz{-- ++(0,-1) -- ++(1,0)}
    \begin{scope}[shift={(0,0)}]
      \node at (-2,4) {$\cP_X =$};
      \draw [gray!50] (0,8) -- (0,7) -- (1,7) -- (1,8) -- cycle;
      \node at (.5,7.5) {$\ga$};
      \draw (0,8) \zz\zz\zz\zz\zz\zz\zz\zz -- (8,8) -- cycle;
      \draw (5,8) -- (5,3) -- (8,3);
      \node at (3.5,6.2) {$\cP_{\Gamma_d}$};
      \node at (6.5,6.2) {$\ka_d.\cP_{F_d}$};
      \node at (6.5,2.5) {\tiny$\cP_X\!\!\ssm\!I(z_d)$};
    \end{scope}
    \begin{scope}[shift={(11,0)}]
      \node at (-1.2,4) {$=$};
      \draw (0,8) \zz\zz\zz\zz\zz\zz\zz\zz -- (8,8) -- cycle;
      \draw (3,8) -- (3,5) -- (8,5);
      \node at (2,6.7) {\tiny$I(z_d^\vee)$};
      \node at (5.5,6.7) {$z_d.\cP_{F_d}$};
      \node at (6,3.7) {\tiny$\cP_X\!\!\ssm\!I(\ka_d^\vee)$};
    \end{scope}
  \end{tikzpicture}
  \end{center}
\end{example}

\subsection{Fibers of the quantum-classical diagram}\label{sec:ss:qcfibers}%

\begin{defn}\label{defn:fiber}%
  Given $u, v \in W^X$ and $0 \leq d \leq d_X(2)$, define the Weyl group
  elements
  \begin{align*}
    u(d) &= (u \cap z_d^\vee) z_d \,,
    & \wh u_d &= (u \cap \ka_d^\vee)/(u \cap z_d^\vee) \,,
    & u_d &= (w_0^X)^{-1}\, \wh u_d\, w_0^X \,, \\
    v(-d) &= (v \cup z_d) / z_d \,,
    & & \text{and}
    & v^d &= (v \cap z_d)/(v \cap \ka_d) \,.
  \end{align*}
\end{defn}

It was proved in \cite{buch.chaput.ea:chevalley} that $X_{u(d)} = \Gamma_d(X_u)$
and $X^{v(-d)} = \Gamma_d(X^v)$.  We will reprove these statements in
\Corollary{fibers_geom} below, together with similar geometric interpretations
of $u_d$ and $v^d$. In particular, $u(d)$ and $v(-d)$ belong to $W^X$. The shape
$I(v(-d))$ is obtained from $I(v)$ by removing the boxes in $I(v \cap z_d)$ and
moving the remaining boxes north-west until they fit in the upper-left corner of
$\cP_X$ (see \cite[\S 3.2]{buch.chaput.ea:chevalley}). Similarly, $I(u(d))$ is
obtained by attaching the shape $I(u)$ to the south-east border of $I(z_d)$ and
discarding any boxes that do not fit within $\cP_X$. More precisely, the
following identities follow from \Proposition{firstbiject}(a) and
\Corollary{zd=z1k1d}.

\begin{lemma}\label{lemma:u(d)}%
  We have $I(u(-d)) = (z_1 s_\ga)^d.(I(u) \ssm I(z_d))$ and $I(u(d)) = I(z_d)
  \cup (z_1 s_\ga)^{-d}.(I(u) \cap I(z_d^\vee))$.
\end{lemma}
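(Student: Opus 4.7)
The plan is to derive both identities as direct consequences of \lemma{skew}(a) applied with $v = z_d$, combined with \cor{zd=z1k1d}, which identifies the action of $z_d$ on $\cP_X \ssm I(z_d)$ with the action of $(z_1 s_\ga)^d$.

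For the first identity, I observe that $u \cup z_d \geq z_d$ in $W^X$, so \lemma{skew}(a) provides an order-preserving bijection $z_d : I(u \cup z_d) \ssm I(z_d) \to I((u \cup z_d)/z_d) = I(u(-d))$. Because $I(u \cup z_d) = I(u) \cup I(z_d)$, the source simplifies to $I(u) \ssm I(z_d)$, and \cor{zd=z1k1d} replaces the restriction of $z_d$ to this subset by $(z_1 s_\ga)^d$. This yields $I(u(-d)) = (z_1 s_\ga)^d.(I(u) \ssm I(z_d))$ in one shot.

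For the second identity, the first step is to verify that $u(d) = (u \cap z_d^\vee)\, z_d$ is a reduced product (and hence lies in $W^X$ with $z_d \leq u(d)$). Writing $v' = u \cap z_d^\vee$, we have $v' \leq z_d^\vee$ in $W^X$; by \cite{proctor:bruhat}, Bruhat and left weak orders agree on $W^X$ for cominuscule $X$, so we may factor $z_d^\vee = x v'$ reducedly. Combined with \lemma{ka_d_inv}, which says $z_d^\vee z_d = w_0^X$ is reduced, this forces $\ell(v' z_d) = \ell(v') + \ell(z_d)$. Now \lemma{skew}(a) applied to $z_d \leq u(d)$ gives $I(u(d)) \ssm I(z_d) = z_d^{-1}.I(u(d)/z_d) = z_d^{-1}.I(v')$, where $I(v') = I(u) \cap I(z_d^\vee)$ by definition of the join and \lemma{PXinvol}(b).

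The final step is to replace $z_d^{-1}$ by $(z_1 s_\ga)^{-d}$ on the set $I(u) \cap I(z_d^\vee)$. By \lemma{skew}(a) applied to $z_d \leq w_0^X$, the map $z_d : \cP_X \ssm I(z_d) \to I(w_0^X/z_d) = I(z_d^\vee)$ is a bijection; \cor{zd=z1k1d} shows that $(z_1 s_\ga)^d$ agrees with $z_d$ on the source, so the two bijections coincide and their inverses agree on $I(z_d^\vee) \supset I(u) \cap I(z_d^\vee)$. Assembling these pieces gives $I(u(d)) = I(z_d) \cup (z_1 s_\ga)^{-d}.(I(u) \cap I(z_d^\vee))$. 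The only mildly delicate step is the reduced-product verification for $u(d)$; everything else is a clean composition of the cited structural results.
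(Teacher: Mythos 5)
Your proof is correct and follows the route the paper evidently had in mind (the paper only asserts that the identities ``follow from Proposition~\ref{prop:firstbiject}(a) and Corollary~\ref{cor:zd=z1k1d}''); you have simply descended one level and invoked \Lemma{skew}(a), which is the ingredient underlying \Proposition{firstbiject}(a), together with \Corollary{zd=z1k1d}. Using \Lemma{skew}(a) directly is actually the cleanest way to identify the image of the bijection with $I(u(-d))$ respectively $I(u(d)/z_d)$, and your reduced-product verification for $u(d)=(u\cap z_d^\vee)\,z_d$ is a genuine prerequisite since \Lemma{skew}(a) requires both arguments to lie in $W^X$ and the bijection only applies when $z_d\leq u(d)$.

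Two small blemishes worth correcting. First, the identity $I(u\cap z_d^\vee)=I(u)\cap I(z_d^\vee)$ is the definition of the \emph{meet} operation on $W^X$ (not the join), and \Lemma{PXinvol}(b) plays no role in it; that reference should be dropped. Second, the parenthetical ``(and hence lies in $W^X$ with $z_d\leq u(d)$)'' is true but follows from a half-step you leave implicit: since $z_d^\vee=xv'$ and $z_d^\vee z_d=w_0^X$ are both reduced, $w_0^X = x\,v'z_d$ is reduced, so $u(d)=v'z_d\leq_L w_0^X$, hence $I(u(d))\subset I(w_0^X)=\cP_X$, which is exactly the condition $u(d)\in W^X$; the inequality $z_d\leq u(d)$ in $W^X$ then follows because $z_d\leq_L u(d)$ and the Bruhat and weak left orders coincide on $W^X$. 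With that spelled out, the argument is airtight.
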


Our next result shows that $u_d$ and $v^d$ belong to $W^{F_d} = W_X \cap
W^{Z_d}$, and the shapes of these elements in $\cP_{F_d}$ are determined by
$z_d.I(u_d) = I(u) \cap z_d.\cP_{F_d}$ and $\ka_d.I(v^d) = I(v) \cap
\ka_d.\cP_{F_d}$ (see \Example{dissectPXa}, \Example{dissectPXc}, and
\Example{fiber-diagrams}). Recall from \Section{flagvar} that the parabolic
factorization of $v \in W$ with respect to $P_{Y_d}$ is denoted by $v = v^{Y_d}
v_{Y_d}$.

\begin{prop}\label{prop:fibers_combin}%
  Let $u, v \in W^X$ and $0 \leq d \leq d_X(2)$.
  \begin{abcenum}
  \item We have $v_{Y_d} = v\cap\ka_d$ and $v^{Y_d} = (v\cup\ka_d)/\ka_d$, and
    the parabolic factorization of $v^{Y_d}$ with respect to $P_X$ is $v^{Y_d} =
    v(-d)v^d$.\smallskip

  \item We have $u_d, v^d \in W^{F_d}$, with shapes given by
    $I(u_d) = z_d^{-1}.I(u) \cap \cP_{F_d}$ and $I(v^d) = \ka_d.I(v) \cap
    \cP_{F_d}$.\smallskip

  \item We have that $u^\vee(-d) = u(d)^\vee = w_0 u(d) w_{0,X}$ is dual to
    $u(d)$ in $W^X$, and $(u^\vee)^d = w_{0,X} u_d w_{0,Z_d}$ is dual to $u_d$
    in $W^{F_d}$.
  \end{abcenum}
\end{prop}
\begin{proof}
  The element $\wh u_d$ is by definition the product of the simple reflections
  $s_{\delta(\al)}$ for all boxes $\al$ in $I(u) \cap (I(\ka_d^\vee) \ssm
  I(z_d^\vee))$, in decreasing order. \Proposition{biject}(b) therefore shows
  that
  \[
    u_d = (w_0^X)^{-1} \wh u_d w_0^X
    = \prod_{\eta \in z_d^{-1}.I(u) \cap \cP_{F_d}} s_{\delta'(\eta)} \,,
  \]
  the product in decreasing order.  This shows that $u_d \in W^{F_d}$ and
  $I(u_d) = z_d^{-1}.I(u) \cap \cP_{F_d}$.
  \Proposition{biject}(a) similarly shows that $v^d \in W^{F_d}$ and $I(v^d) =
  \ka_d.I(v) \cap \cP_{F_d}$.  This proves part (b).

  Since $v\cup\ka_d \in W^X \subset W^{Z_d}$, the product of
  $(v\cup\ka_d)/\ka_d$ with $w_{0,Y_d} = \ka_d w_{0,Z_d}$ is reduced, hence
  $(v\cup\ka_d)/\ka_d \in W^{Y_d}$.  Since $v\cap\ka_d \in W_{Y_d}$, we deduce
  that the parabolic factorization $v = v^{Y_d} v_{Y_d}$ is given by $v^{Y_d} =
  (v\cup\ka_d)/\ka_d$ and $v_{Y_d} = v\cap\ka_d$. Since $u(d) \leq_L z_d^\vee
  z_d = w_0^X$, we have $u(d) \in W^X$.  The dual element is $w_0 u(d) w_{0,X} =
  w_0 (u \cap z_d^\vee) w_{0,X} z_d^{-1} = (u^\vee \cup z_d)/z_d = u^\vee(-d)$.
  This implies that $v(-d) \in W^X$, and since $v^d \in W_X$, it follows that
  $v^{Y_d} = v(-d) v^d$ is the parabolic factorization of $v^{Y_d}$ with respect
  to $P_X$.  This proves part (a).

  The element $\tau = z_d^\vee/(u\cap z_d^\vee)$ commutes with $\wh u_d$ and
  satisfies $\tau u(d) = z_d^\vee z_d = w_0^X$.  This implies $u_d =
  (w_0^X)^{-1} \wh u_d w_0^X = u(d)^{-1} \wh u_d u(d)$, hence $w_0 u^\vee(-d)
  (u^\vee)^d w_{0,Z_d} = w_0 (u^\vee \cup \ka_d) \ka_d w_{0,Z_d} = (u \cap
  \ka_d^\vee) z_d = \wh u_d u(d) = u(d) u_d$.   This shows that $u^\vee(-d)
  (u^\vee)^d$ is dual to $u(d) u_d$ in $W^{Z_d}$. Since $u^\vee(-d)$ is dual to
  $u(d)$ in $W^X$, it follows that $(u^\vee)^d$ is dual to $u_d$ in $W^{F_d}$,
  see \Remark{oppschubfib}.  This completes the proof of part (c).
\end{proof}

In examples we denote an element $u \in W^X$ by the partition $\la = (\la_1,
\la_2, \dots, \la_p)$ for which $\la_j$ is the number of boxes in the $j$-th row
of $I(u)$.

\begin{example}\label{example:fiber-diagrams}%
  Let $X = \LG(8,16)$, $u = (8,6,2) \in W^X$, $v = u^\vee = (7,5,4,3,1)$, and
  set $d=5$. The shape of $u_d = (5,4,1)$ is obtained by intersecting the shape
  of $u$ with $z_d.\cP_{F_d}$, and the shape of $v^d = (2,1,1,1)$ is obtained by
  intersecting the shape of $v$ with $\ka_d.\cP_{F_d}$, see
  \Example{dissectPXc}.\smallskip
  \begin{center}
    \begin{tikzpicture}[x=.4cm,y=.4cm]
      \def\zz{-- ++(0,-1) -- ++(1,0)}
      \begin{scope}[shift={(0,0)}]
        \draw (0,8) \zz\zz\zz\zz\zz\zz\zz\zz -- (8,8) -- cycle;
        \draw (3,8) -- (3,5) -- (8,5);
        \draw[very thick] (0,8) \zz\zz -- (2,5) -- (4,5) -- (4,6) -- (7,6)
          -- (7,7) -- (8,7) -- (8,8) -- cycle;
      \end{scope}
      \begin{scope}[shift={(11,0)}]
        \draw (0,8) \zz\zz\zz\zz\zz\zz\zz\zz -- (8,8) -- cycle;
        \draw (5,8) -- (5,3) -- (8,3);
        \draw[very thick] (0,8) \zz\zz\zz\zz\zz -- (5,4) -- (6,4) -- (6,7)
          -- (7,7) -- (7,8) -- cycle;
      \end{scope}
    \end{tikzpicture}
  \end{center}
  The elements $u_d$ and $v^d$ are dual to each other in $W^{F_d}$, but the
  images $z_d.I(u_d)$ and $\ka_d.I(v^d)$ of their shapes are represented in two
  different rectangles in $\cP_X$. The composed bijection $z_d/\ka_d :
  \ka_d.\cP_{F_d} \cong \cP_{F_d} \cong z_d.\cP_{F_d}$ is given by a
  transposition when $X$ is a Lagrangian Grassmannian. An expression of an
  element of $W^{F_d}$ as a partition therefore depends on how the rectangle
  $\cP_{F_d}$ is oriented. Opposite conventions are used in the expressions $u_d
  = (5,4,1)$ and $v^d = (2,1,1,1)$ given above. We also have $u(d) = w_0^X$ and
  $v(-d) = 1$. Other shifts of $u$ include $u(-2) = (2)$, $u(-1) = (6,2)$, $u(1)
  = (8,7,6,2)$, and $u(2) = (8,7,6,5,2)$.
\end{example}

\begin{cor}\label{cor:fibers_geom}
  Let $u, v \in W^X$ and $0 \leq d \leq d_X(2)$.
  \begin{abcenum}
  \item The general fibers of the map $q_d : p_d^{-1}(X^v) \to Y_d(X^v)$ are
  translates of $(\Gamma_d)^{v\cap\ka_d}$.\smallskip

  \item We have $Y_d(X^v) = (Y_d)^{v(-d)v^d}$, $\Gamma_d(X^v) = X^{v(-d)}$, and
    the general fibers of the map $p_d : Z_d(X^v) \to \Gamma_d(X^v)$ are
    translates of $(F_d)^{v^d}$.\smallskip

  \item We have $Z_d(X_u) = (Z_d)_{u(d)u_d}$, $\Gamma_d(X_u) = X_{u(d)}$, and
    the general fibers of the map $p_d : Z_d(X_u) \to \Gamma_d(X_u)$ are
    translates of $(F_d)_{u_d}$.\smallskip

  \item The general fibers of the map $p_d: Z_d(X_u,X^v) \to \Gamma_d(X_u,X^v)$
    are translates of $(F_d)_{u_d}^{u_d \cap v^d}$.
  \end{abcenum}
\end{cor}
\begin{proof}
  Parts (a) and (b) follow from \Theorem{schubfib}, \Corollary{qc_nbhd}, and
  \Proposition{fibers_combin}(a), and \Proposition{fibers_combin}(c) implies
  that part (c) is equivalent to part (b), see \Remark{oppschubfib}.  Finally,
  part (d) follows from \Theorem{richfib} and \Proposition{comint} together with
  parts (b) and (c).
\end{proof}

% !TeX root=qkpos.tex

\section{The $q$-degrees in quantum cohomology products}
\label{sec:qhinterval}

\subsection{Quantum cohomology}\label{sec:qcohom}%

\targetsec{qhprod}{}%
Let $X = G/P_X$ be a cominuscule flag variety, and let $\QH(X)$ be the (small)
quantum cohomology ring of $X$. As an additive group, this ring is defined as
$\QH(X) = H^*(X;\Z) \otimes_\Z \Z[q]$. Multiplication is defined by
\[
  [X_u] \star [X^v] = \sum_{w,d\geq 0} \gw{[X_u],[X^v],[X_w]}{d}\,
  q^d\, [X^w]
\]
for $u, v \in W^X$. Let
\[
  ([X_u] \star [X^v])_d = \sum_w \gw{[X_u], [X^v], [X_w]}{d}\, [X^w]
\]
denote the coefficient of $q^d$ in this product. The goal of this section is to
identify the degrees $d$ for which $([X_u] \star [X^v])_d \neq 0$. In
particular, we will show that these degrees form an integer interval.

\targetsec{dminmax}{}%
It follows from \cite[Thm.~9.1]{fulton.woodward:quantum} that the smallest
degree $d$ for which $([X_u] \star [X^v])_d \neq 0$ is equal to the \emph{degree
distance} between $X_u$ and $X^v$, defined as the minimal $d$ for which
$\Gamma_d(X_u,X^v) \neq \emptyset$. In particular, the quantum product of two
Schubert classes is never zero. Let $\dmin(u^\vee,v)$ and $\dmax(u^\vee,v)$
denote the minimal and maximal degrees for which $([X_u]\star[X^v])_d \neq 0$.
We let $\dmax(v)$ denote the (unique) number of occurrences of $s_\ga$ in a
reduced expression for $v$. Notice that $d = \dmax(v)$ is also determined by
$\ka_d \leq v \leq z_d$. The following result implies that $\dmax(v)$ is the
only power of $q$ that occurs in the product $[\pt] \star [X^v]$, that is
$\dmax(v) = \dmin(w_0^X,v) = \dmax(w_0^X,v)$. More generally, it was proved in
\cite{belkale:transformation, chaput.manivel.ea:affine} that $[\pt] \star [X^v]
= q^{\dmax(v)}\,[X^{w_0^X v}]$ holds in $\QH(X)$.

\begin{prop}\label{prop:qhcoef}%
  We have $([X_u]\star[X^v])_d \neq 0$ if and only if $\dmin(u^\vee,v) \leq d
  \leq \min(\dmax(u^\vee),\dmax(v))$ and $u_d \leq v^d$. In this case we have
  $([X_u]\star[X^v])_d = [\Gamma_d(X_u,X^v)]$.
\end{prop}
\begin{proof}
  Using \Corollary{qc} and the projection formula we obtain
  \[
  \gw{[X_u], [X^v], [X_w]}{d} = \int_X {p_d}_* q_d^*\big(
  {q_d}_* p_d^*[X_u] \cdot {q_d}_* p_d^*[X^v]\big) \cdot [X_w] \,,
  \]
  which implies that
  \[
  ([X_u] \star [X^v])_d = {p_d}_* q_d^*\big({q_d}_* p_d^*[X_u] \cdot
  {q_d}_* p_d^*[X^v]\big) \,.
  \]
  \Corollary{fibers_geom}(a) shows that ${q_d}_* p_d^*[X^v]$ is equal to
  $[q_d(p_d^{-1}(X^v))]$ for $d \leq \dmax(v)$ and is zero otherwise. It follows
  that $([X_u]\star[X^v])_d$ is non-zero only if $d \leq
  \min(\dmax(u^\vee),\dmax(v))$, in which case
  \[
  ([X_u]\star[X^v])_d = p_*[Z_d(X_u,X^v)] \,.
  \]
  The proposition therefore follows from \Corollary{fibers_geom}(d).
\end{proof}

The main technical result of this section is the following lemma, which we will
prove after discussing its consequences. Notice that \Proposition{biject}(a)
shows that $w \ka_d \in W^X$ and $I(w \ka_d) = I(\ka_d) \cup \ka_d.I(w)$ for
each $w \in W^{F_d}$.

\begin{lemma}\label{lemma:fiber-dtod1}%
  Let $u, v \in W^X$.
  \begin{abcenum}
  \item For $0 < d \leq \dmax(u^\vee)$ we have
    $u_{d-1} \ka_{d-1} = (u_d \ka_d)(-1)$.\smallskip
  \item For $0 < d \leq \dmax(v)$ we have
    $v^{d-1} \ka_{d-1} = (v^d \ka_d) \cap z_{d-1}$.
  \end{abcenum}
\end{lemma}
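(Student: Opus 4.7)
Part (b) follows from lattice arithmetic in $W^X$. Since $d \leq \dmax(v)$ forces $\ka_d \leq v$, we have $v \cap \ka_d = \ka_d$, and the factorization $v^d = (v \cap z_d)/(v \cap \ka_d)$ from \Definition{fiber} collapses to $v^d \ka_d = v \cap z_d$ (a reduced product in left weak Bruhat order, which coincides with the Bruhat order on $W^X$ by Proctor's theorem). The same argument at degree $d-1$ gives $v^{d-1} \ka_{d-1} = v \cap z_{d-1}$. Because $z_{d-1} \leq z_d$ in the distributive lattice $W^X$, the identity $(v \cap z_d) \cap z_{d-1} = v \cap z_{d-1}$ then rearranges to $(v^d \ka_d) \cap z_{d-1} = v^{d-1} \ka_{d-1}$, proving (b).

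For part (a) I would compare inversion sets in $\cP_X$. The first step is to verify that $u_d \ka_d \in W^X$ with $I(u_d \ka_d) = I(\ka_d) \cup \ka_d.I(u_d) \subseteq I(z_d)$: the product is reduced because \Proposition{biject}(a) shows that $\ka_d.I(u_d) \subseteq I(z_d) \ssm I(\ka_d)$ consists of positive roots disjoint from $I(\ka_d)$. \Lemma{u(d)} then applies and yields
\[
  I((u_d \ka_d)(-1)) \;=\; (z_1 s_\ga).\bigl(I(u_d\ka_d) \ssm I(z_1)\bigr).
\]
The plan is to split the right-hand side along the decomposition $I(u_d\ka_d) = I(\ka_d) \cup \ka_d.I(u_d)$ and to match the two pieces with $I(\ka_{d-1})$ and $\ka_{d-1}.I(u_{d-1})$, respectively.

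For the $I(\ka_d)$-piece, $I(\ka_d) \ssm I(z_1) = \{\al \in \cP_X : \tal_2 \leq \al \leq \tal_d\}$; by \Proposition{z1k1}(a,c), $z_1 s_\ga$ restricts to a $\delta$-preserving order isomorphism on $\cP_X \ssm I(z_1)$ that sends $\tal_j \mapsto \tal_{j-1}$, so it carries this interval bijectively onto $\{\al \leq \tal_{d-1}\} = I(\ka_{d-1})$. For the $\ka_d.I(u_d)$-piece, the hypothesis $d \leq \dmax(u^\vee)$ gives $u \leq \ka_d^\vee$, hence $I(u) \subseteq I(\ka_d^\vee)$; combining \Proposition{fibers_combin}(b) with the bijection $z_d : \cP_{F_d} \to I(\ka_d^\vee) \ssm I(z_d^\vee)$ from \Proposition{biject}(b), one obtains $\ka_d.I(u_d) = \sigma_d^{-1}.(I(u) \ssm I(z_d^\vee))$ with $\sigma_d = z_d/\ka_d$, and analogously at degree $d-1$. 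Matching the pieces then reduces to the compatibility relation $(z_1 s_\ga) \circ \sigma_d^{-1} = \sigma_{d-1}^{-1}$ on the relevant domain, together with the observation that removing $I(z_1)$ affects only the $I(\ka_d)$-piece. I expect the main obstacle to be the careful verification of this compatibility, which amounts to tracking how the two dissections of $\cP_X$ (illustrated in \Example{dissectPXa}) transform when one passes from degree $d$ to degree $d-1$.
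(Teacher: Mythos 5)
Your proof of part (b) is correct and is essentially the paper's argument: the bound $d\leq\dmax(v)$ forces $\ka_d\leq v$, so $v^d\ka_d=v\cap z_d$, and the lattice identity $(v\cap z_d)\cap z_{d-1}=v\cap z_{d-1}$ finishes.

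For part (a), however, the pivotal ``observation'' in your sketch, namely that removing $I(z_1)$ affects only the $I(\ka_d)$-piece, i.e.\ that $\ka_d.I(u_d)\cap I(z_1)=\emptyset$, is in general false, and the argument breaks precisely there. Take $X=\Gr(3,7)$, $d=2$, and $u=\ka_2^\vee$, so $u^\vee=\ka_2$ and $\dmax(u^\vee)=2=d$ is within the allowed range. Then $I(u_2)=\cP_{F_2}$, hence $\ka_2.I(u_2)=I(z_2)\ssm I(\ka_2)$, which meets $I(z_1)$ in three boxes ($(1,3)$, $(1,4)$, $(3,1)$ in the row-column coordinates of \Table{tablez1}). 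If your split were correct it would give $\ka_{d-1}.I(u_{d-1})=z_1 s_\ga.\ka_d.I(u_d)$ and hence $\ell(u_{d-1})=\ell(u_d)$; but in this example $\ell(u_1)=3$ and $\ell(u_2)=6$. The rest of your outline is sound: $\ka_d.I(u_d)=\sigma_d^{-1}.(I(u)\ssm I(z_d^\vee))$ is right, and the compatibility $(z_1 s_\ga)\circ\sigma_d^{-1}=\sigma_{d-1}^{-1}$ is just \Corollary{zdkd}. What is missing is the bookkeeping for the boundary overlap $\ka_d.I(u_d)\cap I(z_1)$. The paper avoids your difficulty by establishing (a) via duality from (b): with $v=u^\vee$, \Proposition{fibers_combin}(c) makes $u_d$ dual to $v^d$ in $W^{F_d}$, so $\ka_d.I(u_d)$ is identified (after applying $w_{0,X}(z_1 s_\ga)^d$) with the complement $I(z_d)\ssm I(v^d\ka_d)$, where (b) applies directly; the overlap you would have to control is then absorbed through \Lemma{z1-kd}, which identifies $I(z_1)\ssm I(\ka_d)$ with $w_{0,X}z_1 s_\ga.I(z_1\cap z_{d-1}^\vee)$. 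Your plan could be salvaged, but only by proving a statement equivalent to \Lemma{z1-kd} to manage that boundary set.
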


Part (b) of the following lemma will be relevant for our study of quantum
$K$-theory in \Section{quantumk}.

\begin{lemma}\label{lemma:ud<vd}%
  Let $u, v \in W^X$ and $0 < d \leq \min(\dmax(u^\vee), \dmax(v))$.
  \begin{abcenum}
  \item Assume that $u_d \leq v^d$. Then $u_{d-1} \leq v^{d-1}$.\smallskip

  \item If $I(u_d) \ssm I(v^d)$ is a non-empty rook strip in
    $\cP_{F_d}$, then $d = \dmax(u^\vee,v)+1$.
  \end{abcenum}
\end{lemma}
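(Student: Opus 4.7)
My plan is to use \Lemma{fiber-dtod1} to convert both parts into a containment of order ideals in $\cP_X$. Setting $A = u_d \ka_d$ and $B = v^d \ka_d$, the condition $u_d \leq v^d$ is equivalent to $A \leq B$ in $W^X$ (since multiplication by $\ka_d$ uses the order isomorphism $\ka_d : \cP_{F_d} \to I(z_d) \ssm I(\ka_d)$ on top of the common initial segment $I(\ka_d)$). By \Lemma{fiber-dtod1}, the conclusion $u_{d-1} \leq v^{d-1}$ translates into $A(-1) \leq B \cap z_{d-1}$, i.e., $I(A(-1)) \subset I(B) \cap I(z_{d-1})$, where $I(A(-1)) = (z_1 s_\ga).(I(A) \ssm I(z_1))$ by \Lemma{u(d)} and $(z_1 s_\ga).\al = z_1.\al$ on $\cP_X \ssm I(z_1)$ by \Corollary{zd=z1k1d}. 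Note that $A, B \leq z_d$.

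The key technical input I need is the claim that $z_1.\al < \al$ strictly for every $\al \in \cP_X \ssm I(z_1)$. Granted this, the inclusion $I(A(-1)) \subset I(z_{d-1})$ is automatic, since $z_d(-1) = z_{d-1}$. For the inclusion $I(A(-1)) \subset I(B)$, fix any $\al \in I(A) \ssm I(z_1)$: if $\al \in I(B)$, then $z_1.\al < \al \in I(B)$ and the order-ideal property of $I(B)$ gives $z_1.\al \in I(B)$. This completes part (a), where the hypothesis $A \leq B$ makes the remaining case vacuous.

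For part (b), the remaining case is $\al \in I(A) \ssm I(B) = \ka_d.(I(u_d) \ssm I(v^d))$. The rook-strip hypothesis turns this into an antichain inside $\ka_d.\cP_{F_d}$, and I will argue that every such $\al$ is minimal in $I(A) \ssm I(B)$ as a subset of $\cP_X$: any predecessor of $\al$ in $I(A)$ lies in $I(\ka_d) \subset I(B)$ or in $\ka_d.\cP_{F_d}$, and the antichain property sends the latter into $\ka_d.I(v^d) \subset I(B)$. Hence $z_1.\al$, being a strict predecessor of $\al$ contained in the order ideal $I(A)$, lies in $I(B)$, and this proves $u_{d-1} \leq v^{d-1}$. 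To close part (b), the non-empty rook strip gives $u_d \not\leq v^d$, so by \Proposition{qhcoef} the power $q^d$ does not occur in $[X_u]\star[X^v]$, forcing $d > \dmax(u^\vee, v)$. On the other hand, $u_{d-1} \leq v^{d-1}$ together with the easy verification that $d - 1 \in [\dmin(u^\vee, v), \min(\dmax(u^\vee), \dmax(v))]$ (the lower bound from $d > \dmax(u^\vee,v) \geq \dmin(u^\vee,v)$ and the upper bound from $d \leq \min(\dmax(u^\vee), \dmax(v))$) shows via \Proposition{qhcoef} that $q^{d-1}$ does occur, hence $d - 1 \leq \dmax(u^\vee, v)$. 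Together these yield $d = \dmax(u^\vee, v) + 1$.

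The main obstacle is establishing the structural claim $z_1.\al < \al$ for $\al \in \cP_X \ssm I(z_1)$. This encodes the cominuscule analog of the observation that Postnikov's cylindric rotation shifts each box up-left, and I expect to deduce it from \Proposition{z1k1} together with \Proposition{firstbiject}(a), which identify $z_1 s_\ga$ as an order isomorphism $\cP_X \ssm I(z_1) \to I(z_1^\vee)$ sending $\tal_d \mapsto \tal_{d-1}$; a careful accounting of how the coefficient of each non-$\ga$ simple root is transported by this isomorphism should give the inequality. If no clean type-uniform argument is immediate, the inequality can be verified type-by-type using the posets displayed in \Table{tablez1}.
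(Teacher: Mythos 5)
Your proof follows the same underlying approach as the paper's: translate both parts into containments of order ideals via Lemma \ref{lemma:fiber-dtod1}, and in part (b) conclude via Proposition \ref{prop:qhcoef}. The one place you diverge is in how much of the combinatorics you make explicit, and there is a single technical claim you flag as unresolved.

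For part (a), you do not actually need the pointwise inequality $(z_1 s_\ga).\al < \al$. The paper's argument is a one-liner at the shape level: the general fact $A(-1) \leq A$ (which follows from $X^A \subset \Gamma_1(X^A) = X^{A(-1)}$) combined with $A \leq B$ gives $A(-1) \leq B$ directly, and $A(-1) \leq z_d(-1) = z_{d-1}$ handles the other factor. Your argument reaches the same place but unpacks it box by box, which is more work than necessary for (a).

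For part (b), your detailed verification is genuinely useful, because the paper asserts ``rook strip $\Rightarrow (u_d\ka_d)(-1) \leq v^d\ka_d$'' without elaboration, and the maximality argument you give (rook-strip boxes are maximal in $I(A) = I(u_d\ka_d)$, so a \emph{strict} predecessor of such a box lying in $I(A)$ must lie in $I(B) = I(v^d\ka_d)$) is exactly what is needed. Here you do need strictness of $(z_1 s_\ga).\al < \al$, and you are right to flag it; if it were only $\leq$ with possible equality, the argument would break. Fortunately this can be proved type-uniformly, so no case check is needed: the inequality $(z_1 s_\ga).\al \leq \al$ follows by applying $A(-1) \leq A$ to the principal order ideal $A = \xi(\al)$, since then $(z_1 s_\ga).\al \in I(\xi(\al)(-1)) \subset I(\xi(\al)) = \{\al' \leq \al\}$. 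For strictness, if $(z_1 s_\ga).\al = \al$ then $\al \in I(\xi(\al)(-1))$ forces $\xi(\al)(-1) = \xi(\al)$; but $|I(\xi(\al)(-1))| = |I(\xi(\al))| - |I(\xi(\al)) \cap I(z_1)|$ and $\ga$ lies in both $I(\xi(\al))$ and $I(z_1)$, a contradiction. One more small point: the inference ``$u_d \not\leq v^d$ forces $d > \dmax(u^\vee,v)$'' is not immediate from Proposition \ref{prop:qhcoef} alone (non-occurrence of $q^d$ a priori allows $d < \dmin$); you should note that it uses part (a) to propagate $u_{\dmax(u^\vee,v)} \leq v^{\dmax(u^\vee,v)}$ downward, contradicting $u_d \not\leq v^d$ if $d \leq \dmax(u^\vee,v)$. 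With those two small patches your argument is complete and matches the paper's.
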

\begin{proof}
  The inequality $u_d \leq v^d$ holds if and only if $u_d \ka_d \leq v^d \ka_d$.
  Part (a) therefore follows from \Lemma{fiber-dtod1}, using that
  $(u_d\ka_d)(-1) \leq u_d\ka_d$ and  $(u_d\ka_d)(-1) \leq z_d(-1) \leq
  z_{d-1}$. If $u_d \not\leq v^d$, then it follows from \Proposition{qhcoef} and
  part (a) that $d > \dmax(u^\vee,v)$. If $I(u_d)\ssm I(v^d)$ is a rook strip,
  then $(u_d\ka_d)(-1) \leq v^d\ka_d$, which implies that $u_{d-1} \leq v^{d-1}$
  and $d-1 \leq \dmax(u^\vee,v)$. This proves part (b).
\end{proof}

The following result was proved in \cite{postnikov:affine} for Grassmannians of
type A.

\begin{cor}\label{cor:qhint}%
  The $q$-degrees appearing in a quantum product $[X_u] \star [X^v]$ form an
  interval, that is $([X_u] \star [X^v])_d \neq 0$ if and only if
  $\dmin(u^\vee,v) \leq d \leq \dmax(u^\vee,v)$.
\end{cor}
\begin{proof}
  This follows from \Proposition{qhcoef} and \Lemma{ud<vd}.
\end{proof}

The equivariant (small) quantum cohomology ring $\QH_T(X) = H_T^*(X;\Z)
\otimes_\Z \Z[q]$ is defined like $\QH(X)$, except that equivariant
Gromov-Witten invariants are used to define the quantum product $[X_u]_T \star
[X^v]_T$, see \cite{kim:quantum}. \Proposition{qhcoef} is true also for the
equivariant quantum product, with the same proof.

\begin{cor}
  The equivariant quantum product $[X_u]_T \star [X^v]_T$ in $\QH_T(X)$ contains
  the same powers of $q$ as the non-equivariant product $[X_u] \star [X^v]$.
\end{cor}

\begin{remark}
  It is natural to ask whether the powers of $q$ appearing in an equivariant
  quantum product of Schubert classes defined by the {\em same} Borel subgroup
  form an interval. Based on substantial computer evidence we conjecture that
  $q^d$ occurs in the product $[X^u]_T \star [X^v]_T$ in $\QH_T(X)$ if and only
  if $0 \leq d \leq \dmax(u,v)$.
\end{remark}

\subsection{The minimal and maximal degrees}\label{sec:qh-min-max}

We next give a type-uniform description of the minimal and maximal powers of $q$
in the quantum product $[X_u] \star [X^v]$ that generalizes the description for
Grassmannians of type A proved in \cite{fulton.woodward:quantum,
postnikov:affine}. The minimal degree $\dmin(u^\vee,v)$ is the smallest integer
$d$ for which $\Gamma_d(X_u,X^v) \neq \emptyset$. Since $\Gamma_d(X_u,X^v)$ is
non-empty if and only if $v \leq u(d)$, the degree $\dmin(u^\vee,v)$ can be
interpreted as the number of steps the shape $I(u)$ must be shifted in order to
contain $I(v)$. This recovers Fulton and Woodward's description in type A
\cite{fulton.woodward:quantum}. Postnikov gave a similar description
\cite{postnikov:affine} of the maximal degree $\dmax(u^\vee,v)$ in type A, as
the number of steps $I(u)$ can be shifted before it no longer fits inside a
shape. These descriptions of the powers of $q$ in a quantum product are
generalized in \Theorem{qdeg-shapes} and \Theorem{qposet}. The maximal degree
for arbitrary cominuscule varieties is given by the following formula from
\cite[Thm.~1.2]{chaput.manivel.ea:quantum}.

\begin{thm}\label{thm:dmax-strange}%
  We have $\dmax(u^\vee,v) = \dmax(v) -  \dmin((w_0^X v)^\vee, u)$.
\end{thm}

\targetsec{qbruhat}{}%
Let $\cB = \{q^d [X^u] : u \in W^X, d \in \Z\}$ denote the natural $\Z$-basis of
the localized quantum cohomology ring $\QH(X)_q = \QH(X) \otimes_{\Z[q]}
\Z[q,q^{-1}]$. We define a partial order on $\cB$ by
\[
  q^e [X^v] \leq q^d [X^u] \ \ \Longleftrightarrow \ \
  \Gamma_{d-e}(X_u,X^v) \neq \emptyset \ \ \Longleftrightarrow \ \
  X_v \subset \Gamma_{d-e}(X_u) \,.
\]
Here the sets $\Gamma_{d-e}(X_u,X^v)$ and $\Gamma_{d-e}(X_u)$ can be non-empty
only if $d \geq e$. Notice that if $q^f [X^w] \leq q^e [X^v] \leq q^d [X^u]$,
then $X_w \subset \Gamma_{e-f}(X_v) \subset \Gamma_{e-f}(\Gamma_{d-e}(X_u))
\subset \Gamma_{d-f}(X_u)$ shows that $q^f [X^w] \leq q^d [X^u]$. The order on
$\cB$ extends the Bruhat order on $W^X$, and
\cite[Thm.~9.1]{fulton.woodward:quantum} implies that $q^e [X^v] \leq q^d [X^u]$
holds if and only if $q^d [X^u]$ occurs with non-zero coefficient in the
expansion of $q^e[X^v] \star q^{d'}[X^w]$ in $\QH(X)_q$, for some $w \in W^X$
and $d' \geq 0$ \footnote{Our construction defines a partial order on $\cB = \{
q^d [M^u] : u \in W^M, d \in H_2(M,\Z) \}$ for any flag variety $M$, with the
same interpretation in terms of quantum multiplication.}. Notice that $[\pt]
\star [X^v] = q^{\dmax(v)} [X^{w_0^X v}]$ is an element of $\cB$, as proved in
\cite{belkale:transformation, chaput.manivel.ea:affine}.

\begin{thm}\label{thm:qdeg-shapes}%
  Let $u, v \in W^X$ and $d \in \Z$. The power $q^d$ occurs in $[X_u] \star
  [X^v]$ if and only if $[X^v] \leq q^d [X^u] \leq [\pt] \star [X^v]$.
\end{thm}
\begin{proof}
  The smallest degree $d$ for which $[X^v] \leq q^d [X^u]$ is $\dmin(u^\vee,v)$
  by definition of the partial order on $\cB$, and the largest degree $d$ for
  which $q^d[X^u] \leq q^{\dmax(v)}[X^{w_0^X v}]$ is $\dmax(u^\vee,v)$ by
  \Theorem{dmax-strange}.
\end{proof}

\subsection{Distributive lattice}\label{sec:dist-lattice}%

Our next goal is to show that $\cB$ is a distributive lattice. In the remainder
of this section we extend earlier definitions by setting $u(d) = z_d = w_0^X$
and $u(-d) = 1$ for $d \geq d_X(2)$ and $u \in W^X$.

\begin{lemma}\label{lemma:shift}%
  Let $u, v \in W^X$, $d \in \Z$, and $e \in \N$. The following identities hold.
  \begin{align*}
    (u \cup v)(d) &= u(d) \cup v(d) &
    (u \cap v)(d) &= u(d) \cap v(d) \\
    (u \cup v(d))(e) &= u(e) \cup v(d+e) &
    (u(d) \cap v)(-e) &= u(d-e) \cap v(-e)
  \end{align*}
\end{lemma}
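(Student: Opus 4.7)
My plan is to verify all four identities by working with shapes in $\cP_X$ via Lemma \ref{lemma:u(d)}, extended to $|d| \geq d_X(2)$ by the conventions $I(u(d)) = \cP_X$ and $I(u(-d)) = \emptyset$ in that range. The third and fourth identities reduce to the first two combined with a composition law $(u(d))(\pm e) = u(d \pm e)$: applying identity~(1) with shift parameter $e$ gives $(u \cup v(d))(e) = u(e) \cup (v(d))(e)$, and applying identity~(2) with shift parameter $-e$ gives $(u(d) \cap v)(-e) = (u(d))(-e) \cap v(-e)$, so in each case only an instance of composition remains.

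For identities~(1) and~(2), I would argue purely on shapes. When $d \geq 0$, Lemma \ref{lemma:u(d)} expresses $I(u(d))$ as the disjoint union
\[
I(u(d)) \;=\; I(z_d) \ \sqcup\ \sigma_d\bigl(I(u) \cap I(z_d^\vee)\bigr),
\]
where $\sigma_d = (z_1 s_\ga)^{-d}$ restricts to an order isomorphism $I(z_d^\vee) \to \cP_X \ssm I(z_d)$ by Corollary \ref{cor:zd=z1k1d} and Proposition \ref{prop:firstbiject}(a). Since a bijection commutes with $\cup$ and $\cap$ on subsets of its domain, and the disjointness of $I(z_d)$ from the $\sigma_d$-image yields $(A \sqcup B) \cap (A \sqcup C) = A \sqcup (B \cap C)$, both identities follow by a direct set computation. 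The case $d \leq 0$ is analogous via the dual formula $I(u(d)) = \sigma_d(I(u) \ssm I(z_{-d}))$ and the complementary order isomorphism $\sigma_d : \cP_X \ssm I(z_{-d}) \to I(z_{-d}^\vee)$ obtained from the same results.

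The core step is the composition identity. For $d, e \geq 0$ with $d+e \leq d_X(2)$, expanding both $I((u(d))(e))$ and $I(u(d+e))$ via Lemma \ref{lemma:u(d)} and using $\sigma_e \circ \sigma_d = \sigma_{d+e}$ reduces the claim to the two set-theoretic identities
\[
I(z_e) \ \sqcup\ \sigma_e\bigl(I(z_d) \cap I(z_e^\vee)\bigr) \;=\; I(z_{d+e}),
\qquad
I(z_d^\vee) \cap \sigma_d^{-1}\bigl(I(z_e^\vee)\bigr) \;=\; I(z_{d+e}^\vee),
\]
both of which I would verify from the action of $z_1 s_\ga$ on the chain $\wt\al_1 < \cdots < \wt\al_{d_X(2)}$ recorded in Proposition \ref{prop:z1k1}(c), together with the descriptions of $I(z_d)$ and $I(z_d^\vee)$ as order ideals cut out by this chain. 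The hard part will be the unified verification of composition across all sign patterns of $(d,e)$ and across the boundary $d+e = d_X(2)$: since the two defining formulas of Lemma \ref{lemma:u(d)} take different shapes, the mixed-sign cases require careful tracking of how $(z_1 s_\ga)^{\pm 1}$ acts on all of $\cP_X$, beyond the order-isomorphism descriptions recorded in Proposition \ref{prop:z1k1}.
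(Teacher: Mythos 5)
Your reduction of identities (3) and (4) to (1), (2) plus a composition law is the right structural move, and your set-theoretic verification of (1), (2) via the order isomorphisms from Lemma \ref{lemma:u(d)} is sound. However, the composition law you state, $(u(d))(e) = u(d+e)$ for $d \in \Z$, $e \in \N$, is \emph{false} when $d < 0$: take $u = 1$, $d = -1$, $e = 1$. Then $I(u(-1)) = (z_1 s_\ga).(I(1)\ssm I(z_1)) = \emptyset$, so $u(-1) = 1$, and $I(1(1)) = I(z_1) \cup (z_1 s_\ga)^{-1}.(\emptyset) = I(z_1)$, so $(u(-1))(1) = z_1$, whereas $u(-1+1) = u(0) = 1 \neq z_1$. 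The "hard part" you flag --- the unified verification of composition across sign patterns --- is not merely hard; it cannot succeed because the identity you are trying to prove is wrong in exactly those sign patterns.

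The correct composition law is $u(d)(e) = u(d+e) \cup z_e$ (which holds for all $d \in \Z$, $e \in \N$, and specializes to $u(-e)(e) = u \cup z_e$ rather than $u$). With this in hand, identity (3) still follows from (1), because the extra term $z_e$ is absorbed: $z_e \leq u(e)$, hence
\[
(u \cup v(d))(e) = u(e) \cup (v(d))(e) = u(e) \cup v(d+e) \cup z_e = u(e) \cup v(d+e) \,,
\]
and (4) follows by dualizing via $u(d)^\vee = u^\vee(-d)$, using $u(d)(-e) = u(d-e) \cap z_e^\vee$ and $v(-e) \leq z_e^\vee$. If you want to keep your combinatorial framework, the two auxiliary set identities you write down for $d, e \geq 0$ are the right ingredients for proving $u(d)(e) = u(d+e)$ in that range; what is missing is the observation that the law acquires the correction term $\cup\, z_e$ once $d < 0$, together with the absorption argument. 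Without that, the plan breaks down precisely where you predicted it would be difficult.
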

\begin{proof}
  The identities $(u \cup v)(d) = u(d) \cup v(d)$ and $(u \cap v)(d) = u(d) \cap
  v(d)$ follow from \Lemma{u(d)}, which also implies that $u(-e)(e) = u \cup
  z_e$. We claim that $u(d)(e) = u(d+e) \cup z_e$. For $d \geq 0$ it follows
  from \Theorem{dist} that $\Gamma_e(\Gamma_d(X_u)) = \Gamma_{d+e}(X_u)$, which
  implies $u(d)(e) = u(d+e) = u(d+e) \cup z_e$ by \Corollary{fibers_geom}(c). A
  dual argument shows that $u(-d)(-e) = u(-d-e)$ for $d \geq 0$. For $e' \geq 0$
  we obtain $u(-e-e')(e) = u(-e')(-e)(e) = u(-e') \cup z_e$ and $u(-e)(e+e') =
  u(-e)(e)(e') = (u \cup z_e)(e') = u(e') \cup z_{e+e'}$. This proves all cases
  of $u(d)(e) = u(d+e) \cup z_e$. Using that $z_e \leq u(e)$, we obtain $u(e)
  \cup v(d+e) = u(e) \cup z_e \cup v(d+e) = u(e) \cup v(d)(e) = (u \cup
  v(d))(e)$. The last identity of the lemma follows from this, using that
  $u(d)^\vee = u^\vee(-d)$ by \Proposition{fibers_combin}(c).
\end{proof}

\begin{prop}\label{prop:qposet-dist}%
  The partially ordered set $\cB = \{ q^d [X^u] : u \in W^X, d \in \Z\}$ is a
  distributive lattice with meet and join operations given by
  \[
    q^d [X^u] \cap q^e [X^v] = q^e [X^{u(d-e) \cap v}]
    \text{ \ \ and \ \ }
    q^d [X^u] \cup q^e [X^v] = q^d [X^{v(e-d) \cup u}]
  \]
  for $u,v \in W^X$ and $e \leq d$.
\end{prop}
\begin{proof}
  In this proof we denote $q^d [X^u]$ by $[d,u]$ for brevity. Let $u, v, w \in
  W^X$ and $d, e, f \in \Z$. The partial order on $\cB = \Z \times W^X$ is
  defined by
  \[
    [e,v] \leq [d,u]
    \ \Leftrightarrow \
    \big( e \leq d \text{ and } v \leq u(d-e) \big)
    \ \Leftrightarrow \
    \big( e \leq d \text{ and } v(e-d) \leq u \big) \,.
  \]
  We first show that $[e, u(d-e) \cap v]$ is the greatest lower bound of $[d,u]$
  and $[e,v]$ when $e \leq d$. The relations $[e, u(d-e) \cap v] \leq [d,u]$ and
  $[e, u(d-e) \cap v] \leq [e,v]$ follow from the definition. If $[f,w] \leq
  [d,u]$ and $[f,w] \leq [e,v]$, then
  \[
    w \leq u(d-f) \cap v(e-f) = (u(d-e) \cap v)(e-f) \,,
  \]
  hence $[f,w] \leq [e, u(d-e) \cap v]$. This proves $[d,u] \cap [e,v] = [e,
  u(d-e) \cap v]$. Noting that the map $[d,u] \mapsto [-d,u^\vee]$ is an
  order-reversing involution of $\cB$, the expression $[d,u] \cup [e,v] = [d, u
  \cup v(e-d)]$ for the least upper bound is equivalent to the expression for
  the greatest lower bound. This shows that $\cB$ is a lattice. To prove
  distributivity, assume again that $e \leq d$, and set $n = \max(e,f)$ and $m =
  \max(d,f)$. Notice that $ u(d-m)(m-n) = u(d-n) $, as we have either $m=d$ or
  $m=f=n$. Using \Lemma{shift} we obtain
  \[
    \def\myeq{\\ & \ \ = \ }
    \begin{split}
      &
      ([d,u] \cup [f,w]) \cap ([e,v] \cup [f,w])
      \myeq
      [m,u(d-m) \cup w(f-m)] \cap [n,v(e-n) \cup w(f-n)]
      \myeq
      [n,(u(d-m) \cup w(f-m))(m-n) \cap (v(e-n) \cup w(f-n))]
      \myeq
      [n,(u(d-m)(m-n) \cup w(f-n)) \cap (v(e-n) \cup w(f-n))]
      \myeq
      [n,(u(d-n) \cup w(f-n)) \cap (v(e-n) \cup w(f-n))]
      \myeq
      [n,(u(d-n) \cap v(e-n)) \cup w(f-n)]
      \myeq
      [n,(u(d-e) \cap v)(e-n) \cup w(f-n)]
      \myeq
      [e,u(d-e) \cap v] \cup [f,w]
      \myeq
      ([d,u] \cap [e,v]) \cup [f,w] \,.
    \end{split}
  \]
  Since this identity is formally equivalent to
  \[
    ([d,u] \cup [e,v]) \cap [f,w] = ([d,u] \cap [f,w]) \cup ([e,v] \cap [f,w])
    \,,
  \]
  this completes the proof.
\end{proof}

\begin{defn}\label{defn:quantum-poset}%
  An element $\wh\al \in \cB$ is called \emph{join-irreducible} if $\wh\al =
  \wh\al_1 \cup \wh\al_2$ implies $\wh\al = \wh\al_1$ or $\wh\al = \wh\al_2$,
  for $\wh\al_1, \wh\al_2 \in \cB$. Let $\wh\cP_X \subset \cB$ denote the subset
  of join-irreducible elements. Given $q^d [X^u] \in \cB$, set
  \[
    I(q^d [X^u]) = \{ \wh\al \in \wh\cP_X \mid \wh\al \leq q^d [X^u] \} \,.
  \]
  For $\al \in \cP_X$, define $\partial(\al) \in \N$, $\xi(\al) \in W^X$, and
  $\tau(\al) \in \cB$ by
  \[
    \begin{split}
      \partial(\al) &=
      \min\, \{ d \geq 0 \mid (z_1 s_\ga)^{-d}.\al \in \cP_X \ssm I(z_1^\vee) \}
      \,, \\
      I(\xi(\al)) &= \{ \al' \in \cP_X \mid \al' \leq \al \}
      \,\text{, and} \\
      \tau(\al) &= q^{-\partial(\al)} [X^{\xi(\be)}] \ , \text{\ \ where \ \ }
      \be = (z_1 s_\ga)^{-\partial(\al)}.\al \,.
    \end{split}
  \]
\end{defn}

The integer $\partial(\al)$ exists by \Proposition{z1k1}(a). For example, we
have
\begin{equation}\label{eqn:qposet-minmax}%
  \tau(\ga) = q^{1-d_X(2)} [X^{\ka_{d_X(2)}}]
  \text{ \ \ \ \ and \ \ \ \ }
  \tau(\rho) = [\pt] \,,
\end{equation}
where $\rho \in \Phi^+$ denotes the highest root. For any box $\al \in \cP_X$,
we will show in \Theorem{qposet} that $[X^{\xi(\al)}] = \tau(\al) \cup 1$ holds
in $\cB$, and $[X^{\xi(\al)}] = \tau(\al)$ is join-irreducible if and only if
$\al \in \cP_X \ssm I(z_1^\vee)$. This motivates the definition of $\tau(\al)$.

\begin{lemma}\label{lemma:tau-leq}%
  Let $\al \in \cP_X$ and $q^d [X^u] \in \cB$. We have $\tau(\al) \leq q^d
  [X^u]$ in $\cB$ if and only if $\al \in I(u(d))$.
\end{lemma}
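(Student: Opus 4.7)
First, I will unpack the left-hand side. Writing $k = \partial(\al)$ and $\be = (z_1 s_\ga)^{-k}.\al \in \cP_X \ssm I(z_1^\vee)$, the definition $\tau(\al) = q^{-k}[X^{\xi(\be)}]$ combined with the definition of the order on $\cB$ makes $\tau(\al) \leq q^d[X^u]$ equivalent to the conjunction: $d + k \geq 0$ and $\xi(\be) \leq u(d+k)$ in $W^X$. Since $I(\xi(\be))$ is by construction the principal order ideal of $\be$ in $\cP_X$, the containment $I(\xi(\be)) \subset I(u(d+k))$ is equivalent to $\be \in I(u(d+k))$. So the statement to prove reduces to: $\al \in I(u(d))$ if and only if $d + k \geq 0$ and $\be \in I(u(d+k))$.

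The heart of the argument is a shift identity: for any $v \in W^X$, $\al_0 \in I(z_1^\vee)$, and $e \in \Z$,
\[
  \al_0 \in I(v(e)) \ \iff\ (z_1 s_\ga)^{-1}.\al_0 \in I(v(e+1)).
\]
To prove it, I combine the identity $v(e)(1) = v(e+1) \cup z_1$ established in the proof of \Lemma{shift} with the expansion $I(v(e)(1)) = I(z_1) \cup (z_1 s_\ga)^{-1}.(I(v(e)) \cap I(z_1^\vee))$ coming from \Lemma{u(d)} applied to $v(e)$. Since \Proposition{z1k1}(a) says $(z_1 s_\ga)^{-1}$ bijects $I(z_1^\vee)$ onto $\cP_X \ssm I(z_1)$, intersecting both expressions for $I(v(e)(1))$ with $\cP_X \ssm I(z_1)$ yields the equality $(z_1 s_\ga)^{-1}.(I(v(e)) \cap I(z_1^\vee)) = I(v(e+1)) \ssm I(z_1)$. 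Setting $\al_1 := (z_1 s_\ga)^{-1}.\al_0 \in \cP_X \ssm I(z_1)$, this translates directly into the claimed biconditional.

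I then iterate. Along the chain $\al = \al_0, \al_1 = (z_1 s_\ga)^{-1}.\al_0, \ldots, \al_k = \be$, each element $\al_i$ with $0 \leq i < k$ lies in $I(z_1^\vee)$ by the very definition of $k = \partial(\al)$, so the shift identity applies at each stage and iterating $k$ times produces $\al \in I(u(d)) \iff \be \in I(u(d+k))$, valid for every $d \in \Z$ (and trivially true when $k = 0$).

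Finally, I verify that this equivalence automatically forces $d + k \geq 0$. Suppose for contradiction that $d + k = -m$ with $m \geq 1$. By \Lemma{u(d)}, $I(u(-m)) = (z_1 s_\ga)^m.(I(u) \ssm I(z_m))$; by \Corollary{zd=z1k1d} and \Proposition{firstbiject}(a), $(z_1 s_\ga)^m$ agrees with $z_m$ on $\cP_X \ssm I(z_m)$ and sends this set bijectively onto $I(z_m^\vee)$, so $I(u(-m)) \subset I(z_m^\vee)$. Since $z_1 \leq z_m$ in $W^X$, Poincar\'e duality gives $I(z_m^\vee) \subset I(z_1^\vee)$; but $\be \notin I(z_1^\vee)$ by construction, contradicting $\be \in I(u(d+k))$. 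The main obstacle I foresee is pinning down the correct shift identity, particularly the passage $v(e)(1) = v(e+1) \cup z_1$; once that identity is isolated, the rest of the argument is bookkeeping.
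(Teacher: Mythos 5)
Your proof is correct and uses the same toolkit as the paper's proof: the identity $v(e)(1) = v(e+1) \cup z_1$ from \Lemma{shift}, the description of $I(u(d))$ from \Lemma{u(d)}, and the bijection of \Proposition{z1k1}(a). The only difference is that the paper applies the $e$-fold version $I(u(d)(e)) = I(z_e) \cup (z_1 s_\ga)^{-e}(I(u(d)) \cap I(z_e^\vee))$ in one step, silently relying on the facts $\be \notin I(z_e)$ and $\al \in I(z_e^\vee)$, whereas you iterate a one-step shift $\partial(\al)$ times, which bypasses those intermediate claims and also spells out why $d + \partial(\al) \geq 0$ is automatic.
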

\begin{proof}
  Set $e = \partial(\al)$ and $\be = (z_1 s_\ga)^{-e}.\al$, so that $\tau(\al) =
  q^{-e}[X^{\xi(\be)}]$. We then have $\tau(\al) \leq q^d [X^u]$ if and only if
  $-e \leq d$ and $\xi(\be) \leq u(d+e)$, or equivalently, $d+e \geq 0$ and $\be
  \in I(u(d+e)))$. Since $\be \in \cP_X \ssm I(z_1^\vee)$, the condition $d+e
  \geq 0$ follows from $\be \in I(u(d+e))$. Using that
  \[
    I(u(d+e)) \cup I(z_e) \, = \, I(u(d)(e)) \, = \,
    (z_1 s_\ga)^{-e}\big(I(u(d)) \cap I(z_e^\vee)\big) \cup I(z_e)
  \]
  by \Lemma{shift} and \Lemma{u(d)}, and that $\be \notin I(z_e)$ and $\al \in
  I(z_e^\vee)$, we deduce that $\be \in I(u(d+e))$ is equivalent to $\al \in
  I(u(d))$, as required.
\end{proof}

Part (d) of the following result is essentially a consequence of Birkhoff's
representation theorem \cite{birkhoff:rings} together with
\Proposition{qposet-dist}; we supply a proof since $\wh\cP_X$ is an infinite
set.

\begin{thm}\label{thm:qposet}%
  {\rm(a)} We have $[X^{\xi(\al)}] = \tau(\al) \cup 1$ in $\cB$ for each $\al
  \in \cP_X$.\smallskip

  \noin{\rm(b)} We have $\wh\cP_X = \{ q^d [X^{\xi(\al)}] : \al \in \cP_X \ssm
  I(z_1^\vee),\, d \in \Z \} \cup \{ q^d : d \in \Z \}$.\smallskip

  \noin{\rm(c)} The map $\tau : \cP_X \to \tau(\cP_X)$ is an order isomorphism
  onto an interval in $\wh\cP_X$.\smallskip

  \noin{\rm(d)} The map $q^d [X^u] \mapsto I(q^d [X^u])$ is an order isomorphism
  of $\cB$ with the set of non-empty, proper, lower order ideals in $\wh\cP_X$,
  ordered by inclusion.
\end{thm}
\begin{proof}
  If $q^d = q^e [X^v] \cup q^d [X^u]$, then $e \leq d$ and $v(e-d) \cup u = 1$,
  hence $q^d[X^u] = q^d$. This shows that $q^d$ is join-irreducible. Let $q^d
  [X^u] \in \cB$ satisfy $u \neq 1$. If $q^d [X^u]$ is join-irreducible, then
  $u$ is join-irreducible in $W^X$, so $u = \xi(\al)$ for some $\al \in \cP_X$.
  If $\al \in I(z_1^\vee)$, then $\be = (z_1 s_\ga)^{-1}.\al \in \cP_X$ by
  \Proposition{z1k1}(a), and it follows from \Lemma{u(d)} that $u =
  \xi(\be)(-1)$. But then $q^d [X^u] = q^d \cup q^{d-1} [X^{\xi(\be)}]$ is not
  join-irreducible, a contradiction. On the other hand, assume that $u =
  \xi(\al)$ where $\al \in \cP_X \ssm I(z_1^\vee)$. If $q^d [X^u] = q^e [X^v]
  \cup q^d [X^{u'}]$ with $e < d$, then $u = v(e-d) \cup u'$. Since $\al \notin
  I(v(e-d))$, we have $\al \in I(u')$, so $u' = u$. This proves part (b).

  Let $\al', \al \in \cP_X$, and set $u = \xi((z_1 s_\ga)^{-\partial(\al)}.\al)$.
  Using that $u(-\partial(\al)) = \xi(\al)$ by \Lemma{u(d)}, it follows from
  \Lemma{tau-leq} that $\tau(\al') \leq \tau(\al)$ holds if and only if $\al'
  \in I(\xi(\al))$, which is equivalent to $\al' \leq \al$. This shows that $\tau
  : \cP_X \to \tau(\cP_X)$ is an order isomorphism. To see that $\tau(\cP_X)$ is
  an interval in $\wh\cP_X$, assume that $\tau(\ga) \leq q^d [X^{\xi(\al)}] \leq
  \tau(\rho)$, where $\al \in \cP_X \ssm I(z_1^\vee)$ and $d \in \Z$. By
  \Lemma{tau-leq} and \eqn{qposet-minmax}, this is equivalent to $1 - d_X(2)
  \leq d \leq 0$ and $I(\xi(\al)(d)) \neq \emptyset$. We deduce that $(z_1
  s_\ga)^e.\al \in \cP_X \ssm I(z_1)$ for $0 \leq e < -d$, and $ q^d
  [X^{\xi(\al)}] = \tau((z_1 s_\ga)^{-d}.\al) \in \tau(\cP_X)$. This proves part
  (c).

  Given $\al \in \cP_X$, \Proposition{qposet-dist} implies that $\tau(\al) \cup
  1 = [X^u]$ for some $u \in W^X$. Since $\tau(\al') \not\leq 1$ for each $\al'
  \in \cP_X$ by \Lemma{tau-leq}, another application of \Lemma{tau-leq} shows
  that $\al' \in I(u)$ holds if and only if $\tau(\al') \leq \tau(\al)$, so it
  follows from part (c) that $u = \xi(\al)$. This proves part (a).

  Let $I \subset \wh\cP_X$ be any non-empty, proper, lower order ideal. Since
  $I$ has finitely many maximal elements, say $\wh\al_1, \dots, \wh\al_\ell$, it
  follows from \Proposition{qposet-dist} that $I$ has a well-defined least upper
  bound $q^d [X^u] = \wh\al_1 \cup \dots \cup \wh\al_\ell$ in $\cB$. For any
  element $\wh\be \in I(q^d [X^u])$, we obtain
  \[
    \wh\be = \wh\be \cap q^d [X^u] = (\wh\be \cap \wh\al_1) \cup \dots
    \cup (\wh\be \cap \wh\al_\ell) \,.
  \]
  Since $\wh\be$ is join-irreducible, this implies $\wh\be = \wh\be \cap
  \wh\al_i$ for some $i$, so $\wh\be \in I$. We deduce that $I = I(q^d [X^u])$.
  Part (d) follows from this, noting that any element $q^d [X^u] \in \cB$ is the
  least upper bound of the finite set $\{ q^d \} \cup \{ q^d \tau(\al) \mid \al
  \in I(u) \}$ by part (a).
\end{proof}

The following result elaborates on the relationship between $\cP_X$ and
$\wh\cP_X$.

\begin{prop}\label{prop:qposet-elaborate}%
  Let $\wh\al \in \wh\cP_X$ be any element and let $\rho \in \Phi^+$ be the
  highest root.\smallskip

  \noin{\rm(a)} We have $[\pt] \star 1 = \tau(\rho)$ and $[\pt] \star \tau(\ga)
  = q$.\smallskip

  \noin{\rm(b)} We have $\tau(\ga) \leq \wh\al$ if and only if $\wh\al \not\leq
  1$.\smallskip

  \noin{\rm(c)} We have $\wh\al \leq \tau(\rho)$ if and only if $q \not\leq
  \wh\al$.\smallskip

  \noin{\rm(d)} For $w \in W^X$ with $w \leq z_1^\vee$, we have $I(q[X^w]) =
  I([X^{w(1)}]) \cup \{q\}$.
\end{prop}
\begin{proof}
  By \eqn{qposet-minmax} we have $\tau(\rho) = [\pt]$ and $\tau(\ga) = q^{1-d}
  [X^{\ka_d}]$, where $d = d_X(2)$. Using \cite{belkale:transformation,
  chaput.manivel.ea:affine} we obtain $[\pt] \star [X^{\ka_d}] = q^d [X^{w_0^X
  \ka_d}] = q^d$. Part (a) follows from this. Part (c) follows from the
  definition of the partial order on $\cB$, and part (b) follows from (a) and
  (c), noting that quantum multiplication by the point class defines an order
  automorphism of $\cB$. Assume that $w \in W^X$ satisfies $w \leq z_1^\vee =
  w_0^X(-1)$. It follows from \Lemma{shift} that $w(1-e) = w(1)(-e)$ holds for
  all $e \in \Z$. Indeed, for $e \leq 0$ we have $w(1)(-e) = (1 \cup w(1))(-e) =
  1(-e) \cup w(1-e) = w(1-e)$, and for $e > 0$ we obtain
  \[
    w(1)(-e) = (w(1) \cap w_0^X)(-e) = w(1-e) \cap w_0^X(-e) = w(1-e) \,,
  \]
  noting that $w(1-e) \leq w_0^X(-1)(1-e) = w_0^X(-e)$. \Lemma{tau-leq} then
  implies that $q^e \tau(\al) \leq q[X^w]$ holds if and only if $q^e\tau(\al)
  \leq [X^{w(1)}]$, for any $\al \in \cP_X$ and $e \in \Z$, which proves part
  (d).
\end{proof}

\subsection{Quantum shapes}\label{sec:qshapes}%

Define a \emph{quantum shape} in $\wh\cP_X$ to be any non-empty lower order
ideal $\la \subset \wh\cP_X$. Given a quantum shape $\nu$ containing $\la$, we
write $\nu/\la$ for the \emph{skew shape} of boxes of $\nu$ that are not in
$\la$. By \Theorem{qposet}(c) we may identify $\cP_X$ with the interval
$\tau(\cP_X)$ of $\wh\cP_X$. \Proposition{qposet-elaborate} implies that $\cP_X$
is the skew shape $\cP_X = I([\pt]) / I(1)$. \Lemma{tau-leq} then shows that
\begin{equation}\label{eqn:shape2q}%
  I([X^u]) = I(u) \cup I(1)
\end{equation}
holds for all $u \in W^X$, and further shows that the shift operations on $W^X$
can be expressed as
\[
  I(u(d)) \,=\, I(q^d [X^u]) \cap \cP_X \,=\, q^d I([X^u]) \cap \cP_X
\]
for all $d \in \Z$. \Theorem{qdeg-shapes} and \Theorem{qposet}(d) show that
$q^d$ occurs in $[X_u] \star [X^v]$ if and only if the order ideal of
$q^d[X^u]$ lies between the order ideals of $[X^v]$ and $[\pt] \star [X^v]$.

When $X$ is a Grassmannian of type A, the set $\wh\cP_X$ is Postnikov's
\emph{cylinder} from \cite[\S3]{postnikov:affine}. The analogue of Postnikov's
\emph{torus} is the quotient of $\wh\cP_X$ by the group $\{[\pt]^d \mid d \in
\Z\}$ of powers of a point. Pictures of $\wh\cP_X$ for cominuscule varieties of
types other than A can be found in \Example{qposet-LG} and \Figure{qposet}.

The partially ordered sets $\wh\cP_X$ are isomorphic to certain \emph{full
heaps} of affine Dynkin diagrams that were defined in
\cite[Ch.~6]{green:combinatorics*1} based on a type-by-type construction, and
used to study minuscule representations. Postnikov's cylinder was constructed in
\cite[\S8]{hagiwara:minuscule*2} from a similar viewpoint.

\begin{example}\label{example:qposet-LG}%
  Let $X = \LG(4,8)$ be the Lagrangian Grassmannian of maximal isotropic
  subspaces in an 8-dimensional symplectic vector space, and define $u, v \in
  W^X$ by the shapes $I(u) = \tableau{4}{{}&{}&{}}$ and $I(v) =
  \tableau{4}{{}&{}&{}&{}\\&{}&{}&{}\\&&{}}$. We have
  \[
    [X_u] \star [X^v] = q^2 [X^{\tableau{4}{{}&{}&{}\\&{}&{}}}]
    + q^2 [X^{\tableau{4}{{}&{}&{}&{}\\&{}}}] + q^3
  \]
  in $\QH(X)$, so $\dmin(u^\vee,v) = 2$ and $\dmax(u^\vee,v) = 3$. The following
  picture shows a section of the partially ordered set $\wh\cP_X$, with the
  boxes of $\cP_X$ colored gray.\medskip
  \begin{center}
    \begin{tikzpicture}[x=.4cm,y=.4cm]
      \def\zz{-- ++(0,-1) -- ++(1,0)}
      \def\ZZ{-- ++(4,0) -- ++(0,-4)}
      \draw [fill=gray!30] (2,7) \zz\zz\zz\zz -- ++(0,4) -- cycle;
      \draw (0,9) \ZZ\ZZ -- ++(4,0);
      \draw (1,9) -- ++(0,-1) \ZZ -- ++(4,0) -- ++(0,-3);
      \draw (2,9) -- ++(0,-2) \ZZ -- ++(4,0) -- ++(0,-2);
      \draw (3,9) -- ++(0,-3) \ZZ -- ++(4,0) -- ++(0,-1);
      \draw (2,7) -- (6,7) -- (6,3);
      \draw (0,9) \zz\zz\zz\zz\zz\zz\zz\zz;
      \draw (4,9) -- (5,9) \zz\zz\zz\zz\zz\zz\zz -- ++(0,-1);
      \draw [line width=.7mm,black] (5,4) -- (5,5) -- (6,5) -- (6,7);
      \draw [line width=.7mm,black] (6,3) -- (8,3) -- (8,4) -- (9,4);
      \draw [line width=.7mm,red] (3,6) -- (5,6) -- (5,7) -- (6,7);
    \end{tikzpicture}
  \end{center}
  \medskip
  The south-east borders of the order ideals of $[X^v]$ and $[\pt] \star [X^v]$
  are colored black, and the south-east border of the order ideal of $[X^u]$ is
  colored red. The order ideal of $[\pt] \star [X^v]$ is obtained by reflecting
  $I(v)$ in a diagonal line and attaching the result to the right side of
  $\cP_X$; this follows from \cite[Lemma~2.9]{buch.samuel:k-theory}, by
  observing that multiplication by a point preserves the partial order on $\cB$.
  Notice that the red border will fit between the two black borders if it is
  shifted south-east by 2 or 3 steps.
\end{example}

% !TeX root=qkpos.tex

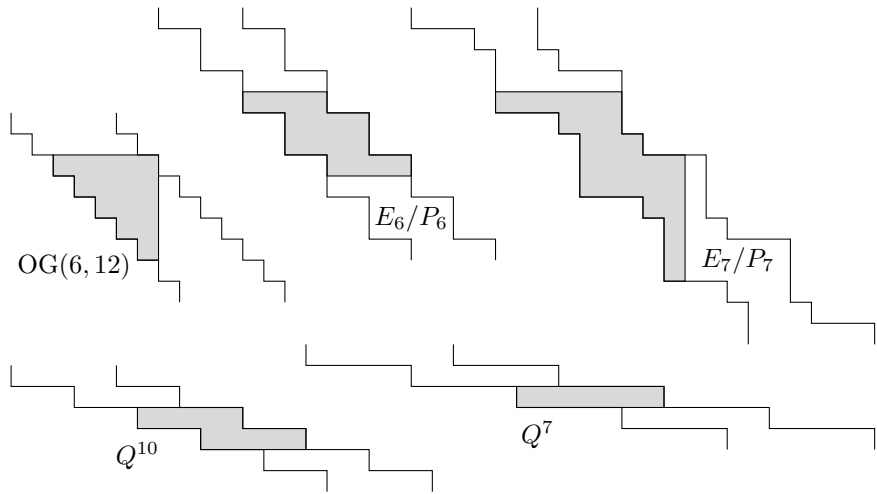
\begin{figure}[h]
    \caption{The partially ordered sets $\wh\cP_X$ for a collection of
      cominuscule flag varieties, with the boxes of $\cP_X$ colored gray.}
    \label{fig:qposet}
    \bigskip
    \begin{tikzpicture}[x=2.8mm,y=2.8mm]
    \begin{scope}[shift={(0,-3)}] % OG(6,12)
        \def\zz{-- ++(0,-1) -- ++(1,0)}
        \draw [fill=gray!30] (0,0) \zz\zz\zz\zz\zz -- (5,0) -- cycle;
        \draw (-2,2) \zz\zz\zz\zz\zz\zz\zz\zz -- ++(0,-1);
        \draw (3,2) \zz\zz\zz\zz\zz\zz\zz\zz -- ++(0,-1);
        \node at (1,-5.3) {$\OG(6,12)$};
    \end{scope}
    \begin{scope}[shift={(9,0)}] % E6/P6
        \def\zz{-- ++(0,-2) -- ++(2,0)}
        \draw [fill=gray!30] (0,0) -- (4,0) -- ++(0,-1) -- ++(2,0) \zz
        -- ++(0,-1) -- ++(-4,0) -- ++(0,1) -- ++(-2,0) -- ++(0,2)
        -- ++(-2,0) -- cycle;
        \draw (0,4) -- ++(0,-1) -- ++(2,0) \zz\zz\zz\zz\zz -- ++(0,-1);
        \draw (-4,4) -- ++(0,-1) -- ++(2,0) \zz\zz\zz\zz\zz -- ++(0,-1);
        \node at (8,-6) {$E_6/P_6$};
    \end{scope}
    \begin{scope}[shift={(21,0)}] % E7/P7
        \def\zz{-- ++(0,-3) -- ++(1,0) -- ++(0,-1) -- ++(3,0)}
        \def\uu{-- ++(0,3) -- ++(-1,0) -- ++(0,1) -- ++(-3,0)}
        \draw [fill=gray!30] (0,0) -- ++(6,0) -- ++(0,-2) -- ++(1,0) -- ++(0,-1)
        -- ++(2,0) -- ++(0,-6) -- ++(-1,0) \uu \uu -- cycle;
        \draw (2,4) -- ++(0,-2) -- ++(1,0) -- ++(0,-1) -- ++(3,0)\zz\zz\zz
        -- ++(0,-1);
        \draw (12,-12) -- ++(0,2) -- ++(-1,0) -- ++(0,1) -- ++(-3,0) \uu\uu\uu
        -- ++(0,1);
        \node at (11.5,-8) {$E_7/P_7$};
    \end{scope}
    \begin{scope}[shift={(4,-15)}] % Q(10)
        \def\zz{-- ++(0,-1) -- ++(3,0)}
        \draw [fill=gray!30] (0,0) -- (5,0) \zz -- ++(0,-1) -- ++(-5,0)
        -- ++(0,1) -- ++ (-3,0) -- cycle;
        \draw (-1,2) \zz\zz\zz\zz\zz -- ++(0,-1);
        \draw (-6,2) \zz\zz\zz\zz\zz -- ++(0,-1);
        \node at (0,-2.3) {$Q^{10}$};
    \end{scope}
    \begin{scope}[shift={(22,-14)}] % Q(7)
        \def\zz{-- ++(0,-1) -- ++(5,0)}
        \draw [fill=gray!30] (0,0) -- (7,0) -- (7,-1) -- (0,-1) -- cycle;
        \draw (-3,2) \zz\zz\zz\zz -- ++(0,-1);
        \draw (-10,2) \zz\zz\zz\zz -- ++(0,-1);
        \node at (1,-2.3) {$Q^7$};
    \end{scope}
    \end{tikzpicture}
  \end{figure}

\begin{remark}
  Let $\al_0$ be the added simple root of the affine root system corresponding
  to $G$. For any affine root $\theta = n_0 \al_0 + \sum_{\be \in \Delta} n_\be
  \be$, let $\la(\theta)=n_\ga - n_0$. Since $\ga$ is cominuscule, we have
  $\la(\theta) \in \{-1,0,1\}$. We plan to prove in a follow-up paper with
  Nicolas Ressayre that $\wh\cP_X$ is isomorphic to the partially ordered set of
  affine roots $\theta$ for which $\la(\theta)=1$, where the order on this set
  is defined by the covering relation $\theta_1 \lessdot \theta_2$ if and only
  if $\theta_2 - \theta_1$ is a positive affine root.
 \end{remark}

\subsection{Proof of the main lemma}\label{sec:qcintproof}%

\targetsec{Wcomin}{}%
Our proof of \Lemma{fiber-dtod1} utilizes a relationship between all the
cominuscule flag varieties $F = G/P_F$ of the same group $G$. Let $W^\comin
\subset W$ denote the set of representatives of single points in these
varieties, together with the identity element:
\[
  W^\comin \ = \ \{ w_0^F \mid F \text{ is a cominuscule flag variety of $G$ }\}
  \cup \{1\} \,.
\]
For each cominuscule root $\ga \in \Delta$ we let $F_\ga = G/P_\ga$ denote the
corresponding cominuscule flag variety. The following result was used to
determine the Seidel representation on the quantum cohomology ring of any flag
variety in \cite{chaput.manivel.ea:affine}, see also
\cite[Prop.~VI.2.6]{bourbaki:elements*78}.

\begin{prop}\label{prop:Wcomin}%
  The set $W^\comin$ is a subgroup of $W$ isomorphic to the coweight lattice of
  $\Phi$ modulo the coroot lattice. The isomorphism maps $w_0^{F_\ga}$ to the
  class of the fundamental coweight $\om^\vee_\ga$ corresponding to $\ga$.
\end{prop}

We mostly need this result when $G$ has Lie type A. Let $w_0^{\Gr(d,n)} \in S_n$
denote the permutation representing the point class on $\Gr(d,n)$. This
permutation is determined by $w_0^{\Gr(d,n)}(p) \equiv p-d$ (mod $n$) for $p \in
[1,n]$. The following consequence of \Proposition{Wcomin} is also immediate from
this description.

\begin{cor}\label{cor:WcominA}%
  The assignment $d \mapsto w_0^{\Gr(d,n)}$ defines an isomorphism of groups
  $\Z/n\Z \to S_n^\comin$.
\end{cor}

\begin{cor}\label{cor:zdkd}%
  For $0 \leq d \leq d_X(2)$ we have $z_d/\ka_d = (z_1/\ka_1)^d = (z_1
  s_\ga)^d$.
\end{cor}
\begin{proof}
  Since $z_d/\ka_d = w_{0,X}^{Z_d}$ represents a point in $F_d = P_X/P_{Z_d}$ by
  \Lemma{weylid}, we can prove the identity by applying \Proposition{Wcomin} to
  the Weyl group $W_X$ of $P_X$. If $X$ is a Grassmannian of type A, a
  Lagrangian Grassmannian, or a maximal orthogonal Grassmannian, then the Levi
  subgroup of $P_X$ is a group of type A (or a product of two such groups), and
  the identity follows from \Corollary{WcominA} and \Table{incidence}. If $X$ is
  a quadric hypersurface, then $F_1$ is also a quadric, $F_2$ is a point, and
  the identity follows from \Proposition{primitive}(4) because $F_1$ is
  primitive. Finally, if $X$ is the Cayley plane $E_6/P_6$ or the Freudenthal
  variety $E_7/P_7$, the identity follows from \Table{incidence} together with
  the isomorphisms $W_{D_5}^\comin \cong \Z/4\Z$ and $W_{E_6}^\comin \cong
  \Z/3\Z$.
\end{proof}

\begin{lemma}\label{lemma:z1-kd}%
  For $1 \leq d \leq d_X(2)$ we have $(z_1 s_\ga)^{d-1}.(I(z_d) \ssm I(z_{d-1}))
  = I(z_1 \cap z_{d-1}^\vee)$ and $I(\ka_d) \cup w_{0,X} z_1 s_\ga.I(z_1 \cap
  z_{d-1}^\vee) = I(z_1 \cup \ka_d)$.
\end{lemma}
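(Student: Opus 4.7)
The plan is to prove the two parts of \Lemma{z1-kd} separately, with part (i) serving as the combinatorial input needed to reformulate part (ii) as a statement about the action of a single involution of $\cP_X$.

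Part (i) is essentially a change of variables. For any $\al \in I(z_d) \ssm I(z_{d-1})$ we have $\al \notin I(z_{d-1})$, so \Corollary{zd=z1k1d} identifies $(z_1 s_\ga)^{d-1}.\al = z_{d-1}.\al$, and \Proposition{firstbiject}(a) makes $z_{d-1} : \cP_X \ssm I(z_{d-1}) \to I(z_{d-1}^\vee)$ an order isomorphism that preserves the labeling $\delta$. Iterating \Proposition{z1k1}(c) gives $z_{d-1}.\wt\al_d = \wt\al_1 = \ga$ and $z_{d-1}.\wt\al_{d+1} = \wt\al_2$, so the defining inequalities $\wt\al_d \leq \al$ and $\al \not\geq \wt\al_{d+1}$ transport under $z_{d-1}$ to $\ga \leq z_{d-1}.\al$ (automatic, i.e.\ $z_{d-1}.\al \in \cP_X$) and $z_{d-1}.\al \not\geq \wt\al_2$ (i.e.\ $z_{d-1}.\al \in I(z_1)$). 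The image is therefore $I(z_1) \cap I(z_{d-1}^\vee) = I(z_1 \cap z_{d-1}^\vee)$, proving part~(i).

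For part (ii), the key new input is that $\sigma := w_{0,X}(z_1 s_\ga)^d$ is an involution of $\cP_X$. Indeed, the conjugation relation $w_{0,X}(z_1 s_\ga)w_{0,X} = (z_1 s_\ga)^{-1}$ from \Proposition{z1k1} gives $w_{0,X}(z_1 s_\ga)^d = (z_1 s_\ga)^{-d} w_{0,X}$, whence $\sigma^2 = w_{0,X}(z_1 s_\ga)^d \cdot w_{0,X}(z_1 s_\ga)^d = (z_1 s_\ga)^{-d}(z_1 s_\ga)^d = 1$. Substituting part~(i), the left side of (ii) becomes $I(\ka_d) \cup \sigma(I(z_d)\ssm I(z_{d-1}))$. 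I would first verify $\sigma(I(z_d)\ssm I(z_{d-1})) \subseteq I(z_1) \cup I(\ka_d)$ by splitting into two cases: iterating \Proposition{z1k1}(c) yields $(z_1 s_\ga)^d.\wt\al_d = w_{0,X}.\ga$, so $\sigma.\wt\al_d = \ga \in I(\ka_d)$; and for any other $\al \in I(z_d)\ssm I(z_{d-1})$, part~(i) places $z_{d-1}.\al$ in $I(z_1) \ssm \{\ga\}$, whereupon \Proposition{z1k1}(b) puts $(z_1 s_\ga)(z_{d-1}.\al)$ in $w_{0,X}.(I(z_1)\ssm\{\ga\})$, so $\sigma.\al \in I(z_1) \ssm \{\ga\}$.

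The main obstacle is the reverse inclusion $I(z_1)\ssm I(\ka_d) \subseteq \sigma(I(z_d)\ssm I(z_{d-1}))$. Because $\sigma$ is an involution, this reduces to checking that $\sigma.\be \in I(z_d)\ssm I(z_{d-1})$ for every $\be \in I(z_1)\ssm I(\ka_d)$, i.e.\ verifying both $\wt\al_d \leq \sigma.\be$ and $\sigma.\be \not\geq \wt\al_{d+1}$. This is subtle because the orbit $(z_1 s_\ga)^k.\be$ for $0 \leq k \leq d$ may alternate between $I(z_1)$ and $\cP_X \ssm I(z_1)$, so no single order isomorphism from \Proposition{z1k1} governs the full iteration. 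I plan to circumvent this by dualising: rewrite $\sigma.\be = (z_1 s_\ga)^{-d}(w_{0,X}.\be)$, and note that $\be \in I(z_1)\ssm I(\ka_d)$ is equivalent to $w_{0,X}.\be \in I(\ka_d^\vee) \ssm I(z_1^\vee)$. A dual version of part~(i) for $(z_1 s_\ga)^{-1}$, proved identically from \Proposition{firstbiject}(a) and the reversed iteration of \Proposition{z1k1}(c), identifies such elements as the image of $I(z_{d-1}^\vee)\ssm I(z_d^\vee) = w_{0,X}.(I(z_d)\ssm I(z_{d-1}))$ under a single order isomorphism, whose pullback then places $\sigma.\be$ in the required skew region and completes the proof.
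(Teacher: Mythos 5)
Your argument for part (i) is correct, and so is the forward inclusion for part (ii): with $\sigma := w_{0,X}(z_1 s_\ga)^d$ you correctly rewrite the left side as $I(\ka_d) \cup \sigma(I(z_d) \ssm I(z_{d-1}))$, verify $\sigma^2 = 1$ from $w_{0,X}(z_1 s_\ga) w_{0,X} = (z_1 s_\ga)^{-1}$, and the two-case computation showing $\sigma(I(z_d) \ssm I(z_{d-1})) \subseteq I(z_1) \cup I(\ka_d)$ goes through. The paper proves part (i) via the sets $\cS_e$ of \Proposition{Sd} rather than by transporting the defining inequalities through the order isomorphism $z_{d-1}$, but your route is equally valid.

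The gap is in the reverse inclusion. You claim that a dual of part (i) (obtained by conjugating with $w_{0,X}$, so concerning $(z_1 s_\ga)^{-(d-1)}$) identifies the set $I(\ka_d^\vee) \ssm I(z_1^\vee)$ with the image of $I(z_{d-1}^\vee) \ssm I(z_d^\vee)$, and that a pullback finishes. But conjugating part (i) by $w_{0,X}$ gives $(z_1 s_\ga)^{-(d-1)}.(I(z_{d-1}^\vee)\ssm I(z_d^\vee)) = \cP_X \ssm (I(z_1^\vee) \cup I(z_{d-1}))$, and this set is \emph{not} $I(\ka_d^\vee) \ssm I(z_1^\vee)$. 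In fact the two sets generally do not even have the same cardinality: $|I(z_{d-1}^\vee)\ssm I(z_d^\vee)| = \ell(z_d) - \ell(z_{d-1})$, while $|I(\ka_d^\vee)\ssm I(z_1^\vee)| = |I(z_1)\ssm I(\ka_d)|$, and for $X=\Gr(3,7)$ with $d=2$ these are $4$ and $3$ respectively. (The discrepancy is accounted for by the box $\tal_d$, whose image $\sigma.\tal_d = \ga$ lands in $I(\ka_d)$ rather than in $I(z_1) \ssm I(\ka_d)$, as your own forward case analysis shows.) So the "dual of part (i)" does not deliver the containment you need, and the proof is incomplete at precisely the point you flagged as "the main obstacle." What is really needed is the inclusion $I(z_1) \cap (z_1 s_\ga)^{-1}.I(z_{d-1}) \subseteq I(\ka_d)$, which the paper obtains by writing $(z_1 s_\ga)^{-1}.I(z_{d-1}) = \bigcup_{e=2}^d \cS_e$ via \Proposition{Sd} and observing that $I(z_1) \cap (I(z_e) \ssm I(z_{e-1})) = \emptyset$ for $e \geq 2$ (boxes in $I(z_e)\ssm I(z_{e-1})$ are $\geq\tal_e\geq\tal_2$), so $I(z_1)\cap\cS_e \subseteq I(\ka_e) \subseteq I(\ka_d)$. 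You would need to incorporate the $\cS_e$ decomposition or supply an equivalent argument to close the gap.
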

\begin{proof}
  Recall the definition of the set $\cS_d$ before \Proposition{Sd}. Noting that
  $I(z_d) \ssm I(z_{d-1}) = \cS_d \cap (\cP_X \ssm I(z_{d-1}))$ and $I(z_1 \cap
  z_{d-1}^\vee) = \cS_1 \cap I(z_{d-1}^\vee)$, the first identity of the lemma
  follows from \Proposition{Sd}, \Proposition{firstbiject}(a), and
  \Corollary{zd=z1k1d}. \Proposition{z1k1} implies that
  \[
    \begin{split}
      w_{0,X} z_1 s_\ga.I(z_1 \cap z_{d-1}^\vee)
      &\ = \
      w_{0,X} z_1 s_\ga.I(z_1) \cap (z_1 s_\ga)^{-1} w_{0,X}.I(z_{d-1}^\vee) \\
      &\ = \
      I(z_1) \ssm (z_1 s_\ga)^{-1}.I(z_{d-1}) \,,
    \end{split}
  \]
  so the second
  identity is equivalent to
  \[
    I(z_1) \cap (z_1 s_\ga)^{-1}.I(z_{d-1}) \subset I(\ka_d) \,.
  \]
  Since $(z_1 s_\ga)^{-1}.I(z_{d-1}) = \bigcup_{e=2}^d \cS_e$ by
  \Proposition{Sd}, it suffices to show that $I(z_1) \cap \cS_d \subset
  I(\ka_d)$ for $d \geq 2$. This follows from the definition of $\cS_d$, as
  $I(z_1) \cap (I(z_d)\ssm I(z_{d-1})) = \emptyset$.
\end{proof}

\begin{proof}[Proof of \Lemma{fiber-dtod1}]
  The definition of $v^d$ is equivalent to $v^d \ka_d = (v \cap z_d) \cup
  \ka_d$, which specializes to $v^d \ka_d = v \cap z_d$ for $d \leq \dmax(v)$.
  Part (b) follows from this. For part (a), let $v = u^\vee$ be dual to $u$ in
  $W^X$. Then \Proposition{fibers_combin}(c) shows that $u_d$ is dual to $v^d$
  in $W^{F_d}$, and $u_{d-1}$ is dual to $v^{d-1}$ in $W^{F_{d-1}}$. Since
  \Corollary{zdkd} and \Lemma{weylid} show that $w_{0,X} (z_1 s_\ga)^d =
  w_{0,Z_d} = \ka_d w_{0,Z_d} \ka_d$, it follows from \Proposition{biject}(a)
  that
  \[
    \begin{split}
      w_{0,X}(z_1 s_\ga)^d.(I(z_d) \ssm I(v^d \ka_d))
      &= \ka_d w_{0,Z_d} \ka_d.(I(z_d) \ssm I(v^d \ka_d)) \\
      &= \ka_d w_{0,Z_d}.(\cP_{F_d} \ssm I(v^d))
      = \ka_d.I(u_d) \\
      &= I(u_d \ka_d) \ssm I(\ka_d) \,,
    \end{split}
  \]
  and part (b) implies that
  \[
    \begin{split}
      I(z_{d-1}) \ssm I(v^{d-1} \ka_{d-1})
      &= (I(z_d) \ssm I(v^d \ka_d)) \cap I(z_{d-1}) \\
      &= (I(z_d) \ssm I(v^d \ka_d)) \ssm (I(z_d) \ssm I(z_{d-1})) \,.
    \end{split}
  \]
  By combining these identities and using \Lemma{z1-kd} and \Lemma{u(d)}, we
  obtain
  \[
    \begin{split}
      I(u_{d-1} \ka_{d-1}) \ssm I(\ka_{d-1})
      &= w_{0,X} (z_1 s_\ga)^{d-1}.(I(z_{d-1}) \ssm I(v^{d-1} \ka_{d-1})) \\
      &= z_1 s_\ga w_{0,X} (z_1 s_\ga)^{d}.(
        (I(z_d) \ssm I(v^d \ka_d)) \ssm (I(z_d) \ssm I(z_{d-1})) ) \\
      &= z_1 s_\ga.\left( (I(u_d \ka_d) \ssm I(\ka_d)) \ssm
        w_{0,X} z_1 s_\ga.I(z_1 \cap z_{d-1}^\vee) \right) \\
      &= z_1 s_\ga.\left( I(u_d \ka_d) \ssm I(z_1 \cup \ka_d) \right) \\
      &= z_1 s_\ga.\left( I(u_d \ka_d) \ssm I(z_1) \right)
        \ssm z_1 s_\ga.\left( I(\ka_d) \ssm I(z_1) \right) \\
      &= I((u_d \ka_d)(-1)) \ssm I(\ka_{d-1}) \,.
    \end{split}
  \]
  This proves part (a).
\end{proof}

\begin{example}\label{example:rot2c}%
  Let $X = \LG(8,16)$, $u = (8,6,2) \in W^X$, and set $d = 5$. Then $u_d =
  (5,4,1)$ is obtained by intersecting $I(u)$ with the rectangle
  $z_d.\cP_{F_d}$, see \Example{dissectPXc} and \Example{fiber-diagrams}. The
  following pictures illustrate how the skew shape $I(u_{d-1} \ka_{d-1}) \ssm
  I(\ka_{d-1})$ is obtained from $I(u_d \ka_d) \ssm I(\ka_d)$. Notice that,
  since the bijection $z_d.\cP_{F_d} \cong \ka_d.\cP_{F_d}$ is given by a
  transposition, the partition of $u_d$ is conjugated when we form $I(u_d \ka_d)
  \ssm I(\ka_d)$.

  \begin{center}
  \begin{tikzpicture}[x=.4cm,y=.4cm]
    \def\zz{-- ++(0,1) -- ++(-1,0)}
    \tikzset{dot/.pic={\node at (.5,-.5) {$\bullet$};}}
    \begin{scope}[shift={(0,12)}]
      \node at (5,1) {$I(u_d \ka_d) \ssm I(\ka_d)$};
      \draw (0,0) -- (8,0) -- (8,-8) -- ++(-1,0) \zz\zz\zz\zz\zz\zz\zz -- cycle;
      \draw[very thick] (5,0) -- (8,0) -- (8,-5) -- (5,-5) -- cycle;
      \draw[pattern=north west lines] (5,0) -- (8,0) -- ++(0,-1) -- ++(-1,0)
      -- ++(0,-3) -- ++(-1,0) -- ++(0,-1) -- ++(-1,0) -- cycle;
      \pic foreach \py in {-3,...,0} foreach \px in {5,6} at (\px,\py) {dot};
      \pic at (7,0) {dot};
      \pic at (5,-4) {dot};
      \draw [very thick, ->] (9.5,-4) -- (12.5,-4);
    \end{scope}
    \begin{scope}[shift={(14,12)}]
      \node at (5,1) {$I(z_d) \ssm I(v^d \ka_d)$};
      \draw (0,0) -- (8,0) -- (8,-8) -- ++(-1,0) \zz\zz\zz\zz\zz\zz\zz -- cycle;
      \draw[very thick] (5,0) -- (8,0) -- (8,-5) -- (5,-5) -- cycle;
      \draw[pattern=north west lines] (8,0) -- (7,0) -- ++(0,-1) -- ++(-1,0)
      -- ++(0,-3) -- ++(-1,0) -- ++(0,-1) -- ++(3,0) -- cycle;
      \pic foreach \py in {-4,...,-1} foreach \px in {6,7} at (\px,\py) {dot};
      \pic at (5,-4) {dot};
      \pic at (7,0) {dot};
      \draw [very thick, ->] (4,-7) -- (4,-9.5);
    \end{scope}
    \begin{scope}[shift={(14,0)}]
      \node at (4,1) {$I(z_{d-1}) \ssm I(v^{d-1} \ka_{d-1})$};
      \draw (0,0) -- (8,0) -- (8,-8) -- ++(-1,0) \zz\zz\zz\zz\zz\zz\zz -- cycle;
      \draw [very thick] (4,0) -- (8,0) -- (8,-4) -- (4,-4) -- cycle;
      \draw[pattern=north west lines] (8,0) -- (7,0) -- ++(0,-1) -- ++(-1,0)
      -- ++(0,-3) -- ++(2,0) -- cycle;
      \pic foreach \py in {-4,...,-1} foreach \px in {6,7} at (\px,\py) {dot};
      \pic at (5,-4) {dot};
      \pic at (7,0) {dot};
      \draw [very thick, ->] (-1.5,-4) -- (-4.5,-4);
    \end{scope}
    \begin{scope}[shift={(0,0)}]
      \node at (4,1.5) {$I(u_{d-1} \ka_{d-1}) \ssm I(\ka_{d-1})$};
      \draw (0,0) -- (8,0) -- (8,-8) -- ++(-1,0) \zz\zz\zz\zz\zz\zz\zz -- cycle;
      \draw [very thick] (4,0) -- (8,0) -- (8,-4) -- (4,-4) -- cycle;
      \draw[pattern=north west lines] (4,0) -- (6,0) -- ++(0,-3) -- ++(-1,0)
      -- ++(0,-1) -- ++(-1,0) -- cycle;
      \pic foreach \py in {-2,...,1} foreach \px in {4,5} at (\px,\py) {dot};
      \pic at (6,1) {dot};
      \pic at (4,-3) {dot};
    \end{scope}
  \end{tikzpicture}
  \end{center}
\end{example}

\begin{example}\label{example:rot2a}%
  Let $X = \Gr(7,17)$, $u = (10,8,5,5,4,1,0) \in W^X$, and $d=4$. Then $F_d =
  \Gr(3,7) \times \Gr(4,10)$, so elements of $W^{F_d}$ can be represented by
  pairs of partitions. We find $u_d = ((4,2,0),(5,4,1,0))$ by intersecting
  $I(u)$ with $z_d.\cP_{F_d}$, see \Example{dissectPXa}. The skew shape
  $I(u_{d-1} \ka_{d-1}) \ssm I(\ka_{d-1})$ is obtained from $I(u_d \ka_d) \ssm
  I(\ka_d)$ with the following steps.

  \begin{center}
  \begin{tikzpicture}[x=.4cm,y=.4cm]
    \tikzset{dot/.pic={\node at (.5,-.5) {$\bullet$};}}
    \begin{scope}[shift={(0,11)}]
      \draw (0,0) -- (0,7) -- (10,7) -- (10,0) -- cycle;
      \draw [very thick] (4,3) -- (10,3) -- (10,7) -- (4,7) -- cycle;
      \draw [very thick] (0,0) -- (0,3) -- (4,3) -- (4,0) -- cycle;
      \draw [pattern=north west lines] (0,1) -- (2,1) -- (2,2)
      -- (4,2) -- (4,3) -- (0,3) -- cycle;
      \draw [pattern=north west lines]  (5,4) -- (5,5) -- (8,5) -- (8,6) -- (9,6) -- (9,7)
      -- (4,7) -- (4,4) -- cycle;
      \pic at (3,3) {dot};
      \pic foreach \py in {6,7} foreach \px in {4,5,6,7} at (\px,\py) {dot};
      \pic at (4,5) {dot};
      \pic at (8,7) {dot};
      \pic foreach \px in {0,1,2} at (\px,3) {dot};
      \pic at (0,2) {dot};
      \pic at (1,2) {dot};
      \draw [very thick, ->] (11.5,3.5) -- (14.5,3.5);
    \end{scope}
    \begin{scope}[shift={(16,11)}]
      \draw (0,0) -- (0,7) -- (10,7) -- (10,0) -- cycle;
      \draw [very thick] (4,3) -- (10,3) -- (10,7) -- (4,7) -- cycle;
      \draw [very thick] (0,0) -- (0,3) -- (4,3) -- (4,0) -- cycle;
      \draw [pattern=north west lines] (0,1) -- (2,1) -- (2,2) -- (4,2) -- (4,0)
      -- (0,0) -- cycle;
      \draw [pattern=north west lines] (5,3) -- (5,4) -- (6,4) -- (6,5) -- (9,5) -- (9,6) -- (10,6) -- (10,3)
      -- cycle;
      \pic at (0,1) {dot};
      \pic foreach \py in {4,5} foreach \px in {6,7,8,9} at (\px,\py) {dot};
      \pic at (5,4) {dot};
      \pic at (9,6) {dot};
      \pic foreach \px in {1,2,3} at (\px,1) {dot};
      \pic at (3,2) {dot};
      \pic at (2,2) {dot};
      \draw [very thick, ->] (5,-1) -- (5,-3);
    \end{scope}
    \begin{scope}[shift={(16,0)}]
      \draw (0,0) -- (0,7) -- (10,7) -- (10,0) -- cycle;
      \draw [very thick] (3,4) -- (10,4) -- (10,7) -- (3,7) -- cycle;
      \draw [very thick] (0,0) -- (0,4) -- (3,4) -- (3,0) -- cycle;
      \draw [pattern=north west lines] (0,1) -- (2,1) -- (2,2) -- (3,2) -- (3,0)
      -- (0,0) -- cycle;
      \draw [pattern=north west lines] (6,4) -- (6,5) -- (9,5) -- (9,6) -- (10,6) -- (10,4) -- cycle;
      \pic at (0,1) {dot};
      \pic foreach \py in {4,5} foreach \px in {6,7,8,9} at (\px,\py) {dot};
      \pic at (5,4) {dot};
      \pic at (9,6) {dot};
      \pic foreach \px in {1,2,3} at (\px,1) {dot};
      \pic at (3,2) {dot};
      \pic at (2,2) {dot};
      \draw [very thick, ->] (-1.5,3.5) -- (-4.5,3.5);
    \end{scope}
    \begin{scope}[shift={(0,0)}]
      \draw (0,0) -- (0,7) -- (10,7) -- (10,0) -- cycle;
      \draw [very thick] (3,4) -- (10,4) -- (10,7) -- (3,7) -- cycle;
      \draw [very thick] (0,0) -- (0,4) -- (3,4) -- (3,0) -- cycle;
      \draw [pattern=north west lines] (0,2) -- (1,2) -- (1,3) -- (3,3)
      -- (3,4) -- (0,4) -- cycle;
      \draw [pattern=north west lines] (3,5) -- (4,5) -- (4,6) -- (7,6) -- (7,7) -- (3,7) -- cycle;
      \pic at (2,4) {dot};
      \pic foreach \py in {7,8} foreach \px in {3,4,5,6} at (\px,\py) {dot};
      \pic at (3,6) {dot};
      \pic at (7,8) {dot};
      \pic foreach \px in {-1,0,1} at (\px,4) {dot};
      \pic at (-1,3) {dot};
      \pic at (0,3) {dot};
    \end{scope}
  \end{tikzpicture}
  \end{center}
\end{example}

% !TeX root=qkpos.tex

\section{Results about quantum $K$-theory}\label{sec:quantumk}

\subsection{The small quantum $K$-theory ring}\label{sec:qkring}%

\targetsec{qkring}{}%
Let $X = G/P_X$ be a cominuscule flag variety. The (small) quantum $K$-theory
ring $\QK(X)$ of Givental and Lee \cite{givental:wdvv, lee:quantum} is an
algebra over the ring $\Z\llbracket q \rrbracket$ of formal power series in a
single variable $q$ called the \emph{deformation parameter}. As a $\Z\llbracket
q \rrbracket$-module we have $\QK(X) = K(X) \otimes \Z\llbracket q\rrbracket$.
The associative product $\star$ of $\QK(X)$ is defined in terms of $K$-theoretic
Gromov-Witten invariants. We recall a construction of this product from
\cite{buch.chaput.ea:chevalley}.

\targetsec{qkprod}{}%
Let $\Psi : K(X) \to K(X)$ be the linear map defined by $\Psi(\cO^w) =
\cO^{w(-1)}$. This map sends the class of any Schubert variety $X^w$ to the
class of its line neighborhood $\Gamma_1(X^w)$ by \Corollary{fibers_geom}(b), so
it can also be defined by $\Psi = (\ev_2)_* (\ev_1)^*$, where $\ev_1$ and
$\ev_2$ are the evaluation maps from $\Mb_{0,2}(X,1)$. \Corollary{qc_nbhd}
implies that $\Psi = (p_1)_* (q_1)^* (q_1)_* (p_1)^*$. Given $u, v \in W^X$ and
$d \geq 1$, we define the class
\begin{equation}\label{eqn:qk-prod-d}%
  (\cO_u \star \cO^v)_d \ = \ [\cO_{\Gamma_d(X_u,X^v)}] -
  \Psi([\cO_{\Gamma_{d-1}(X_u,X^v)}])
\end{equation}
in $K(X)$. Let $(\cO_u \star \cO^v)_0 = \cO_u \cdot \cO^v$ be the product in the
$K$-theory ring. It then follows from \cite[Prop.~3.2]{buch.chaput.ea:chevalley}
that Givental's product in $\QK(X)$ is given by
\begin{equation}\label{eqn:qk-prod}%
  \cO_u \star \cO^v \ = \ \sum_{d \geq 0} (\cO_u \star \cO^v)_d\, q^d \,.
\end{equation}
The proof in \cite{buch.chaput.ea:chevalley} showing that \eqn{qk-prod} agrees
with Givental's definition relies on a version of the quantum-to-classical
principle for large degrees that was established in \cite{buch.mihalcea:quantum,
chaput.perrin:rationality, buch.chaput.ea:projected}.

The definition \eqn{qk-prod-d} implies that $(\cO_u \star \cO^v)_d = 0$ for all
sufficiently large degrees $d$, since eventually we have
$\Gamma_{d-1}(X_u,X^v)=X$. As a consequence, the product $\cO_u \star \cO^v$
contains only finitely many non-zero terms. A similar finiteness result is known
for the quantum $K$-theory of arbitrary flag varieties
\cite{buch.chaput.ea:finiteness, buch.chaput.ea:rational, kato:quantum,
anderson.chen.ea:finiteness}. In the cominuscule case we have $(\cO_u \star
\cO^v)_d = 0$ whenever $d > d_X(2)$ by \cite[Thm.~1]{buch.chaput.ea:finiteness}.
Using this explicit bound, we can focus on the terms $(\cO_u \star \cO^v)_d$ of
small degrees, which will be studied using the tools developed in the previous
sections.

\targetsec{qkconst}{}%
The Schubert structure constants of $\QK(X)$ are the integers $N_{u,v}^{w,d}$
defined by
\begin{equation}\label{eqn:qk-const}%
  \cO^u \star \cO^v \ = \ \sum_{w,d\geq 0} N_{u,v}^{w,d}\, q^d\, \cO^w \,.
\end{equation}
Equivalently, we have $(\cO_u \star \cO^v)_d = \sum_w N^{w,d}_{u^\vee,v} \cO^w$
for each degree $d$. These structure constants are expected to have alternating
signs in the following sense \cite{lenart.maeno:quantum,
buch.mihalcea:quantum}.

\begin{conj}\label{conj:qkpos}%
  We have $(-1)^{\ell(uvw) + \deg(q^d)} N_{u,v}^{w,d} \geq 0$.
\end{conj}

Here $\deg(q^d) = d \int_{X_{s_\ga}} c_1(T_X) = d\,(\ell(z_1) + 1)$ denotes the
degree of $q^d$ in the quantum cohomology ring $\QH(X)$. This conjecture
generalizes the fact that the structure constants $N_{u,v}^{w,0}$ of the
ordinary $K$-theory ring of $X$ have alternating signs
\cite{buch:littlewood-richardson*1, brion:positivity}. A more general version of
\Conjecture{qkpos} for the equivariant quantum $K$-theory of arbitrary flag
varieties is discussed in \cite[\S2.4]{buch.chaput.ea:chevalley}.

Recall from \Corollary{fibers_geom}(d) that the general fibers of the map $p_d :
Z_d(X_u,X^v) \to \Gamma_d(X_u,X^v)$ are translates of $(F_d)^{u_d\cap
v^d}_{u_d}$. The quotient $(u_d \cup v^d)/v^d$ of Weyl group elements (see
\Sec{comin-schubert}) will be called a \emph{short rook strip} if it is a
product of commuting reflections associated to short simple roots of $\Phi$.
Equivalently, the skew shape $I(u_d \cup v^d) \ssm I(v^d)$ in $\cP_{F_d}$
consists of pairwise incomparable short boxes. We emphasize that the lengths of
the simple roots should be measured relative to the root system $\Phi$ of $G$,
as opposed to the root system $\Phi_X$ of the group $P_X$ acting on $F_d$. For
example, when $X = \LG(n,2n)$ is a Lagrangian Grassmannian, $\Phi_X$ is a root
system of type A, but its roots are considered short since they are short in
$\Phi$.

\begin{defn}\label{defn:exceptional}%
  Let $u,v \in W^X$. An integer $d$ is an \emph{exceptional degree} of the
  product $\cO_u \star \cO^v$ if $d \leq \min(\dmax(u^\vee), \dmax(v))$ and
  $(u_d \cup v^d)/v^d$ is a non-empty short rook strip.
\end{defn}

Notice that exceptional degrees do not occur when $X$ is minuscule, as all roots
of a simply laced root system are considered long. In addition, most products
$\cO_u \star \cO^v$ on odd quadrics $Q^{2n-1}$ and Lagrangian Grassmannians
$\LG(n,2n)$ have no exceptional degrees, see \Example{quadric-exceptional} and
\Table{exceptional-counts}. Notice also that, if $d$ is an exceptional degree of
$\cO_u \star \cO^v$, then we must have $d = \dmax(u^\vee,v) + 1$ by
\Lemma{ud<vd}(b). We proceed to state our main results about quantum $K$-theory
of cominuscule flag varieties.

\begin{thm}\label{thm:qkdegrees}%
  Let $u, v \in W^X$. The quantum $K$-theory product $\cO_u \star \cO^v$ in
  $\QK(X)$ contains the same powers of $q$ as the quantum cohomology product
  $[X_u] \star [X^v]$ in $\QH(X)$, with the exception that $q^d$ may also occur
  in $\cO_u \star \cO^v$ if $d = \dmax(u^\vee,v)+1$ is an exceptional degree.
  In particular, the powers $q^d$ occurring in $\cO_u \star \cO^v$ form an
  integer interval.
\end{thm}

We conjecture that $(\cO_u \star \cO^v)_d \neq 0$ whenever $d$ is an exceptional
degree. This is true for quadrics and has been verified for Lagrangian
Grassmannians $\LG(n,2n)$ with $n \leq 6$. See \Conjecture{exceptional} for a
more detailed statement.

\begin{thm}\label{thm:qkpos}%
  \Conjecture{qkpos} is true whenever $X$ is minuscule or any quadric
  hypersurface. It is also true whenever $d$ is not an exceptional degree of the
  product $\cO_{u^\vee} \star \cO^v$.
\end{thm}

\begin{table}[h]
  \caption{The number of products $\cO_u \star \cO^v$ with exceptional degrees
  on Lagrangian Grassmannians $\LG(n,2n)$.}
  \label{tab:exceptional-counts}
  \begin{tabular}{|c|c|c|}
    \hline
    $n$ & Total products & Exceptional degrees \\
    \hline
    2  & 10 & 1 (10\%) \\
    3  & 36 & 3 (8.3\%) \\
    4  & 136 & 17 (12.5\%) \\
    5  & 528 & 70 (13.3\%) \\
    6  & 2080 & 313 (15.0\%) \\
    7  & 8256 & 1317 (16.0\%) \\
    8  & 32896 & 5590 (17.0\%) \\
    9  & 131328 & 23310 (17.7\%) \\
    10 & 524800 & 96932 (18.5\%) \\
    \hline
  \end{tabular}
\end{table}

\subsection{Dual structure constants}\label{sec:qkdual}%

Before discussing our proofs of \Theorem{qkdegrees} and \Theorem{qkpos}, we
apply these results to obtain a similar positivity statement for the structure
constants of $\QK(X)$ with respect to its dual basis $\{\cI_q^v\}$. We also
apply our results from \Section{qhinterval} to give a new proof of a formula
from \cite{summers:dual} for the dual basis elements of any cominuscule flag
variety.

\targetsec{qktring}{}%
For any flag
variety $M = G/P_M$ we let $\QK_T(M)$ denote the $T$-equivariant quantum
$K$-theory ring. This ring is an algebra over $\cK\llbracket q \rrbracket$,
where $\cK = K_T(\pt)$ and $q$ is a set of variables indexed by the Schubert
curves in $M$. We have $\QK_T(M) = K_T(M) \otimes_\cK \cK\llbracket q\rrbracket$
as a $\cK\llbracket q\rrbracket$-module; we refer to e.g.\
\cite[\S2]{buch.chaput.ea:chevalley} or \cite[\S3]{buch.chung.ea:euler} for
details, including the definition of the product $\star$ of $\QK_T(M)$.

\targetsec{metric}{}%
Givental's \emph{quantum $K$-metric} is the $\cK\llbracket q
\rrbracket$-bilinear paring on $\QK_T(M)$ given by
\[
  (\!(\cF_1, \cF_2)\!) = \sum_{d \geq 0} I_d(\cF_1,\cF_2)\, q^d
\]
for $\cF_1, \cF_2 \in K_T(M)$, where $I_d(\cF_1,\cF_2) \in \cK$ denotes the
$T$-equivariant Gromov-Witten invariant. For $v \in W^M$ we let $\cI_q^v \in
\QK_T(M)$ denote class dual to $\cO_{M_v}$ under this pairing, that is, we have
$(\!(\cO_{M_u},\cI_q^v)\!) = \delta_{u,v}$ for all $u,v \in W^M$. Recall the
partial order $\leq$ on the set $\cB = \{ q^d [M^w] : w \in W^M, d \in H_2(M,\Z)
\}$ from \Section{qh-min-max}.

\begin{thm}\label{thm:schub2dual}%
  Let $v \in W^M$. The expansion of $\cO_{M^v}$ in the dual basis of $\QK_T(M)$
  is given by
  \[
    \cO_{M^v} = \sum_{q^d[M^w] \,\geq\, [M^v]} q^d\, \cI_q^w \,.
  \]
\end{thm}
\begin{proof}
  It suffices to prove the equality
  \[
    \qpair{\cO_{M^v},\cO_{M_u}} \,=
    \sum_{q^d[M^w] \,\geq\, [M^v]} q^d\, \qpair{\cI_q^w,\cO_{M_u}} \,.
  \]
  It follows from \cite[Cor.~3.3]{buch.chaput.ea:finiteness} that the left side
  is the sum of $q^d$ for all degrees $d \in H_2(M,\Z)$ for which
  $\Gamma_d(M_u,M^v) \neq \emptyset$, and the right side is the sum of $q^d$ for
  all $d$ satisfying $q^d[M^u] \geq [M^v]$. The result follows because these
  conditions on $d$ are equivalent.
\end{proof}

\targetsec{dual2schub}{}%
It follows from \Theorem{schub2dual} that each dual Schubert class $\cI_q^w$ can
be written as a (possibly infinite) linear combination
\begin{equation}\label{eqn:dual2schub}%
  \cI_q^w \,=\, \sum_{q^d[M^u] \,\geq\, [M^w]} C_w^{u,d}\, q^d\, \cO_{M^u}
\end{equation}
with integer coefficients $C_w^{u,d} \in \Z$. Set $\deg(q^d) = \int_d c_1(T_M)$.
The following conjecture follows from the main result of \cite{summers:dual}
when $M$ is cominuscule.

\begin{conj}
  The dual Schubert class $\cI_q^w$ is a finite alternating $\Z$-linear
  combination of the $\Z$-basis $\{q^d \cO_{M^u}\}$, that is, the sum
  \eqn{dual2schub} contains finitely many non-zero terms, and we have
  $(-1)^{\ell(w u)+\deg(q^d)} C_w^{u,d} \geq 0$.
\end{conj}

\targetsec{dualconst}{}%
Define the \emph{dual structure constants} $A_{u,v}^{w,d} \in \cK$
of $\QK_T(M)$ by
\begin{equation}\label{eqn:dualconst}%
  \cI_q^u \star \cI_q^v \,=\, \sum_{w,d} A_{u,v}^{w,d}\, q^d\, \cI_q^w \,.
\end{equation}
Based on computations with cominuscule flag varieties, we conjecture that the
dual structure constants satisfy both finiteness and positivity in the following
sense.

\begin{conj}\label{conj:qkdual}%
  Let $u, v, w \in W^M$. The constant $A_{u,v}^{w,d}$ is non-zero for only
  finitely many degrees $d \in H_2(M,\Z)$. In addition, we have
  \[
    (-1)^{\ell(uvw)+\deg(q^d)} A_{u,v}^{w,d} \,\in\,
    \N\big[\, [\C_{-\be}]-1 : \be \in \Delta \,\big] \,.
  \]
\end{conj}

The positivity statement for $d=0$ is equivalent to
\cite[Cor.~5.2]{anderson.griffeth.ea:positivity}. We will show that when $M$ is
cominuscule, the finiteness statement follows from the finiteness of quantum
multiplication in the structure sheaf basis \cite{buch.chaput.ea:finiteness}
together with the formula for $\cI_q^v$ proved in \cite{summers:dual}. We will
also prove the non-equivariant case of the positivity statement when $M$ is
minuscule or an odd quadric.

\targetsec{qposet-basis}{}%
Assume that $M=X$ is cominuscule. In this case the equivariant quantum
$K$-theory ring $\QK_T(X)$ is defined by \eqn{qk-prod-d} and \eqn{qk-prod},
except that all structure sheaves are endowed with their natural $T$-equivariant
structure, see \cite[\S3.3]{buch.chaput.ea:chevalley}. Given any quantum shape
$\la \subset \wh\cP_X$, we define $\cO^\la = q^d \cO^w$ and $\cI_q^\la = q^d
\cI_q^w$, where $w \in W^X$ and $d \in \Z$ are determined by $I(q^d[X^w]) =
\la$, see \Theorem{qposet}(d).

\targetsec{jxi}{}%
Let $J = 1 - \cO^{s_\ga} \in K_T(X)$ be the ideal sheaf of the Schubert divisor
$X^{s_\ga}$. We define three $\cK\llbracket q\rrbracket$-linear endomorphisms
$\cJ$, $\zeta$, $\theta$ of $\QK_T(X)$ as follows. For $u \in W^X$ we set
$\cJ(\cO^u) = J_u \cO^u$, where $J_u \in \cK$ denotes the restriction of $J$ to
the point $u.P_X \in X^T$. The map $\zeta$ is defined by
\[
  \zeta(\cO^u) \,= \sum_{w/u \text{ short rook strip}}
  (-1)^{\ell(w/u)}\, [\C_{-\delta(w/u)}]\, \cO^w \,,
\]
where the sum is over all $w \in W^X$ such that $u \leq w$ and $w/u$ is a short
rook strip, and $\delta(w/u)$ denotes the sum of the (short) simple roots $\be
\in \Delta$ for which $s_\be$ appears in a reduced expression of $w/u$. This map
$\zeta$ was denoted by $\phi$ in \cite[\S3.4]{buch.chaput.ea:chevalley}; it is
the identity map when $X$ is minuscule. Finally, for any quantum shape $\la
\subset \wh\cP_X$ we set
\[
  \theta(\cO^\la) \,= \sum_{\nu/\la \text{ rook strip}}
  (-1)^{|\nu/\la|}\, \cO^\nu
\]
where the sum is over all quantum shapes $\nu \subset \wh\cP_X$ containing
$\la$, such that $\nu/\la$ is a rook strip in $\wh\cP_X$, that is, any two
distinct boxes of $\nu/\la$ are incomparable. This map is $\cK\llbracket
q\rrbracket$-linear because multiplication by $q$ defines an order automorphism
of $\wh\cP_X$. The Chevalley formula from \cite{buch.chaput.ea:chevalley} can be
stated as follows.

\begin{thm}\label{thm:qktchev}%
  For any class $\cF \in \QK_T(X)$ we have $J \star \cF =
  \theta(\zeta(\cJ(\cF)))$.
\end{thm}
\begin{proof}
  We may assume that $\cF = \cO^u$ is the Schubert class defined by $u \in W^X$.
  Let $\la = I([X^u])$ be the associated quantum shape, and let $\wh\cR(\la)$ be
  the set of quantum shapes $\nu \subset \wh\cP_X$ containing $\la$ such that
  $\nu/\la$ is a rook strip. We partition this set into the subsets
  $\wh\cR_0(\la) = \{\nu \in \wh\cR(\la) \mid q \notin \nu\}$ and $\wh\cR_1(\la)
  = \{\nu \in \wh\cR(\la) \mid q \in \nu\}$. The first subset $\wh\cR_0(\la)$ is
  in bijective correspondence with the set
  \[
    \cR_0(u) = \{ w \in W^X \mid
    u \leq w \text{ and } w/u \text{ is a rook strip} \} \,,
  \]
  where the bijection $\cR_0(u)
  \to \wh\cR_0(\la)$ is given by $w \mapsto I([X^w])$; this follows from
  \eqn{shape2q} since $w/u$ is a rook strip if and only if $I(w)/I(u)$ is a rook
  strip in $\cP_X$. \Proposition{qposet-elaborate}(d) shows that $I(q) =
  I([X^{z_1}]) \cup \{q\}$. It follows that $z_1 \leq u$ holds if and only if
  $q$ is a minimal box of $\wh\cP_X \ssm \la$, which in turn is equivalent to
  $\wh\cR_1(\la) \neq \emptyset$. In this case, consider the set
  \[
    \cR_1(u) = \{ w \in W^X \mid u(-1) \leq w \leq z_1^\vee \text{ and }
    w/u(-1) \text{ is a rook strip} \} \,.
  \]
  We claim that the map $w \mapsto I(q[X^w])$ is a bijection $\cR_1(u) \to
  \wh\cR_1(\la)$. Indeed, \Proposition{z1k1}(a) shows that $w \mapsto w(1)$ is a
  bijection $\cR_1(u) \to \cR_0(u)$, and $\nu \mapsto \nu\cup\{q\}$ is a
  bijection $\wh\cR_0(\la) \to \wh\cR_1(\la)$, so the claim follows from
  \Proposition{qposet-elaborate}(d). Using the notation of
  \cite[\S3]{buch.chaput.ea:chevalley} we obtain $\theta(\cO^u) =
  \theta_0(\cO^u) + q\,\theta_1(\cO^u)$, after which the claimed identity is
  equivalent to \cite[Thm.~3.9]{buch.chaput.ea:chevalley}.
\end{proof}

\begin{remark}\label{remark:zeta-qshapes}%
  By \Theorem{qposet}(c) we may identify $\cP_X$ with a subset of $\wh\cP_X$, in
  which case any box in $\wh\cP_X$ has the form $q^d$ or $q^d\al$, with $\al \in
  \cP_X$ and $d \in \Z$. If we define $q^d$ to be long, define $q^d\al$ to have
  the same length as $\al$, and set $\delta(q^d \al) = \delta(\al)$, then the
  map $\zeta$ can be defined by $\zeta(\cO^\la) = \sum_\nu (-1)^{|\nu/\la|}
  [\C_{-\delta(\nu/\la)}] \cO^\nu$, where the sum is over all quantum shapes
  $\nu$ containing $\la$ such that $\nu/\la$ is a short rook strip, and
  $\delta(\nu/\la) = \sum_{\wh\al \in \nu/\la} \delta(\wh\al)$.
\end{remark}

The first equality of the following result is equivalent to the main result of
\cite{summers:dual} when $\la \subset \cP_X$ is a classical shape. We give a new
proof based on quantum shapes.

\begin{cor}\label{cor:comin-dual}%
  The dual Schubert class $\cI_q^\la \in \QK_T(X)$ is given by
  \[
    \cI_q^\la \,=\, J \star \cJ^{-1} \zeta^{-1}(\cO^\la) \,=\, \theta(\cO^\la) \,.
  \]
\end{cor}
\begin{proof}
  The second equality $J \star \cJ^{-1}\zeta^{-1}(\cO^\la) = \theta(\cO^\la)$
  follows from \Theorem{qktchev}. \Theorem{schub2dual} can be restated as
  $\cO^\la = \sum_{\nu \supset \la} \cI_q^\nu$, where the sum is over all
  quantum shapes $\nu$ containing $\la$. Using this to expand $\theta(\cO^\la)$
  in the dual basis $\{\cI_q^\nu\}$, the coefficient of $\cI_q^\nu$ is zero if
  $\nu$ does not contain $\la$, and otherwise is the sum of $(-1)^{|S|}$ over
  all subsets $S$ of the minimal boxes of $\nu/\la$. This proves
  $\theta(\cO^\la) = \cI_q^\la$.
\end{proof}

By substituting the first identity of \Corollary{comin-dual} into
\eqn{dualconst}, dividing by $J$, and applying $\zeta\cJ$ to both sides, we
obtain the following expression for the dual structure constants of $\QK_T(X)$:
\begin{equation}\label{eqn:dualcomp}%
  \sum_{w,d} A_{u,v}^{w,d}\, q^d\, \cO^w \,=\,
  \zeta\cJ(J \star \cJ^{-1}\zeta^{-1}(\cO^u) \star \cJ^{-1}\zeta^{-1}(\cO^v))
  \,.
\end{equation}
The finiteness part of \Conjecture{qkdual} for cominuscule flag varieties
follows from this identity together with the main result of
\cite{buch.chaput.ea:finiteness}. We have also used \eqn{dualcomp} to verify the
positivity claim of \Conjecture{qkdual} for $\Gr(2,8)$, $\Gr(3,7)$, $\OG(6,12)$,
$\LG(5,10)$, $Q^{10}$, $Q^{11}$, as well as all cominuscule subvarieties of
these spaces.

\targetsec{qkdual}{}%
Let $\ov A_{u,v}^{w,d} \in \Z$ denote the dual structure constants of the
non-equivariant ring $\QK(X)$, that is, we have $\cI_q^u \star \cI_q^v = \sum
\ov A_{u,v}^{w,d}\, q^d\, \cI_q^w$ in $\QK(X)$. These constants are determined
by the non-equivariant specialization of \eqn{dualcomp}, where $\cJ$ is the
identity map and the factors $[\C_{-\delta(w/u)}]$ are ignored in the definition
of $\zeta$. The special case $d=0$ of the next result follows from
\cite[Thm.~1]{brion:positivity} and \cite[Remark~3.7]{graham.kumar:positivity}.

\begin{cor}\label{cor:dualsigns}%
  Assume that $X$ is a minuscule flag variety or a quadric hypersurface. Then
  the dual structure constants of $\QK(X)$ satisfy
  \[
    (-1)^{\ell(uvw)+\deg(q^d)}\, \ov A_{u,v}^{w,d} \,\geq\, 0 \,.
  \]
\end{cor}
\begin{proof}
  When $X$ is minuscule, $\zeta$ is the identity map, so \eqn{dualcomp} simplifies
  to
  \[
    \sum_{w,d} \ov A_{u,v}^{w,d}\, q^d\, \cO^w \,=\,
    J \star \cO^u \star \cO^v \,.
  \]
  The result follows from this because the product $J \star \cO^u \star \cO^v$
  has alternating signs by \Theorem{qkpos}, that is, it is an alternating linear
  combination of the $\Z$-basis $\{q^d\cO^w\}$. The claim for odd quadrics is
  verified in the following example.
\end{proof}

\begin{example}\label{example:quadric-exceptional}%
  Let $X = Q^{2n-1} = \OG(1,2n+1)$ be a quadric hypersurface of type $B_n$. We
  assume $n \geq 3$ for simplicity. We have $d_X(2) = 2$, $\deg(q) = 2n-1$, and
  \[
  \cP_X = \tableau{20}{1&2&{\cdots}&n\!\!-\!\!1&n&n\!\!-\!\!1&{\cdots}&2&1} \,.
  \]
  All boxes of $\cP_X$ are long except the middle box. Notice that
  $\ka_1.\cP_{F_1} = z_1.\cP_{F_1}$ consists of the middle $2n-3$ boxes of
  $\cP_X$. It follows that $\cO_u \star \cO^v$ has an exceptional degree if and
  only if $I(u) \ssm I(v)$ consists of the middle box. In other words, the only
  exceptional product is $\cO^{n-1} \star \cO^{n-1} = \cO_n \star \cO^{n-1}$;
  here we denote each element $u \in W^X$ by its length $\ell(u)$. By the
  Chevalley formula from \cite{buch.chaput.ea:chevalley} we have
  \[
    \cO^1 \star \cO^u = \begin{cases}
      2 \cO^n - \cO^{n+1} & \text{if $u = n-1$,} \\
      \cO^{2n-1} + q - q \cO^1 & \text{if $u = 2n-2$,} \\
      q \cO^1 & \text{if $u = 2n-1$, and} \\
      \cO^{u+1} & \text{otherwise.}
    \end{cases}
  \]
  Using this and the associativity of the quantum $K$-theory product, we obtain
  \[
  \cO^{n-1} \star \cO^{n-1} = (\cO^1)^{n-1} \star \cO^{n-1} =
  2 \cO^{2n-2} - \cO^{2n-1} - q + q \cO^1 \,.
  \]
  This product has alternating signs and exceptional degree 1. The corresponding
  product in $\QH(X)$ is $[X^{n-1}] \star [X^{n-1}] = 2[X^{2n-2}]$.

  To prove that the dual structure constants $\ov A_{u,v}^{w,d}$ also have
  alternating signs, notice that $(2-\cO^1)\star\cO^n = \cO^1\star\cO^{n-1}$,
  and therefore
  \[
    (2-\cO^1)\star\cO^n\star\cO^n = (\cO^1)^n\star\cO^n
    = 2q\cO^1 - q\cO^2 = (2-\cO^1)\star q\cO^1 \,.
  \]
  Since $2-\cO^1$ is a non-zero divisor in $\QK(X)$, we obtain $\cO^n \star
  \cO^n = q \cO^1$. The (non-equivariant) maps $\zeta$ and $\zeta^{-1}$ are given by
  \[
    \zeta(\cO^u) = \begin{cases}
      \cO^u & \text{\!\!if $u \neq n\!-\!1$,}\\
      \cO^{n-1}\!-\!\cO^n & \text{\!\!if $u = n\!-\!1$}
    \end{cases}
    \ \ \ \text{and} \ \ \
    \zeta^{-1}(\cO^u) = \begin{cases}
      \cO^u & \text{\!\!if $u \neq n\!-\!1$,}\\
      \cO^{n-1}\!+\!\cO^n & \text{\!\!if $u = n\!-\!1$.}
    \end{cases}
  \]
  Using the above identities, we compute the products
  \[
    J \star \zeta^{-1}(\cO^{n-1}) \,=\, \cO^{n-1} - \cO^n
  \]
  and
  \[
    J \star \zeta^{-1}(\cO^{n-1}) \star \zeta^{-1}(\cO^{n-1}) \,=\,
    (\cO^{n-1} - \cO^n) \star (\cO^{n-1} + \cO^n) \,=\,
    2\cO^{2n-2} - \cO^{2n-1} - q \,.
  \]
  Since both of these products have alternating signs, we deduce from
  \Theorem{qkpos} that the product $J \star \zeta^{-1}(\cO^u) \star
  \zeta^{-1}(\cO^v)$ has alternating signs for all $u,v \in W^X$. Noting that the
  map $\zeta$ preserves alternating signs, this implies that the right side of
  \eqn{dualcomp} has alternating signs, which completes the proof of
  \Corollary{dualsigns}.
\end{example}

\subsection{A geometric construction of the quantum product}
\label{sec:geomprod}%

We give a geometric construction of the classes $(\cO_u \star \cO^v)_d \in K(X)$
that is better suited for determining the signs of the structure constants of
$\QK(X)$. Recall the notation from \Section{prim-nbhd}.

\begin{lemma}\label{lemma:trans-Gamma}%
  Let $1 \leq d \leq d_X(2)$. The diagonal action of $G$ on the set
  $\{(\eta,\om) \in Y_{d-1}\times Y_d \mid \Gamma_\eta \subset \Gamma_\om
  \}$ is transitive.
\end{lemma}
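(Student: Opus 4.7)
The plan is to first reduce to a slice problem, then identify the slice as a $P_{Y_d}$-homogeneous space. I would take as basepoint the pair $(\eta_0,\omega_0) = (1.P_{Y_{d-1}},1.P_{Y_d})$, which lies in the set because $\ka_{d-1} \leq \ka_d$ (from $\tal_{d-1} < \tal_d$ and the definition of $\ka_d$) gives $\Gamma_{\eta_0} = X_{\ka_{d-1}} \subset X_{\ka_d} = \Gamma_{\omega_0}$ by \Corollary{comin_kapd}(a). Since $G$ acts transitively on $Y_d$, every pair in the set can be moved by $G$ so that $\omega = \omega_0$, reducing the problem to showing that the stabilizer $P_{Y_d}$ of $\omega_0$ acts transitively on
\[
  S_d = \{\eta \in Y_{d-1} : \Gamma_\eta \subset \Gamma_d\} \,.
\]

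Next, I would view $\Gamma_d = P_{Y_d}/P_{Z_d}$ as a primitive cominuscule flag variety of diameter $d$ for (the Levi quotient of) $P_{Y_d}$, and apply the quantum-to-classical construction of \Section{incidence} internally to $\Gamma_d$ in degree $d-1$. This produces a $P_{Y_d}$-homogeneous flag variety $Y'$ whose points parameterize the $P_{Y_d}$-translates inside $\Gamma_d$ of the standard diameter-$(d-1)$ primitive cominuscule subvariety. Each such translate is a primitive cominuscule subvariety of $X$ of diameter $d-1$ contained in $\Gamma_d$, and therefore determines a unique element of $Y_{d-1}$ by \Corollary{comin_kapd}(b), defining a $P_{Y_d}$-equivariant map $\phi \colon Y' \to Y_{d-1}$ with image contained in $S_d$. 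Conversely, given $\eta \in S_d$, I would pick two general points $x, y \in \Gamma_\eta \subset \Gamma_d$ with $\dist(x,y) = d-1$ and apply \Corollary{comin_kapd}(b) internally to $\Gamma_d$ to produce a unique $\eta' \in Y'$ with $\Gamma_{\eta'} = \Gamma_{d-1}(x,y) = \Gamma_\eta$, giving $\phi(\eta') = \eta$. Hence $\phi$ surjects onto $S_d$, and since $P_{Y_d}$ acts transitively on $Y'$, it acts transitively on $S_d$ as well.

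The main obstacle is verifying that the intrinsic notion of a primitive cominuscule subvariety of $\Gamma_d$ of diameter $d-1$ agrees with the restriction of the same notion from $X$. This follows from the description in \Section{incidence} of $\Gamma_d$ as a cominuscule flag variety with $\cP_{\Gamma_d} = I(\ka_d) \subset \cP_X$, inheriting its partial order, its labeling by simple roots, and the degrees of its curves from $X$; in particular, the element $\ka_{d-1}$ intrinsic to $\Gamma_d$ coincides with the restriction of $\ka_{d-1} \in W^X$, and $\dist$ measured in $\Gamma_d$ agrees with $\dist$ measured in $X$ for pairs of points in $\Gamma_d$.
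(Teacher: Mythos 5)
Your proof is correct, and it does take a somewhat different route from the paper's, although both hinge on the same key insight: a diameter-$(d-1)$ subvariety $\Gamma_\eta \subset \Gamma_{\omega_0}$ is pinned down by any pair $x,y \in \Gamma_\eta$ with $\dist(x,y)=d-1$. The paper's proof is shorter: after reducing to $\omega=\omega_0$ it picks such a pair $(x,y)$, notes that $\Gamma_\eta = \Gamma_{d-1}(x,y)$ by \Corollary{comin_kapd}(a), and then applies \Lemma{orbits} to the cominuscule variety $\Gamma_{\omega_0}$ (via the Levi of $P_{Y_d}$) to move $(x,y)$ to $(1.P_X,\ka_{d-1}.P_X)$, which forces $g.\eta = 1.P_{Y_{d-1}}$. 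You instead package the same idea via the internal quantum-to-classical construction of $\Gamma_d$: you build the $P_{Y_d}$-homogeneous space $Y'$ parameterizing diameter-$(d-1)$ primitive subvarieties of $\Gamma_d$, check that it maps $P_{Y_d}$-equivariantly onto $S_d$ (via the "internal" \Corollary{comin_kapd}(b) in the surjectivity step), and conclude transitivity from homogeneity of $Y'$. What your route buys is an explicit identification of $S_d$ with a flag variety $Y'$ of $P_{Y_d}$, which is structurally satisfying and matches how the paper later studies $Y_{d-1,d}$; what it costs is the extra care you correctly flag in the final paragraph, namely verifying that the intrinsic $\ka_{d-1}$-Schubert variety of $\Gamma_d$ is the same subvariety of $X$ as $X_{\ka_{d-1}}$, and that $\dist$ is intrinsic to $\Gamma_d$. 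Those compatibilities are indeed guaranteed by the discussion in \Section{incidence}, and nothing you use depends on the lemma being proved, so there is no circularity. If anything, the paper's direct appeal to \Lemma{orbits} accomplishes your surjectivity and homogeneity arguments in one stroke; your version makes the same machinery more visible at the price of length.
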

\begin{proof}
  Let $(\eta,\om) \in Y_{d-1} \times Y_d$ be such that $\Gamma_\eta \subset
  \Gamma_\om$. We must show that $(\eta,\om)$ is in the orbit
  $G.(1.P_{Y_{d-1}},1.P_{Y_d})$. Since $G$ acts transitively on $Y_d$, we may
  assume that $\om = 1.P_{Y_d}$. Choose $x,y \in \Gamma_\eta$ such that
  $\dist(x,y)=d-1$. Then $\Gamma_{d-1}(x,y) = \Gamma_\eta$ by
  \Corollary{comin_kapd}, and \Lemma{orbits} applied to $\Gamma_\om$ shows that
  we can find $g \in P_{Y_d}$ such that $g.(x,y) = (1.P_X,\kappa_{d-1}.P_X)$. It
  follows that $g.\Gamma_\eta = X_{\ka_{d-1}}$, as required.
\end{proof}

\targetsec{Ydd1}{}%
For $1 \leq d \leq d_X(2)$ we set $Y_{d-1,d} = G/(P_{Y_{d-1}} \cap P_{Y_d})$. By
\Lemma{trans-Gamma} we can make the identification
\[
  Y_{d-1,d} \,=\,
  \{(\eta,\om) \in Y_{d-1}\times Y_d \mid \Gamma_\eta \subset \Gamma_\om \} \,.
\]
Let $\psi_d : Y_{d-1,d} \to Y_{d-1}$ and $\phi_d : Y_{d-1,d} \to Y_d$ be the
projections. Given $u, v \in W^X$ we define the varieties
\[
  \begin{split}
    Y_{d-1,1}(X_u,X^v) &\,=\, \phi_d(\psi_d^{-1}(Y_{d-1}(X_u,X^v))) \\
    &\,=\, \{ \om \in Y_d \mid \exists\, \eta \in Y_{d-1}(X_u,X^v) :
    \Gamma_\eta \subset \Gamma_\om \} \,,\\
    Z_{d-1,1}(X_u,X^v) &\,=\, q_d^{-1}(Y_{d-1,1}(X_u,X^v)) \,\text{, and}\\
    \Gamma_{d-1,1}(X_u,X^v) &\,=\, p_d(Z_{d-1,1}(X_u,X^v)) \,.
  \end{split}
\]
Notice that $\psi_d^{-1}(Y_{d-1}(X_u,X^v))$ is a Richardson variety and
$Y_{d-1,1}(X_u,X^v)$ is a projected Richardson variety, so \Theorem{projrich}
implies that $Y_{d-1,1}(X_u,X^v)$ has rational singularities and
$[\cO_{Y_{d-1,1}(X_u,X^v)}] = (\phi_d)_* (\psi_d)^*
[\cO_{Y_{d-1}(X_u,X^v)}]$. Since the map $q_d : Z_{d-1,1}(X_u,X^v) \to
Y_{d-1,1}(X_u,X^v)$ is a locally trivial fibration with non-singular fibers, it
follows that $Z_{d-1,1}(X_u,X^v)$ has rational singularities as well. On the
other hand, $\Gamma_{d-1,1}(X_u,X^v)$ is not in general a (projected) Richardson
variety. It would be interesting to understand the singularities of this
variety. In \Example{notprojrich} we give an example where
$\Gamma_{d-1,1}(X_u,X^v)$ has rational singularities and fails to be a projected
Richardson variety.

\begin{question}
  Does $\Gamma_{d-1,1}(X_u,X^v)$ always have rational singularities?
\end{question}

The following lemma applied to $Y_{d-1}(X_u,X^v)$ shows that
$\Gamma_{d-1,1}(X_u,X^v) = \Gamma_1(\Gamma_{d-1}(X_u,X^v))$, that is,
$\Gamma_{d-1,1}(X_u,X^v)$ is the set of all points in $X$ that are connected by
a line to a stable curve of degree $d-1$ meeting $X_u$ and $X^v$. The statement
uses the maps of the following diagram.
\[
  \xymatrix{& Z_d \ar[r]^{p_d\,} \ar[d]^{q_d} & X
    & Z_1 \ar[l]_{\ p_1} \ar[d]^{q_1} \\
    Y_{d-1,d} \ar[r]^{\ \,\phi_d} \ar[d]_{\psi_d} & Y_d && Y_1 \\
    Y_{d-1} & & Z_{d-1} \ar[ll]_{q_{d-1}} \ar[uu]_{p_{d-1}}}
\]

\begin{lemma}\label{lemma:pusharound}
  For any subset $\Omega \subset Y_{d-1}$ we have
  \begin{equation}\label{eqn:nbhdid}
    p_d\, q_d^{-1}\, \phi_d\, \psi_d^{-1}(\Omega) =
    p_1\, q_1^{-1}\, q_1\, p_1^{-1}\, p_{d-1}\, q_{d-1}^{-1}(\Omega) \,.
  \end{equation}
\end{lemma}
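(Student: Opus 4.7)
The plan is to reduce \eqn{nbhdid} to a set-theoretic identity via transitivity, then establish the identity by two inclusions. Both sides distribute over $\Omega = \bigcup_\eta \{\eta\}$, so I may assume $\Omega = \{\eta\}$, and by the transitivity of $G$ on $Y_{d-1}$ I may further take $\eta = 1.P_{Y_{d-1}}$, giving $\Gamma_\eta = \Gamma_{d-1} = X_{\kappa_{d-1}}$ by \Corollary{comin_kapd}(a). Unwinding, $\phi_d \phi_{d-1}^{-1}(\{\eta\})$ is the $P_{Y_{d-1}}$-orbit $\{\omega \in Y_d : \Gamma_\omega \supset \Gamma_{d-1}\}$, so the left-hand side of \eqn{nbhdid} becomes $\bigcup_{\omega : \Gamma_\omega \supset \Gamma_{d-1}} \Gamma_\omega$. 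By \Corollary{qc_nbhd} applied to $\Gamma_{d-1} = p_{d-1} q_{d-1}^{-1}(\{\eta\})$, the right-hand side becomes $\Gamma_1(\Gamma_{d-1})$. The lemma reduces to the set identity $\bigcup_{\Gamma_\omega \supset \Gamma_{d-1}} \Gamma_\omega = \Gamma_1(\Gamma_{d-1})$ in $X$.

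For $(\subseteq)$, I will fix $\omega \in Y_d$ with $\Gamma_\omega \supset \Gamma_{d-1}$. Then $\Gamma_\omega$ is a primitive cominuscule variety of diameter $d$ containing $\Gamma_{d-1}$ and $1.P_X$; since $\Gamma_\omega$ is primitive, $\kappa_{d-1} = z_{d-1}$ inside $\Gamma_\omega$ by \Proposition{primitive}, and curve degrees in $\Gamma_\omega$ agree with those in $X$ (\Section{incidence}). Thus $\Gamma_{d-1}$ is the $(d-1)$-curve neighborhood of $1.P_X$ inside $\Gamma_\omega$, and applying \Corollary{fibers_geom}(c) inside $\Gamma_\omega$ identifies the $1$-neighborhood of $\Gamma_{d-1}$ (inside $\Gamma_\omega$) with the Schubert variety of $\Gamma_\omega$ corresponding to $z_{d-1}(1)$. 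A combinatorial check via \Lemma{u(d)} will confirm $z_{d-1}(1) = w_0^{\Gamma_\omega}$ in a primitive cominuscule of diameter $d$---the shape $I(z_{d-1}) \subset \cP_{\Gamma_\omega}$ omits only the top box $\tilde\alpha_d$, which the $(z_1 s_\gamma)^{-1}$-shift of \Lemma{u(d)} reinstates---yielding $\Gamma_\omega \subset \Gamma_1(\Gamma_{d-1})$.

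For $(\supseteq)$, given $x \in \Gamma_1(\Gamma_{d-1})$, I will pick $y \in \Gamma_{d-1}$ with $\dist(x, y) \leq 1$ and $z \in \Gamma_{d-1}$ at the maximal distance $d-1$ from $y$; the triangle inequality gives $\dist(x, z) \leq d$. On the dense open locus where $\dist(x, z) = d$, \Corollary{comin_kapd}(b) provides a unique $\omega \in Y_d$ with $x, z \in \Gamma_\omega$; concatenating a line from $x$ to $y$ with a degree-$(d-1)$ curve from $y$ to $z$ inside $\Gamma_{d-1}$ produces a degree-$d$ curve through $x, y, z$, so $y \in \Gamma_d(x, z) = \Gamma_\omega$. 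Then $y, z \in \Gamma_\omega \cap \Gamma_{d-1}$ sit at distance $d-1$, and the uniqueness in \Corollary{comin_kapd}(b) (applied at diameter $d-1$) forces the $\Gamma_{d-1}$-subvariety of $\Gamma_\omega$ through $y, z$ to equal $\Gamma_{d-1}$, giving $\Gamma_\omega \supset \Gamma_{d-1}$. The remaining case $\dist(x, z) < d$ follows by closure since the LHS is closed in $X$. The hardest step will be the combinatorial identity $z_{d-1}(1) = w_0^{\Gamma_\omega}$ used in $(\subseteq)$, which I will verify by a direct calculation with \Lemma{u(d)} and the explicit structure of $\cP_{\Gamma_\omega}$ in primitive cases.
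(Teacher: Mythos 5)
Your reduction to $\Omega = \{\eta\}$ and $\eta = 1.P_{Y_{d-1}}$, and the resulting set identity $\bigcup_{\Gamma_\omega \supset \Gamma_{d-1}} \Gamma_\omega = \Gamma_1(\Gamma_{d-1})$, are correct and a reasonable alternative framing of the lemma. However, there are two genuine gaps.

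In your $(\subseteq)$ argument, the step ``since $\Gamma_\omega$ is primitive, $\kappa_{d-1}=z_{d-1}$ inside $\Gamma_\omega$ by \Proposition{primitive}'' is false. Proposition \ref*{prop:primitive}(3) asserts $\kappa_d = z_d$ only at the diameter $d = d_{\Gamma_\omega}(2)$; for $d-1 < d$ the element $z_{d-1}^{\Gamma_\omega}$ (whose shape omits only the top box $\wt\al_d$) is strictly larger than $\kappa_{d-1}$. Consequently $\Gamma_{d-1} = X_{\kappa_{d-1}}$ is \emph{not} the $(d-1)$-curve neighborhood $X_{z_{d-1}}$ of $1.P_X$ inside $\Gamma_\omega$, so the rest of the chain is logically disconnected even though the end conclusion $\Gamma_\omega \subset \Gamma_1(\Gamma_{d-1})$ is true. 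The fix is to drop the detour through $z_{d-1}$ and apply \Corollary{fibers_geom}(c) directly to $\Gamma_{d-1}=X_{\kappa_{d-1}}$: one needs $\kappa_{d-1}(1) = w_0^{\Gamma_\omega}$, which follows from \Lemma{prim_dual} since $\kappa_{d-1}^\vee = z_1$ in $W^{\Gamma_\omega}$, hence $\kappa_{d-1}(1)^\vee = z_1(-1) = 1$. Alternatively, the paper shows $\Gamma_\omega \subset \Gamma_1(\Gamma_{d-1})$ in one line: for any $z \in \Gamma_\omega$, the divisor $\Gamma_1(z)\cap\Gamma_\omega$ and $\Gamma_{d-1}$ are Poincar\'e dual in $H^*(\Gamma_\omega;\Z)$ by \Lemma{prim_dual}, so they intersect.

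In your $(\supseteq)$ argument, you invoke ``the dense open locus where $\dist(x,z)=d$'' without establishing that, for $x$ ranging over a dense open subset of $\Gamma_1(\Gamma_{d-1})$, some admissible pair $(y,z)$ achieves $\dist(x,z)=d$. This is not obvious (note that for $x \in \Gamma_{d-1}$ one always has $\dist(x,z) \le d-1$), and your closure argument hinges on it. The paper avoids the genericity issue entirely: after choosing $y$ with $\dist(x,y)=d-1$, the reducible degree-$d$ curve through $x,y,z$ is a stable map, and since $\pi:\Bl_d\to M_d$ is surjective (\Proposition{qc_birat}), its image lies in some $\Gamma_\omega$; then $\dist(x,y)=d-1$ and \Corollary{comin_kapd}(b) force $\Gamma_{d-1}=\Gamma_{d-1}(x,y)\subset\Gamma_\omega$. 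You should replace your construction with this one, or else supply a proof that the locus you use is dense.
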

\begin{proof}
  We may assume that $\Omega = \{\eta\}$ contains a single point $\eta \in
  Y_{d-1}$, in which case the right hand side of \eqn{nbhdid} is equal to
  $\Gamma_1(\Gamma_\eta)$ by \Corollary{qc_nbhd}. A point $z \in X$ belongs to
  the left hand side of \eqn{nbhdid} if and only if there exists $\om \in Y_d$
  such that $z \in \Gamma_\om$ and $\Gamma_\eta \subset \Gamma_\om$. Since
  $\Gamma_1(z)\cap\Gamma_\om$ and $\Gamma_\eta$ represent dual classes in
  $H^*(\Gamma_\om;\Z)$ by \Lemma{prim_dual}, this implies $\Gamma_1(z) \cap
  \Gamma_\eta \neq \emptyset$, which is equivalent to $z \in
  \Gamma_1(\Gamma_\eta)$. On the other hand, given $z \in
  \Gamma_1(\Gamma_\eta)$, we can choose $x \in \Gamma_1(z) \cap \Gamma_\eta$,
  and then choose $y \in \Gamma_\eta$ such that $\dist(x,y) = d-1$. Since there
  exists a (possibly reducible) rational curve of degree $d$ through $x$, $y$,
  and $z$, it follows from \Proposition{qc_birat} that we may choose $\om \in
  Y_d$ such that $x,y,z \in \Gamma_\om$. Since $x,y \in \Gamma_\om$ and
  $\dist(x,y) = d-1$, it follows from \Corollary{comin_kapd} that $\Gamma_\eta =
  \Gamma_{d-1}(x,y) \subset \Gamma_\om$. This shows that $z$ belongs to the
  left hand side of \eqn{nbhdid}.
\end{proof}

\begin{thm}\label{thm:qkprod}
  We have $(\cO_u \star \cO^v)_d = [\cO_{\Gamma_d(X_u,X^v)}] - (p_d)_*
  [\cO_{Z_{d-1,1}(X_u,X^v)}]$.
\end{thm}
\begin{proof}
  By equation \eqn{qk-prod-d} and \Corollary{qc_nbhd} we have
  \[
  (\cO_u \star \cO^v)_d = [\cO_{\Gamma_d(X_u,X^v)}] -
  (p_1)_* (q_1)^* (q_1)_* (p_1)^* [\cO_{\Gamma_{d-1}(X_u,X^v)}] \,.
  \]
  It is therefore enough to show that
  \begin{multline*}
    (p_d)_* (q_d)^* (\phi_d)_* (\psi_d)^* [\cO_{Y_{d-1}(X_u,X^v)}] \ = \\
    (p_1)_* (q_1)^* (q_1)_* (p_1)^* (p_{d-1})_* (q_{d-1})^*
    [\cO_{Y_{d-1}(X_u,X^v)}] \,.
  \end{multline*}
  More generally, the linear operators
  \[
    (p_d)_* (q_d)^* (\phi_d)_* (\psi_d)^*
    \ \ \ \ \ \text{and} \ \ \ \ \
    (p_1)_* (q_1)^* (q_1)_* (p_1)^* (p_{d-1})_* (q_{d-1})^*
  \]
  define the same map $K(Y_{d-1}) \to K(X)$. In fact, using that $K(Y_{d-1})$
  has a basis of Schubert classes $[\cO_\Omega]$, this follows from
  \Lemma{pusharound}.
\end{proof}

\subsection{Proof of our main theorems}
\label{sec:strategy}

In \cite{buch.chaput.ea:projected} we proved that $\Gamma_d(X_u,X^v)$ has
rational singularities and that
\[
  (p_d)_*[\cO_{Z_d(X_u,X^v)}] = [\cO_{\Gamma_d(X_u,X^v)}] \,.
\]
In fact, this follows from \Corollary{qc_nbhd} and \Theorem{projrich}.

\targetsec{resolclass}{}%
Let $\Omega$ be an irreducible variety defined over $\C$ and let $\rho:
\wt\Omega \to \Omega$ be a resolution of singularities. Define the
\emph{resolution class} of $\Omega$ to be the image $\rho_*[\cO_{\wt\Omega}] =
\sum_{i \geq 0} (-1)^i [R^i \rho_* \cO_{\wt\Omega}]$ in the Grothendieck group
$K(\Omega)$ of coherent sheaves on $\Omega$. This class is independent of the
chosen desingularization and will be denoted simply by $[\cO_{\wt\Omega}]$. When
$\Omega \subset X$ is a closed subvariety, we also write $[\cO_{\wt\Omega}]$ for
the image of the resolution class in $K(X)$. If $\Omega$ has rational
singularities, then $[\cO_{\wt\Omega}] = [\cO_\Omega]$. We need the following
result \cite[\S4, Remark]{brion:positivity}.

\begin{thm}[Brion]\label{thm:brion}%
  Let $M = G/P_M$ be a flag variety over $\C$ and let $\Omega \subset M$ be an
  irreducible closed subvariety. Then the resolution class $[\cO_{\wt\Omega}]$
  is an alternating linear combination of Schubert classes, that is, we have
  \[
    [\cO_{\wt\Omega}] = \sum_{w \in W^M} c_w(\Omega)\, \cO^w
  \]
  in $K(M)$, where $(-1)^{\ell(w)-\codim(\Omega,M)}\, c_w(\Omega) \geq 0$ for
  all $w \in W^M$.
\end{thm}

\begin{thm}\label{thm:kollar}%
  Let $f : \Omega' \to \Omega$ be a surjective morphism between complex
  projective varieties with rational singularities. Then $f$ is cohomologically
  trivial if and only if the general fibers of $f$ are cohomologically trivial.
\end{thm}
\begin{proof}
  The implication \emph{`if'} follows from \cite[Thm~7.1]{kollar:higher} (see
  the proof of \cite[Thm.~3.2]{buch.mihalcea:quantum}), and \emph{`only if'}
  follows from \cite[III.12.8 and III.12.9]{hartshorne:algebraic*1}.
\end{proof}

We will use the following consequence of \Theorem{kollar} when condition (b) is
satisfied. The condition that $\Omega'$ has rational singularities is necessary
in this case, see \Example{push}.

\begin{cor}\label{cor:push}%
  Let $f : \Omega' \to \Omega$ be a surjective morphism of irreducible
  projective varieties over $\C$. Assume that either {\rm(a)} the general fibers
  of $f$ are rationally connected, or {\rm(b)} $\Omega'$ has rational
  singularities and the general fibers of $f$ are cohomologically trivial. Then,
  $f_*[\cO_{\wt\Omega'}] = [\cO_{\wt\Omega}]$.
\end{cor}
\begin{proof}
  Let $\wt\Omega$ be a desingularization of $\Omega$, and let $\wt\Omega'$ be a
  desingularization of the unique irreducible component of $\Omega'
  \times_\Omega \wt\Omega$ that maps birationally onto $\Omega'$. We obtain a
  commutative diagram where the vertical maps are resolutions of singularities.
  \[
    \xymatrix{
      \wt\Omega' \ar[d]_{\pi'} \ar[r]^{\wt f} & \wt\Omega \ar[d]^\pi \\
      \Omega' \ar[r]^f & \Omega
    }
  \]
  Let $U' \subset \Omega'$ be a dense open subset such that $\pi' :
  {\pi'}^{-1}(U') \to U'$ is an isomorphism, and set $Z = \wt\Omega' \ssm
  {\pi'}^{-1}(U')$. For $x \in \Omega$ we let $\Omega'_x \subset \Omega'$,
  $\wt\Omega'_x \subset \wt\Omega'$, and $Z_x \subset Z$ denote the fibers over
  $x$. Set $r = \dim(\Omega') - \dim(\Omega)$. Choose a dense open subset $U
  \subset \Omega$ such that $f\pi' : (f\pi')^{-1}(U) \to U$ is smooth,
  $\dim(Z_x) < r$ for all $x \in U$, and $\Omega'_x$ is rationally connected for
  $x \in U$ in case (a), or cohomologically trivial with rational singularities
  in case (b). Here we use that the general fibers of $f$ have rational
  singularities when $\Omega'$ has rational singularities by
  \cite[Lemma~3]{brion:positivity}.

  Let $x \in U$. Then $\wt\Omega'_x$ is a disjoint union of non-singular
  varieties of dimension $r$, and $\Omega'_x$ is irreducible. Since
  $\wt\Omega'_x \cap {\pi'}^{-1}(U') \subset \wt\Omega'_x$ is a dense open
  subset isomorphic to $\Omega'_x \cap U'$, it follows that $\wt\Omega'_x$ is
  birational to $\Omega'_x$. We deduce that $\wt\Omega'_x$ is cohomologically
  trivial; this follows from \cite[Cor.~4.18(a)]{debarre:higher-dimensional} if
  $\Omega'_x$ is rationally connected, and from the Leray spectral sequence if
  $\Omega'_x$ is cohomologically trivial with rational singularities.
  \Theorem{kollar} now shows that $\wt f$ is cohomologically trivial, which
  completes the proof.
\end{proof}

The alternating signs conjecture for $\QK(X)$ would be a consequence of the
following two statements.

\begin{enumerate}
\item[(I)] The general fibers of the map $p_d : Z_{d-1,1}(X_u,X^v) \to
  \Gamma_{d-1,1}(X_u,X^v)$ are cohomologically trivial.\smallskip

\item[(II)] The variety $\Gamma_{d-1,1}(X_u,X^v)$ is either equal to
  $\Gamma_d(X_u,X^v)$ or a divisor in $\Gamma_d(X_u,X^v)$.
\end{enumerate}

In fact, the class $[\cO_{\Gamma_d(X_u,X^v)}]$ has alternating signs by
\Theorem{brion}, and these signs are compatible with \Conjecture{qkpos} for
$\dmin(u^\vee,v) \leq d \leq \dmax(u^\vee,v)$, as \Proposition{qhcoef} shows
that $\codim(\Gamma_d(X_u,X^v), X) = \ell(u^\vee) + \ell(v) - \deg(q^d)$.
Property (I) implies that $(p_d)_*[\cO_{Z_{d-1,1}(X_u,X^v)}]$ is the resolution
class of $\Gamma_{d-1,1}(X_u,X^v)$ by \Corollary{push}, which also has
alternating signs by \Theorem{brion}. The point of (II) is that, if
$\Gamma_{d-1,1}(X_u,X^v)$ is a divisor in $\Gamma_d(X_u,X^v)$, then the
alternating signs of the two terms in \Theorem{qkprod} enhance each other to
yield the alternating signs of $(\cO_u \star \cO^v)_d$. Properties (I) and (II)
also imply that $(\cO_u \star \cO^v)_d$ is non-zero if and only if
$\Gamma_{d-1,1}(X_u,X^v) \neq \Gamma_d(X_u,X^v)$. This determines whether the
power $q^d$ occurs in $\cO_u \star \cO^v$. We will show that (II) is true in all
cases, whereas (I) holds if and only if $d$ is not an exceptional degree of
$\cO_u \star \cO^v$. These results are sufficient to establish
\Theorem{qkdegrees} and \Theorem{qkpos}. \Table{exceptional-counts} illustrates
that most products $\cO_u \star \cO^v$ on Lagrangian Grassmannians are fully
described by these results.

\begin{proof}[Proofs of \Theorem{qkdegrees} and \Theorem{qkpos}]
  \Table{divisors} shows the range of degrees where $Y_{d-1,1}(X_u,X^v)$ is a
  divisor in $Y_d(X_u,X^v)$, and where $\Gamma_{d-1,1}(X_u,X^v)$ is a divisor in
  $\Gamma_d(X_u,X^v)$. The codimension of $Y_{d-1,1}(X_u,X^v)$ is determined by
  \Proposition{yd1projrich} and \Proposition{yd=yd1}, after which the
  codimension of $\Gamma_{d-1,1}(X_u,X^v)$ is determined by
  \Proposition{zd1birat} and \Corollary{gd=gd1}. The results now follow from
  \Corollary{zd1-fibers} using the strategy discussed above.
\end{proof}

\begin{table}
  \caption{Divisors in $Y_d(X_u,X^v)$ and $\Gamma_d(X_u,X^v)$.}
  \label{tab:divisors}
  \def\arraystretch{1.3}
  \begin{tabular}{|c|c|c|}
    \hline
    Range of degrees & $Y_{d-1,1}(X_u,X^v)$ & $\Gamma_{d-1,1}(X_u,X^v)$ \\
    \hline
    $1 \leq d \leq \dmin(u^\vee,v)$ &
    $= \emptyset$ & $= \emptyset$
    \\
    $\dmin(u^\vee,v) < d \leq \dmax(u^\vee,v)$ &
    $\subsetneq \ Y_d(X_u,X^v)$ & $\subsetneq \ \Gamma_d(X_u,X^v)$
    \\
    $\dmax(u^\vee,v) < d \leq \min(\dmax(u^\vee),\dmax(v))$ &
    $\subsetneq \ Y_d(X_u,X^v)$ & $= \ \Gamma_d(X_u,X^v)$
    \\
    $\min(\dmax(u^\vee),\dmax(v)) < d \leq d_X(2)$ &
    $= \ Y_d(X_u,X^v)$ & $= \ \Gamma_d(X_u,X^v)$
    \\
    \hline
  \end{tabular}
\end{table}

\begin{example}\label{example:push}%
  Let $E \subset \bP^2$ be an elliptic curve, let $\Omega' \subset \bP^3$ be the
  cone over $E$, set $\Omega = \Spec(\C)$, and let $f : \Omega' \to \Omega$ be
  the structure morphism. Using the exact sequence $0 \to \cO_{\bP^3}(-3) \to
  \cO_{\bP^3} \to \cO_{\Omega'} \to 0$, it follows that the fibers of $f$ are
  cohomologically trivial. Let $\wt\Omega'$ be the blow-up of $\Omega'$ at its
  vertex, and let $g : \wt\Omega' \to E$ be the map induced by the projection
  $\Omega' \dashrightarrow E$. Since the fibers of $g$ are projective lines, it
  follows from \Corollary{push}(a) that $g_*[\cO_{\wt\Omega'}] = [\cO_E]$. Since
  $\chi(E,\cO_E) = 0$, we deduce that $f_*[\cO_{\wt\Omega'}] = 0 \neq
  [\cO_{\Omega}]$. This shows that \Corollary{push}(b) may fail without the
  assumption that $\Omega'$ has rational singularities.
\end{example}

\begin{remark}\label{remark:qkt-pos}%
  Since \Corollary{push} remains true in equivariant $K$-theory, our results
  imply that the identity
  \[
    (\cO_u \star \cO^v)_d =
    [\cO_{\Gamma_d(X_u,X^v)}] - [\cO_{\wt{\Gamma_{d-1,1}}(X_u,X^v)}]
  \]
  holds in the equivariant quantum $K$-theory ring $\QK_T(X)$ whenever $d$ is
  not an exceptional degree. In particular, \Theorem{qkdegrees} holds for
  $\QK_T(X)$.

  Let $\wt N^{w,d}_{u,v} \in K_T(\pt)$, for $u,v,w \in W^X$, denote the
  structure constants describing the action of the $B^-$-stable Schubert basis
  $\{\cO^v\}$ on the $B$-stable basis $\{\cO_{u^\vee}\}$ of $\QK_T(X)$:
  \[
    \cO_{u^\vee} \star \cO^v = \sum_{w,d \geq 0} \wt N^{w,d}_{u,v}\, q^d\,
    \cO_{w^\vee}
  \]
  If \Theorem{brion} is upgraded to the equivariant setting of
  \cite[Thm.~4.1]{anderson.griffeth.ea:positivity}, then these constants would
  satisfy the positivity property
  \[
    (-1)^{\ell(uvw)+\deg(q^d)}\, \wt N^{w,d}_{u,v}
    \ \in \ \N\big[ [\C_{-\be}]-1 : \be \in \Delta \big] \,.
  \]
  We thank David~Anderson for sending us an outline of a proof of the
  equivariant version of \Theorem{brion}, with some details left to check. We
  hope to address this elsewhere, and possibly prove a slight generalization of
  \Theorem{brion}.

  The classes $\cO_{u^\vee}$ and $\cO^u$ are distinct in $K_T(X)$, so it is not
  clear how to apply our results to products $\cO^u \star \cO^v$ of two
  $B^-$-stable Schubert classes in $\QK_T(X)$. A positivity conjecture for the
  structure constants of such products is discussed in
  \cite[Conj.~2.2]{buch.chaput.ea:chevalley}, generalizing
  \cite[Conj.~5.1]{griffeth.ram:affine} and
  \cite[Cor.~5.3]{anderson.griffeth.ea:positivity}.
\end{remark}

\begin{conj}
  The power $q^d$ occurs in the equivariant quantum product $\cO^u \star \cO^v
  \in \QK_T(X)$ if and only if $0 \leq d \leq \dmax(u,v)$ or $d = \dmax(u,v)+1$
  is an exceptional degree of $\cO_{u^\vee} \star \cO^v$.
\end{conj}

\subsection{Proofs and counterexamples to (I) and (II)}\label{sec:pf-cex}%

Fix elements $u, v \in W^X$ and a degree $1 \leq d \leq d_X(2)$. We proceed to
establish the required properties of the map $p_d : Z_{d-1,1}(X_u,X^v) \to
\Gamma_{d-1,1}(X_u,X^v)$. Notice that $\Gamma_d(X_u,X^v)$ is empty for $d <
\dmin(u^\vee,v)$, and $\Gamma_{d-1,1}(X_u,X^v)$ is empty for $d \leq
\dmin(u^\vee,v)$.

\begin{lemma}\label{lemma:swtald}%
  We have $w_{0,Y_d}^{Y_{d-1,d}} = s_{\tal_d}$.
\end{lemma}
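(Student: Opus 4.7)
The plan is to identify the fiber of $\phi_d$ with a flag variety of a simple group and then apply a general result about the minimal coset representative of the longest element for the parabolic orthogonal to the highest root. The projection $\phi_d : Y_{d-1,d} \to Y_d$ is $G$-equivariant, so its fiber $F = \phi_d^{-1}(1.P_{Y_d}) = P_{Y_d}/(P_{Y_{d-1}} \cap P_{Y_d})$ is a flag variety of $P_{Y_d}$, and under the identification $Y_{d-1,d} = \{(\eta,\om) : \Gamma_\eta \subset \Gamma_\om\}$ it equals $\{\eta \in Y_{d-1} : \Gamma_\eta \subset \Gamma_d\}$. The action of $P_{Y_d}$ on $\Gamma_d$ factors through the simple factor $L_\ga$ of its Levi containing $\ga$, and by \Proposition{defYd} the variety $\Gamma_d$ is a primitive cominuscule variety of diameter $d$ for $L_\ga$. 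Applying \Corollary{comin_kapd} inside $L_\ga$ identifies $F$ with the flag variety $\wh Y_{d-1} = L_\ga/Q'$ that parametrizes sub-cominuscule subvarieties of $\Gamma_d$ of diameter $d-1$.

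Next I would reduce to an assertion inside $L_\ga$. Writing $W_{Y_d} = W_{L_\ga} \times W''$ where $W''$ is the Weyl group of the remaining simple factors of the Levi, the essential excluded simple roots of $Y_{d-1}$ all lie in $\Delta_{L_\ga}$ by inspection of \Table{incidence}, so $W_{Y_{d-1,d}} = W_{Q'} \times W''$. This yields $W_{Y_d}/W_{Y_{d-1,d}} = W_{L_\ga}/W_{Q'}$ and hence $w_{0,Y_d}^{Y_{d-1,d}} = w_{0,L_\ga}^{Q'}$. Moreover, $\tal_d$ is the unique maximum of $\cP_{\Gamma_d} \subset \cP_X$, and therefore equals the highest root of $\Phi_{L_\ga}$.

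The key step is the following general claim: for a simple group $H$ with highest root $\rho$ and parabolic $Q \subset H$ defined by $\Delta_Q = \{\be \in \Delta_H : (\be,\rho^\vee) = 0\}$, one has $s_\rho = w_{0,H}^Q$. Indeed, for $\be \in \Delta_Q$ we compute $s_\rho.\be = \be - (\be,\rho^\vee)\rho = \be > 0$, so $s_\rho \in W_H^Q$. Since $\rho^\vee$ is dominant, $(\al,\rho^\vee) \geq 0$ for every $\al \in \Phi_H^+$, and expanding $\al = \sum_\be c_\be \be$ with $c_\be \geq 0$ shows $(\al,\rho^\vee) = 0$ if and only if $\al \in \Phi_Q^+$. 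Therefore $\ell(s_\rho) = |\{\al \in \Phi_H^+ : (\al,\rho^\vee) > 0\}| = |\Phi_H^+| - |\Phi_Q^+| = \dim(H/Q)$, matching the length of the longest element of $W_H^Q$; since $s_\rho \in W_H^Q$ attains this length, $s_\rho = w_{0,H}^Q$.

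Applying this claim with $H = L_\ga$, $\rho = \tal_d$, $Q = Q'$ gives the desired identity, provided $\Delta_{Q'} = \{\be \in \Delta_{L_\ga} : (\be,\tal_d^\vee) = 0\}$. The main obstacle is this characterization of $Q'$ as the parabolic orthogonal to the highest root; I would verify it by direct inspection of the essential excluded simple roots of $\wh Y_{d-1}$ in \Table{incidence} for each type, or uniformly by noting that in the affine Dynkin diagram of $L_\ga$ the affine node $-\tal_d$ is adjacent exactly to those simple roots $\be \in \Delta_{L_\ga}$ with $(\be,\tal_d^\vee) > 0$, which coincide with the boundary of the sub-diagram $\Delta_{\Gamma_{d-1}} \subset \Delta_{L_\ga}$.
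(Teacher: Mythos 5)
Your argument is correct in outline and takes a genuinely different route from the paper's proof. Both arguments reduce to the primitive cominuscule variety $\Gamma_d$, but you make the reduction explicit by passing to the simple factor $L_\ga$ of the Levi of $P_{Y_d}$ and identifying the fiber of $\phi_d$ over $1.P_{Y_d}$ with a flag variety $L_\ga/Q'$. The paper instead computes directly in the Weyl group: after noting that the fiber is unchanged when $X$ is replaced by $X_{\ka_d}$, it writes $w_{0,Y_d}^{Y_{d-1,d}} = w_0^{Y_{d-1}} = w_0 w_{0,Y_{d-1}} = w_0^X z_{d-1}$ using \Lemma{weylid}, and then evaluates $w_0^X z_{d-1} = s_{\tal_d}$ via \Lemma{prim_dual} and the primitivity identity $(w_0^X)^{-1} = w_0^X$. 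Your key new ingredient is the general fact that if $Q \subset H$ is the parabolic of a simple group $H$ generated by the simple roots orthogonal to the highest root $\rho$, then $s_\rho = w_{0,H}^Q$; the proof you give (checking $s_\rho \in W_H^Q$ and matching lengths) is correct, and it buys a conceptual explanation of why the answer is precisely the reflection in $\tal_d$. What it costs is the additional verification, which you explicitly defer, that your parabolic $Q'$ — namely the $Y_{d-1}$-parabolic of $\Gamma_d$ from \Proposition{defYd} — satisfies $\Delta_{Q'} = \{\be \in \Delta_{L_\ga} : (\be,\tal_d^\vee) = 0\}$, and also that $\Delta_{Y_d} \ssm \Delta_{Y_{d-1}} \subset \Delta_{L_\ga}$ so that $W_{Y_d}/W_{Y_{d-1,d}} \cong W_{L_\ga}/W_{Q'}$. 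These are finite checks comparable in nature to the proof of \Proposition{defYd} itself, so they are not conceptual gaps, but they do need to be carried out: your remark about the affine Dynkin diagram correctly identifies the neighbours of the affine node as the simple roots not orthogonal to $\rho$, yet this alone does not show that those are exactly $\Delta_{L_\ga} \ssm \Delta_{Q'}$ without either inspecting \Table{incidence} or supplying an independent argument that $Q'$ is the stabilizer in $L_\ga$ of the Schubert variety $(\Gamma_d)_{\ka_{d-1}}$ and that this stabilizer is the orthogonal parabolic.
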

\begin{proof}
  The element $w_{0,Y_d}^{Y_{d-1,d}}$ describes the fiber of the map $\phi_d :
  Y_{d-1,d} \to Y_d$ over $1.P_{Y_d}$, that is, $\phi_d^{-1}(1.P_{Y_d}) =
  (Y_{d-1,d})_{w_{0,Y_d}^{Y_{d\!-\!1,d}}}$. Since this fiber does not change if
  the cominuscule variety $X$ is replaced with $X_{\ka_d}$, we may assume that
  $X$ is a primitive cominuscule variety of diameter $d$. In this case $Y_d$ is
  a point and $Y_{d-1,d} = Y_{d-1}$, so \Lemma{weylid} and \Lemma{prim_dual}
  imply that $w_{0,Y_d}^{Y_{d-1,d}} = w_0 w_{0,Y_{d-1}} = w_0 w_{0,X} z_{d-1} =
  w_0^X z_{d-1} = s_{\tal_d}$, as required.
\end{proof}

We first consider degrees in the range $\dmin(u^\vee,v) < d \leq
\min(\dmax(u^\vee),\dmax(v))$. In this case the maps $q_d : p_d^{-1}(X_u) \to
Y_d(X_u)$ and $q_d : p_d^{-1}(X^v) \to Y_d(X^v)$ are birational by
\Corollary{fibers_geom}(a). Since the fibers of $p_d : Z_d \to X$ and $q_d : Z_d
\to Y_d$ are described by $w_{0,X}^{Z_d} = z_d/\ka_d$ and $w_{0,Y_d}^{Z_d} =
\ka_d$ by \Lemma{weylid}, we deduce that $u z_d \ka_d = u(z_d/\ka_d)$ and
$v\ka_d = v/\ka_d$ belong to $W^{Y_d}$, and $u z_d \in W^{Z_d}$. With the
notation for projected Richardson varieties from \Section{projrich}, we obtain
$Y_d(X_u,X^v) = (Y_d)_{u z_d \ka_d}^{v\ka_d} = \Pi_{u z_d \ka_d}^{v\ka_d}(Y_d)$
and $Z_d(X_u,X^v) = (Z_d)_{u z_d}^{v\ka_d} = \Pi_{u z_d}^{v\ka_d}(Z_d)$.

\begin{prop}\label{prop:yd1projrich}%
  Assume that $\dmin(u^\vee,v) < d \leq \min(\dmax(u^\vee),\dmax(v))$. Then
  $v\ka_d s_\ga \leq_{Y_d} u z_d \ka_d$, and $Y_{d-1,1}(X_u,X^v) = \Pi_{u z_d
  \ka_d}^{v\ka_d s_\ga}(Y_d)$ is a divisor in $Y_d(X_u,X^v)$.
\end{prop}
\begin{proof}
  Define $\eta \in W_{Z_d}$ by $s_\ga \eta = \ka_d/\ka_{d-1}$. We have
  $Y_{d-1}(X^v) = (Y_{d-1})^{v/\ka_{d-1}} = (Y_{d-1})^{v \ka_d s_\ga \eta}$, and
  \Proposition{fibers_combin}(a) shows that $v \ka_d s_\ga \eta = v/\ka_{d-1}
  \in W^{Y_{d-1}}$. We also have $\ka_d/\ka_{d-1} \in W_{Y_d} \cap W^{Y_{d-1}}$,
  hence $u z_d \ka_d \eta \in W^{Y_{d-1,d}}$. Define
  \[
    Z_{d-1,d} = G/(P_{Y_{d-1}} \cap P_{Y_d} \cap P_X) =
    \{(\eta,\om,x) \in Y_{d-1}\times Y_d\times X \mid x \in
    \Gamma_\eta \subset \Gamma_\om \} \,,
  \]
  with projections $p : Z_{d-1,d} \to Z_d$ and $q : Z_{d-1,d} \to Y_{d-1,d}$.
  \[
    \xymatrix{
      X & Z_{d-1} \ar[l]_{p_{d-1}} \ar[d]_{q_{d-1}} &
      Z_{d-1,d} \ar[l] \ar[r]^p \ar[d]^q & Z_d \ar[r]^{p_d} \ar[d]^{q_d} & X \\
      & Y_{d-1} & Y_{d-1,d} \ar[l]_{\psi_d} \ar[r]^{\phi_d} & Y_d
    }
  \]
  Using that $\ka_d w_{0,Z_d}^{Z_{d-1,d}} = w_{0,Y_d}^{Y_{d-1,d}}
  w_{0,Y_{d-1,d}}^{Z_{d-1,d}} = s_{\tal_d} \ka_{d-1}$ by \Lemma{weylid} and
  \Lemma{swtald}, we obtain $w_{0,Z_d}^{Z_{d-1,d}} = \ka_d s_{\tal_d} \ka_{d-1}
  = \eta$. This implies
  \[
    \psi_d^{-1}(Y_{d-1}(X_u)) = q p^{-1} p_d^{-1}(X_u)
    = (Y_{d-1,d})_{u z_d \ka_d \eta} \,.
  \]
  We obtain
  \[
    \begin{split}
      Y_{d-1,1}(X_u,X^v) &= \phi_d(\psi_d^{-1}(Y_{d-1}(X_u,X^v))) \\
      &= \phi_d \left( \psi_d^{-1}(Y_{d-1}(X_u)) \cap
        \psi_d^{-1}(Y_{d-1}(X^v)) \right) \\
      &= \phi_d \left( (Y_{d-1,d})_{u z_d \ka_d \eta}^{v\ka_d
        s_\ga \eta}\right)
       = \Pi_{u z_d \ka_d \eta}^{v\ka_d s_\ga \eta}(Y_d) =
      \Pi_{u z_d \ka_d}^{v\ka_d s_\ga}(Y_d) \,,
    \end{split}
    \]
    where the last two equalities follow from \Corollary{rich2rich} and
    \Theorem{simX}(b), or \cite[Prop.~3.3]{knutson.lam.ea:projections}. The
    inequality $v\ka_d s_\ga \leq_{Y_d} u z_d \ka_d$ holds because
    $Y_{d-1}(X_u,X^v) \neq \emptyset$ and $u z_d \ka_d \in W^{Y_d}$. Finally, it
    follows from \Proposition{rich_iso} that $Y_{d-1,1}(X_u,X^v)$ is a divisor
    in $Y_d(X_u,X^v) = \Pi_{u z_d \ka_d}^{v \ka_d}(Y_d)$.
\end{proof}

\begin{lemma}\label{lemma:zd1open}%
  Assume that $\dmin(u^\vee,v) < d \leq \min(\dmax(u^\vee),\dmax(v))$. Then
  $Z_{d-1,1}(X_u,X^v) \cap \oPi_{u z_d}^{v\ka_d}(Z_d)$ is a dense open subset of
  $Z_{d-1,1}(X_u,X^v)$.
\end{lemma}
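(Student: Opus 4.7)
The plan is to split the argument into openness and density. For openness, the parabolic factorizations appearing in the proof of \Proposition{yd1projrich} show that both $uz_d$ and $v\ka_d$ lie in $W^{Z_d}$: one has $uz_d = (uz_d\ka_d)\cdot\ka_d$ reduced with $uz_d\ka_d \in W^{Y_d}$ and $\ka_d = w_{0,Y_d}^{Z_d} \in W_{Y_d}^{Z_d}$, while $v_{Y_d} = v\cap\ka_d = \ka_d$ for $d \leq \dmax(v)$ gives $v\ka_d = v^{Y_d} \in W^{Y_d} \subset W^{Z_d}$. Since $Z_d(X_u,X^v)$ is non-empty, $v\ka_d \leq uz_d$ in the Bruhat order, and as both elements lie in $W^{Z_d}$ this is equivalent to $v\ka_d \leq_{Z_d} uz_d$. \Proposition{rich_iso} then identifies $\oPi_{uz_d}^{v\ka_d}(Z_d)$ with the Richardson cell $\oZ_{uz_d}^{v\ka_d}$, which is open in $Z_d(X_u,X^v)$. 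Since $Z_{d-1,1}(X_u,X^v) \subset Z_d(X_u,X^v)$ by construction, the intersection is open in $Z_{d-1,1}(X_u,X^v)$.

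For density, I would exhibit a dense open subset of $Z_{d-1,1}(X_u,X^v)$ lying inside $\oZ_{uz_d}^{v\ka_d}$. Combining \Proposition{yd1projrich} with \Proposition{rich_iso} shows that $\oPi_{uz_d\ka_d}^{v\ka_d s_\ga}(Y_d)$ is dense open in $Y_{d-1,1}(X_u,X^v)$, so the preimage $U := q_d^{-1}(\oPi_{uz_d\ka_d}^{v\ka_d s_\ga}(Y_d))$ is dense open in $Z_{d-1,1}(X_u,X^v)$. Because $s_\ga \in W_{Y_d}$, the projection $\pi_Y : E \to Y_d$ satisfies $\pi_Y(\oE^{v\ka_d s_\ga}) = \oY^{v\ka_d}$, hence $\oPi_{uz_d\ka_d}^{v\ka_d s_\ga}(Y_d) \subset \oY_{uz_d\ka_d}^{v\ka_d}$ and $U \subset q_d^{-1}(\oY_{uz_d\ka_d}^{v\ka_d})$.

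To finish, I would compare $U$ with $\oZ_{uz_d}^{v\ka_d}$ fiber-wise through the locally trivial $\Gamma_d$-fibration $q_d$. The reduced factorization $uz_d = (uz_d\ka_d)\cdot\ka_d$ yields $\dim \oZ_{uz_d} = \dim \oY_{uz_d\ka_d} + \ell(\ka_d)$, and $\dim Z_d - \dim Y_d = \ell(\ka_d)$ gives $\dim \oZ^{v\ka_d} = \dim \oY^{v\ka_d} + \ell(\ka_d)$; both intersections $\oZ_{uz_d} \cap q_d^{-1}(y)$ and $\oZ^{v\ka_d} \cap q_d^{-1}(y)$ are then dense open subsets of the irreducible fiber $q_d^{-1}(y) \cong \Gamma_d$ for each $y \in \oY_{uz_d\ka_d}^{v\ka_d}$, and so is their intersection $\oZ_{uz_d}^{v\ka_d} \cap q_d^{-1}(y)$. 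Since $U \cap q_d^{-1}(y) = q_d^{-1}(y)$ for every $y \in \oPi_{uz_d\ka_d}^{v\ka_d s_\ga}(Y_d)$, it follows that $U \cap \oZ_{uz_d}^{v\ka_d}$ is fiber-wise dense in $U$, and hence dense in $U$ and in $Z_{d-1,1}(X_u,X^v)$. The principal delicacy is the containment $\oPi_{uz_d\ka_d}^{v\ka_d s_\ga}(Y_d) \subset \oY_{uz_d\ka_d}^{v\ka_d}$: although $v\ka_d s_\ga$ is not the minimal coset representative in $W/W_{Y_d}$, the projection $\pi_Y$ still carries $\oE^{v\ka_d s_\ga}$ into the open cell $\oY^{v\ka_d}$ precisely because $s_\ga \in W_{Y_d}$, guaranteeing that the codimension-one locus $Y_{d-1,1}(X_u,X^v)$ meets the open Richardson cell of $Y_d(X_u,X^v)$ in a dense subset.
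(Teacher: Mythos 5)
Your proof rests on the identification $\oPi_{uz_d}^{v\ka_d}(Z_d) = \oZ_{uz_d}^{v\ka_d}$ made in the first step, but this is false in general. \Proposition{rich_iso}(c) only says that the projection $G/B \to Z_d$ restricts to an isomorphism from the open Richardson cell $\oE_{uz_d}^{v\ka_d}$ of $G/B$ onto $\oPi_{uz_d}^{v\ka_d}(Z_d)$; it does not say that the image is the open Richardson cell of $Z_d$. By \Corollary{rich2rich}, that cell is the larger set $\oZ_{uz_d}^{v\ka_d} = \oPi_{uz_d w_{0,Z_d}}^{v\ka_d}(Z_d)$, and by \Lemma{minparab}(c) the two differ whenever $v\ka_d s \leq uz_d$ for some simple reflection $s \in W_{Z_d}$, which is the typical situation. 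Your fiber-wise argument in the $\Gamma_d$-bundle therefore only shows, at best, that $Z_{d-1,1}(X_u,X^v)$ meets the Richardson cell $\oZ_{uz_d}^{v\ka_d}$ in a dense open subset. That is strictly weaker than the lemma, and the difference is exactly the point: \Proposition{zd1birat} invokes \Proposition{rich_iso}(c) to get injectivity of $p_d$ on $\oPi_{uz_d}^{v\ka_d}(Z_d)$, and $p_d$ is generally not injective on the larger set $\oZ_{uz_d}^{v\ka_d}$.

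The paper's proof is combinatorial rather than fiber-theoretic. Since $Z_{d-1,1}(X_u,X^v)$ is an irreducible divisor in $\Pi_{uz_d}^{v\ka_d}(Z_d)$, if it avoided the stratum $\oPi_{uz_d}^{v\ka_d}(Z_d)$ then \Theorem{richmodel} would force $Z_{d-1,1}(X_u,X^v) = \Pi_a^b(Z_d)$ for some $v\ka_d \leq b \leq_{Z_d} a \leq uz_d$; projecting to $Y_d$ and using \Proposition{yd1projrich} and \Theorem{simX} pins down $a = uz_d$ and $b = v\ka_d s_\ga$, which contradicts $\Pi_{uz_d}^{v\ka_d s_\ga}(Y_d) = \Pi_{uz_d \ka_d}^{v\ka_d}(Y_d)$. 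To repair your approach you would have to show that the fibers $q_d^{-1}(y)$, for $y$ generic in $Y_{d-1,1}(X_u,X^v)$, meet $\oPi_{uz_d}^{v\ka_d}(Z_d)$ itself and not merely the Richardson cell; the dimension count you give does not establish this, since $\oPi_{uz_d}^{v\ka_d}(Z_d)$ is not a union of $q_d$-fibers over its image.
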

\begin{proof}
  By \Proposition{yd1projrich}, $Z_{d-1,1}(X_u,X^v)$ is a divisor in $Z_d(X_u,X^v) =
  \Pi_{u z_d}^{v\ka_d}(Z_d)$. If the claim is false, then \Theorem{richmodel}
  implies that $Z_{d-1,1}(X_u,X^v) = \Pi_a^b(Z_d)$, where $v\ka_d \leq b
  \leq_{Z_d} a \leq u z_d$. Since $\Pi_a^b(Y_d) = \Pi_{u z_d \ka_d}^{v\ka_d
  s_\ga}(Y_d)$ by \Proposition{yd1projrich} and $u z_d \ka_d \in W^{Y_d}$, it
  follows from \Theorem{simX} that $a \geq u z_d \ka_d$ and $b \geq v\ka_d
  s_\ga$. Since we also have $\ell(a)-\ell(b) = \ell(u z_d) - \ell(v\ka_d
  s_\ga)$, it follows that $a = u z_d$ and $b = v\ka_d s_\ga$. But
  \Theorem{simX} also implies that $\Pi_{u z_d}^{v\ka_d s_\ga}(Y_d) = \Pi_{u z_d
  \ka_d}^{v\ka_d}(Y_d)$, a contradiction.
\end{proof}

\begin{prop}\label{prop:zd1birat}%
  Assume that $\dmin(u^\vee,v) < d \leq \dmax(u^\vee,v)$. Then the map $p_d :
  Z_{d-1,1}(X_u,X^v) \to \Gamma_{d-1,1}(X_u,X^v)$ is birational.
\end{prop}
\begin{proof}
  Since $p_d : Z_d(X_u,X^v) \to \Gamma_d(X_u,X^v)$ is birational by
  \Proposition{qhcoef} and \Corollary{fibers_geom}(d), it follows from
  \Proposition{rich_iso} that the restriction of $p_d$ to the open projected
  Richardson variety $\oPi_{u z_d}^{v\ka_d}(Z_d)$ is injective. The result
  therefore follows from \Lemma{zd1open}.
\end{proof}

Our next result shows that $Z_{d-1,1}(X_u,X^v) = Z_d(X_u,X^v)$ and
$\Gamma_{d-1,1}(X_u,X^v) = \Gamma_d(X_u,X^v)$ whenever $d >
\min(\dmax(u^\vee),\dmax(v))$.

\begin{prop}\label{prop:yd=yd1}%
  Assume $d > \min(\dmax(u^\vee),\dmax(v))$. Then, $Y_{d-1,1}(X_u,X^v) =
  Y_d(X_u,X^v)$.
\end{prop}
\begin{proof}
  Let $\om \in Y_d(X_u,X^v)$ and assume that $d > \dmax(v)$. Then
  $\Gamma_\om \cap X^v$ has positive dimension by \Corollary{fibers_geom}(a).
  Choose any point $x \in \Gamma_\om \cap X_u$. Then $\Gamma_{d-1}(x) \cap
  \Gamma_\om$ is a divisor in $\Gamma_\om$ by \Lemma{prim_dual}. It
  follows that $\Gamma_{d-1}(x) \cap \Gamma_\om \cap X^v \neq \emptyset$.
  Choose any point $y \in \Gamma_{d-1}(x) \cap \Gamma_\om \cap X^v$. Since
  $\dist(x,y) \leq d-1$, there exists $\eta \in Y_{d-1}$ such that $x,y \in
  \Gamma_\eta \subset \Gamma_\om$ by \Corollary{comin_kapd}(b) applied to
  $\Gamma_\om$. This proves that $\om \in Y_{d-1,1}(X_u,X^v)$. A symmetric
  argument works if $d > \dmax(u^\vee)$.
\end{proof}

\noin
We finally discuss the remaining range $\dmax(u^\vee,v) < d \leq
\min(\dmax(u^\vee),\dmax(v))$. For degrees in this range, the map $p_d :
Z_d(X_u,X^v) \to \Gamma_d(X_u,X^v)$ has fibers of positive dimension by
\Proposition{qhcoef} and \Corollary{fibers_geom}(d).

\begin{notation}\label{notation:epsilon}%
  Given a fixed cominuscule variety $X$, we let $\epsilon$ denote the constant
  defined by $\epsilon=1$ if $X$ is minuscule or an odd quadric of dimension at
  least five, while $\epsilon=2$ if $X$ is a Lagrangian Grassmannian. The
  three-dimensional quadric $Q^3 = \LG(2,4)$ is considered a Lagrangian
  Grassmannian.
\end{notation}

\targetsec{partialYd}{}%
The proof of the following result is postponed to \Section{divisors}, where we
also justify the definition of $\epsilon$. Let $[\partial Y_d] = \sum_{\be \in
\Delta\ssm\Delta_{Y_d}} [Y_d^{s_\be}]$ denote the (ample) sum of the Schubert
divisors in $Y_d$.

\begin{prop}\label{prop:zd1-fibers}%
  Let $\dmax(u^\vee,v) < d \leq \min(\dmax(u^\vee),\dmax(v))$. For all points
  $z$ in a dense open subset of $\Gamma_d(X_u,X^v)$, the fiber $D = p_d^{-1}(z)
  \cap Z_{d-1,1}(X_u,X^v)$ is a Cartier divisor of class $\epsilon\,
  q_d^*[\partial Y_d]$ in the Richardson variety $R = p_d^{-1}(z) \cap
  Z_d(X_u,X^v)$.
\end{prop}

\begin{cor}\label{cor:gd=gd1}%
  For $d > \dmax(u^\vee,v)$ we have $\Gamma_{d-1,1}(X_u,X^v) =
  \Gamma_d(X_u,X^v)$.
\end{cor}
\begin{proof}
  This follows from \Proposition{yd=yd1} and \Proposition{zd1-fibers}.
\end{proof}

\begin{cor}\label{cor:zd1-fibers}%
  Let $\dmin(u^\vee,v) < d \leq d_X(2)$. The general fibers of the map $p_d :
  Z_{d-1,1}(X_u,X^v) \to \Gamma_{d-1,1}(X_u,X^v)$ are cohomologically trivial if
  and only if $d$ is not an exceptional degree of $\cO_u \star \cO^v$.
\end{cor}
\begin{proof}
  This follows from \Proposition{zd1birat} if $d \leq \dmax(u^\vee,v)$, and it
  follows from \Proposition{yd=yd1} and \Corollary{richfib} for $d >
  \min(\dmax(u^\vee), \dmax(v))$. Assume that $\dmax(u^\vee,v) < d \leq
  \min(\dmax(u^\vee),\dmax(v))$, and let $R$ and $D$ be as in
  \Proposition{zd1-fibers}. Then $R$ is a translate of the Richardson variety
  $(F_d)_{u_d}^{u_d \cap v^d}$, and we have $[D] = \epsilon [\partial F_d]$ in
  $\Pic(R)$. Notice that $R$ has positive dimension by \Proposition{qhcoef}. We
  use \Corollary{trivial} to argue that $D$ is cohomologically trivial if and
  only if $(u_d \cup v^d)/v^d$ is not a short rook strip. If $X$ is minuscule,
  then $F_d$ is a product of minuscule varieties, $\epsilon = 1$, and $D$ is
  cohomologically trivial because there are no decreasing primed tableaux of
  shape $I(u_d) \ssm I(v^d)$ with integer labels from the interval
  $[\frac{1}{2},1)$. If $X = Q^{2n-1}$ is an odd quadric with $n \geq 3$, then
  $\epsilon=1$, and the assumptions imply that $d=1$, hence $F_d = Q^{2n-3}$.
  This time $D$ is cohomologically trivial if and only if there are no
  decreasing primed tableaux of shape $I(u_d) \ssm I(v^d)$ using only the label
  $\frac{1}{2}$, that is, $(u_d \cup v^d)/v_d$ is not a short rook strip.
  Finally, if $X = \LG(n,2n)$ is a Lagrangian Grassmannian, then $\epsilon = 2$,
  $F_d = \Gr(n-d,n)$ is a Grassmannian of type A, and $D$ is cohomologically
  trivial if and only if there are no decreasing primed tableaux of shape
  $I(u_d) \ssm I(v^d)$ using only the label $1$, that is, $(u_d \cup v^d)/v^d$
  is not a rook strip. In this case a rook strip is the same as a short rook
  strip, since all boxes of $\cP_{F_d}$ are short by convention. The result
  follows from these observations.
\end{proof}

Since the general fibers of $p_d : Z_{d-1,1}(X_u,X^v) \to
\Gamma_{d-1,1}(X_u,X^v)$ have rational singularities by
\cite[Lemma~3]{brion:positivity}, it follows from \Corollary{zd1-fibers} that
these fibers are irreducible projective varieties of arithmetic genus zero for
any non-exceptional degree in the range $\dmin(u^\vee,v) < d \leq d_X(2)$. The
following result describes the fibers for exceptional degrees.

\begin{thm}\label{thm:exceptional}%
  Let $d = \dmax(u^\vee,v)+1$ be an exceptional degree of $\cO_u \star \cO^v$.
  Then the general fibers of $p_d : Z_{d-1,1}(X_u,X^v) \to \Gamma_d(X_u,X^v)$
  have rational singularities and arithmetic genus one. They are irreducible
  projective varieties if they have positive dimension.
\end{thm}
\begin{proof}
  The general fibers  $D = Z_{d-1,1}(X_u,X^v) \cap p_d^{-1}(z)$ have rational
  singularities by \cite[Lemma~3]{brion:positivity}. Since there is exactly one
  decreasing primed tableau of shape $I(u_d)\ssm I(v^d)$ with labels in
  $[\frac{1}{2},\epsilon)$ by the proof of \Corollary{zd1-fibers}, it follows
  from \Theorem{Jcohom} and the proof of \Corollary{trivial} that $D$ has
  arithmetic genus 1. In the positive dimensional case, the general fibers are
  connected by \Proposition{zd1-fibers} and the Fulton-Hansen connectedness
  theorem \cite{fulton.hansen:connectedness}.
\end{proof}

\begin{remark}
  When $d = \dmax(u^\vee,v)+1$ is an exceptional degree of $\cO_u \star \cO^v$,
  the general fibers of $p_d : Z_{d-1,1}(X_u,X^v) \to \Gamma_d(X_u,X^v)$ can be
  described more explicitly as follows. Since $(u_d \cup v^d)/v^d$ is a rook
  strip, it follows from \Corollary{fibers_geom}(d) and
  \cite[Lemma~3.2(b)]{buch.ravikumar:pieri} that the Richardson variety $R =
  Z_d(X_u,X^v) \cap p_d^{-1}(z)$ is a product of projective lines for general $z
  \in \Gamma_d(X_u,X^v)$. \Proposition{zd1-fibers} shows that $D =
  Z_{d-1,1}(X_u,X^v) \cap p_d^{-1}(z)$ has multidegree $(2,2,\dots,2)$ in $R$.
  The arithmetic genus of $D$ can also be computed from this description.
\end{remark}

\targetsec{altsign}{}%
Given a non-zero $K$-theory class $\cF \in K(X)$, the \emph{initial term}
$\lead(\cF)$ is defined as the homogeneous component of lowest degree in the
Chern character $\ch(\cF) \in H^*(X,\Q)$. Equivalently, $\lead(\cF)$ is the
leading term of $\cF$ modulo the topological filtration of $K(X)$ (see
\cite[Ex.~15.2.16]{fulton:intersection}). Let $\codim(\cF)$ denote the complex
degree of $\lead(\cF)$, so that $\lead(\cF) \in H^{2\codim(\cF)}(X,\Z)$, and let
\[
  \cF = \sum_{w \in W^X} c_w(\cF)\, \cO^w
\]
be the expansion of $\cF$ in the Schubert basis of $K(X)$. Then $\codim(\cF)$ is
the minimal length $\ell(w)$ for which $c_w(\cF) \neq 0$. The class $\cF$ has
\emph{alternating signs} if $(-1)^{\ell(w) - \codim(\cF)} c_w(\cF) \geq 0$ holds
for all $w \in W^X$.

Part (a) of the following conjecture might point towards a generalization of
Brion's positivity theorem. Parts (b) and (c) imply that $(\cO_u \star \cO^v)_d
\neq 0$ whenever $d$ is an exceptional degree.

\begin{conj}\label{conj:exceptional}%
  Assume that $d = \dmax(u^\vee,v)+1$ is an exceptional degree of $\cO_u \star
  \cO^v$.
  \begin{abcenum}
  \item The class $(p_d)_* [\cO_{Z_{d-1,1}}(X_u,X^v)] \in K(X)$ has alternating
    signs.\smallskip

  \item If $\dim \Gamma_d(X_u,X^v) \not\equiv \dim Z_d(X_u,X^v)$ (mod 2), then
    the initial term of $(p_d)_* [\cO_{Z_{d-1,1}}(X_u,X^v)]$ is equal to $2\,
    [\Gamma_d(X_u,X^v)]$.\smallskip

  \item If $\dim \Gamma_d(X_u,X^v) \equiv \dim Z_d(X_u,X^v)$ (mod 2), then the
    initial term of $(p_d)_* [\cO_{Z_{d-1,1}}(X_u,X^v)]$ has complex degree
    $\codim(\Gamma_d(X_u,X^v),X) + 1$.
  \end{abcenum}
\end{conj}

\begin{example}
  Let $X = Q^{2n-1}$ be a quadric of odd dimension. By
  \Example{quadric-exceptional}, the only exceptional product in $\QK(X)$ is
  $\cO_n \star \cO^{n-1}$, with corresponding exceptional degree $d=1$. Since
  $(\cO_n \star \cO^{n-1})_1 = -1 + \cO^1$, we obtain from \Theorem{qkprod} that
  $\Gamma_1(X_n, X^{n-1}) = X$ and
  \[
    (p_1)_*[\cO_{Z_{0,1}(X_n,X^{n-1})}] =
    [\cO_{\Gamma_1(X_n,X^{n-1})}] - (\cO_n \star \cO^{n-1})_1 = 2 - \cO^1 \,.
  \]
  The general fibers of $p_1 : Z_1(X_n,X^{n-1}) \to X$ are projective lines by
  \Corollary{fibers_geom}(d), and the general fibers of $p_1 :
  Z_{0,1}(X_n,X^{n-1}) \to X$ consist of two reduced points by
  \Proposition{zd1-fibers}. This proves \Conjecture{exceptional} for odd quadrics.
\end{example}

\begin{example}\label{example:lg-exceptional}%
  Let $X = \LG(4,8)$ and define $u,v \in W^X$ by
  $I(u) = \tableau{6}{{}&{}&{}&{}\\&{}&{}}$ and
  $I(v) = \tableau{6}{{}&{}&{}\\&{}}$. The corresponding products in $\QH(X)$
  and $\QK(X)$ are given by
  \[
    [X_u] \star [X^v]
    = 4 [X^{(4,3,1)}] + 4q [X^{(3)}] + 2q [X^{(2,1)}]
  \]
  and
  \[
    \begin{split}
      \cO_u \star \cO^v &=
      4 \cO^{(4,3,1)}
      -4 \cO^{(4,3,2)}
      +\cO^{(4,3,2,1)} \\
      & +4 q \cO^{(3)}
      +2 q \cO^{(2,1)}
      -4 q \cO^{(4)}
      -11 q \cO^{(3,1)}
      +7 q \cO^{(3,2)}
      +7 q \cO^{(4,1)} \\
      & -5 q \cO^{(4,2)}
      -2 q \cO^{(3,2,1)}
      +q \cO^{(4,3)}
      +2 q \cO^{(4,2,1)}
      -q \cO^{(4,3,1)} \\
      & +q^2
      -2 q^2 \cO^{(1)}
      +2 q^2 \cO^{(2)}
      -q^2 \cO^{(3)}
      -q^2 \cO^{(2,1)}
      +q^2 \cO^{(3,1)} \,.
    \end{split}
  \]
  The product $\cO_u \star \cO^v$ has exceptional degree $d = 2$, and we have
  $\Gamma_d(X_u,X^v) = X$, $F_d = \Gr(2,4)$, $u_d = \tableau{6}{{}&{}\\{}}$, and
  $v^d = \tableau{6}{{}}$. The general fibers of $p_d : Z_{d-1,1}(X_u,X^v) \to
  X$ are elliptic curves by \Theorem{exceptional}. The identity
  \[
    (p_d)_*[\cO_{Z_{d-1,1}(X_u,X^v)}] = 1 - (\cO_u \star \cO^v)_d =
    2 \cO^{(1)} - 2 \cO^{(2)} + \cO^{(3)} + \cO^{(2,1)} - \cO^{(3,1)}
  \]
  shows that \Conjecture{exceptional} holds for the product $\cO_u \star \cO^v$.
\end{example}

In \cite[Ex.~5.4]{buch.chaput.ea:projected} we gave an example of a projected
Richardson variety in the Grassmannian $\Gr(2,6)$ that is not of the form
$\Gamma_d(X_u,X^v)$. On the other hand, the following example shows that not all
varieties of the form $\Gamma_{d-1,1}(X_u,X^v)$ are projected Richardson
varieties. The studied variety $\Gamma_{d-1,1}(X_u,X^v)$ has rational
singularities and satisfies $(p_d)_* [\cO_{Z_{d-1,1}(X_u,X^v)}] =
[\cO_{\Gamma_{d-1,1}(X_u,X^v)}]$.

\begin{example}\label{example:notprojrich}%
  Let $X = \Gr(3,6)$ be the Grassmannian of 3-planes in $\C^6$ and set $v =
  s_2s_4s_3$ and $u = v^\vee$. Then $v$ corresponds to the partition $I(v) =
  (2,1) = \tableau{8}{{}&{}\\{}}$. A calculation in $\QH(X)$ gives $([X_u]\star
  [X^v])_1 = 1$, so we have $\Gamma_1(X_u,X^v) = X$, and it follows from
  \Proposition{zd1birat} that $\Gamma_{0,1}(X_u,X^v)$ is a divisor in $X$. Let
  $\{e_1,e_2,e_3,e_4,e_5,e_6\}$ be the standard basis of $\C^6$ and set $A_1 =
  \Span\{e_1,e_2\}$, $A_2 = \Span\{e_3,e_4\}$, and $A_3 = \Span\{e_5,e_6\}$. The
  Richardson variety $X_u^v$ is isomorphic to $\P^1 \times \P^1 \times \P^1$ and
  consists of all 3-planes $V = \Span\{a_1,a_2,a_3\}$ for which $a_i \in A_i$.
  The variety $\Gamma_{0,1}(X_u,X^v) = \Gamma_1(X_u^v)$ is the union of all
  lines through $X_u^v$. For any point $V' \in X$ we have $V' \in
  \Gamma_1(X_u^v)$ if and only if there exists a point $V \in X_u^v$ such that
  $\dim(V + V') \leq 4$. Consider the open affine subset $U \subset X$
  corresponding to matrices of the form:
  \begin{equation}\label{eqn:matrix1}
    \begin{bmatrix}
      1 & x_{11} & 0 & x_{12} & 0 & x_{13} \\
      0 & x_{21} & 1 & x_{22} & 0 & x_{23} \\
      0 & x_{31} & 0 & x_{32} & 1 & x_{33}
    \end{bmatrix}
  \end{equation}
  The row space of such a matrix belongs to $X_u^v$ if and only if has the form:
  \begin{equation}\label{eqn:matrix2}
    \begin{bmatrix}
      1 & t_1 & 0 & 0   & 0 & 0 \\
      0 & 0   & 1 & t_2 & 0 & 0 \\
      0 & 0   & 0 & 0   & 1 & t_3
    \end{bmatrix}
  \end{equation}
  The span of the 6 row vectors in \eqn{matrix1} and \eqn{matrix2} has rank 4 or
  less if and only if the matrix
  \[
    \begin{bmatrix}
      x_{11}-t_1 &   x_{12}       &    x_{13}  \\
      x_{21}     &   x_{22}-t_2   &    x_{23}  \\
      x_{31}     &   x_{32}       &    x_{33}-t_3
    \end{bmatrix}
  \]
  has rank at most one, which implies that $x_{12} x_{23} x_{31} = x_{32} x_{21}
  x_{13}$. Since the divisor defined by this equation is irreducible, it
  coincides with $\Gamma_1(X_u^v) \cap U$.

  Let $p_{ijk}$ for $1 \leq i < j < k \leq 6$ denote the Plücker coordinates on
  $X$. Then $\Gamma_1(X_u^v)$ is defined by the equation $p_{123}\, p_{456} =
  p_{124}\, p_{356}$. It follows that $\Gamma_1(X_u^v)$ is a divisor of degree
  2 in $X$, so it is not a projected Richardson variety. In fact, it follows
  from \Theorem{simX} that there are 6 projected Richardson divisors in $X$,
  namely $\Pi_{s_3 w_0^X}^{1}(X)$ and $\Pi_{w_0^X}^{s_k}(X)$ for $1 \leq k \leq
  5$, and since their union is anticanonical by \cite[Lemma
  5.4]{knutson.lam.ea:projections}, each of these divisors has degree 1.
  Moreover, we obtain
  \[
    [\cO_{\Gamma_1(X_u^v)}] = 2 \cO^{(1)} - \cO^{(1)}\cdot \cO^{(1)} = 2
    \cO^{(1)} - \cO^{(2)} - \cO^{(1,1)} + \cO^{(2,1)} \,.
  \]
  Using the Pieri formula \cite[Thm.~5.4]{buch.mihalcea:quantum} we obtain
  \[
    (\cO_u \star \cO^v)_1 =
    1 - 2 \cO^{(1)} + \cO^{(2)} + \cO^{(1,1)} - \cO^{(2,1)}
  \]
  Using \Theorem{qkprod}, we obtain
  \[
    (p_1)_* [\cO_{Z_{0,1}(X_u,X^v)}] = [\cO_{\Gamma_1(X_u,X^v)}] -
    (\cO_u \star \cO^v)_1 = [\cO_{\Gamma_1(X_u^v)}] \,.
  \]
  This identity also follows from \Proposition{zd1birat}, granted that
  $\Gamma_1(X_u^v)$ has rational singularities. In fact, Chenyang~Xu has shown
  us a proof that the local equation $x_{12} x_{23} x_{31} = x_{32} x_{21}
  x_{13}$ is a canonical singularity, which implies that $\Gamma_1(X_u^v) \cap
  U$ has rational singularities. One can check that the local neighborhood of
  $\Gamma_1(X_u^v)$ defined by the non-vanishing of any Plücker coordinate
  $p_{ijk}$ is a deformation of $\Gamma_1(X_u^v) \cap U$. It therefore follows
  from \cite{kawamata:deformations*1} that $\Gamma_1(X_u^v)$ has canonical
  singularities globally, or from \cite{elkik:singularites} that
  $\Gamma_1(X_u^v)$ has rational singularities globally. As mentioned earlier,
  it would be interesting to know if all varieties of the form
  $\Gamma_{d-1,1}(X_u,X^v)$ have rational singularities.
\end{example}

% !TeX root=qkpos.tex

\section{Divisors of the quantum-to-classical construction}\label{sec:divisors}

\targetsec{Zd2}{}%
Let $X = G/P_X$ be cominuscule and fix a degree $1 \leq d \leq d_X(2)$. Define
the variety
\[
  \Zd2 \,=\, Z_d \times_{Y_d} Z_d \,=\, \{ (\om,x,y) \in Y_d \times X^2
  \mid x,y \in \Gamma_\om \} \,,
\]
with projections $e_i : \Zd2 \to Z_d$ for $i=1,2$. Recall from
\Notation{epsilon} that we set $\epsilon=2$ if $X$ is a Lagrangian Grassmannian
and $\epsilon=1$ otherwise. This means that the roots of
$\Delta\ssm\Delta_{Y_d}$ are long if $\epsilon=1$ and short if $\epsilon=2$. In
particular, we have $(\al^\vee,\om_\ga)=\epsilon$ for any $\al \in \cP_X$
satisfying $\delta(\al) \in \Delta\ssm\Delta_{Y_d}$.

\begin{prop}\label{prop:D-class}%
  The set $\cD = \{ (\om,x,y) \in \Zd2 \mid \dist(x,y) \leq d-1 \}$ is a divisor
  in $\Zd2$ with rational singularities. The class of $\cD$ in $\Pic \Zd2$ is
  given by
  \[
    [\cD] \,=\, (p_d e_1)^*[X^{s_\ga}] + (p_d e_2)^*[X^{s_\ga}] -
    \epsilon\, (q_d e_1)^* [\partial Y_d] \,.
  \]
\end{prop}
\begin{proof}
  The projection $e_2 : \cD \to Z_d$ is $G$-equivariant, and therefore a locally
  trivial fibration with fibers given by $\cD \cap e_2^{-1}(\om,x) \cong
  \Gamma_{d-1}(x) \cap \Gamma_\om$. \Lemma{prim_dual} and \Lemma{ka_d_inv} imply
  that $\cD$ is a divisor in $\Zd2$ with rational singularities.

  The group $H^2(\Zd2;\Z)$ is a free abelian group generated by the basis
  elements $(p_d e_i)^*[X^{s_\ga}]$ for $i=1,2$ and $(q_d e_1)^*[Y_d^{s_\be}]$
  for $\be \in \Delta\ssm\Delta_{Y_d}$. Set $\om_0 = 1.P_{Y_d} \in Y_d$ and
  $x_0 = 1.P_X \in X$, and define an embedding $\zeta : \Gamma_{\om_0} \to
  \Zd2$ by $\zeta(x) = (\om_0,x,x_0)$. Since $\zeta(\Gamma_{\om_0}) =
  e_2^{-1}(\om_0,x_0)$, it follows from the local triviality of $\cD$ that
  $\zeta^{-1}(\cD)$ is reduced. The identity $\zeta^*[\cD] = [\Gamma_{\om_0}
  \cap \Gamma_{d-1}(x_0)] = \zeta^* (p_d e_1)^* [X^{s_\ga}]$ then implies that
  the coefficient of $(p_d e_1)^*[X^{s_\ga}]$ in $[\cD]$ is one. A symmetric
  argument shows that the coefficient of $(p_d e_2)^*[X^{s_\ga}]$ in $[\cD]$ is
  one.

  Given $\be \in \Delta\ssm\Delta_{Y_d}$, let $\al \in \cP_X$ be the minimal
  root for which $\delta(\al) = \be$. This root $\al$ can be constructed as the
  sum of all simple roots in the interval $[\ga,\be]$ from $\ga$ to $\be$ in the
  Dynkin diagram. Then $I(\kappa_d) \cup \{\al\}$ is a straight shape in
  $\cP_X$, and $\be = \delta(\al) = \kappa_d.\al$. Let $C \subset Z_d$ be the
  $T$-stable curve through the points $\kappa_d.(\om_0,x_0)$ and $\kappa_d
  s_\al.(\om_0,x_0)$. Since $\ka_d^{-1} = \ka_d$, we obtain
  $s_\al\kappa_d.x_0 = \kappa_d s_\be.x_0 = \kappa_d.x_0 \in \Gamma_{\om_0}$.
  It follows that $x_0 \in \Gamma_\om$ for each $\om \in q_d(C)$, so $C' =
  \{(\om,x,x_0) \mid (\om,x) \in C\}$ is a curve in $\Zd2$. Since
  $\kappa_d.x_0$ and $\kappa_d s_\al.x_0$ are points in $\Gamma_d(x_0) \ssm
  \Gamma_{d-1}(x_0)$ by \Theorem{dist}, we obtain $\dist(x,x_0) = d$ for all $x
  \in p_d(C)$, hence $C' \cap \cD = \emptyset$ and $\int_{C'} [\cD] = 0$.
  Finally, since $\int_{C'} (p_d e_1)^*[X^{s_\ga}] = (\al^\vee,\om_\ga) =
  \epsilon$ and $\int_{C'} (q_d e_1)^*[Y_d^{s_\be}] = (\al^\vee,\om_\be) =
  1$, we deduce that the coefficient of $(q_d e_1)^*[Y_d^{s_\be}]$ in $[\cD]$ is
  $-\epsilon$, as required.
\end{proof}

\targetsec{exactnbhd}{}%
Given any closed subset $\Omega \subset X$ we set $\oGamma_d(\Omega) =
\Gamma_d(\Omega) \ssm \Gamma_{d-1}(\Omega)$. We have
\[
  Y_d(\Omega, \oGamma_d(\Omega))
  \,=\, \{ \om \in Y_d(\Omega) \mid
  \Gamma_\om \cap \oGamma_d(\Omega) \neq \emptyset \} \,.
\]
For any point $\om \in Y_d(\Omega)$ we have $\Gamma_\om \cap \Omega \neq
\emptyset$, and since $\Gamma_\om$ has diameter $d$, we obtain $\Gamma_\om
\subset \Gamma_d(\Omega)$. It follows that $\YdoGam{\Omega} = Y_d(\Omega, X \ssm
\Gamma_{d-1}(\Omega))$. Since $q_d$ is an open map, this shows that
$\YdoGam{\Omega}$ is a relatively open subset of $Y_d(\Omega)$. Notice also that
for $v \in W^X$, we have $Y_d(X^v, \oGamma_d(X^v)) \neq \emptyset$ if and only
if $d \leq \dmax(v)$.

\begin{prop}\label{prop:section}%
  Assume that $\Omega \subset X$ is a Schubert variety.
  \begin{abcenum}
  \item For each $\om \in \YdoGam{\Omega}$, $\Gamma_\om \cap \Omega$ is a
    (reduced) single point.\smallskip

  \item The map $\sigma : \YdoGam{\Omega} \to \Omega$ defined by
    $\{\sigma(\om)\} = \Gamma_\om \cap \Omega$ is a morphism of varieties.
  \end{abcenum}
\end{prop}
\begin{proof}
  Given any point $\om \in \YdoGam{\Omega}$, the intersection $\Gamma_\om \cap
  \Omega$ is a Schubert variety in $\Gamma_\om$ by \Theorem{schubfib}. If it has
  positive dimension, then it meets the Schubert divisor $\Gamma_\om \cap
  \Gamma_{d-1}(z)$ for every point $z \in \Gamma_\om$ by \Lemma{prim_dual}. This
  implies that $\Gamma_\om \subset \Gamma_{d-1}(\Omega)$, a contradiction. This
  proves part (a).

  Since $q_d : p_d^{-1}(\Omega) \to Y_d(\Omega)$ is a projective morphism, so is
  the restriction $q_d : p_d^{-1}(\Omega) \cap Z_d(\oGamma_d(\Omega)) \to
  \YdoGam{\Omega}$, and part (a) implies that this restricted map is bijective.
  Since the target is normal, the map is an isomorphism by Zariski's main
  theorem. Part (b) follows from this because $\sigma$ is the composition of the
  inverse map with $p_d$.
\end{proof}

\targetsec{oYd}{}%
Given $u, v \in W^X$ we define the varieties
\[
  \begin{split}
    \oY_d(X_u,X^v) &\,=\, Y_d(X_u,\oGamma_d(X_u)) \cap Y_d(X^v,\oGamma_d(X^v))
      \,\text{, and}\\
    \oY_{d-1,1}(X_u,X^v) &\,=\, \oY_d(X_u,X^v) \cap Y_{d-1,1}(X_u,X^v) \,.
  \end{split}
\]
It follows from Kleiman's transversality theorem \cite{kleiman:transversality}
that $\oY_d(X_u,X^v)$ is a dense open subset of $Y_d(X_u,X^v)$ whenever $d \leq
\min(\dmax(u^\vee),\dmax(v))$. By \Proposition{section} there are morphisms
$\sigma_1 : \oY_d(X_u,X^v) \to X_u$ and $\sigma_2 : \oY_d(X_u,X^v) \to X^v$
defined by $\{\sigma_1(\om)\} = \Gamma_\om \cap X_u$ and $\{\sigma_2(\om)\} =
\Gamma_\om \cap X^v$. By \Corollary{comin_kapd}(b) we have
\begin{equation}\label{eqn:oYd1}
  \oY_{d-1,1}(X_u,X^v) \,=\, \big\{ \om \in \oY_d(X_u,X^v) \mid
  \dist(\sigma_1(\om), \sigma_2(\om)) \leq d-1 \big\} \,.
\end{equation}

\begin{prop}\label{prop:Yd1cartier}%
  Assume $1 \leq d \leq \min(\dmax(u^\vee),\dmax(v))$. Then
  $\oY_{d-1,1}(X_u,X^v)$ is a Cartier divisor in $\oY_d(X_u,X^v)$, with class in
  $\Pic \oY_d(X_u,X^v)$ given by
  \[
    [\oY_{d-1,1}(X_u,X^v)] \,=\, \sigma_1^*[X^{s_\ga}] + \sigma_2^*[X^{s_\ga}]
    - \epsilon\, [\partial Y_d] \,.
  \]
\end{prop}
\begin{proof}
  Define the variety
  \[
    \oZd2(X_u,X^v) = e_1^{-1}(\Omega_1) \cap e_2^{-1}(\Omega_2) \,,
  \]
  where $\Omega_1 = p_d^{-1}(X_u) \cap Z_d(\oGamma_d(X_u))$ and $\Omega_2 =
  p_d^{-1}(X^v) \cap Z_d(\oGamma_d(X^v))$. Since $\Omega_1 \subset
  p_d^{-1}(X_u)$ and $\Omega_2 \subset p_d^{-1}(X^v)$ are open subsets of
  opposite Schubert varieties in $Z_d$, it follows from Kleiman's transversality
  theorem \cite{kleiman:transversality} that $\oZd2(X_u,X^v)$ and $\cD \cap
  \oZd2(X_u,X^v)$ are reduced, where $\cD$ is the divisor of
  \Proposition{D-class}. \Proposition{section} shows that the map $\varphi:
  \oY_d(X_u,X^v) \to \oZd2(X_u,X^v)$ defined by $\varphi(\om) = (\om,
  \sigma_1(\om), \sigma_2(\om))$ is an isomorphism with inverse morphism $q_d
  e_1$, and \eqn{oYd1} shows that $\oY_{d-1,1}(X_u,X^v) = \varphi^{-1}(\cD)$
  holds as (reduced) subschemes of $\oY_d(X_u,X^v)$. The result therefore
  follows from \Proposition{D-class}.
\end{proof}

\begin{prop}\label{prop:div_pull}%
  Let $u \in W^X$, $1 \leq d \leq \dmax(u^\vee)$, and $z \in \oX_{u(d)}$. Then
  the morphism $\sigma : Y_d(X_u,z) \to X_u$ defined by $\{\sigma(\om)\} =
  \Gamma_\om \cap X_u$ is injective, and we have $\sigma^*[X^{s_\ga}] =
  \epsilon [\partial Y_d]$ in $\Pic Y_d(X_u,z)$.
\end{prop}
\begin{proof}
  The assumptions imply that $z \in \oGamma_d(X_u)$, so we must have
  $\dist(\sigma(\om),z) \geq d$ for any point $\om \in Y_d(X_u,z)$. We deduce
  from \Corollary{comin_kapd}(b) that $\Gamma_\om = \Gamma_d(\sigma(\om),z)$.
  This shows that $\sigma$ is injective.

  The projection $q_d : p_d^{-1}(z) \to Y_d(z)$ is an isomorphism, and the
  inverse image of $Y_d(X_u,z)$ is $p_d^{-1}(z) \cap Z_d(X_u)$, which is a
  translate of the Schubert variety $(F_d)_{u_d}$ by \Corollary{fibers_geom}(c).
  This shows that the restriction map $\Pic Y_d(z) \to \Pic Y_d(X_u,z)$ is
  surjective. Since the restriction map $\Pic Y_d \to \Pic Y_d(z)$ is also
  surjective, it follows that $\Pic Y_d(X_u,z)$ is generated by (the
  restrictions of) the divisors $[Y_d^{s_\be}]$ for $\be \in \Delta \ssm
  \Delta_{Y_d}$. The class $[Y_d^{s_\be}]$ is non-zero if and only if $\be \in
  I(u_d)$, which by \Proposition{fibers_combin}(b) is equivalent to $z_d.\be \in
  I(u)$. Notice also that $z_d.\be$ is a minimal box of $I(\ka_d^\vee) \ssm
  I(z_d^\vee)$ by \Proposition{biject}(b).

  To compute $\sigma^*[X^{s_\ga}]$, we may assume that $z = u(d).P_X$, since the
  maps $p_d$ and $q_d$ are equivariant. Let $\be \in \Delta\ssm\Delta_{Y_d}$ and
  assume that $\al = z_d.\be \in I(u)$. Set $\ov{u} = u \cap z_d^\vee$. Then
  $u(d) = \ov{u} z_d$, $\ov{u} s_\al \in W^X$, and $I(\ov{u} s_\al) = I(\ov{u})
  \cup \{\al\}$. We claim that the points $u(d).P_{Y_d}$ and $u(d)s_\be.P_{Y_d}$
  belong to $Y_d(X_u,z)$. Indeed these points are in $Y_d(z)$, since they are
  the images of $u(d).P_{Z_d}$ and $u(d)s_\be.P_{Z_d}$. Since $\kappa_d.P_X \in
  X_{\kappa_d} = p_d q_d^{-1}(1.P_{Y_d})$, we have $1.P_{Y_d} \in
  Y_d(\kappa_d.P_X)$, hence $u(d).P_{Y_d} \in Y_d(\ov{u}z_d \ka_d.P_X) =
  Y_d(\ov{u}.P_X)$ and $u(d)s_\be.P_{Y_d} = \ov{u}s_\al z_d.P_{Y_d} \in
  Y_d(\ov{u}s_\al.P_X)$. This proves the claim, and also shows that
  $\sigma(u(d).P_{Y_d}) = \ov{u}.P_X$ and $\sigma(u(d)s_\be.P_{Y_d}) =
  \ov{u}s_\al.P_X$. We deduce that $Y_d(X_u,z)$ contains the $T$-stable curve $C
  \subset Y_d$ through $u(d).P_{Y_d}$ and $u(d)s_\be.P_{Y_d}$, and that
  $\sigma(C) \subset X_u$ is the $T$-stable curve through $\ov{u}.P_X$ and
  $\ov{u}s_\al.P_X$. This implies that $\sigma_*[(Y_d)_{s_\be}] = \sigma_*[C] =
  [\sigma(C)] = (\al^\vee,\om_\ga)[X_{s_\ga}]$, so it follows from Poincar\'e
  duality that the coefficient of $[Y_d^{s_\be}]$ in $\sigma^*[X^{s_\ga}]$ is
  equal to $\epsilon = (\al^\vee,\om_\ga)$, as required.
\end{proof}

\begin{proof}[Proof of \Proposition{zd1-fibers}]
  Let $z \in \Gamma_d(X_u,X^v)$ be a general point, and set $R = p_d^{-1}(z)
  \cap Z_d(X_u,X^v)$ and $D = p_d^{-1}(z) \cap Z_{d-1,1}(X_u,X^v)$. Then $R$ is
  a Richardson variety by \Theorem{richfib}, and $q_d$ restricts to an
  isomorphism of $R$ onto $R' = q_d p_d^{-1}(z) \cap Y_d(X_u,X^v) = Y_d(X_u,z)
  \cap Y_d(X^v,z)$, under which $D$ is pulled back from $D' = R' \cap
  Y_{d-1,1}(X_u,X^v)$. By the choice of $z$ and the bound $d \leq
  \min(\dmax(u^\vee),\dmax(v))$, we may assume that $z \in \oX_{u(d)} \cap
  \oX^{v(-d)} \subset \oGamma(X_u) \cap \oGamma(X^v)$. This implies that $R'$ is
  contained in $\oY_d(X_u,X^v)$, so it follows from \Proposition{Yd1cartier} and
  \Proposition{div_pull} that $D'$ is a Cartier divisor in $R'$ of class $[D'] =
  \sigma_1^*[X^{s_\ga}] + \sigma_2^*[X^{s_\ga}] - \epsilon [\partial Y_d] =
  \epsilon [\partial Y_d]$. The result follows from this.
\end{proof}

\begin{remark}
  We demonstrate in \Example{div_pull}  that the identity $\sigma^*[X^{s_\ga}] =
  \epsilon [\partial Y_d]$ may fail to hold in $\Pic Y_d(X_u,\oGamma_d(X_u))$,
  with $\sigma$ as in \Proposition{section}. However, the proof of
  \Proposition{div_pull} shows that this identity holds whenever $\Delta \ssm
  \Delta_{Y_d} \subset I(u_d)$, as in this case we have $\Pic Y_d(X_u,z) = \Pic
  Y_d(X_u, \oGamma_d(X_u)) = \Pic Y_d$.
\end{remark}

\begin{example}\label{example:div_pull}%
  Let $X = \Gr(m,n)$ be a Grassmannian of diameter $d_X(2) \geq 3$, and set
  $d=2$ and $u = s_\ga$. Let $E_k = \langle e_1, e_2, \dots, e_k \rangle \subset
  \C^n$ be the subspace spanned by the first $k$ basis vectors, for $0 \leq k
  \leq n$. Then $X_u = \bP(E_{m+1}/E_{m-1}) = \{ V \in X \mid E_{m-1} \subset V
  \subset E_{m+1} \}$. Set $N_0 = \langle e_{m+2}, e_{m+3} \rangle$, $S_0 = E_m
  \oplus N_0$, and let $C \subset Y_d = \Fl(m-2,m+2;n)$ be the curve given by $C
  = \{(K,S_0) \mid K \in \bP(E_{m-1}/E_{m-3})\}$. Since $E_m \in \Gamma_\om \cap
  X_u$ for each $\om \in C$, we have $C \subset Y_d(X_u)$. Define $z : C \to X$
  by $z((K,S_0)) = K \oplus N_0$. For $V \in X_u$ and $(K,S_0) \in C$ we have $V
  \cap (K \oplus N_0) = K$. This implies that $\dist(V,z(\om)) = 2$ for each $V
  \in X_u$ and $\om \in C$, so $z(\om) \in \oGamma_d(X_u) \cap \Gamma_\om$. In
  particular, we have $\om \in Y_d(X_u,z(\om))$ and $C \subset
  Y_d(X_u,\oGamma_d(X_u))$. However, since the restriction $\sigma : C \to X_u$
  of the morphism of \Proposition{section} is the constant function $\sigma(\om)
  = E_m$, we obtain $\int_C \sigma^*[X^{s_\ga}] = 0 \neq 1 = \int_C [\partial
  Y_d]$. More generally, our construction shows that $\sigma^*[X^{s_\ga}] = 0
  \in \Pic Y_d(X_u,\oGamma_d(X_u)) = \Pic Y_d$.
\end{example}

\Proposition{zd1-fibers} shows that the restriction of the divisor
$Z_{d-1,1}(X_u,X^v)$ to $R = Z_d(X_u,X^v) \cap p_d^{-1}(z)$ is a Cartier divisor
that can be pulled back from $Z_d$. The following example shows that
$Z_{d-1,1}(X_u,X^v)$ may not itself be a Cartier divisor pulled back from $Z_d$.

\begin{example}
  Let $X = \LG(3,6)$ and define $u,v \in W^X$ by $I(u)=(3,2)$ and $I(v) =
  (2,1)$. The corresponding products in $\QH(X)$ and $\QK(X)$ are given by
  \[
    \begin{split}
      [X_u] \star [X^v] &=
      2 [X^{(3,1)}] \ \ \ \text{and} \\
      \cO_u \star \cO^v &=
      2 \cO^{(3,1)} - \cO^{(3,2)} + q \cO^{(1)} - q \cO^{(2)} \,.
    \end{split}
  \]
  Let $d=1$. We have $Y_d = \IG(2,6) = C_3/P_2$ and $Y_d(X_u,X^v) = q_d
  p_d^{-1}(X^{s_3 s_2 s_3}) = Y_d^{s_3 s_2}$. The general fibers of $p_d :
  Z_d(X_u,X^v) \to \Gamma_d(X_u,X^v)$ are projective lines, and $p_d :
  Z_{d-1,1}(X_u,X^v) \to \Gamma_d(X_u,X^v)$ is a morphism of degree 2. We also
  have $Y_{d-1} = X$, $Y_{d-1,d} = Z_d$, and $Y_{d-1,1}(X_u,X^v) = q_d
  p_d^{-1}(X_u \cap X^v)$. It follows that
  \[
    [Z_{d-1,1}(X_u,X^v)]
    = (q_d)^*(q_d)_*(p_d)^*([X_u] \cdot [X^v])
    = 2[Z_d^{s_3 s_1 s_2}] \,.
  \]
  Assume that $Z_{d-1,1}(X_u,X^v)$ is the intersection of $Z_d(X_u,X^v)$ with an
  effective Cartier divisor $D \subset Z_d$. Then we must have $[D] \cdot
  [Z_d(X_u,X^v)] = [Z_{d-1,1}(X_u,X^v)]$ in $H^*(Z_d)$. But we have
  $[Z_d(X_u,X^v)] = [Z_d^{s_3 s_2}]$ and $[D] = a [Z_d^{s_2}] + b [Z_d^{s_3}]$
  for some integers $a$ and $b$. Now compute the products
  \[
    [Z_d^{s_2}] \cdot [Z_d^{s_3 s_2}] =
    [Z_d^{s_3 s_1 s_2}] + [Z_d^{s_2 s_3 s_2}]
  \]
  and
  \[
    [Z_d^{s_3}] \cdot [Z_d^{s_3 s_2}] =
    [Z_d^{s_3 s_2 s_3}] + 2 [Z_d^{s_2 s_3 s_2}]
  \]
  It follows that the coefficient of $[Z_d^{s_3 s_2 s_3}]$ in $[D] \cdot
  [Z_d(X_u,X^v)]$ is $b$, and the coefficient of $[Z_d^{s_2 s_3 s_2}]$ is
  $a+2b$. Since these Schubert classes do not appear in $[Z_{d-1,1}(X_u,X^v)]$,
  we obtain $a=b=0$, a contradiction.
\end{example}

% !TeX root=qkpos.tex

\section{Fibers of Gromov-Witten varieties}\label{sec:claims}%

Let $X$ be a cominuscule flag variety and fix $u, v \in W^X$ and $1 \leq d \leq
d_X(2)$. We finish this paper by proving that completions of the general fibers
of the rational maps $M_d(X_u,X^v) \dashrightarrow Z_d(X_u,X^v)$ and
$M_{d-1,1}(X_u,X^v) \dashrightarrow Z_{d-1,1}(X_u,X^v)$ are cohomologically
trivial. While this assertion from the introduction is not required for the
proofs of our main results, it provides additional details of the relationship
between the geometry of Gromov-Witten varieties and analogous varieties obtained
from the quantum-to-classical construction.

\targetsec{Bld11}{}%
Recall the maps of the diagram \eqn{qcdiagram}, and define the varieties
\[
  \begin{split}
    \Bl_{d-1,1} &\,=\, \pi^{-1}(M_{d-1,1}) \ \subset \Bl_d \,, \\
    \Bl_d(X_u,X^v) &\,=\, \pi^{-1}(M_d(X_u,X^v)) \, \text{, and} \\
    \Bl_{d-1,1}(X_u,X^v) &\,=\, \Bl_d(X_u,X^v) \cap \Bl_{d-1,1} \,.
  \end{split}
\]
Since the birational map $M_d \dashrightarrow Z_d$ is defined as a morphism
exactly on the open subset of $M_d$ over which $\pi : \Bl_d \to M_d$ is an
isomorphism, our assertion is justified by the following result. (We consider a
map between empty varieties to have cohomologically trivial fibers.)

\begin{thm}\label{thm:introclaims}%
  The general fibers of the maps $e_3 \phi : \Bl_d(X_u,X^v) \to Z_d(X_u,X^v)$
  and $e_3 \phi : \Bl_{d-1,1}(X_u,X^v) \to Z_{d-1,1}(X_u,X^v)$ are
  cohomologically trivial.
\end{thm}

The proof requires some additional results, starting with the following
consequence of \Theorem{kollar}.

\begin{cor}\label{cor:cohom_triv_compose}%
  Let $f : M \to N$ and $g : N \to P$ be morphisms of complex projective
  varieties with rational singularities. Assume that the general fibers of $f$
  are cohomologically trivial. Then the general fibers of $g$ are
  cohomologically trivial if and only if the general fibers of $\,gf$ are
  cohomologically trivial.
\end{cor}
\begin{proof}
  This follows from \Theorem{kollar}, as the Grothendieck spectral sequence
  shows that $R^i g_* \cO_N = R^i(gf)_*\cO_M$.
\end{proof}

\begin{lemma}\label{lemma:birat_div}%
  Let $f : M \to N$ be a birational morphism of irreducible varieties, with $N$
  normal. Let $D \subset M$ be an irreducible subvariety of codimension 1, and
  assume that $f(D)$ has codimension 1 in $N$. Then the restricted map $f : D
  \to \ov{f(D)}$ is birational.
\end{lemma}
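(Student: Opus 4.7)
\medskip

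The plan is to reduce the statement to a local claim at the generic point of $Z := \ov{f(D)}$. Let $\xi \in D$ and $\eta \in Z$ denote the generic points. Since $f(D)$ is dense in $Z$ and $D$ is irreducible, $f$ sends the generic point of $D$ to the generic point of $Z$, so $f(\xi) = \eta$. The morphism $f$ therefore induces a local homomorphism of local rings $\cO_{N,\eta} \to \cO_{M,\xi}$. Because $f$ is birational, the induced homomorphism of fraction fields $K(N) \to K(M)$ is an isomorphism, so we may identify $\cO_{N,\eta}$ with a local subring of $\cO_{M,\xi}$; moreover $\cO_{M,\xi}$ dominates $\cO_{N,\eta}$ in the sense that the maximal ideal of $\cO_{M,\xi}$ contracts to the maximal ideal of $\cO_{N,\eta}$.

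The key observation is now that the hypothesis $N$ normal, together with the fact that $\eta$ is a codimension-one point (since $Z$ has codimension one in $N$), implies that $\cO_{N,\eta}$ is a discrete valuation ring. I will then invoke the standard algebraic fact that a DVR is maximal among the local subrings of its fraction field that it is dominated by. Concretely: if $R$ is a DVR with uniformizer $\pi$ and fraction field $K$, and $R \subset R' \subsetneq K$ is a local subring with $R'$ dominating $R$, then $R' = R$. Indeed, any $x \in R' \setminus R$ would satisfy $x = u \pi^{-n}$ for some unit $u \in R^\times$ and integer $n > 0$, making $x^{-1} \in R \subset R'$ a unit of $R'$; but then $x^{-1}$ would lie in the maximal ideal of $R$ and hence, by domination, in the maximal ideal of $R'$, a contradiction.

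Since $D$ is a proper subvariety of $M$, the maximal ideal of $\cO_{M,\xi}$ is nonzero, so $\cO_{M,\xi} \neq K(M)$; applying the DVR maximality to $R = \cO_{N,\eta}$ and $R' = \cO_{M,\xi}$ gives $\cO_{M,\xi} = \cO_{N,\eta}$. Passing to residue fields yields $K(D) = \cO_{M,\xi}/\mathfrak m_{M,\xi} = \cO_{N,\eta}/\mathfrak m_{N,\eta} = K(Z)$, so the function-field extension $K(Z) \hookrightarrow K(D)$ induced by $f|_D$ is an equality, meaning $f|_D : D \to Z$ is birational. The argument has no real obstacle: once the identification at the generic point is set up, everything reduces to the short algebraic verification of DVR maximality under domination.
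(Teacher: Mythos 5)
Your proof is correct, and it takes a genuinely different route from the paper's. The paper argues geometrically: it first reduces to $N$ non-singular, then invokes the theorem (citing Shafarevich) that the locus $Z$ where the inverse rational map $f^{-1}$ fails to be a morphism has fibers of positive dimension under $f^{-1}(Z)\to Z$, forcing $\codim(Z,N)\ge 2$; since $\ov{f(D)}$ has codimension $1$, it is not contained in $Z$, so $f^{-1}$ is defined on a dense open subset of $\ov{f(D)}$, giving birationality of $f|_D$. You instead work entirely in commutative algebra at the generic points $\xi\in D$ and $\eta\in\ov{f(D)}$: normality of $N$ makes $\cO_{N,\eta}$ a DVR of $K(N)=K(M)$, the local ring $\cO_{M,\xi}$ is a proper local subring of $K(M)$ dominating it, and the standard maximality of a DVR under domination forces $\cO_{M,\xi}=\cO_{N,\eta}$, whence equality of residue fields $K(D)=K(\ov{f(D)})$. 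Your argument is more self-contained (no appeal to an external dimension-of-fibers theorem or reduction to the smooth locus) and isolates exactly where normality enters, namely through Serre's criterion giving a DVR at a codimension-one point; the paper's argument is more geometric and makes visible the familiar fact that the indeterminacy locus of $f^{-1}$ has codimension at least two, which is the conceptual content behind the lemma.
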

\begin{proof}
  The assumptions imply that $f(D)$ meets the non-singular locus of $N$, so we
  may assume that $N$ is non-singular. Let $Z \subset N$ be the closed subset of
  points where the rational map $f^{-1}$ is not defined as a morphism. Then
  $f^{-1}(Z)$ is a proper closed subset of $M$, and the fibers of $f^{-1}(Z) \to
  Z$ have positive dimension by \cite[4.4, Thm.~2]{shafarevich:basic*2}. It
  follows that $Z$ has codimension at least 2 in $N$, so $f^{-1}$ is defined on
  a dense open subset of $f(D)$.
\end{proof}

\targetsec{hZd11}{}%
Recall the variety $Y_{d-1,d} = G/(P_{Y_{d-1}} \cap P_{Y_d})$ from
\Section{geomprod} and define
\[
  \begin{split}
    \wh Z_{d-1,1} &\,=\, Z_d \times_{Y_d} Y_{d-1,d} \\
    &\,=\,
    \{ (\eta,\om,z) \in Y_{d-1} \times Y_d \times X \mid
    \Gamma_\eta \subset \Gamma_\om \text{ and } z \in \Gamma_\om \} \,,
    \\
    \hZdd3 &\,=\, Z_{d-1} \times_{Y_{d-1}} Z_{d-1} \times_{Y_{d-1}} \Zhdd \\
    &\,=\,
    \{ (\eta,\om,x,y,z) \in Y_{d-1} \times Y_d \times X^3 \mid
    x,y \in \Gamma_\eta \subset \Gamma_\om \text{ and } z \in \Gamma_\om \}
    \,\text{, and}
    \\
    \Zdd3 &\,=\,
    \{ (\om,x,y,z) \in \Zd3 \mid \dist(x,y) \leq d-1 \} \ \subset \Zd3
    \,.
  \end{split}
\]

\begin{lemma}\label{lemma:Zdd13_birat}%
  The restricted morphism $\phi: \Bl_{d-1,1} \to \Zdd3$ and the projection
  $p' : \hZdd3 \to \Zdd3$ are birational.
\end{lemma}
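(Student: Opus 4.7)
The plan is to establish both birationality statements by producing generic inverses, leveraging the rigidity of primitive cominuscule varieties.

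For $p'$ the argument is direct. The locus where $\dist(x,y) = d-1$ is dense in $\Zdd3$, and for a generic $(\omega,x,y,z) \in \Zdd3$ in this locus, \Corollary{comin_kapd}(b) furnishes a unique $\eta \in Y_{d-1}$ with $x,y \in \Gamma_\eta$, namely $\Gamma_\eta = \Gamma_{d-1}(x,y)$. The inclusion $\Gamma_\eta \subset \Gamma_\omega$ will follow by observing that the curve neighborhood of $x,y$ computed inside the primitive variety $\Gamma_\omega$ is again a primitive cominuscule subvariety of $X$ of diameter $d-1$, so equals $\Gamma_{\eta'}$ for some $\eta' \in Y_{d-1}$; the uniqueness from \Corollary{comin_kapd}(b) forces $\eta' = \eta$. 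This produces a generic inverse to $p'$, so $p'$ is birational.

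For $\phi : \Bl_{d-1,1} \to \Zdd3$ I plan to combine \Proposition{qc_birat} with \Lemma{birat_div}. First I would verify that $\Bl_{d-1,1}$ is an irreducible divisor in $\Bl_d$, using the description $\Bl_d = G \times^{P_{Y_d}} \Mb_{0,3}(\Gamma_d,d)$ together with the fact that stable maps of splitting type $(d-1,1)$ form an irreducible boundary divisor in $\Mb_{0,3}(\Gamma_d,d)$. Second, $\Zdd3$ is an irreducible codimension-one subvariety of $Z_d^{(3)}$, being the pullback of the divisor $\cD$ from \Proposition{D-class} under the projection $Z_d^{(3)} \to \Zd2$ that forgets the last coordinate (this projection is a locally trivial fibration with irreducible fibers, so it preserves irreducibility). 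By construction $\phi(\Bl_{d-1,1}) \subset \Zdd3$, and granted that this image is dense in $\Zdd3$, \Lemma{birat_div} applied to the birational morphism $\phi : \Bl_d \to Z_d^{(3)}$ yields that the restriction $\phi : \Bl_{d-1,1} \to \Zdd3$ is birational.

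The substantive step is dominance. Given a generic $(\omega,x,y,z) \in \Zdd3$ with $\dist(x,y) = d-1$, set $\Gamma_\eta = \Gamma_{d-1}(x,y) \subset \Gamma_\omega$ as above. Inside the primitive variety $\Gamma_\omega$ of diameter $d$, \Lemma{prim_dual} identifies $[\Gamma_{d-1}]$ and $[\Gamma_1]$ as Poincar\'e dual Schubert classes, so for generic $z$ the intersection $\Gamma_\eta \cap \Gamma_1(z)$ consists of a single reduced point $n$. \Theorem{prim3pt} applied to $\Gamma_\eta$ then yields the unique degree-$(d-1)$ rational curve $f_1 : \bP^1 \to \Gamma_\eta$ through $x, y, n$ (with $n$ playing the role of the node), while $f_2$ is the unique line through $n$ and $z$. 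The resulting stable map $f = f_1 \cup f_2$ lies in $M_{d-1,1}$ and satisfies $\phi(\omega,f) = (\omega,x,y,z)$.

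The main obstacle is checking that the generic-position conditions are met simultaneously on a dense subset of $\Zdd3$—in particular, that $\Gamma_\eta \cap \Gamma_1(z)$ really is a single reduced point for generic input, and that the $\PGL_2$-reparametrizations of the two components of $f$ are indeed killed by specifying the three marked/node points. Both should follow from Kleiman transversality together with the explicit Schubert-calculus description of the intersection inside the primitive variety $\Gamma_\omega$, after which \Lemma{birat_div} finishes the proof.
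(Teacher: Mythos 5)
Your proposal follows the same route as the paper: for $p'$, \Corollary{comin_kapd}(b) applied inside $\Gamma_\omega$ gives existence and (generic) uniqueness of $\eta$; for $\phi$, dominance onto $\Zdd3$ is obtained from \Lemma{prim_dual} and \Theorem{prim3pt}, after which \Proposition{D-class} (codimension one of $\Zdd3$) together with \Lemma{birat_div} applied to the birational map $\phi\colon \Bl_d \to \Zd3$ finishes the argument. The concerns raised in your final paragraph about generic uniqueness of the stable map are unnecessary, since \Lemma{birat_div} already upgrades dominance onto a codimension-one image to birationality once the source is an irreducible divisor; only existence of a preimage over a general point is needed.
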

\begin{proof}
  Let $(\om,x,y,z) \in \Zdd3$. By \Corollary{comin_kapd}(b) there exists $\eta
  \in Y_{d-1}$ such that $x, y \in \Gamma_\eta \subset \Gamma_\om$, and $\eta$
  is unique when $\dist(x,y) = d-1$. This shows that $p'$ is birational. It
  follows from \Lemma{prim_dual} that $\Gamma_\eta \cap \Gamma_1(z)$ contains at
  least one point $t$. There exists a stable curve in $\Gamma_\eta$ of degree
  $d-1$ through $x$, $y$, and $t$ by \Theorem{prim3pt}, and $t$ is connected to
  $z$ by a line. This shows that $(\om,x,y,z)$ belongs to
  $\phi(\Bl_{d-1,1})$. Since $\phi: \Bl_d \to \Zd3$ is birational by
  \Proposition{qc_birat}, it follows from \Proposition{D-class} and
  \Lemma{birat_div} that $\phi: \Bl_{d-1,1} \to \Zdd3$ is birational.
\end{proof}

\targetsec{Zd3uv}{}%
The proof of \Theorem{introclaims} uses the following varieties:
\[
  \begin{split}
    \Zd3(X_u,X^v) &\,=\,
    (p_d e_1)^{-1}(X_u) \cap (p_d e_2)^{-1}(X^v) \ \subset \Zd3 \,, \\
    \Zdd3(X_u,X^v) &\,=\, \Zd3(X_u,X^v) \cap \Zdd3 \,, \\
    \hZdd3(X_u,X^v) &\,=\, p_{d-1}^{-1}(X_u) \times_{Y_{d-1}} p_{d-1}^{-1}(X^v)
    \times_{Y_{d-1}} \Zhdd \,, \\
    Y_{d-1,d}(X_u,X^v) &\,=\,
    \psi_d^{-1}(Y_{d-1}(X_u,X^v)) \,\text{, and} \\
    \wh Z_{d-1,1}(X_u,X^v) &\,=\, Y_{d-1,d}(X_u,X^v) \times_{Y_d} Z_d \,.
  \end{split}
\]
The first three spaces are the subvarieties of $\Zd3$, $\Zdd3$, $\hZdd3$ defined
by $x \in X_u$ and $y \in X^v$. The last variety $\Zhdd(X_u,X^v)$ consists of
all triples $(\eta,\om,z) \in \Zhdd$ for which $X_u \cap \Gamma_\eta \neq
\emptyset$ and $X^v \cap \Gamma_\eta \neq \emptyset$.

\begin{proof}[Proof of \Theorem{introclaims}]
  It follows from \Proposition{qc_birat} and Kleiman's transversality theorem
  \cite{kleiman:transversality} that $\phi : \Bl_d(X_u,X^v) \to \Zd3(X_u,X^v)$
  is birational, and the fiber of $e_3 : \Zd3(X_u,X^v) \to Z_d(X_u,X^v)$ over
  $(\om,z)$ is isomorphic to $(\Gamma_\om \cap X_u) \times (\Gamma_\om \cap
  X^v)$, which is a product of Schubert varieties by \Theorem{schubfib}. Using
  that $\Bl_d(X_u,X^v)$, $\Zd3(X_u,X^v)$, and $Z_d(X_u,X^v)$ have rational
  singularities, it follows from \Corollary{cohom_triv_compose} that the general
  fibers of $e_3\phi : \Bl_d(X_u,X^v) \to Z_d(X_u,X^v)$ are cohomologically
  trivial.

  Consider the commutative diagram:
  \begin{equation}\label{eqn:zdd13-fibers}
    \xymatrix{
      \Zdd3(X_u,X^v) \ar[rrd]_{e_3} &&
      \Bl_{d-1,1}(X_u,X^v) \ar[ll]_{\approx}^\phi \ar[d]^{e_3 \phi} \\
      \hZdd3(X_u,X^v) \ar[u]^{\approx}_{p'} \ar[r]_{\wh p} &
      \Zhdd(X_u,X^v) \ar[d] \ar[r]_{\phi'_d} &
      Z_{d-1,1}(X_u,X^v) \ar[d]^{q_d} \ar[r]_{\hspace{2.3em}\subset} &
      Z_d \ar[d]^{q_d} \\
      & Y_{d-1,d}(X_u,X^v) \ar[r]_{\phi_d} &
      Y_{d-1,1}(X_u,X^v) \ar[r]_{\hspace{2.3em}\subset} &
      Y_d
    }
  \end{equation}
  Here $\wh p$ is the projection that forgets $x$ and $y$, and $\phi'_d$ is the
  base change of $\phi_d$ along $q_d$. It follows from \cite[Thm.~2.5 and
  Prop.~3.7]{buch.chaput.ea:finiteness} together with \Theorem{projrich} and
  \Proposition{D-class} that all varieties in the diagram \eqn{zdd13-fibers}
  have rational singularities. The maps $p'$ and $\phi$ with target
  $\Zdd3(X_u,X^v)$ are birational by \Lemma{Zdd13_birat} and Kleiman's
  transversality theorem \cite{kleiman:transversality}. The fiber of $\wh p$
  over $(\eta,\om,z) \in \Zhdd(X_u,X^v)$ is the product $(\Gamma_\eta \cap X_u)
  \times (\Gamma_\eta \cap X^v)$ of Schubert varieties by \Theorem{schubfib}.
  The fibers of $\phi'_d$ coincide with the fibers of $\phi_d$, and the general
  such fibers are Richardson varieties by \Theorem{richfib}. We deduce from
  \Corollary{cohom_triv_compose} that the general fibers of the maps $e_3$ and
  $e_3 \phi$ with target $Z_{d-1,1}(X_u,X^v)$ are cohomologically trivial. This
  completes the proof.
\end{proof}

\begin{cor}\label{cor:introclaims_birat}%
  The restricted maps $e_3 \phi : \Bl_d(X_u,X^v) \to Z_d(X_u,X^v)$ and $e_3 \phi
  : \Bl_{d-1,1}(X_u,X^v) \to Z_{d-1,1}(X_u,X^v)$ are birational for $d \leq
  \min(\dmax(u^\vee),\dmax(v))$.
\end{cor}
\begin{proof}
  It follows from \Corollary{dimMd}(a) and \Lemma{weylid} that $\dim
  \Bl_d(X_u,X^v) = \dim Z_d(X_u,X^v) = \ell(u) - \ell(v) + \int_d c_1(T_X)$
  (when these varieties are not empty), and from \Proposition{yd1projrich} that
  $\dim \Bl_{d-1,1}(X_u,X^v) = \dim Z_{d-1,1}(X_u,X^v) = \ell(u) - \ell(v) +
  \int_d c_1(T_X) - 1$.
\end{proof}

\begin{remark}
  The proof of \Theorem{introclaims} shows more generally that the general
  fibers of $e_3 \phi : \Bl_d(X_u,X^v) \to Z_d(X_u,X^v)$ are rational, and the
  general fibers of $e_3 \phi : \Bl_{d-1,1}(X_u,X^v) \to Z_{d-1,1}(X_u,X^v)$ are
  rationally connected. The last statement uses
  \cite[Cor.~1.3]{graber.harris.ea:families}.
\end{remark}

% !TeX root=qkpos.tex

\section*{Index of symbols}

We list the most important symbols used in this paper in approximate order of
appearance after the introduction. In electronic versions of this paper, each
symbol is clickable and linked to its definition.\smallskip

\indexsec{flagvar}{%
\slink{group}{$T$}%
\slink{group}{$B$}%
\slink{group}{$G$}%
\slink{group}{$B^-$}%
\slink{group}{$\Phi$}%
\slink{group}{$\Phi^+$}%
\slink{group}{$\Delta$}%
\slink{group}{$W$}%
\slink{flagvar}{$P_X$}%
\slink{flagvar}{$\Phi_X$}%
\slink{flagvar}{$\Delta_X$}%
\slink{flagvar}{$W_X$}%
\slink{flagvar}{$W^X$}%
\slink{flagvar}{$X_u$}%
\slink{flagvar}{$X^v$}%
\slink{flagvar}{$\oX_u$}%
\slink{flagvar}{$\oX^v$}%
\slink{bruhat}{$X_u^v$}%
\slink{bruhat}{$\oX_u^v$}%
\slink{factor}{$u^X$}%
\slink{factor}{$u_X$}%
\slink{factor}{$w_0$}%
\slink{factor}{$w_0^X$}%
\slink{factor}{$w_{0,X}$}%
\slink{factor}{$u^\vee$}%
\slink{factor2}{$u_X^Y$}%
\slink{hecke}{$u \cdot v$}%
\slink{weak}{$v \leq_L u$}%
\slink{weak}{$v \leq_X u$}%
\slink{fundweight}{$\om_\be$}%
}

\indexsec{semitrans}{%
\slink{stab}{$G_\Omega$}%
\slink{stab}{$G(\Omega_1,\Omega_2)$}%
}

\indexsec{projrich}{%
\slink{projrich}{$\Pi_u^v(X)$}%
\slink{projrich}{$\oPi_u^v(X)$}%
\slink{simx}{$\sim_X$}%
}

\indexsec{comin-schubert}{%
\slink{gamma}{$\ga$}%
\slink{poset}{$\cP_X$}%
\slink{poset}{$I(u)$}%
\slink{cominrep}{$w_\la$}%
\slink{cominrep}{$u/v$}%
\slink{labeling}{$\la(\al)$}%
\slink{labeling}{$\delta(\al)$}%
\slink{labeling}{$\delta$}%
\sref{tab:tablez1}{$\Gr(m,n)$}%
\sref{tab:tablez1}{$\OG(n,2n)$}%
\sref{tab:tablez1}{$\LG(n,2n)$}%
\sref{tab:tablez1}{$Q^N$}%
\sref{tab:tablez1}{$E_6/P_6$}%
\sref{tab:tablez1}{$E_7/P_7$}%
\slink{distlattice}{$u \cup v$}%
\slink{distlattice}{$u \cap v$}%
}

\indexsec{cohom-rich}{%
\slink{ktheory}{$K_T(X)$}%
\slink{ktheory}{$\euler{X}$}%
\slink{ktheory}{$\cO^v$}%
\slink{ktheory}{$\cO_u$}%
\slink{ktheory}{$J$}%
\slink{labelweight}{$\la(\cT)$}%
\slink{tabrep}{$\cT[m]$}%
\slink{tabrep}{$C^v_{u,[a,m)}$}%
}

\indexsec{qcintro}{%
\slink{kerspan}{$\Ker(C)$}%
\slink{kerspan}{$\Span(C)$}%
\slink{kerspan}{$\Fl(m-d,m+d;n)$}%
}

\indexsec{nbhd}{%
\slink{gwinv}{$M_d$}%
\slink{gwinv}{$\Mb_{0,3}(X,d)$}%
\slink{gwinv}{$\ev_i$}%
\slink{gwinv}{$\gw{\Omega_1,\Omega_2,\Omega_3}{d}$}%
\slink{gwinv}{$I_d(\cF_1,\cF_2,\cF_3)$}%
\slink{cnbhd}{$M_d(\Omega_1,\Omega_2)$}%
\slink{cnbhd}{$\Gamma_d(\Omega_1,\Omega_2)$}%
\slink{cnbhd}{$M_d(\Omega)$}%
\slink{cnbhd}{$\Gamma_d(\Omega)$}%
\slink{dist}{$\dist(x,y)$}%
\slink{diameter}{$d_X(2)$}%
\slink{diameter}{$\tal_i$}%
\sref{defn:zdkappad}{$\ka_d$}%
\sref{defn:zdkappad}{$z_d$}%
\sref{lemma:orbits}{$\ocZ_{d,2}$}%
\slink{Sd}{$\cS_d$}%
}

\indexsec{incidence}{%
\slink{dynkinint}{$[\ga,\be]$}%
}

\indexsec{prim-nbhd}{%
\slink{qcspaces}{$Y_d$}%
\slink{qcspaces}{$Z_d$}%
\slink{qcspaces}{$p_d$}%
\slink{qcspaces}{$q_d$}%
\slink{qcspaces}{$F_d$}%
\slink{qcspaces}{$\Gamma_d$}%
\slink{qcspaces}{$\Gamma_\om$}%
}

\indexsec{qcblowup}{%
\slink{blowup}{$\Bl_d$}%
\slink{Zd3}{$Z_d^{(3)}$}%
\slink{Zd3}{$e_i$}%
\slink{qcrich}{$Y_d(\Omega_1,\Omega_2)$}%
\slink{qcrich}{$Z_d(\Omega_1,\Omega_2)$}%
\slink{qcrich}{$Y_d(\Omega)$}%
\slink{qcrich}{$Z_d(\Omega)$}%
}

\indexsec{qcbiject}{%
\slink{posetFd}{$\cP_{F_d}$}%
\slink{posetFd}{$\delta'$}%
}

\indexsec{ss:qcfibers}{%
\sref{defn:fiber}{$u(d)$}%
\sref{defn:fiber}{$v(-d)$}%
\sref{defn:fiber}{$\wh u_d$}%
\sref{defn:fiber}{$u_d$}%
\sref{defn:fiber}{$v^d$}%
}

\indexsec{qcohom}{%
\slink{qhprod}{$[X_u]\star[X^v]$}%
\slink{qhprod}{$([X_u]\star [X^v])_d$}%
\slink{dminmax}{$\dmin(u,v)$}%
\slink{dminmax}{$\dmax(u,v)$}%
\slink{dminmax}{$\dmax(v)$}%
}

\indexsec{qh-min-max}{%
\slink{qbruhat}{$\cB$}%
}

\indexsec{dist-lattice}{%
\sref{defn:quantum-poset}{$\wh\cP_X$}%
\sref{defn:quantum-poset}{$I(q^d[X^u])$}%
\sref{defn:quantum-poset}{$\partial(\al)$}%
\sref{defn:quantum-poset}{$\xi(\al)$}%
\sref{defn:quantum-poset}{$\tau(\al)$}%
}

\indexsec{qshapes}{%
\sref{sec:qshapes}{$\nu/\la$}%
}

\indexsec{qcintproof}{%
\slink{Wcomin}{$W^\comin$}%
}

\indexsec{qkring}{%
\slink{qkring}{$\QK(X)$}%
\slink{qkprod}{$\Psi$}%
\slink{qkprod}{$(\cO_u \star \cO^v)_d$}%
\slink{qkprod}{$\cO_u \star \cO^v$}%
\slink{qkconst}{$N^{w,d}_{u,v}$}%
}

\indexsec{qkdual}{%
\slink{qktring}{$\QK_T(M)$}%
\slink{qktring}{$\cK$}%
\slink{qktring}{$\cK\llbracket q\rrbracket$}%
\slink{metric}{$(\!(\cF_1,\cF_2)\!)$}%
\slink{metric}{$\cI_q^v$}%
\slink{dual2schub}{$C_w^{u,d}$}%
\slink{dualconst}{$A_{u,v}^{w,d}$}%
\slink{qposet-basis}{$\cO^\la$}%
\slink{qposet-basis}{$\cI_q^\la$}%
\slink{jxi}{$J$}%
\slink{jxi}{$J_u$}%
\slink{jxi}{$\cJ$}%
\slink{jxi}{$\zeta$}%
\slink{jxi}{$\delta(w/u)$}%
\slink{jxi}{$\theta$}%
\sref{remark:zeta-qshapes}{$\delta(\nu/\la)$}%
\slink{qkdual}{$\ov A_{u,v}^{w,d}$}%
}

\indexsec{geomprod}{%
\slink{Ydd1}{$Y_{d-1,1}$}%
\slink{Ydd1}{$\psi_d$}%
\slink{Ydd1}{$\phi_d$}%
\slink{Ydd1}{$Y_{d-1,1}(X_u,X^v)$}%
\slink{Ydd1}{$Z_{d-1,1}(X_u,X^v)$}%
\slink{Ydd1}{$\Gamma_{d-1,1}(X_u,X^v)$}%%
}

\indexsec{strategy}{%
\slink{resolclass}{$[\cO_{\wt\Omega}]$}%
}

\indexsec{pf-cex}{%
\sref{notation:epsilon}{$\epsilon$}%
\slink{partialYd}{$[\partial Y_d]$}%
\slink{altsign}{$\lead(\cF)$}%
\slink{altsign}{$c_w(\cF)$}%
}

\indexsec{divisors}{%
\slink{Zd2}{$Z_d^{(2)}$}%
\sref{prop:D-class}{$\cD$}%
\slink{exactnbhd}{$\oGamma_d(\Omega)$}%
\slink{oYd}{$\oY_d(X_u,X^v)$}%
\slink{oYd}{$\oY_{d-1,1}(X_u,X^v)$}%
\slink{oYd}{$\sigma_i$}%
}

\indexsec{claims}{%
\slink{Md11}{$M_{d-1,1}$}%
\slink{Md11}{$M_{d-1,1}(X_u,X^v)$}%
\slink{Bld11}{$\Bl_{d-1,1}$}%
\slink{Bld11}{$\Bl_d(X_u,X^v)$}\sbreak%
\slink{Bld11}{$\Bl_{d-1,1}(X_u,X^v)$}%
\slink{hZd11}{$\wh Z_{d-1,1}$}%
\slink{hZd11}{$\wh Z^{(3)}_{d-1,1}$}%
\slink{hZd11}{$Z^{(3)}_{d-1,1}$}%
\slink{Zd3uv}{$Z^{(3)}_d(X_u,X^v)$}%
\slink{Zd3uv}{$Z^{(3)}_{d-1,1}(X_u,X^v)$}\sbreak%
\slink{Zd3uv}{$\wh Z^{(3)}_{d-1,1}(X_u,X^v)$}%
\slink{Zd3uv}{$Y_{d-1,d}(X_u,X^v)$}%
\slink{Zd3uv}{$\wh Z_{d-1,1}(X_u,X^v)$}%
}

% Bibliography
% A .bib file can be used by adding the line "\def\qkposbib{file.bib}" to the
% file qkpos.conf.  Do not add qkpos.conf to the repository.
\ifdefined\qkposbib
\bibliography{qkpos,\qkposbib}
\else
% If no .bib file is defined, include bibliography.tex

\fi
\bibliographystyle{halpha}

\end{document}